\DeclareSymbolFont{bbold}{U}{bbold}{m}{n}
\DeclareSymbolFontAlphabet{\mathbbold}{bbold}
\newcommand{\bbone}{\mathbbold{1}}
\DeclareSymbolFontAlphabet{\amsbb}{AMSb}
\renewcommand{\mathbb}[1]{\amsbb{#1}}
\newtheorem{theorem}{Theorem}
\numberwithin{theorem}{subsection}
\newtheorem{thm}[theorem]{Theorem}
\newtheorem{proposition}[theorem]{Proposition}
\newtheorem{propn}[theorem]{Proposition}
\newtheorem{corollary}[theorem]{Corollary}
\newtheorem{cor}[theorem]{Corollary}
\newtheorem{lemma}[theorem]{Lemma}
\theoremstyle{definition}
\newtheorem{definition}[theorem]{Definition}
\newtheorem{defn}[theorem]{Definition}
\newtheorem{examples}[theorem]{Examples}
\newtheorem{notation}[theorem]{Notation}
\newtheorem{remark}[theorem]{Remark}
\newtheorem{warning}[theorem]{Warning}
\providecommand{\op}{\mathrm{op}}
\providecommand{\xel}{\mathrm{el}}
\providecommand{\xev}{\mathrm{ev}}
\providecommand{\xint}{\mathrm{int}}
\newcommand{\xFun}{\operatorname{Fun}}
\newcommand{\xInd}{\operatorname{Ind}}
\DeclareMathOperator{\colimP}{colim}
\newcommand{\colim}{\mathop{\colimP}}
\newcommand{\xCat}{\operatorname{Cat}}
\newcommand{\xCS}{\operatorname{CtsSeg}}
\newcommand{\PSh}{\operatorname{P}}
\newcommand{\xMap}{\operatorname{Map}}
\newcommand{\Map}{\operatorname{Map}}
\newcommand{\xAlg}{\operatorname{Alg}}
\newcommand{\xSet}{\operatorname{Set}}
\DeclareMathOperator{\xSeg}{Seg}
\newcommand{\xB}{\mathcal{B}}
\newcommand{\xE}{\mathcal{E}}
\newcommand{\xI}{\mathcal{I}}
\newcommand{\xxO}{\mathcal{O}}
\newcommand{\xxP}{\mathcal{P}}
\newcommand{\xxQ}{\mathcal{Q}}
\newcommand{\xcc}{\mathcal{C}}
\newcommand{\xdd}{\mathcal{D}}
\newcommand{\xS}{\mathcal{S}}
\newcommand{\xV}{\mathcal{V}}
\newcommand{\xX}{\mathcal{X}}
\newcommand{\xU}{\mathcal{U}}
\newcommand{\xfe}{\mathfrak{e}}
\newcommand{\xfc}{\mathfrak{c}}
\newcommand{\id}{\operatorname{id}}
\newcommand{\xF}{\mathbb{F}}
\newcommand{\icat}{$\infty$-category}
\newcommand{\icats}{$\infty$-categories}
\newcommand{\iopd}{$\infty$-operad}
\newcommand{\iopds}{$\infty$-operads}
\newcommand{\isoto}{\xrightarrow{\sim}}
\newcommand{\xto}[1]{\xrightarrow{#1}}
\newcommand{\from}{\leftarrow}
\newcommand{\xfrom}[1]{\xleftarrow{#1}}
\newcommand{\csquare}[8]{ %
\[ %
\begin{tikzpicture} %
\matrix (m) [matrix of math nodes,row sep=3em,column sep=2.5em,text height=1.5ex,text depth=0.25ex] %
{ #1 \pgfmatrixnextcell #2 \\ %
  #3 \pgfmatrixnextcell #4 \\ }; %
\path[->,font=\footnotesize] %
(m-1-1) edge node[auto] {$#5$} (m-1-2)%
(m-1-1) edge node[left] {$#6$} (m-2-1)%
(m-1-2) edge node[auto] {$#7$} (m-2-2)%
(m-2-1) edge node[below] {$#8$} (m-2-2);%
\end{tikzpicture}%
\]%
}
\newcommand{\nolabelcsquare}[4]{\csquare{#1}{#2}{#3}{#4}{}{}{}{}}
\def\@tocline#1#2#3#4#5#6#7{\relax
  \ifnum #1>\c@tocdepth 
  \else
    \par \addpenalty\@secpenalty\addvspace{#2}%
    \begingroup \hyphenpenalty\@M
    \@ifempty{#4}{%
      \@tempdima\csname r@tocindent\number#1\endcsname\relax
    }{%
      \@tempdima#4\relax
    }%
    \parindent\z@ \leftskip#3\relax \advance\leftskip\@tempdima\relax
    \rightskip\@pnumwidth plus4em \parfillskip-\@pnumwidth
    #5\leavevmode\hskip-\@tempdima
      \ifcase #1
       \or \hskip -1em \or \hskip 1em \or \hskip 3em \else \hskip 5em \fi%
      #6\nobreak\relax
    \hfill\hbox to\@pnumwidth{\@tocpagenum{#7}}
      \par
    \nobreak
    \endgroup
  \fi}
\newcommand{\xCor}{V}
\newcommand{\name}[1]{\ensuremath{\text{\textup{#1}}}}
\newcommand{\CorDF}{\xCor_{\DF}}
\newcommand{\CorO}{\xCor_{\bbO}}
\newcommand{\Fin}{\name{Fin}}
\newcommand{\simp}{\bbDelta}
\newcommand{\DF}{\simp_{\mathbb{F}}}
\newcommand{\DFX}{\simp_{\mathbb{F},X}}
\newcommand{\DFXop}{\DFX^{\op}}
\newcommand{\DV}{\simp^{\mathcal{V}}}
\newcommand{\DVop}{\simp^{\mathcal{V},\op}}
\newcommand{\DFint}{\simp_{\mathbb{F},\name{int}}}
\newcommand{\DFel}{\simp_{\mathbb{F},\name{el}}}
\newcommand{\DFelI}{\simp_{\mathbb{F},\name{el}/I}}
\newcommand{\DFelIop}{(\simp_{\mathbb{F},\name{el}/I})^{\op}}
\newcommand{\DFi}{\DF^{1}}
\newcommand{\DFiop}{\DF^{1,\op}}
\newcommand{\DFiint}{\bbDelta^{1}_{\mathbb{F},\name{int}}}
\newcommand{\DFiV}{\DF^{1,\mathcal{V}}}
\newcommand{\DFiU}{\DF^{1,\mathcal{U}}}
\newcommand{\DFiVop}{\DF^{1,\mathcal{V},\op}}
\newcommand{\DFiVint}{\bbDelta^{1,\mathcal{V}}_{\mathbb{F},\name{int}}}
\newcommand{\DFV}{\DF^{\mathcal{V}}}
\newcommand{\DFU}{\DF^{\mathcal{U}}}
\newcommand{\DFXV}{\DFX^{\mathcal{V}}}
\newcommand{\DFXVop}{\DFX^{\mathcal{V},\op}}
\newcommand{\bari}{\overline{\imath}}
\newcommand{\barj}{\overline{\jmath}}
\newcommand{\bartau}{\overline{\tau}}
\newcommand{\tauel}{\tau_{\name{el}}}
\newcommand{\tauint}{\tau_{\name{int}}}
\newcommand{\bartauel}{\bartau_{\name{el}}}
\newcommand{\bartauint}{\bartau_{\name{int}}}
\newcommand{\Dext}{\partial_{\name{ext}}}
\newcommand{\PU}{\PSh(\mathcal{U})}
\newcommand{\DFPU}{\DF^{\PU}}
\newcommand{\DFVk}{\DF^{\mathcal{V}^{\kappa}}}
\newcommand{\DFVint}{\DFint^{\mathcal{V}}}
\newcommand{\DFVintop}{\DFint^{\mathcal{V},\op}}
\newcommand{\DFVel}{\DFel^{\mathcal{V}}}
\newcommand{\DFUel}{\DFel^{\mathcal{U}}}
\newcommand{\DFVelop}{\DFel^{\mathcal{V},\op}}
\newcommand{\DFVop}{\DF^{\mathcal{V},\op}}
\newcommand{\DFUop}{\DF^{\mathcal{U},\op}}
\newcommand{\bbO}{\bbOmega}
\newcommand{\bbOop}{\bbO^{\op}}
\newcommand{\bbOX}{\bbO_{X}}
\newcommand{\bbOint}{\bbO_{\name{int}}}
\newcommand{\bbOintT}{\bbO_{\name{int}/T}}
\newcommand{\bbOel}{\bbO_{\name{el}}}
\newcommand{\bbOelT}{\bbO_{\name{el}/T}}
\newcommand{\bbOelTop}{(\bbO_{\name{el}/T})^{\op}}
\newcommand{\bbOV}{\bbO^{\mathcal{V}}}
\newcommand{\bbOU}{\bbO^{\mathcal{U}}}
\newcommand{\bbOVop}{\bbO^{\mathcal{V},\op}}
\newcommand{\bbOVint}{\bbOint^{\mathcal{V}}}
\newcommand{\bbOVintop}{\bbOint^{\mathcal{V},\op}}
\newcommand{\bbOVel}{\bbOel^{\mathcal{V}}}
\newcommand{\bbOUel}{\bbOel^{\mathcal{U}}}
\newcommand{\bbOVelop}{\bbOel^{\mathcal{V},\op}}
\newcommand{\bbF}{\mathbb{F}}
\newcommand{\Set}{\name{Set}}
\newcommand{\Seg}{\name{Seg}}
\newcommand{\Ind}{\name{Ind}}
\newcommand{\Fun}{\name{Fun}}
\newcommand{\PSeg}{\PSh_{\Seg}}
\newcommand{\PkSeg}{\PSh_{\kappa\name{-Seg}}}
\newcommand{\PSSeg}{\PSh_{\mathbb{S}\name{-Seg}}}
\newcommand{\PCts}{\PSh_{\name{Cts}}}
\newcommand{\PCS}{\PSh_{\xCS}}
\newcommand{\PM}{\PSh_{\name{Mon}}}
\newcommand{\PCM}{\PSh_{\name{CtsMon}}}
\newcommand{\PCSS}{\PSh_{\name{Cts}\,\mathbb{S}\name{-Seg}}}
\newcommand{\PCoS}{\PSh_{\name{CS}}}
\newcommand{\PCCS}{\PSh_{\name{CCS}}}
\newcommand{\blank}{\text{\textendash}}
\newcommand{\PrL}{\name{Pr}^{\mathrm{L}}}
\newcommand{\Cat}{\name{Cat}}
\newcommand{\CatI}{\Cat_{\infty}}
\newcommand{\LCatI}{\widehat{\Cat}_{\infty}}
\newcommand{\IFF}{if and only if}
\newcommand{\PSU}{\PSh_{\mathbb{S}}(\mathcal{U})}
\newcommand{\Sub}{\name{Sub}}
\newcommand{\OOp}{\mathcal{O}p}
\newcommand{\sOp}{\mathbf{Op}}
\newcommand{\Opd}{\name{Opd}}
\newcommand{\OpdV}{\Opd^{\mathbf{V}}}
\newcommand{\OpdI}{\Opd_{\infty}}
\newcommand{\OpdIV}{\OpdI^{\mathcal{V}}}
\newcommand{\Alg}{\name{Alg}}
\newcommand{\AlgDFS}{\name{Alg}_{\DF^{\op}/\mathcal{S}}}
\newcommand{\AlgOS}{\name{Alg}_{\bbO^{\op}/\mathcal{S}}}
\newcommand{\AlgOSet}{\name{Alg}_{\bbO^{\op}/\Set}}
\newcommand{\AlgOpSet}{\name{Alg}_{\OOp/\Set}}
\newcommand{\angled}[1]{\langle #1 \rangle}
\newcommand{\ie}{i.e.\@}
\newcommand{\cf}{cf.\@}
\title{Enriched $\infty$-operads}
\author{Hongyi Chu}
\address{MPIM, Bonn, Germany}
\author{Rune Haugseng}
\address{NTNU, Trondheim, Norway}
\date{\today}
\begin{document}
\begin{abstract}
  In this paper we initiate the study of enriched \iopds{}. We
  introduce several models for these objects, including enriched
  versions of Barwick's Segal operads and the dendroidal Segal spaces
  of Cisinski and Moerdijk, and
  show these are equivalent. Our main results are a version of Rezk's
  completion theorem for enriched \iopds{}: localization at the fully
  faithful and essentially surjective morphisms is given by the full
  subcategory of \emph{complete} objects, and a rectification theorem:
  the homotopy theory of \iopds{} enriched in the \icat{} arising from
  a nice symmetric monoidal model category is equivalent to the
  homotopy theory of strictly enriched operads.
\end{abstract}
\maketitle
\tableofcontents

\section{Introduction}\label{sec introduction}
Operads are a convenient formalism for parametrizing many algebraic
structures of interest in mathematics. Roughly speaking, an
\emph{operad}\footnote{The term operad is often used only for the
  single-object version of this notion, with the many-object case we
  consider known as a \emph{coloured operad} or \emph{multicategory};
  we have chosen to use the shorter term operad for the more general
  notion, as seems to be increasingly common in the current
  literature.} is a structure similar to a category, but instead of
morphisms with a single source and target, an operad has
\emph{multimorphisms} with a \emph{list} of objects as a source;
moreover, there is a $\Sigma_{n}$-action on the multimorphisms with
$n$ inputs that permutes these. If $\mathbf{C}$ is a symmetric
monoidal category then one can view $\mathbf{C}$ as an operad where a
multimorphism $(x_{1},\ldots,x_{n}) \to y$ is given by a morphism
$x_{1} \otimes \cdots \otimes x_{n} \to y$. If $\mathbf{O}$ is an
operad we can then define an \emph{$\mathbf{O}$-algebra} in
$\mathbf{C}$ to be a functor of operads $\mathbf{O} \to
\mathbf{C}$.
Many interesting algebraic structures arise as algebras in this sense,
including associative algebras, commutative algebras, and enriched
categories with a fixed set of objects.

For many purposes however, it is necessary to generalize operads to
\emph{enriched} operads --- here we replace the \emph{set} of
multimorphisms with an object of some symmetric monoidal
category. Then we can, for example, describe Lie algebras or
Poisson 
algebras as algebras for operads enriched in abelian
groups.

In topology, we often encounter operads enriched in topological spaces
or simplicial sets, known as topological and simplicial
operads. Indeed, it was this setting that originally motivated the
introduction of operads back in the 1970s: $n$-fold loop spaces admit
natural multiplications where algebraic identities, such as
associativity, only hold up to coherent homotopy, and this structure
can be codified as the structure of an algebra over a topological
operad $E_{n}$, defined using spaces of ``little discs'' in
$\mathbb{R}^{n}$. These operads were introduced by
Boardman--Vogt~\cite{BoardmanVogt} and May~\cite{May}, who both proved
versions of the \emph{recognition principle} for $n$-fold loop spaces:
 $n$-fold loop spaces are precisely the spaces that admit the
structure of a \emph{grouplike}\footnote{Meaning the induced
associative multiplication on the set of connected components makes
this a group.} $E_{n}$-algebra.

For applications in algebraic topology we typically only care about
the weak homotopy types of the spaces of multimorphisms in a
simplicial or topological operad. We are therefore led to consider the
\emph{homotopy theory} of such operads. This can be done by imposing a
model structure, with the weak equivalences being a suitable notion of
maps that are ``fully faithful and essentially surjective up to
homotopy'' (often called Dwyer--Kan equivalences). Such a model
structure on simplicial operads has been constructed by
Cisinski--Moerdijk~\cite{CisinkiMoerdijk3} and by
Robertson~\cite{Robertson}.

Unfortunately, for many purposes this model structure is not as
well-behaved as one might have hoped. For example, Boardman and Vogt
constructed a tensor product of simplicial operads whose internal Hom
gives simplicial operads of algebras, but this is well-known not to be
homotopically well-behaved\footnote{In particular, it does not make
  simplicial operads a symmetric monoidal model category; see the discussion at
  \url{https://mathoverflow.net/questions/198205/boardman-vogt-tensor-product}.}
so that these simplicial operads of algebras are typically not
homotopically meaningful.

We can improve the situation by replacing simplicial operads by a
weakly equivalent, but more flexible, notion in the form of
\emph{\iopds{}}. Roughly speaking, an \iopd{} is analogous to a
simplicial operad, but composition of multimorphisms is not strictly
associative, but rather associative up to coherent homotopy. The
first, and by far the best developed, model for \iopds{} is that of
Lurie~\cite{ha}; other models include the dendroidal sets of
Moerdijk--Weiss~\cite{MoerdijkWeiss}, the dendroidal Segal spaces of
Cisinski--Moerdijk~\cite{CisinkiMoerdijk2}, and the complete Segal
operads of Barwick~\cite{bar}. (These are all known to be equivalent,
due to the results of
\cite{bar,HeutsHinichMoerdijk,ChuHaugsengHeuts}.) There is an analogue
of the Boardman--Vogt tensor product for \iopds{}, and on the \icat{}
of \iopds{} this is as well-behaved as one might wish.

However, there are other homotopical contexts where we want to
consider enriched operads. For example, in algebraic settings we often
encounter operads enriched in chain complexes (usually called
\emph{dg-operads}), where we only care about the specific chain
complexes up to quasi-isomorphism. Similarly, in stable homotopy
theory we might want to consider operads enriched in spectra.  In both
cases model structures with the DK-equivalences as weak equivalences
were constructed by Caviglia~\cite{Caviglia,Caviglia2},\footnote{See
  also Remark~\ref{rmk:fixedobmodstr} for a discussion of model
  structures with a fixed set of objects.} but in the case of
dg-operads only over a field of characteristic zero --- indeed, even
with a fixed set of objects there do not seem to exist model structures
on dg-operads in positive characteristic.

In this paper we extend the $\infty$-categorical approach to
homotopy-coherent algebraic structures to contexts such as these, by
laying the foundations for a theory of \emph{enriched \iopds{}}. This
allows for enrichment in any symmetric monoidal \icat{}, thereby
giving a well-behaved homotopy theory of \iopds{} enriched in, for
example, spectra, chain complexes (in \emph{any} characteristic), and
modules over a commutative ring spectrum, as well as in more exotic
contexts such as quasicoherent sheaves on a derived stack. Although we
obtain the same homotopy theory as that presented by model categories
of enriched operads (when these exist), our $\infty$-categorical
approach is better-behaved in several respects --- most notably, we
obtain the homotopically correct \icats{} of algebras for enriched
\iopds{} simply as the right adjoint to a natural tensoring with
\icats{}.

Although our concerns in the present paper are foundational, we expect
the theory of enriched \iopds{} to have a number of interesting
applications, particularly in the context of \emph{Koszul duality}.
Koszul duality for dg-operads was first introduced in
\cite{GinzburgKapranov} and was later studied in, for example,
\cite{GetzlerJones,FresseKoszul,FresseEn,LodayVallette,Vallette}.
Though it is currently best understood over a field of characteristic
zero (which is the context for the papers just cited), Koszul duality
also occurs in spectra \cite{ChingBar,ChingHarper} where it is closely
related to Goodwillie calculus \cite{ChingId}. For spectra, however,
it seems likely that a full understanding of Koszul duality requires
$\infty$-categorical methods, as coalgebraic structures in spectra are
difficult to work with using model categories. More generally, Koszul
duality should occur for stable symmetric monoidal \icats{} ---
indeed, in this setting Francis--Gaitsgory~\cite{FrancisGaitsgory}
have used the expected properties of enriched \iopds{} to obtain
Koszul duality equivalences under certain finiteness hypotheses
(including in the case of chiral algebras) and also conjectured how
this should generalize.

\subsection{Main Results}
Our first goal is to define \iopds{} enriched in a symmetric monoidal
\icat{} $\mathcal{V}$ and set up their ``algebraic'' homotopy theory
(i.e.\ without inverting the fully faithful and essentially surjective
maps). In \S\ref{sec segal presheaves} we do this by considering an
enriched analogue of Barwick's approach to \iopds{}: we define, given
a presentably symmetric monoidal \icat{} $\mathcal{V}$, a notion of
\emph{continous Segal presheaves} on an \icat{} $\DFV$. We then show
that the \icat{} $\PCS(\DFV)$ of these objects has several pleasant
properties:
\begin{itemize}
\item it is presentable,
\item it is tensored and cotensored over Segal spaces --- giving Segal
  spaces of algebras by adjunction,
\item if $\mathcal{V}$ is the \icat{} $\mathcal{S}$ of spaces, with
  the Cartesian product as symmetric monoidal structure, then it is
  equivalent to Barwick's \icat{} of Segal operads.
\item it is functorial for lax symmetric monoidal functors.
\end{itemize}
To show the last point, we prove that $\PCS(\DFV)$ is equivalent to an
alternative model, using algebras in $\mathcal{V}$ for \icats{}
$\DFX^{\op}$, for which this functoriality is obvious. (This model
also makes sense without assuming presentability, but it is not our
main focus as our other results are more easily established using
Segal presheaves.)

Just as in the case of Segal spaces, to obtain the ``correct'' \icat{}
of enriched \iopds{} we need to invert the fully faithful and
essentially surjective maps. We introduce these in \S\ref{sec ffes}
and then prove our first main result, an analogue of Rezk's completion
theorem for Segal spaces in this context:
\begin{thm}
  The \icat{} $\OpdIV$ obtained as the localization of $\PCS(\DFV)$ at
  the fully faithful and essentially surjective morphisms is given by
  the full subcategory of complete objects, i.e.\ those whose
  underlying Segal space is complete in the sense of Rezk.
\end{thm}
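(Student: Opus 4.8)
The plan is to exhibit the complete objects as a reflective localization of $\PCS(\DFV)$ and then to identify the morphisms inverted by the reflection with the fully faithful and essentially surjective (FFES) maps; the theorem will then follow from the universal property of reflective localizations together with the standard fact that a strongly saturated class generated by a small set presents the localization at its class of local equivalences.

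First I would construct a completion functor. Since $\PCS(\DFV)$ is presentable and tensored over Segal spaces, and since completeness of the underlying Segal space is, by Rezk's criterion, locality with respect to the single map $E \to \Delta^{0}$ from the walking-equivalence Segal space to the point, the complete objects form an accessible reflective subcategory: they are exactly the objects local with respect to the small set $S$ obtained by tensoring $E \to \Delta^{0}$ with a set of generators of $\PCS(\DFV)$. Write $L$ for the resulting reflection and $\eta_{\mathcal{O}} \colon \mathcal{O} \to L\mathcal{O}$ for the unit. Reading off the effect of $L$ from its description through the tensoring and the corresponding computation for Segal spaces, one sees that $L$ modifies only the space of objects of the underlying Segal space while inducing equivalences on all $\mathcal{V}$-valued multimapping objects; in other words $\eta_{\mathcal{O}}$ is fully faithful.

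Second I would verify that every unit $\eta_{\mathcal{O}}$ is FFES. Fully faithfulness is exactly the observation just made, and essential surjectivity holds because completion does not alter the homotopy category of $\mathcal{O}$ --- it only corrects the homotopy type of the space of objects so as to record equivalences --- so that the set of equivalence classes of objects is unchanged. Combined with the two-out-of-three property of FFES maps established when these maps are introduced, this shows that $L$ preserves FFES maps. Third, and this is the crux, I would prove the enriched analogue of Rezk's theorem: an FFES map $f \colon \mathcal{O} \to \mathcal{P}$ between complete objects is an equivalence. Fully faithfulness already gives equivalences on all multimapping objects, so it remains only to see that $f$ induces an equivalence of underlying Segal spaces; as these are complete and $f$ is fully faithful and essentially surjective at their level, Rezk's characterization of weak equivalences between complete Segal spaces applies. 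The delicate point is to match the enriched essential-surjectivity hypothesis with completeness of the space of objects and to propagate fully faithfulness on multimapping objects through the entire Segal-presheaf structure, and I expect this matching to be the main obstacle.

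Finally I would assemble the pieces. For any FFES map $f$ the map $Lf$ is again FFES by the second step, hence an equivalence between complete objects by the third step, so $f$ is an $L$-local equivalence; conversely, if $Lf$ is an equivalence then applying two-out-of-three to the naturality square $\eta_{\mathcal{P}} \circ f = Lf \circ \eta_{\mathcal{O}}$, whose other three edges are FFES, shows that $f$ is FFES. Thus the class of FFES morphisms coincides with the class of $L$-local equivalences. Since $L$ is a reflective localization onto the complete objects, its universal property identifies $\OpdIV \simeq \PCS(\DFV)[\mathrm{FFES}^{-1}]$ with the full subcategory of complete objects, which is precisely the assertion.
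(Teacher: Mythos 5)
Your overall architecture is the right one and matches the paper's (which is Rezk's): produce a completion, show its unit is simultaneously a local equivalence and fully faithful and essentially surjective, show that FFES maps between complete objects are equivalences, and conclude by two-out-of-three on the naturality square. The final assembly step and the reduction of ``FFES between complete objects implies equivalence'' to the complete-Segal-space case are essentially what the paper does (the latter via the observation that $\mathrm{ev}_{\overline{\mathfrak{e}}}$ is a Cartesian fibration whose Cartesian edges are exactly the fully faithful maps, so that a fully faithful map over an equivalence of object spaces is itself an equivalence).

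The genuine gap is in your second step. You obtain $L$ as the abstract accessible reflection onto the local objects and then claim one can ``read off'' that the unit $\eta_{\mathcal{O}}$ only modifies the space of objects and induces equivalences on all multimapping objects. An abstract reflective localization gives you no such description: you know only that $L\mathcal{O}$ is local and that $\eta_{\mathcal{O}}$ lies in the saturated class generated by your set $S$, and saturated classes are not closed under ``being fully faithful.'' This is precisely where the real work lives. The paper's route is to construct the completion explicitly as the geometric realization $\widehat{\mathcal{O}} = |\mathcal{O}^{E^{\bullet}}|$ and to prove (Proposition~\ref{propn F Segal}) that the squares
\[
\begin{tikzcd}
\mathcal{O}^{E^{m}}(\overline{\mathfrak{c}}_n) \arrow{r} \arrow{d} & \mathcal{O}^{E^{n}}(\overline{\mathfrak{c}}_n) \arrow{d}\\
\mathcal{O}^{E^{m}}(\overline{\mathfrak{e}})^{k+1} \arrow{r} & \mathcal{O}^{E^{n}}(\overline{\mathfrak{e}})^{k+1}
\end{tikzcd}
\]
are Cartesian (because $E^{n}\to E^{m}$ is a pseudo-equivalence, hence $\mathcal{O}^{E^{m}}\to\mathcal{O}^{E^{n}}$ is fully faithful), so that by descent in the $\infty$-topos $\mathcal{S}$ the squares remain Cartesian after realization; this simultaneously shows that $|\mathcal{O}^{E^{\bullet}}|$ is already a continuous Segal presheaf (so the colimit in $\PCS(\DFV)$ agrees with the objectwise one and can be compared with the enriched-category completion) and that $\mathcal{O}\to\widehat{\mathcal{O}}$ is fully faithful. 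One also needs the pseudo-equivalence machinery to see that $\mathcal{O}\to\widehat{\mathcal{O}}$ is a \emph{local} equivalence, which is what identifies $\widehat{(\blank)}$ with your abstract reflection $L$ in the first place. Without an explicit model of this kind and the descent argument, the claim that $\eta_{\mathcal{O}}$ is FFES is unsupported, and the rest of your argument does not get off the ground.
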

The proof follows the same outline as the completion theorem for
enriched \icats{} from \cite{enriched}, which in turn is a variant of
Rezk's original proof \cite{Rezk}; the main new ingredient needed is
the tensoring of $\PCS(\DFV)$ with complete Segal spaces, constructed
in \S\ref{sec tensor product}. This moreover gives natural \icats{} of
algebras for enriched \iopds{} by adjunction.

In \S\ref{sec dendr} we turn to an enriched analogue of the dendroidal
Segal spaces of Cisinski and Moerdijk: We again consider a notion of
continuous Segal presheaves, now on an \icat{} $\bbOV$ enhancing the
dendroidal category $\bbO$. Our main result here extends the
comparison result of \cite{ChuHaugsengHeuts} to the enriched setting:
\begin{thm}
  There is an equivalence of \icats{} $\PCS(\DFV) \simeq \PCS(\bbOV)$.
\end{thm}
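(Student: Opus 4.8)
The goal is to prove that $\PCS(\DFV) \simeq \PCS(\bbOV)$, where $\DFV$ is an enriched enhancement of the category $\DF$ (built on $\simp$, the simplex category, decorated with finite-set data) and $\bbOV$ is the corresponding enrichment of the dendroidal category $\bbO = \bbOmega$. This is the enriched analogue of the comparison in \cite{ChuHaugsengHeuts}, where presumably it was shown that the unenriched Segal presheaf models on $\DF$ and $\bbO$ agree. So the natural plan is to reduce the enriched statement to the unenriched one as much as possible, and then to isolate the genuinely new input coming from the enrichment in $\mathcal{V}$.

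Let me think about what these objects actually are. The reader is told that $\PCS(\blank)$ denotes continuous Segal presheaves. For a base category like $\DFV$ or $\bbOV$, a Segal presheaf is a presheaf (valued in spaces, or built out of $\mathcal{V}$ through the fibers) satisfying a Segal condition expressing that the value on a higher object decomposes as a limit over elementary pieces; the ``continuity'' condition controls the dependence on the $\mathcal{V}$-direction.

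What would I do first? I would look for a functor $\bbOV \to \DFV$ (or the reverse, or a span) that induces the comparison. In the unenriched setting of \cite{ChuHaugsengHeuts}, the comparison $\PCS(\DF) \simeq \PCS(\bbO)$ presumably arises from a functor between $\DF$ and $\bbO$ — perhaps a functor $\bbO \to \DF$ given by ``forgetting the tree structure down to its underlying linear/simplicial data,'' together with an identification of the elementary objects on both sides.

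So the plan is:

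\emph{First}, I would construct the comparison functor at the level of the decorated categories $\DFV$ and $\bbOV$, compatibly with the unenriched functor $\DF \to \bbO$ (or $\bbO \to \DF$) used in \cite{ChuHaugsengHeuts}. The key point is that the $\mathcal{V}$-enrichment is built in a ``fiberwise'' or ``colored'' manner over the unenriched base, so that there is a map of categories over $\simp_{\mathbb{F}}$ (resp. $\bbO$) that is the identity on the $\mathcal{V}$-decoration and is the comparison functor on the underlying combinatorial data. I expect $\DFV$ and $\bbOV$ to both sit over a common base recording elementary objects, and the enrichment data (the $\mathcal{V}$-part) to be pulled back from this base.

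\emph{Second}, I would show that restriction along this functor carries continuous Segal presheaves to continuous Segal presheaves, and establish that it is an equivalence by checking it on elementary objects. The crucial structural feature I would exploit is that a Segal presheaf is determined by its restriction to the subcategory of elementary objects $\DFVel$ (resp. $\bbOUel$) together with the Segal descent data — this is exactly the content of the Segal condition, which says the presheaf is right Kan extended (or rather, a limit-preserving extension) from the elementary pieces. In the notation of the macros above, the relevant subcategories $\DFVel$, $\bbOVel$ and their opposites are precisely the elementary-object loci, strongly suggesting the strategy is: (i) identify the categories of elementary objects on the two sides, and (ii) show both full presheaf categories of continuous Segal objects are obtained by the \emph{same} universal construction (Segal/continuous localization) from these equivalent elementary data. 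The elementary objects in both the simplicial-finite-set world and the dendroidal world are the corollas together with the edges/colors, and these should match on the nose once the $\mathcal{V}$-decoration is accounted for.

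\emph{Third}, I would verify that the continuity condition — the part genuinely involving $\mathcal{V}$ — is preserved and reflected by the comparison. This is where the enrichment enters nontrivially beyond a formal transport of the result of \cite{ChuHaugsengHeuts}. I would argue that the continuity condition is imposed purely along the $\mathcal{V}$-fibers, which are literally identical on the two sides (both being governed by the same presentably symmetric monoidal $\mathcal{V}$ and the same indexing by finite sets of inputs to a corolla), so continuity is visibly matched under the equivalence once the Segal parts are matched.

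The main obstacle I anticipate is the \emph{second} step: showing that the Segal condition on the two sides corresponds under restriction, which amounts to identifying the two categories of elementary objects (and their active/inert factorization systems) together with the ``shapes'' over which the Segal limits are taken. In \cite{ChuHaugsengHeuts} this kind of comparison was delicate precisely because the combinatorics of degeneracies, inert maps, and the finite-set decorations do not match $\bbO$ in an obvious way — the functor relating $\DF$ and $\bbO$ is not fully faithful, and one must check that it nonetheless induces an equivalence after imposing Segal descent. I would therefore expect the heart of the argument to be a careful analysis, relative to the factorization systems recorded by $\DFint, \DFel$ and $\bbOint, \bbOel$, that the relevant comma categories or ``Segal cores'' agree, and that the enriched enhancement does not disturb this agreement. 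Concretely, I would aim to reduce to the statement already proved in \cite{ChuHaugsengHeuts} by a fiberwise argument over $\mathcal{V}$, so that the only new verification is that the enrichment is compatible with the functors and Kan extensions in play — making the bulk of the novelty bookkeeping rather than new homotopy theory, but with the genuine difficulty concentrated in setting up the correct enriched analogues of the elementary-object categories so that the reduction is legitimate.
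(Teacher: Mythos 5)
Your overall strategy --- reduce to the unenriched comparison of \cite{ChuHaugsengHeuts}, match the elementary objects, and treat continuity fibrewise over $\mathcal{V}$ --- is indeed the route the paper takes, but two concrete points are missing, and the second is where the actual content lies. First, there is no comparison functor directly between $\DFV$ and $\bbOV$: objects of $\DF$ are \emph{forests} while objects of $\bbO$ are trees, so one must first pass to the full subcategory $\DFi\subset\DF$ of trees (those $([n],f)$ with $f(n)=\mathbf{1}$) and prove $\PCS(\DFV)\simeq\PCS(\DFiV)$; only then does the functor $\tau\colon\DFi\to\bbO$ of \cite{ChuHaugsengHeuts}, and its pullback $\bartau\colon\DFiV\to\bbOV$, exist. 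Your proposal never confronts this mismatch and is noncommittal even about the direction of the comparison functor.

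Second, ``both sides are the same universal construction on equivalent elementary data'' is not something you can extract just from an identification $\DFel\simeq\bbOel$. The paper's mechanism is: restriction to the inert subcategories is a monadic right adjoint on both sides, $\PSeg(\bbOVint)\simeq\PSeg(\DFiVint)$ because Segal presheaves on the inert part are right Kan extended from the (equivalent) elementary subcategories, and then one applies the Barr--Beck--Lurie criterion. That criterion forces you to show the two free-Segal-presheaf monads are intertwined by $\bartau^{*}$, which comes down to proving that $\bartau^{*}$ preserves Segal equivalences. This is the genuinely combinatorial step: the Segal decompositions on the $\DF$ side are by levels and spines, while on the $\bbO$ side they are by vertices of a tree, and the paper matches them by induction on the number of vertices using the outer decomposition $\Dext T$ and the theory of inner anodyne maps (resting ultimately on a Joyal--Tierney argument). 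None of this follows formally from the agreement of elementary objects, so your step (ii) as stated conceals essentially the whole proof. Your fibrewise treatment of continuity --- reduce to a small $\mathcal{U}$, or to $\kappa$-compact objects, and observe that the continuity conditions are literally identical on the two sides --- is fine and is what the paper does.
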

Once the definitions are set up, the proof proceeds essentially as in
\cite{ChuHaugsengHeuts}.

Finally, in \S\ref{sec alg} we prove our third main result, which
relates our enriched \iopds{} to the existing homotopy theories of
operads strictly enriched in model categories. Specifically, we prove
the following rectification theorem:
\begin{thm}
  Suppose $\mathbf{V}$ is a nice symmetric monoidal model category,
  and let $\mathbf{V}[W^{-1}]$ denote the symmetric monoidal \icat{}
  obtained by inverting the weak equivalences. Then the \icat{}
  $\OpdI^{\mathbf{V}[W^{-1}]}$ of \iopds{} enriched in
  $\mathbf{V}[W^{-1}]$ is equivalent to the \icat{}
  $\Opd^{\mathbf{V}}[\name{DK}^{-1}]$, obtained by inverting the
  Dwyer--Kan equivalences between strictly enriched
  $\mathbf{V}$-operads.
\end{thm}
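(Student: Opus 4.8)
The plan is to compare the two homotopy theories by first reducing to a fixed collection of objects, where the comparison becomes a rectification statement for algebras in $\mathbf{V}$ over the fixed \icat{} $\DFXop$, and then assembling these fibrewise equivalences and inverting the two classes of weak equivalences. To set up the comparison, I would use the algebra model: $\PCS(\DFV)$ is assembled from the \icats{} of $\mathcal{V}$-valued algebras on the categories $\DFXop$, glued over the space of objects (this is the model $\AlgDFS$-style description advertised in \S\ref{sec segal presheaves}), and taking $\mathcal{V} = \mathbf{V}[W^{-1}]$ produces the target. On the strict side, a $\mathbf{V}$-operad with a fixed set of colours $X$ is precisely a strict $\mathbf{V}$-valued Segal functor on $\DFXop$: its value on an elementary object is the object of multimorphisms, and the Segal maps encode composition strictly. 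Post-composing such a functor with the localization $\mathbf{V} \to \mathbf{V}[W^{-1}]$ and retaining only the homotopy-coherent data yields a functor $\Opd^{\mathbf{V}} \to \PCS(\DFV)$ for $\mathcal{V} = \mathbf{V}[W^{-1}]$. The first thing to check is that this functor carries Dwyer--Kan equivalences to fully faithful and essentially surjective maps, so that it descends to the localizations.

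The heart of the argument is the fixed-object statement: for a fixed colour set $X$, the strict category of $\mathbf{V}$-operads on $X$, localized at the levelwise weak equivalences, is equivalent to the \icat{} of $\mathbf{V}[W^{-1}]$-valued algebras on $\DFXop$. Since $\mathbf{V}$ is a nice symmetric monoidal model category, the category of $\mathbf{V}$-operads on $X$ inherits a transferred, projective-type model structure, and its underlying \icat{} is computed by the standard rectification results for algebras over coloured operads in symmetric monoidal model categories (of Pavlov--Scholbach type): strictly commuting algebras rectify to homotopy-coherent ones, identifying the localized strict algebras with the coherent $\mathbf{V}[W^{-1}]$-algebras. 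I would combine this with the algebra model's identification of the fibre of $\PCS(\DFV)$ over $X$ as exactly these coherent algebras, giving a fibrewise equivalence.

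Finally I would assemble the fibrewise equivalences. Both sides carry compatible functors recording the underlying Segal space, hence the object data, and under these the Dwyer--Kan equivalences on the strict side correspond exactly to the fully faithful and essentially surjective maps on the \icat{} side. Using the completion theorem --- that $\OpdIV$ is the localization of $\PCS(\DFV)$ at the FFES maps, realised as the full subcategory of complete objects --- the fibrewise equivalences over each $X$ glue to an equivalence of localizations, so that inverting levelwise equivalences fibrewise and then inverting the \name{DK}/FFES maps yields $\Opd^{\mathbf{V}}[\name{DK}^{-1}] \simeq \OpdI^{\mathbf{V}[W^{-1}]}$.

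I expect the main obstacle to be twofold. First, the fixed-object rectification requires the strict model structure on $\mathbf{V}$-operads to exist and to have the expected underlying \icat{}; the hypothesis that $\mathbf{V}$ is nice (cofibrant generation, the monoid and operad axioms, suitable cofibrancy of the unit) must be strong enough to guarantee this, and making the abstract rectification apply to the specific Segal-algebra presentation of operads is where the real technical work lies. Second, and genuinely more delicate, one must verify that the passage from a fixed colour set to the full homotopy theory is compatible on the two sides: the strict theory changes objects along honest operad maps, whereas the \icat{} side glues the fibres over a space of objects and encodes the change of objects through FFES maps. Matching these two mechanisms, and checking the \name{DK}/FFES correspondence precisely, is the crux of upgrading the fibrewise equivalences to the asserted global equivalence.
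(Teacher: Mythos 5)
Your overall architecture --- fibrewise rectification over a fixed colour set, followed by assembly over the objects and localization at the $\name{DK}$/FFES morphisms --- matches the paper's strategy. But there is a genuine gap at the key fibrewise step. You identify a strict $\mathbf{V}$-operad with colour set $X$ with a strict Segal functor on $\DFXop$ and then invoke ``standard rectification results for algebras over coloured operads (of Pavlov--Scholbach type)'' to compare with the fibre $\Alg_{\DFXop}(\mathbf{V}[W^{-1}])$ of the Segal-presheaf model. These rectification theorems, however, are statements about algebras over an honest ($\Sigma$-cofibrant, coloured) operad $\sOp_S$ in a model category, and their output is an equivalence $\Alg_{\sOp_S}(\mathbf{V})[W_S^{-1}] \simeq \Alg_{\OOp_S}(\mathbf{V}[W^{-1}])$, where $\OOp_S$ is the category of operators of $\sOp_S$. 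They say nothing about strict-versus-coherent Segal functors on $\DFXop$. The missing idea is the bridge between $\Alg_{\OOp_S}(\mathbf{V}[W^{-1}])$ and the fibre of the Segal-presheaf model: in the paper this is supplied by exhibiting $\bbO_S^{\op}$ as an \emph{approximation} to $\OOp_S$ in the sense of \cite[Definition 2.3.3.6]{ha} (so that $\Alg_{\OOp_S}(\mathcal{V}) \simeq \Alg_{\bbO_S^{\op}}(\mathcal{V})$ by \cite[Theorem 2.3.3.23]{ha}), combined with the comparison $\PCS(\DFV) \simeq \PCS(\bbOV)$ of \S\ref{sec dendr} and Theorem~\ref{thm:PCSisAlgDend}. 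You flag that ``making the abstract rectification apply to the specific Segal-algebra presentation'' is where the work lies, but you offer no mechanism for it; without the approximation argument (or a substitute rectification theorem for Segal-type diagrams, which Pavlov--Scholbach do not provide) the fibrewise equivalence is unproved.

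A second, smaller gap concerns the assembly. The strict side is fibred over the category $\Set$ of colour \emph{sets}, while $\AlgOS(\mathcal{V})$ is fibred over the space of objects in $\mathcal{S}$; you note that matching the two change-of-objects mechanisms is ``the crux'' but do not resolve it. The paper does this in two separate steps: Proposition~\ref{propn:FFESSet} shows that restricting from $\mathcal{S}$ to $\Set$ does not change the FFES-localization, and Corollary~\ref{cor:OpdWeq} uses the fibration-localization results of \cite{enrcomp,hinloc} to identify $\OpdV[W^{-1}]$ with the Cartesian fibration having the localized fibres $\Alg_{\sOp_S}(\mathbf{V})[W_S^{-1}]$. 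With those two ingredients the $\name{DK}$/FFES correspondence is immediate; without them your ``glue the fibrewise equivalences'' step remains a sketch.
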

This result applies, for example, with $\mathbf{V}$ being simplicial
sets (where we recover a result of
Cisinski--Moerdijk~\cite{CisinkiMoerdijk3}), symmetric spectra, or
chain complexes over a field of characteristic $0$. To prove this we
first show that $\mathcal{V}$-enriched \iopds{} with a fixed set of
objects are equivalent to algebras in $\mathcal{V}$ for the \iopd{}
obtained from the classical operad for $S$-coloured operads; the
rectification result then reduces to a rectification result for operad
algebras due to Pavlov and Scholbach~\cite{PavlovScholbach}.

\subsection{Notation and Terminology}
\begin{itemize}
\item We assume the existence of three nested Grothendieck universes;
  the sets contained in them are called \emph{small}, \emph{large} and
  \emph{very large}, respectively.
\item To the greatest extent possible, we work with
  $\infty$-categories without mentioning their specific implementation
  as quasicategories. In particular, we do not distinguish
  notationally between a category and its nerve and all categorical constructions such as taking (co)limits should be understood in the $\infty$-categorical setting. 
\item We write $\xS$ for the $\infty$-category of spaces (or
  $\infty$-groupoids) and, for an $\infty$-category $\xcc$, we write
  $\PSh(\xcc)$ for the $\infty$-category $\xFun(\mathcal{C}^\op,\xS)$
  of presheaves of spaces on $\xcc$.
\item To ease notation we will often leave the Yoneda embedding
  implicit, i.e.\ if $\mathcal{C}$ is a small \icat{} and $c$ is an
  object of $\mathcal{C}$ we will also use $c$ to denote the presheaf
  in $\PSh(\mathcal{C})$ represented by $c$.
\item We denote the usual simplicial indexing category by $\simp$.
\item We denote the unit of a symmetric monoidal \icat{} $\mathcal{V}$
  by $\bbone_{\mathcal{V}}$, or just $\bbone$ if $\mathcal{V}$ is
  clear from the context.
\item We write $\mathbb{F}$ for a skeleton of the category
  $\name{Fin}$ of finite sets, spanned by
  $\mathbf{n} := \{1,\ldots,n\}$. Similarly, we write $\mathbb{F}_{*}$
  for a skeleton of the category $\name{Fin}_{*}$ of finite pointed
  sets, spanned by
  $\langle n \rangle := \mathbf{n}_{+} := (\{*,1,\ldots,n\},
  *)$. 
\item For a finite set $K$, we write $K_{+}$ for the pointed set
  obtained from $K$ by adjoining a disjoint basepoint. If $|K| = n$,
  we will often implicitly identify $K_{+}$ with $\langle n \rangle$
  and thus regard $K_{+}$ as an object of $\mathbb{F}_{*}$.
\end{itemize}

For the reader's convenience we also recall some definitions and
notational conventions related to symmetric monoidal \icats{} and
\iopds{}, as presented in \cite{ha}.

\begin{definition}\label{def F*}
  A morphism $f\colon\langle m\rangle \to \langle n\rangle$ in $\mathbb{F}_{*}$ is called
  \emph{inert} if the preimage $f^{-1}(i)$ of $i$ has exactly one
  element for every $i \neq *$.  For
  $1\leq i,j \leq n$, we write
  $\rho_i\colon \langle n\rangle\to \langle 1\rangle$ for the inert map
  determined by
  \begin{equation*}
    \rho_i(j)= \begin{cases}
      1             & \text{if }i=j\\
      \ast               & \text{otherwise}.
    \end{cases}
  \end{equation*}
  A morphism $f\colon\langle m\rangle \to \langle n\rangle$ is called
  \emph{active} if $f^{-1}(\ast)=\ast$. The inert and active morphisms
  form a factorization system on $\xF_{*}$.
\end{definition}

\begin{definition}
  A \emph{symmetric monoidal $\infty$-category} is a coCartesian
  fibration $\xcc^\otimes\to \mathbb{F}_*$ such that, for
  every $n\geq 0$, the induced
  functors
  $\rho_{i,!}\colon \xcc^\otimes_{\langle
    n\rangle}\to\xcc^\otimes_{\langle 1\rangle}$ for $0 < i \leq n$
  exhibit $\xcc^\otimes_{\langle n\rangle}$ as the product
  $(\xcc^\otimes_{\langle 1\rangle})^{\times n}$. We often denote a
  symmetric monoidal \icat{} just by $\xcc^\otimes$, leaving the
  coCartesian fibration to $\xF_{*}$ implicit. Moreover, if
  $\mathcal{C}^{\otimes}$ denotes a symmetric monoidal \icat{} then we
  write $\xcc$ for $\xcc^\otimes_{\langle 1\rangle}$ and refer to this
  as the \emph{underlying $\infty$-category} of $\xcc^\otimes$. In
  this situation we will also, somewhat informally, refer to
  $\mathcal{C}^{\otimes}$ as a \emph{symmetric monoidal structure} on
  $\mathcal{C}$.
\end{definition}

\begin{definition}
  We say that a symmetric monoidal $\infty$-category $\xV^\otimes$ is
  \emph{presentably symmetric monoidal} if the underlying \icat{}
  $\mathcal{V}$ is presentable and the tensor product preserves
  colimits in each variable.
\end{definition}

\begin{remark}
  A symmetric monoidal $\infty$-category corresponds (via the
  straightening equivalence of \cite[\S 3.2]{ht}) to a functor
  $F\colon \xF_{*}\to \xCat_\infty$ such that the map
  $F(\langle n\rangle)\to F(\langle 1\rangle)^{\times n}$ induced by
  the maps $\rho_{i}$ is an
  equivalence. Thus the definition of symmetric monoidal \icats{} is
  analogous to that of (special) $\Gamma$-spaces, introduced as models for
  $E_{\infty}$-spaces by Segal \cite{SegalCatCohlgy}.
\end{remark}

\begin{definition}
  An \emph{$\infty$-operad} is a functor
  $p\colon \xxO \to \xF_{*}$ satisfying the following
  conditions:
  \begin{enumerate}
  \item For every inert morphism
    $f\colon \langle m\rangle\to \langle n\rangle$ and every object
    $x\in \xxO_{\langle m\rangle}$, there is a $p$-coCartesian
    lift of $f$ at $x$.
    \item Let $x\in \xxO_{\langle m\rangle}$, $y\in \xxO_{\langle n\rangle}$ be objects and let $f\colon \langle m\rangle\to \langle n\rangle$ be a morphism in $\xF_{*}$. Let $\xMap_{\xxO}^f(x,y)$ denote the union of those connected components of $\xMap_{\xxO}(x,y)$ which lie over $f$. The coCartesian lifts $y\to \rho_{i,!}(y)$ of the inert morphisms $\rho_{i}\colon \langle n\rangle\to \langle 1\rangle, 1\leq i\leq n$, induce a map 
    \[\xMap_{\xxO}^f(x,y)\to \prod_{1\leq i\leq n}\xMap_{\xxO}^{\rho_i\circ f}(x,\rho_{i,!}(y))\] which is an equivalence of spaces.
    \item The functors $\rho_{i,!}\colon \xxO_{\langle
                    n\rangle}\to\xxO_{\langle 1\rangle},1\leq i\leq n$,
                  given by coCartesian pushforward along $\rho_{i}$, give an equivalence \[\xxO_{\langle n\rangle} \to(\xxO_{\langle 1\rangle})^{\times n}.\]
  \end{enumerate}
\end{definition}
\begin{warning}
  For $\infty$-operads, our notational convention is slightly
  different from that of \cite{ha}, where \iopds{} are generally
  denoted $\mathcal{O}^{\otimes}$, with $\mathcal{O}$ referring to the
  \icat{} $\mathcal{O}^{\otimes}_{\angled{1}}$. For \iopds{} that are not
  symmetric monoidal \icats{}, however, this \icat{} is typically not
  of particular interest and is only rarely referred to. We therefore
  use the simpler notation $\mathcal{O}$ for an \iopd{}, without
  having a special notation for the \icat{}
  $\mathcal{O}_{\angled{1}}$.
\end{warning}

\subsection{Acknowledgments}
Hongyi: Many results contained in this paper originate from my
doctoral thesis. I thank my advisor Markus Spitzweck for suggesting
this interesting topic in the first place as well as for all the time
spent in numerous discussions during the course of this work. I would
also like to thank Denis-Charles Cisinski, David Gepner and Hadrian
Heine for interesting and helpful discussions. Moreover, I thank the
DFG for supporting me throughout my PhD and the Labex CEMPI
(ANR-11-LABX-0007-01) for enabling me to work on --- and finish ---
the manuscript at hand.

We thank Dmitri Pavlov and David White for helpful comments on model
structures for operad algebras.

\section{Enriched $\infty$-Operads as Segal Presheaves}\label{sec
  segal presheaves}

In this section we introduce and study our first model for
$\mathcal{V}$-enriched \iopds{}, as presheaves on a certain \icat{}
$\DFV$ satisfying Segal and continuity conditions. In
\S\ref{subsec:enrcat} we warm up to this by recalling the analogous
definition of enriched \icats{} as continuous Segal presheaves from
\cite{enriched}. In \S\ref{sec:DF} we then recall the definition of
the category $\DF$ and some of its basic properties from \cite{bar},
before we define $\DFV$ and continuous Segal presheaves on it in
\S\ref{subsec DFV}; these form a full subcategory $\PCS(\DFV)$ of
presheaves on $\DFV$. Next, \S\ref{subsec SegAlg} introduces an
alternative model for enriched \iopds{}, using algebras in
$\mathcal{V}$ for certain \icats{} $\DFX^{\op}$; we prove this is
equivalent to $\PCS(\DFV)$, but this model has the advantage that
certain functoriality properties are obvious.

The goal of \S\ref{subsec psh} is to show that if we enrich in the
\icat{} $\PSh(\mathcal{U})$ of presheaves on a small symmetric
monoidal \icat{} $\mathcal{U}$ (using the Day convolution) then
$\PCS(\DF^{\PSh(\mathcal{U})})$ is equivalent to Segal presheaves on
the much smaller \icat{} $\DF^{\mathcal{U}}$; as a special case, we
see that continous Segal presheaves for $\mathcal{S}$ are equivalent
to Barwick's Segal operads, i.e.\ Segal presheaves on $\DF$.  We
extend this in \S\ref{subsec enr loc} to get a small presentation of
$\PCS(\DFV)$ when $\mathcal{V}$ is a localization of a presheaf
\icat{} $\PSh(\mathcal{U})$; this allows us to show that $\PCS(\DFV)$ is presentable.

In \S\ref{subsec inner anodyne} we then study inner anodyne maps in
$\PSh(\DFV)$, in order to construct the tensoring of $\PCS(\DFV)$ with
Segal spaces in \S\ref{sec tensor product}. Finally, in \S\ref{sec under icat}
we discuss the underlying enriched \icat{} of an enriched \iopd{},
which will be needed in \S\ref{sec ffes}.

\subsection{Enriched $\infty$-Categories as Segal Presheaves}\label{subsec:enrcat}
As motivation for our definitions of enriched \iopds{} as Segal
presheaves, in this subsection we discuss the analogous definition in
the simpler setting of enriched \icats{}. This model of enriched
\icats{} as Segal presheaves was briefly introduced in \cite{enriched}.
We begin by recalling the definition of Rezk's Segal spaces,
introduced in \cite{Rezk} as a model for \icats{}.

\begin{definition}
  A presheaf $F\colon \simp^\op\to \xS$ is called a \emph{Segal space}
  if it satisfies the Segal condition, i.e., for every
  $[n]\in \simp^\op$, the map
  \[F([n])\to F([1])\times_{F([0])}\ldots\times_{F([0])}F([1]),\]
  induced by the maps
  $[1] \simeq \{i-1,i\} \hookrightarrow [n]$ and the
  maps $[0] \to [n]$ in $\simp$, is an equivalence in $\xS$. We write $\PSeg(\simp)$ for the full subcategory of $\PSh(\simp)$ spanned by Segal spaces.
\end{definition}
\begin{remark}\label{rem Segal spaces}
  For every $[n]\in \simp^\op$, the object
  $F([1])\times_{F([0])}\ldots\times_{F([0])}F([1])$ occurring in the
  previous definition should be thought of as the space of sequences of
  $n$ composable morphisms. Similarly, $F([n])$ should be interpreted
  as the space of all these sequences together with sequences of
  composites of adjacent maps. The Segal condition then says that
  there exists a homotopy coherent composition for composable
  morphisms.  By the Yoneda Lemma, a presheaf $F\colon \simp^\op\to \xS$ is a
  Segal space if and only if it is local with respect to all spine
  inclusions
  $\Delta^1\amalg_{\Delta^0}\ldots\amalg_{\Delta^0}\Delta^1\to
  \Delta^n$.
\end{remark}

\begin{defn}
  Given a symmetric monoidal
  \icat{} $\mathcal{V}^{\otimes}$, let $\mathcal{V}_{\otimes} \to
  \xF_{*}^{\op}$ denote the Cartesian fibration corresponding to the
  same functor $\xF_{*} \to \xCat_\infty$ as $\xV^\otimes\to \xF_{*}$. We define the \icat{} $\DV$
  by the pullback square
  \csquare{\DV}{\mathcal{V}_{\otimes}}{\simp}{\xF_{*}^{\op},}{}{}{}{\name{V}^\op}
  where the functor $\name{V} \colon \simp^\op \to \xF_{*}$ takes $[n]$ to
  $\angled{n}$ and a morphism $f \colon [n] \to [m]$ in $\simp$ to the morphism
  $\name{V}(f)\colon \angled{m} \to \angled{n}$ in $\mathbb F_*$ given by
  \[ \name{V}(f)(i) \mapsto
  \begin{cases}
    j, & \text{ if } f(j-1) < i \leq f(j),\\
    *, & \text{otherwise}.
  \end{cases}
  \]
\end{defn}

\begin{remark}
  If we regard an object $[n]\in \simp$ as a linear tree with $n$
  vertices and $n+1$ edges, then the functor $\name{V}^\op$ takes
  $[n]$ to the disjoint union of the set of vertices of $[n]$ with the
  base point. The functor $\name V$ is identical to the
  composite of the functor
  $\name{Cut}\colon \simp\to \name{Assoc}^{\otimes}$ defined in
  \cite[Construction 4.1.2.9]{ha} and the forgetful functor
  $\name{Assoc}^{\otimes}\to \mathbb F_*$.  In particular, it follows
  from the construction that an object of $\DV$ lying over
  $[n] \in \simp$ can be identified with a sequence
  $(v_{1},\ldots, v_{n})$ of objects of $\mathcal{V}$; we write
  $[n](v_{i})_{1\leq i\leq n}$ for this object.
\end{remark}

\begin{remark}
  There is no need to require $\mathcal{V}$ to be symmetric monoidal
  here --- we can equally well work with monoidal \icats{}, which can
  be described as coCartesian fibrations over $\simp^{\op}$ satisfying
  Segal conditions, and this is the definition used in
  \cite{enriched}. We have chosen to restrict to the symmetric
  monoidal case here for consistency with our discussion of \iopds{}
  below.
\end{remark}

\begin{defn}
  Suppose $\mathcal{V}$ is a symmetric monoidal \icat{}. A presheaf $F
  \in \PSh(\DV)$ is a \emph{Segal presheaf} if for every object $[n](v_{i})_{1\leq i\leq n}$ the map
  \[ F([n](v_{i})_{1\leq i\leq n}) \to F([1](v_{1})) \times_{F([0])}
  \cdots \times_{F([0])} F([1](v_{n})),\]
  induced by composition with the Cartesian morphisms over $\rho_{i}
  \colon [1] \to [n]$ and $[0] \to [n]$, is an equivalence.
\end{defn}

\begin{defn}
  Suppose $\mathcal{V}$ is a presentably symmetric monoidal
  \icat{}. Then a presheaf $F \in \PSh(\DV)$ is a \emph{continuous
    Segal presheaf} if it is a Segal presheaf and it is
  \emph{continuous} in the sense that the functor
  \[ \mathcal{V}^{\op} \simeq (\DVop)_{[1]} \to
  \mathcal{S}_{/F([0])^{\times 2}} \]
  preserves limits.
\end{defn}

\begin{remark}
  A continous Segal presheaf $\mathcal{C}$ on $\DV$ encodes a
  $\mathcal{V}$-enriched \icat{} in the following way: $\mathcal{C}([0])$ is the
  space of objects, and for $x,y \in \mathcal{C}([0])$ the functor 
  \[ \mathcal{C}([1](\blank,x,y)) \colon \mathcal{V}^{\op} \to \mathcal{S} \]
  given by taking the fibre of $\mathcal{C}([1](\blank))$ at $x,y$, preserves
  limits. Since $\mathcal{V}$ is presentable, it is therefore
  represented by an object $\mathcal{C}(x,y) \in \mathcal{V}$ --- this
  gives the morphisms from $x$ to $y$. 
\end{remark}

\subsection{The Category $\DF$}\label{sec:DF}
We now wish to introduce a definition of enriched \iopds{} analogous to
that of enriched \icats{} we just discussed. Our starting point will
be Barwick's definition of \iopds{} as Segal presheaves on a category
$\DF$. In this subsection we recall the definition of this category and
its basic structure, before turning to the relevant Segal conditions
in the following subsection.

\begin{definition}\label{def Delta' Phi}
  We let $\simp'_\mathbb{F} \to \simp$ be the Grothendieck fibration
  $\simp'_\mathbb{F}\to \simp$ associated to the functor
  $\Fun(\blank, \mathbb{F})$, where we view the objects of $\simp$ as
  categories in the usual way. 
\end{definition}
\begin{remark}
  An object in $\simp'_{\bbF}$ is of the form $([m], f)$, where
  $[m]\in\simp$ and $f$ is a functor $[m] \to \bbF$. We can think of
  this object as a sequence of morphisms
  $f(0)\to f(1)\to \ldots\to f(m)$ in $\mathbb{F}$. A morphism
  $([n],g)\to ([m], f)$ in $\simp'_\mathbb{F}$ is given by a morphism
  $\alpha\colon [n]\to [m]$ in $\simp$ and a natural transformation
  $\phi\colon g\to f \circ \alpha$. The morphism
  $(\alpha, \phi)\colon ([n],g)\to ([m], f)$ is then a Cartesian lift
  of $\alpha$ at the object $([m], f)$ if and only if the natural
  transformation $\phi\colon g\to f \circ \alpha$ is a natural
  isomorphism.
\end{remark}

\begin{definition}\label{def Delta_F}
  Let $\DF$ be the subcategory of $\simp'_\mathbb{F}$ containing all
  the objects, but only the morphisms
  $(\alpha, \phi)\colon ([n],g)\to ([m], f)$ which satisfy the
  following conditions:
  \begin{enumerate}
  \item For every $k$,  $0\leq k \leq n$, the morphism $\phi_k\colon g(k)\to f(\alpha(k))$ is injective. 
  \item For $k,l$, $0\leq k\leq l\leq n$, the induced square       
    \[
    \begin{tikzcd}
      g(k) \arrow{r}{\phi_{k}}
      \arrow{d} & f(\alpha(k)) \arrow{d} \\
      g(l) \arrow{r}{\phi_{l}} & f(\alpha(l)).
    \end{tikzcd}
    \]
    is a pullback square in $\mathbb{F}$. 
  \end{enumerate}
  We say an object $([m],f)\in \DF$ has \emph{length $m$}.
\end{definition}
\begin{notation}\label{no Iij}
  For an object $([m], f)\in \DF$ and $0\leq i\leq j\leq m$, let
  $f^{i,j}\colon f(i)\to f(j)$ denote the image of the morphism $i\to
  j$ in $[m]$ under the functor $f\colon [m]\to \mathbb{F}$. For a
  subset $S \subseteq f(j)$, let $f(i)_{S}$ denote the induced fibre
  product $f(i)\times_{f(j) }S$. We often write $I$ for an object
  $([m],f)\in \DF$, if there is no need to emphasize the length of the
  object. For $I=([m], f)$ and $k\in f(m)$, we write $I_k$ for the
  object in $\DF$ given by the sequence 
  \[f(0)_{\{k\}}\to \cdots\to f(m-1)_{\{k\}}\to \{k\} .\]
\end{notation}

\begin{remark}\label{rem forest}
As mentioned above, an object $([m], f)$ in $\DF$ is
given by a sequence of morphisms
    $$f(0)\to f(1)\to\ldots\to f(m)$$
    in $\mathbb{F}$. Objects in $\DF$ can be thought of
    as graphs (with levels), with \emph{vertices} and
    \emph{edges} given by the sets
    $\coprod_{i=1}^{m}f(i)$ and
    $\coprod_{i=0}^{m}f(i)$, respectively. Given a
    vertex
    $v\in f(i)\subseteq
    \coprod_{i=1}^{m}f(i)$,
    we say that an edge
    $e\in \coprod_{i=0}^{m}f(i)$ is an
    \emph{incoming} edge of $v$ if and only if
    $e\in f(i-1)_v\subseteq f(i-1)$ and $e$ is
    the unique \emph{outgoing} edge of $v$ if and only if
    $e$ and $v$ correspond to the same element in
    $f(i)$.
    Here is an example of a typical object in $\DF$ 
\[\begin{tikzcd}
{} & \ar[dr, dash] & & \ar[dl, dash]   & \ar[d, dash]        &               \\
{} & & . \ar[dr, dash] &                 & . \ar[dl, dash]  &    & .\ar[d, dash] \\
& . \ar[d, dash] & & . \ar[d, dash]  & & .\ar[d, dash]       & .\ar[d, dash] \\
& ~              &  & ~              & & ~                   & ~             
\end{tikzcd}
\]
which corresponds to a sequence $\mathbf{3}\to \mathbf{3}\to \mathbf{4}$ of finite sets.
  We see that an object $([m], f)$ can be regarded as a (non-planar)
  tree if $f(m)=\mathbf{1}$, and more generally as a finite \emph{forest}
  containing $|f(m)|$ trees.


\end{remark}
\begin{definition}\label{def inert}
  We say a morphism $\alpha \colon [n] \to [m]$ in $\simp$ is
  \emph{inert} if it is the inclusion of a subinterval in $[m]$,
  i.e. if $\alpha(i) = \alpha(0)+i$ for all $i$, and \emph{active} if
  it preserves the boundary, i.e. if $\alpha(0) = 0$ and
  $\alpha(n) = m$.
We say a map $(\alpha, \phi) \colon ([n], g) \to ([m], f)$ in $\DF$
  is
  \begin{enumerate}
    \item \emph{inert} if $\alpha$ is inert in $\simp$,
    \item \emph{active} if $\alpha$ is active in $\simp$ and $\phi_l\colon g(l)\to f(\alpha(l))$ is an isomorphism for every $0\leq l\leq n$.
  \end{enumerate}
We write $\DFint$ for the subcategory of $\DF$ containing only the inert morphisms in $\DF$.
\end{definition}
\begin{remark}\label{rem factorization system}
  The active and inert
  maps form a factorization system on $\simp$ which can be lifted
  along the Cartesian fibration $\DF\to \simp$, to give
  an active-inert factorization system on
  $\DF$ as follows: Any morphism $(\alpha,\phi)\colon ([n], g)\to ([m], f)$ in $\DF$ defines an active-inert factorization $[n]\overset{\alpha'}{\to} [k]\overset{\alpha''}{\to}[m]$ in $\simp$ and by Definition~\ref{def Delta_F} the inert map $\alpha''$ can be lifted to a map $(\alpha'',\phi'')\colon ([k], h)\to ([m], g)$ where, for $0\leq l\leq k$, $h(l):= f(\alpha''(l))\times_{f(\alpha(n))}\phi_n(g(n))$ and $\phi_i$ is the canonical inclusion $h(l)\hookrightarrow f(\alpha''(l))$. It is then clear that $(\alpha,\phi)$ factors as the composite of $(\alpha', \phi')$ and $(\alpha'',\phi'')$ where $\phi'_i\colon g(l)\simeq f(\alpha(l))\times_{f(\alpha(n))} \phi_n(g(n))\simeq h(\alpha'(l))$ is an equivalence for every $0\leq l\leq n$.
\end{remark}

In Remark~\ref{rem forest} we saw that objects in
$\DF$ can be interpreted as forests. Now we want to introduce and study the simplest examples of forests, which
are edges and corollas.  
\begin{definition}\label{def cor Phi}
  The \emph{edge} is the object
  $\xfe := ([0],\mathbf{1})\in \DF$ --- this is the
  trivial tree with no vertices and a single edge. A \emph{corolla} is a tree with
  exactly one vertex; more precisely, the \emph{$n$-corolla} $\xfc_n$
  is $([1], \mathbf{n} \to \mathbf{1})$. We write
  $\simp_\mathbb{F,\xel}$ for the full subcategory of
  $\simp_\mathbb{F,\xint}$ spanned by the corollas $\xfc_{n}$ and the edge
  $\xfe$. For an object $I\in \DF$, we let $\DFelI$ denote the category $\DFel\times_{\simp_{\mathbb{F},\xint}} \simp_{\mathbb{F},\xint/I}$.
\end{definition} 
    
\begin{remark}\label{rem corolla are vertices}
  In Remark~\ref{rem forest} we defined the sets of edges and vertices
  of an object $([m], f)$ to be $\coprod_{i=0}^{m}f(i)$ and
  $\coprod_{i=1}^{m}f(i)$. It clearly follows from the previous
  definition that the set of edges coincides with the set of morphisms
  $\xfe\to ([m], f)$ in $\DF$, while the set of vertices is given by
  the set of isomorphism classes of inert morphisms from corollas to
  $([m], f)$. In particular, each vertex $v\in\coprod_{i=1}^mf(i)$
  corresponds to an isomorphism class of inert morphisms of the form
  $\phi\colon \xfc_n\to ([m], f)$ such that $\phi(g(1))=v$.
\end{remark}

\begin{definition}\label{def cor functor}
  We write $\CorDF\colon \DF^{\op} \to \xF_{*}$ for the functor
  taking $([m], f) \in \DF$ to $(\coprod_{i=1}^m f(i))_{+}$ (i.e.\ the
  set of vertices of $([m], f)$ when viewed as a forest) and a
  morphism $(\alpha,\phi)\colon ([n], g)\to ([m], f)$ in $\DF$ to the map
  $(\coprod_{i=1}^m f(i))_{+} \to (\coprod_{j=1}^n g(j))_{+}$ given on
  the component $f(i)$ by the map
  $f(i) \to (\coprod_{j=1}^n g(j))_{+}$ taking $x \in f(i)$ to an
  object $y \in g(j)$ if $\alpha(j-1)< i \leq \alpha(j)$ and
  $f^{i,\alpha(j)}(x)=\phi_j(y)$, and to the base point $*$ otherwise.
\end{definition}
\begin{remark}
  According to Remark~\ref{rem corolla are vertices} an element
  $y\in g(j)$ corresponds to an isomorphism class of inert maps of the
  form $\xfc_p\to ([n],g)$ where $p=| g(j-1)_y| $. By composing one of
  these morphisms with $(\alpha,\phi)$ we obtain a map
  $\xfc_p\to ([m], f)$, well-defined up to isomorphism. The image is a
  subtree of $([m], f)$, given by the sequence
\[\mathbf{p}\simeq f(\alpha(j-1))_{\phi_j(y)}\to \ldots\to f(\alpha(j)-1)_{\phi_j(y)}\to\mathbf{1}.\]
Clearly, a corolla $x\in f(i)$ in $([m], f)$ lies in this subtree if
and only if $\alpha(j-1)< i \leq \alpha(j)$ and
$f^{i,\alpha(j)}(x)=\phi_j(y)$. Unpacking the previous definition we
therefore see that the map $\CorDF(\alpha, \phi)$ carries a corolla
$x\in \coprod_{i=1}^m f(i)$ to $y\in g(j)$ if $x$ is a corolla of the
subtree in $([m], f)$ induced by $y$ and $(\alpha,\phi)$. Since
different corollas in $([n],g)$ induce subtrees in $([m], f)$ which
have disjoint sets of incoming edges, the object $y$ satisfying the necessary
conditions is unique if it exists. Of course, it may happen that there
are corollas in $([m], f)$ that are not in the image of any corollas
in $([n],g)$; in this case, the map $\CorDF(\alpha, \phi)$ takes these corollas to the base point.
\end{remark}

\begin{lemma}
  The functor $\CorDF\colon \DF^{\op} \to \xF_{*}$  preserves inert-active
  factorizations.
\end{lemma}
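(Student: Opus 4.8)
The plan is to reduce the statement to two class-wise checks and then verify each by directly unwinding Definition~\ref{def cor functor}. By Remark~\ref{rem factorization system} the source $\DF$ carries an active-inert factorization system, while by Definition~\ref{def F*} the target $\mathbb{F}_{*}$ carries the inert-active one. Since $\CorDF$ is contravariant, it turns a factorization $(\alpha,\phi) = i \circ a$ in $\DF$, with $a$ active and $i$ inert, into the composite $\CorDF(a) \circ \CorDF(i)$ in $\mathbb{F}_{*}$, so that $\CorDF(i)$ appears on the left. By uniqueness of inert-active factorizations in $\mathbb{F}_{*}$, it therefore suffices to prove that $\CorDF$ sends inert maps to inert maps and active maps to active maps; the exhibited composite is then automatically the inert-active factorization of $\CorDF(\alpha,\phi)$.

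For the inert case, let $(\alpha,\phi)\colon ([n],g) \to ([m],f)$ be inert, so that $\alpha(j) = \alpha(0)+j$, and I must show that every vertex $y \in g(j)$ of the source has exactly one preimage under $\CorDF(\alpha,\phi)$. By Definition~\ref{def cor functor}, a vertex $x \in f(i)$ is carried to $y$ precisely when $\alpha(j-1) < i \leq \alpha(j)$ and $f^{i,\alpha(j)}(x) = \phi_{j}(y)$. Since $\alpha$ is inert, the condition $\alpha(j-1) < i \leq \alpha(j)$ forces $i = \alpha(j)$, whence $f^{\alpha(j),\alpha(j)} = \id$ and the equation collapses to $x = \phi_{j}(y)$. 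Thus $y$ has the single preimage $\phi_{j}(y) \in f(\alpha(j))$, and $\CorDF(\alpha,\phi)$ is inert.

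For the active case, let $(\alpha,\phi)$ be active, so that $\alpha(0) = 0$, $\alpha(n) = m$, and $\phi$ is a natural isomorphism (the Cartesian condition of Definition~\ref{def inert}). Here I must show that no vertex of $([m],f)$ is sent to the basepoint, i.e.\ that $\CorDF(\alpha,\phi)^{-1}(\ast) = \ast$. Given $x \in f(i)$ with $1 \leq i \leq m$, the half-open intervals $(\alpha(j-1), \alpha(j)]$ for $1 \leq j \leq n$ partition $\{1,\dots,m\}$ because $\alpha$ is active, so there is a unique index $j$ with $\alpha(j-1) < i \leq \alpha(j)$; and since $\phi_{j}\colon g(j) \to f(\alpha(j))$ is then a bijection, there is a unique $y \in g(j)$ with $\phi_{j}(y) = f^{i,\alpha(j)}(x)$. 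Hence every $x$ has a genuine image, giving activeness.

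I expect the only real care to lie in the bookkeeping of Definition~\ref{def cor functor} --- keeping straight the half-open index intervals and the fibre-product identifications --- rather than in any conceptual obstacle. Notably, the two defining properties of $\DF$-morphisms (injectivity of the $\phi_{k}$ and the pullback squares) play no role beyond the inert/active dichotomy itself: inertness of $\alpha$ collapses the admissible index range to the single value $i = \alpha(j)$, while activeness together with the Cartesian condition supplies exactly the existence and uniqueness of the target vertex.
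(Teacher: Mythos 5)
Your proof is correct and follows essentially the same route as the paper's: both arguments reduce to showing that $\CorDF$ carries inert maps to inert maps and active maps to active maps, the paper phrasing the inert case via the induced subtree being a corolla and you via the collapse of the index range to $i=\alpha(j)$, which is the same computation. Your extra remarks on contravariance and the uniqueness of inert-active factorizations in $\mathbb{F}_*$ are a harmless (and slightly more careful) elaboration of the paper's closing sentence.
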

\begin{proof}
  Let $(\alpha,\phi)\colon ([n],g)\to ([m], f)$ be an inert map in $\DF$ and
  let $y$ be a corolla in $([n],g)$. We want to show that
  $\CorDF(\alpha,\phi)$ is inert, i.e.
  $\CorDF(\alpha,\phi)^{-1}(y)\simeq \mathbf{1}$. The previous remark reveals that
  the set $\CorDF(\alpha,\phi)^{-1}(y)$ consists of corollas in the subtree induced
  by $y$ and $(\alpha,\phi)$. Since $\alpha$ is inert, the definition
  of $\DF$ implies that this subtree is a
  corolla equivalent to $y$.

  Now suppose $(\alpha,\phi)\colon ([n],g)\to ([m], f)$ is an active
  map in $\DF$. Then $\phi_k \colon g(k)\to f(\alpha(k))$ is an isomorphism for
  all $k$, and we have $\alpha(0) = 0$, $\alpha(n) = m$. This implies
  that, for every $k$ and $x\in f(k)$, there exists $l$ and
  $y\in g(l)$ such that $\alpha(l-1)< k \leq \alpha(l)$ and
  $f^{k,\alpha(l)}(x)=\phi_l(y)$.  Therefore, we see that the functor
  $\CorDF$ preserves inert and active morphisms, and in particular
  inert-active factorizations.
\end{proof}

\subsection{Segal Presheaves on $\DFV$}\label{subsec DFV}
We now want to define an enriched version of Barwick's \emph{Segal
  operads}, which we refer to as \emph{Segal presheaves} on
$\DF$. Before we do this, let us first recall Barwick's definition:
\begin{definition}\label{def:SegPsh}
  A presheaf $\xxO\colon \simp^\op_\mathbb{F}\to \xS$ is called a
  \emph{Segal presheaf} if the canonical map
  \[ \xxO(I)\to \lim_{J\in \DFelIop}
  \xxO(J)\]
  is an equivalence for every $I \in \DF$.  In this case we also say
  that $\xxO$ satisfies the \emph{Segal condition}; we write
  $\PSh_{\xSeg}(\DF)$ for the full subcategory of $\PSh(\DF)$ spanned
  by the Segal presheaves.
\end{definition}
Just as Segal spaces model $\infty$-categories, Segal
presheaves on $\DF$ model $\infty$-operads. In the latter case we
identify $\mathcal{O}(\xfc_n)$ with the space of multimorphisms with $n$ source
objects and one target object. The Segal condition identifies
$\mathcal{O}(I)$ for $I \in \DF$ as the space of composable trees of such
multimorphisms of shape $I$, and the functor $\mathcal{O}$ tells us how to compose
this data to a single multimorphism in a homotopy-coherently
associative manner.

\begin{remark}
  In the paper \cite{bar} Barwick introduced the theory of
  \emph{operator categories}. Speaking somewhat informally, we can
  think of an operator category as encoding the operations and
  coherences for a family of algebraic structures; key examples are
  the category $\mathbb{F}$ of finite sets and the category
  $\mathbb{O}$ of finite ordered sets. For every operator category
  $\Phi$, Barwick defines a category $\simp_\Phi$ (see
  \cite[Definition 2.4]{bar}) whose objects encode the tree-like
  structures of composable operations in $\Phi$. The category $\DF$
  defined above is a special case of this, and Definition~\ref{def:SegPsh}
  can be extended to the more general notion of Segal presheaves on
  $\simp_{\Phi}$ as in \cite[Definition~2.6]{bar} (where these are called
  \emph{Segal $\Phi$-operads}). Our work on enriched Segal presheaves
  on $\DF$ in this and the next section has an obvious variant for a
  general operator category $\Phi$, but we have chosen to state our
  results only for the most important case of symmetric operads in
  order to present the underlying idea as transparently as
  possible. Moreover, the lack of a good replacement for the dendroidal category for arbitrary operator categories (except $\mathbb F$ and $\mathbb O$) prevents us from generalizing the results of the last two sections.
\end{remark}

It is convenient to recall some alternative characterizations of Segal
presheaves, for which we need some notation:
\begin{notation}
  For $I = ([n], f)$ in $\DF$, we write
  $I|_{ij} := ([j-i], f|_{\{i,i+1,\ldots,j\}})$ for
  $0 \leq i < j \leq n$ and $I|_{i} := ([0], f(i))$.
\end{notation}
By rewriting the colimits in Definition~\ref{def:SegPsh} in various
ways, we get:
\begin{propn}\label{propn:SegDFcond}
  The following are equivalent for a presheaf $\mathcal{O} \in
  \PSh(\DF)$:
  \begin{enumerate}
  \item $\mathcal{O}$ is a Segal presheaf.
  \item $\mathcal{O}$ is local with respect to the morphisms
    \[ I_{\Seg} := \colim_{J \in \DFelIop} J \to I\]
    for all $I \in \DF$.
  \item $\mathcal{O}$ is local with respect to the morphisms
    \[ I|_{01} \amalg_{I|_{1}} I|_{12} \amalg_{I|_{2}} \cdots
    \amalg_{I|_{n-1}} I|_{(n-1)n} \to I\]
    for all $I \in \DF$,
    \[ \coprod_{i \in \mathbf{m}}([1], \mathbf{n_{i}} \to \mathbf{1})
    \to ([1], \mathbf{n} \to \mathbf{m}),\]
    for all $\mathbf{n} \to \mathbf{m}$ (including $\mathbf{m} =
    \mathbf{0}$), and
    \[ \coprod_{i \in \mathbf{m}}([0], \mathbf{1}) 
    \to ([0], \mathbf{m}),\]
    for all $\mathbf{m}$ (including $\mathbf{m} = \mathbf{0}$).
  \item $\mathcal{O}$ is local with respect to the morphisms
    \[ I_{\Seg} \to I\]
    for all $I = ([n],f)$ such that $f(n) = \mathbf{1}$, and
    \[ \coprod_{i \in f(n)} I_{i} \to I \]
    for all $I = ([n], f)$, where $I_{i} = ([n], f_{i})$ is obtained
    by taking the fibres at $i \in f(n)$.
  \item $\mathcal{O}|_{\DFint}$ is the right Kan extension of
    $\mathcal{O}|_{\DFel}$.\qed
  \end{enumerate}
\end{propn}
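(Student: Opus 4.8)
The plan is to treat all five conditions as reformulations of the same locality requirement and to move between them using the Yoneda lemma, the calculus of strongly saturated classes, and the pointwise formula for right Kan extensions. The equivalence of (1) and (2) is purely formal: by the Yoneda lemma $\Map_{\PSh(\DF)}(I,\mathcal{O}) \simeq \mathcal{O}(I)$, and since mapping out of a colimit is the limit of the mapping spaces,
\[ \Map_{\PSh(\DF)}(I_{\Seg}, \mathcal{O}) \simeq \lim_{J \in \DFelIop} \Map_{\PSh(\DF)}(J, \mathcal{O}) \simeq \lim_{J \in \DFelIop} \mathcal{O}(J). \]
Thus $\mathcal{O}$ is local with respect to $I_{\Seg} \to I$ exactly when the Segal map $\mathcal{O}(I) \to \lim_{J \in \DFelIop} \mathcal{O}(J)$ is an equivalence, which is (1).

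For $(2) \Leftrightarrow (3) \Leftrightarrow (4)$ I would use that the maps against which a fixed presheaf is local form a strongly saturated class, so that locality with respect to a set of maps depends only on the strongly saturated class it generates; it then suffices to check that the three sets of morphisms generate the same such class. The essential input is an explicit identification of $I_{\Seg} = \colim_{J \in \DFelIop} J$ as an iterated pushout of representables. Viewing $I = ([n],f)$ as a forest (Remark~\ref{rem forest}), its elementary subobjects are precisely its edges $\xfe \to I$ and its corollas $\xfc_p \to I$ (the vertices), by Remark~\ref{rem corolla are vertices}, and distinct corollas overlap only along shared edges. One rebuilds $I_{\Seg}$ in two stages: gluing the corollas in a single level along their incoming and outgoing edges recovers the width decomposition $\coprod_i \xfc_{n_i} \to ([1], \mathbf{n} \to \mathbf{m})$ (with $\coprod_i ([0],\mathbf{1}) \to ([0],\mathbf{m})$ as the length-zero case), while gluing successive levels along the length-zero objects $I|_{i}$ recovers the spine map; this places the Segal maps of (2) in the saturated class generated by the maps of (3), and conversely the maps of (3) are themselves Segal maps for suitable $I$ up to that class. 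Passing to (4) then amounts to the observation that a general forest is the coproduct of its trees over $f(n)$, which converts the forest spine maps of (3) into the tree spine maps of (4) together with the decomposition maps $\coprod_{i \in f(n)} I_i \to I$; the latter specialize to the two base-case maps of (3) when $I$ has length at most $1$.

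For the equivalence with (5) I would identify the limit in the Segal condition with the pointwise right Kan extension formula. Since the relevant elementary objects map into $I$ via inert morphisms (Remark~\ref{rem corolla are vertices}), the comma category computing the right Kan extension of $\mathcal{O}|_{\DFel}$ along $\DFel \hookrightarrow \DFint$ is, at each $I$, indexed by $\DFelIop$, so that the formula reads $(\mathrm{Ran}\,\mathcal{O}|_{\DFel})(I) \simeq \lim_{J \in \DFelIop} \mathcal{O}(J)$. Hence $\mathcal{O}|_{\DFint}$ is this right Kan extension exactly when the Segal condition holds. Because $I_{\Seg} \to I$ involves only objects and inert morphisms, the Segal condition constrains only $\mathcal{O}|_{\DFint}$, so it is equivalent to condition (5).

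The step I expect to be the main obstacle is the explicit decomposition of $I_{\Seg}$ used in $(2)\Leftrightarrow(3)$: one must verify that $\DFelIop$ really has the shape of the gluing diagram, i.e.\ that the only nontrivial coincidences among the elementary subobjects of the forest $I$ are the edges shared between adjacent corollas and that no higher overlaps contribute. Making this precise — most cleanly by exhibiting the pushout diagram as a cofinal subdiagram of, or a colimit-preserving reindexing of, $\DFelIop$ — is where the combinatorial content of the proposition lies; once this decomposition is established, the remaining equivalences follow formally.
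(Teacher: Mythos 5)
Your argument is correct and is essentially the proof the paper has in mind: the proposition is stated there without proof (``by rewriting the colimits in various ways''), and the proof of the enriched analogue (Proposition~\ref{propn:SegDFVcond}) simply refers back to the unenriched case for the equivalence of (1)--(4) and handles (5) by a cofinality lemma, exactly as you do. The only point worth making fully explicit in your step for (5) is the identification of the right-Kan-extension comma category at $I$ with $\DFelIop$: the former is the slice of $\DFel$ over $I$ taken in $\DFint$, so one must check that the inert maps from elementary objects are cofinal in the indexing category of the Segal limit --- this is precisely what Lemma~\ref{lem:cofslice} supplies in the enriched setting, and your appeal to Remark~\ref{rem corolla are vertices} is the correct unenriched substitute.
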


We now want to define an enriched version of this model for
\iopds{}. More precisely, for $\mathcal{V}$ a symmetric monoidal
\icat{} we will define an \icat{} $\DFV$ and then introduce a notion of
Segal presheaves on this \icat{}.

\begin{definition}\label{def V vee}
  Given a symmetric monoidal
\icat{} $\mathcal{V}^{\otimes}$, let $\mathcal{V}_{\otimes} \to \xF_{*}^{\op}$ denote the Cartesian fibration corresponding to the same functor $\xF_{*}\to \xCat_\infty$ as $\xV^\otimes\to \xF_{*}$. We define the \icat{} $\DFV$ by
  the pullback square
  \csquare{{\DF^\xV}}{\xV_\otimes}{ \DF }{\xF_{*}^{\op}}{}{}{}{\CorDF^\op}
  We also define $\DFVint$ and $\DFVel$ as the pullbacks $\DFV \times_{
  \DF} \DFint$ and $\DFV \times_{\DF} \DFel$, respectively.
\end{definition}

\begin{remark}\label{rem labeled tree}
  Note that the fibre $({\DF^\xV})_{\xfc_{n}}$ is equivalent to $\xV$
  for a corolla $\xfc_{n}\in\DF$. An object in ${\DF^\xV}$ should be
  thought of as an object $([m], f)\in\DF$ together with a labelling
  of each vertex in $([m], f)$ by an object of $\xV$. Therefore, if
  $\xfc_{n}$ is a corolla in $\DF$, we write $\xfc_{n}(v)$ for the
  object in $\DF^\xV$ lying over $\xfc_{n}$ and labeled by $v\in
  \xV$. Given an arbitrary object $I\in \DF$ and objects $v_c\in \xV$,
  $c \in \CorDF(I)$, we write $I(v_c)_c$ for the object in $\DFV$
  corresponding to $(v_{c})_{c}$ under the equivalence
  $(\DFV)_{I} \simeq \mathcal{V}^{\times | \CorDF(I)| }$. We often
  write $\overline{I}$ instead of $I(v_c)_c$ if we do not want to
  explicitly introduce notation for the labeling $(v_c)_c$. In particular, we let
  $\overline{\xfe}$ denote the unique object of
  $(\DFV)_{\mathfrak{e}} \simeq *$.
\end{remark}

\begin{remark}
  More generally, instead of a symmetric monoidal \icat{}
  $\mathcal{V}$ we could consider enriching in a coCartesian fibration
  $\mathcal{M} \to \DF^{\op}$ satisfying Segal conditions in the form
  \[ \mathcal{M}_{I} \simeq \prod_{\mathfrak{c}_{n} \to
    I}\mathcal{M}_{\mathfrak{c}_{n}}\]
  (where the product is over isomorphism classes of inert maps
  $\mathfrak{c}_{n} \to I$); such an $\mathcal{M}$ is an ``internal
  \iopd{}'' in \icats{}. We expect that most of our results should
  straightforwardly generalize to this setting. However, as we do not
  wish to develop the theory of ``internal \iopds{} in $\CatI$''
  (because we are not aware of any interesting examples of these, let
  alone ones that one might want to enrich \iopds{} in) we have chosen
  to consider only enrichment in symmetric monoidal \icats{} for
  simplicity.
\end{remark}

\begin{definition}\label{def Segal presheaf}
  Let $\mathcal{V}$ be a symmetric monoidal \icat{}. We say a presheaf
  $\mathcal{O} \in \PSh(\DFV)$ is a \emph{Segal presheaf} if for every
  object $\overline{I} \in \DFV$ lying over $I$ in $\DF$, the  canonical map
  \[ \xxO(\overline I) \to \lim_{\psi \in \DFelIop}\xxO(\psi^* \overline I)\] is 
  an equivalence, where $\psi^*x\to x$ is the Cartesian lift of the
  inert map $\psi$ (corresponding to a coCartesian morphism in
  $\xV^\otimes$). 
  We write $\PSeg(\DFV)$ for the full subcategory of $\PSh(\DFV)$ spanned by the Segal presheaves. 
\end{definition}

\begin{defn}\label{def cts Seg psh}
  Let $\mathcal{V}$ be a presentably symmetric monoidal \icat{}. We
  say a presheaf $\mathcal{O} \in \PSh(\DFV)$ is a \emph{continuous
    Segal presheaf} if it is a Segal presheaf and moreover for every
  $n$, the functor
\[\xxO(\xfc_n(\blank))\colon \mathcal{V}^{\op} \simeq (\DFVop)_{\xfc_n} \to
\xS_{/\xxO(\overline \xfe)^{n+1}}\]
induced by the Cartesian lifts of the $n+1$ morphisms $\xfe\to \xfc_n$
preserves all small limits. We write $\PCS(\DFV)$ for the full
subcategory of $\PSh(\DFV)$ spanned by the continuous Segal
presheaves.  
\end{defn}

\begin{remark}
  Continuous Segal presheaves give a model of $\mathcal{V}$-enriched
  \iopds{}: Given $\mathcal{O} \in \PCS(\DFV)$ and
  $x_{1},\ldots, x_{n},y \in \mathcal{O}(\mathfrak{e})$, let
  $\mathcal{O}(\mathfrak{c}_{n} (v, {x_{1},\ldots,x_{n},y}))$ be defined
  by the pullback square \nolabelcsquare{\mathcal{O}(\mathfrak{c}_{n}(v,{x_{1},\ldots,x_{n},y}))}{\mathcal{O}(\mathfrak{c}_{n},
    v)}{\{x_{1},\ldots,x_{n},y\}}{\mathcal{O}(\mathfrak{e})^{\times
      (n+1)},} where $y$ is in the ``outgoing'' coordinate.  Then the
  presheaf
  $\mathcal{O}(\mathfrak{c}_{n} (\blank, {x_{1},\ldots,x_{n},y})) \colon
  \mathcal{V}^{\op} \to \mathcal{S}$
  is limit-preserving. Since $\mathcal{V}$ is presentable, this means
  this presheaf is representable by some object
  \[\mathcal{O}(x_{1},\ldots,x_{n}; y) \in \mathcal{V}.\] This is the
  object describing the multimorphisms from $(x_{1},\ldots,x_{n})$ to
  $y$ in the enriched \iopd{} $\mathcal{O}$.
\end{remark}
\begin{remark}
  To obtain a definition of enriched \iopds{} in this style when
  $\mathcal{V}$ is not presentable, we could consider those presheaves
  $\mathcal{O}$ on $\DFV$ such that the presheaves
  $\mathcal{O}(\mathfrak{c}_{n} (\blank, {x_{1},\ldots,x_{n},y}))$ are
  all representable. However, in such settings it is more natural to
  consider an alternative definition of enriched \iopds{}, such as
  that of \S\ref{subsec SegAlg}. Moreover, most of the results we wish to prove
  using Segal presheaves only hold in the presentable case, so there
  is no reason to consider such a generalization.
\end{remark}

We end this section by proving some equivalent reformulations of the
definitions of Segal presheaves and continuous Segal presheaves. First
we give an analogue of Proposition~\ref{propn:SegDFcond}:
\begin{propn}\label{propn:SegDFVcond}
  The following are equivalent for a presheaf $\mathcal{O} \in
  \PSh(\DFV)$:
  \begin{enumerate}[(1)]
  \item $\mathcal{O}$ is a Segal presheaf.
  \item $\mathcal{O}$ is local with respect to the morphisms
    \[ \overline{I}_{\Seg} := \colim_{J \in \DFelIop} \overline{J} \to \overline{I}\]
    for all $\overline{I} \in \DFV$.
  \item $\mathcal{O}$ is local with respect to the morphisms
    \[ \overline{I}|_{\Delta^{n}_{\Seg}} := \overline{I}|_{01} \amalg_{\overline{I}|_{1}} \overline{I}|_{12} \amalg_{\overline{I}|_{2}} \cdots
    \amalg_{\overline{I}|_{n-1}} \overline{I}|_{(n-1)n} \to \overline{I}\]
    for all $\overline{I} \in \DF$,
    \[ \coprod_{i \in \mathbf{m}}\mathfrak c_{n_i}(v_{i})
    \to ([1], \mathbf{n} \to \mathbf{m})(v_{1},\ldots,v_{m}),\]
    for all $\mathbf{n} \to \mathbf{m}$ (including $\mathbf{m} =
    \mathbf{0}$), and
    \[ \coprod_{i \in \mathbf{m}}\mathfrak e
    \to ([0], \mathbf{m}),\]
    for all $\mathbf{m}$ (including $\mathbf{m} = \mathbf{0}$).
  \item $\mathcal{O}$ is local with respect to the morphisms
    \[ \overline{I}_{\Seg}  \to \overline{I}\]
    for all $\overline{I}$ over $I = ([n],f)$ such that $f(n) = \mathbf{1}$, and
    \[ \coprod_{i \in f(n)} \overline{I}_{i} \to \overline{I} \]
    for all $I = ([n], f)$, where $I_{i} = ([n], f_{i})$ is obtained
    by taking the fibres at $i \in f(n)$.
  \item $\mathcal{O}|_{\DFVintop}$ is the right Kan extension of
    $\mathcal{O}|_{\DFVelop}$
  \end{enumerate}
\end{propn}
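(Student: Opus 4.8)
The plan is to run the same cycle of implications as in the unenriched Proposition~\ref{propn:SegDFcond}, carrying the $\mathcal{V}$-labels along: every decomposition below splits a forest only along its edges (which are unlabelled) and regroups its vertices without ever merging two of them, so each label attached via $\CorDF$ is distributed unambiguously and the relevant colimits in $\PSh(\DFV)$ are computed exactly as the images of their unenriched counterparts in $\PSh(\DF)$. The equivalence of (1) and (2) is then just the Yoneda lemma: since $\overline{I}_{\Seg} = \colim_{J \in \DFelIop} \overline{J}$ is a colimit of representables, we have $\Map_{\PSh(\DFV)}(\overline{I}_{\Seg}, \mathcal{O}) \simeq \lim_{\psi \in \DFelIop} \mathcal{O}(\psi^{*}\overline{I})$, and the structure map $\overline{I}_{\Seg} \to \overline{I}$ induces under $\Map(-,\mathcal{O})$ precisely the Segal comparison map of Definition~\ref{def Segal presheaf}. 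Thus $\mathcal{O}$ is local with respect to all the maps $\overline{I}_{\Seg} \to \overline{I}$ if and only if it is a Segal presheaf.

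For (2)$\Leftrightarrow$(3)$\Leftrightarrow$(4) I would show that the three families of maps in (3) and the two families in (4) each generate the same strongly saturated class as the collection of all Segal maps $\{\overline{I}_{\Seg} \to \overline{I}\}$; since locality with respect to a set of maps depends only on the generated saturated class, this suffices. The essential point is a two-stage factorisation of $\overline{I}_{\Seg} \to \overline{I}$: first one decomposes $\overline{I}$ along its length via the spine map $\overline{I}|_{\Delta^{n}_{\Seg}} \to \overline{I}$ (the first family of (3)), and then one decomposes each length-one factor into its corollas (the second family) and each length-zero factor into edges (the third family); composing these recovers the full elementary decomposition. Conversely each generating map is itself a Segal map, or an iterated pushout of Segal maps, so the two saturated classes agree. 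The step (3)$\Leftrightarrow$(4) merely regroups the generators according to whether $I = ([n], f)$ is a single tree, i.e.\ $f(n) = \mathbf{1}$, in which case the relevant map is $\overline{I}_{\Seg} \to \overline{I}$, or a general forest, which one first splits into its constituent trees by $\coprod_{i \in f(n)} \overline{I}_{i} \to \overline{I}$. These are all the labelled lifts of the corresponding maps in Proposition~\ref{propn:SegDFcond}.

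Finally, for (1)$\Leftrightarrow$(5) I would use the pointwise formula for the right Kan extension along $\DFVel \hookrightarrow \DFVint$: its value at an object $\overline{I}$ is the limit of $\mathcal{O}|_{\DFVelop}$ over the category of elementary objects equipped with an inert map to $\overline{I}$. Since $\DFVint$ and $\DFV$ have the same objects, condition (5) tests a comparison map at every $\overline{I}$, exactly as (1) does. The two comparison maps coincide once one checks that the inert elementary objects over $\overline{I}$ are cofinal in the indexing category $\DFelIop$ of Definition~\ref{def Segal presheaf}; this cofinality is supplied by the active--inert factorisation system on $\DFV$ (Remark~\ref{rem factorization system}), which factors an arbitrary elementary map to $\overline{I}$ through a unique inert one, the active part being initial in the relevant slice. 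Hence the Kan extension value agrees with the Segal limit and (5)$\Leftrightarrow$(1).

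I expect the main obstacle to be the combinatorial bookkeeping in the middle step: verifying on the nose that $\overline{I}_{\Seg} \to \overline{I}$ is the claimed iterated pushout of the generating maps inside $\PSh(\DFV)$, in particular that the gluings over the length-zero pieces $\overline{I}|_{i}$ and over the edge object $\overline{\xfe}$ produce exactly the right colimit and that no vertex label is duplicated or dropped in the process. The cofinality input needed for (5) is by comparison a secondary technical point that the factorisation system renders routine.
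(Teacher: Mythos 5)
Your treatment of (1)$\Leftrightarrow$(2)$\Leftrightarrow$(3)$\Leftrightarrow$(4) matches the paper, which handles this part by observing it is identical to the unenriched Proposition~\ref{propn:SegDFcond}: the Yoneda identification for (1)$\Leftrightarrow$(2) and the two-stage decomposition showing the various families generate the same strongly saturated class are exactly the intended argument, and your remark that the labels are carried along unambiguously because the decompositions only glue along (unlabelled) edges is the right reason the unenriched argument transports.

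The gap is in (1)$\Leftrightarrow$(5), where you have correctly reduced the step to a cofinality statement but justified it by the wrong factorization system. The right Kan extension in (5) is taken along $\DFVelop \hookrightarrow \DFVintop$, so its pointwise formula at $\overline{I}$ is a limit over the comma category $\DFVel \times_{\DFVint} (\DFVint)_{/\overline{I}}$, all of whose objects already lie over \emph{inert} maps $J \to I$ in $\DF$ --- there is no active part to factor away, and the active--inert factorization of Remark~\ref{rem factorization system} contributes nothing here. What distinguishes this comma category from the index category $\DFelIop$ of the Segal condition is the $\mathcal{V}$-direction: a morphism $\overline{J} \to \overline{I}$ in $\DFVint$ over an inert $\psi$ consists of $\psi$ together with an arbitrary, not necessarily invertible, morphism of labels, whereas the Segal limit in Definition~\ref{def Segal presheaf} is indexed only by the Cartesian lifts $\psi^{*}\overline{I} \to \overline{I}$, i.e.\ those morphisms whose label component is an equivalence. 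The cofinality you actually need is that the full subcategory of Cartesian morphisms to $\overline{I}$ is cofinal in this comma category; this is precisely Lemma~\ref{lem:cofslice} applied to the Cartesian fibration $\DFVint \to \DFint$ with full subcategory $\DFel$, and its proof rests on factoring a morphism in a Cartesian fibration into a fibrewise morphism followed by a Cartesian one (which produces an initial object in the relevant slice), not on the active--inert factorization. As written, your argument never engages with the non-Cartesian label morphisms, so the identification of the Kan-extension value with the Segal limit is unjustified; substituting Lemma~\ref{lem:cofslice} for your cofinality claim repairs the step and the rest of your outline goes through.
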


To prove this we use the following observation:
\begin{lemma}\label{lem:cofslice}
  Suppose $p \colon \mathcal{E} \to \mathcal{B}$ is a Cartesian
  fibration. If $\mathcal{B}'$ is a full subcategory of $\mathcal{B}$
  and $e \in \mathcal{E}$, let
  $\mathcal{E}' := \mathcal{E} \times_{\mathcal{B}} \mathcal{B}'$,
  $\mathcal{E}'_{/e} := \mathcal{E}' \times_{\mathcal{E}}
  \mathcal{E}_{/e}$
  and $\xB'_{/p(e)}:=\xB'\times_\xB\xB_{/p(e)}$. Let
  $\mathcal{E}''_{/e}$ denote the full subcategory of
  $\mathcal{E}'_{/e}$ containing only the Cartesian morphisms to
  $e$. Then $\mathcal{E}''_{/e} \to \mathcal{E}'_{/e}$ is cofinal and
  $\mathcal{E}''_{/e} \to \mathcal{B}'_{/p(e)}$ is an equivalence.
\end{lemma}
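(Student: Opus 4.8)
The plan is to reduce everything to the standard slice-level characterization of Cartesian morphisms. Write $b := p(e)$ and let $\bar{p} \colon \mathcal{E}_{/e} \to \mathcal{B}_{/b}$ be the functor induced by $p$. The key observation I would record first is that a morphism $g \colon x' \to e$ is $p$-Cartesian if and only if, for every $w \in \mathcal{E}$, the map
\[ \Map_{\mathcal{E}_{/e}}(w, x') \to \Map_{\mathcal{B}_{/b}}(\bar{p}(w), \bar{p}(x')) \]
induced by $\bar{p}$ is an equivalence. This is just the defining pullback square for a Cartesian morphism, read off fibrewise over the structure map $w \to e$: the fibre of $\Map_{\mathcal{E}}(w, x') \to \Map_{\mathcal{E}}(w, e)$ identifies with the corresponding fibre of $\Map_{\mathcal{B}}(pw, px') \to \Map_{\mathcal{B}}(pw, b)$.

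With this in hand the equivalence is formal. Let $(\mathcal{E}_{/e})^{\mathrm{Cart}} \subseteq \mathcal{E}_{/e}$ be the full subcategory on the $p$-Cartesian morphisms to $e$. The displayed equivalence says exactly that $\bar{p}$ is fully faithful on $(\mathcal{E}_{/e})^{\mathrm{Cart}}$, while essential surjectivity onto $\mathcal{B}_{/b}$ is the existence of Cartesian lifts. Thus $\bar{p}$ restricts to an equivalence $(\mathcal{E}_{/e})^{\mathrm{Cart}} \isoto \mathcal{B}_{/b}$. Since $\mathcal{E}''_{/e}$ is the full subcategory of $(\mathcal{E}_{/e})^{\mathrm{Cart}}$ on those $x' \to e$ with $p(x') \in \mathcal{B}'$, and $\mathcal{B}'$ is full, this equivalence carries $\mathcal{E}''_{/e}$ onto the full subcategory of $\mathcal{B}_{/b}$ lying over $\mathcal{B}'$, namely $\mathcal{B}'_{/b}$; this yields the desired equivalence $\mathcal{E}''_{/e} \isoto \mathcal{B}'_{/p(e)}$ (the target being $\mathcal{B}'_{/p(e)}$, as introduced just above the statement).

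For cofinality of the inclusion $\iota \colon \mathcal{E}''_{/e} \hookrightarrow \mathcal{E}'_{/e}$ I would apply Lurie's criterion and check that for each $\xi = (x \to e)$ in $\mathcal{E}'_{/e}$ the \icat{} $\mathcal{E}''_{/e} \times_{\mathcal{E}'_{/e}} (\mathcal{E}'_{/e})_{\xi/}$ is weakly contractible. Its objects are Cartesian morphisms $x' \to e$ together with a factorization $x \to x' \to e$. Choosing a Cartesian lift $\bar{x} \to e$ of $p(x) \to p(e)$ (which lies in $\mathcal{E}''_{/e}$ because $p(\bar{x}) = p(x) \in \mathcal{B}'$) and the canonical factorization $x \to \bar{x} \to e$ over $\id_{p(x)}$, I claim the resulting object is initial. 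Indeed, the mapping space out of it to any $(x' \to e, \alpha \colon x \to x')$ is the fibre over $\alpha$ of the precomposition map $\Map_{\mathcal{E}_{/e}}(\bar{x}, x') \to \Map_{\mathcal{E}_{/e}}(x, x')$; by the first paragraph (valid since $x'$ is Cartesian) this map is identified with precomposition along $p(x \to \bar{x}) = \id_{p(x)}$ in $\mathcal{B}_{/b}$, hence is an equivalence, so every such fibre is contractible. A comma \icat{} with an initial object is weakly contractible, so $\iota$ is cofinal.

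The only real work is the first paragraph: the fibrewise comparison of mapping spaces in the slice \icats{} $\mathcal{E}_{/e}$ and $\mathcal{B}_{/b}$, together with enough naturality that the precomposition maps on the two sides correspond. Once that naturality is pinned down, the equivalence is a one-line consequence and the cofinality reduces to exhibiting the Cartesian lift as an initial object, both of which are then purely formal. I would treat the slice mapping-space manipulations as instances of the standard description of mapping spaces in over- and under-\icats{} rather than reproving them from scratch.
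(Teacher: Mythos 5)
Your proof is correct, and its overall architecture matches the paper's: cofinality is established via \cite[Theorem 4.1.3.1]{ht} by exhibiting the Cartesian factorization of $\xi$ as an initial object of the comma \icat{}, and the equivalence rests on the universal property of Cartesian morphisms. The executions differ in two places, though. For initiality, the paper argues directly with factorizations (every map out of the candidate object is induced by a Cartesian lift of the relevant morphism, hence is essentially unique), whereas you compute the mapping spaces of the comma \icat{} using the slice-level characterization of Cartesian morphisms (\cite[Proposition 2.4.4.3]{ht}); your version makes the ``essential uniqueness'' precise at the cost of having to pin down the naturality of the identification of slice mapping spaces, which you rightly flag as the only real work. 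For the equivalence, the paper assembles a cube of pullbacks to show that $\mathcal{E}'_{/e} \to \mathcal{B}'_{/p(e)}$ is a right fibration, restricts it to $\mathcal{E}''_{/e}$, and checks that the fibres are contractible (hence a trivial fibration); you instead verify fully faithfulness and essential surjectivity directly from the same mapping-space characterization. Both routes are standard, and yours has the mild advantage of reusing a single input for both halves of the lemma. Two small points: in your first paragraph $w$ should range over objects of $\mathcal{E}_{/e}$ rather than of $\mathcal{E}$ (as your subsequent fibrewise reading makes clear), and you are right that the target of the equivalence is $\mathcal{B}'_{/p(e)}$ rather than $\mathcal{B}_{/p(e)}$ as literally written in the statement.
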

\begin{proof}
  By \cite[Theorem 4.1.3.1]{ht}, the map
  $\mathcal{E}''_{/e} \to \mathcal{E}'_{/e}$ is cofinal if and only if
  the $\infty$-category
  $\xcc:=\mathcal{E}''_{/e}\times_{
    \mathcal{E}'_{/e}}(\mathcal{E}'_{/e})_{\phi/}$ is weakly
  contractible for every object $\phi\in \mathcal{E}'_{/e}$. The
  definition of $\xcc$ implies that an object in $\xcc$ is given by a
  factorization $\phi=\beta\circ \alpha$ such that the morphism
  $\beta$ is $p$-Cartesian. Moreover, a morphism from
  $\phi=\beta_0\circ \alpha_0$ to $\phi=\beta_1\circ \alpha_1$ in
  $\xcc$ is given by a factorization $\alpha_1= \alpha'\circ\alpha_0$
  such that $\beta_1\circ\alpha'=\beta_0$.  Since $\beta_0$ and
  $\beta_1$ are Cartesian, \cite[Proposition 2.4.1.7]{ht} implies that
  $\alpha'$ is Cartesian as well.  We now prove the weak
  contractibility of $\xcc$ by showing that it has an initial object
  given by the factorization $\phi=\beta_0\circ \alpha_0$, where
  $\beta_0$ is a Cartesian lift of $p(\phi)$. If an object in $\xcc$
  is given by a factorization $\phi=\beta_1\circ \alpha_1$, then we
  have a map from $\phi=\beta_0\circ \alpha_0$ to
  $\phi=\beta_1\circ \alpha_1$ given by factoring $\alpha_1$ into a
  Cartesian lift $\alpha'$ of $p(\alpha_1)$ and a map $\alpha'$ lying
  in the fibre $\xE_{e_0}$. Then $\beta_1\circ \alpha'$ coincides with
  $\beta_0$ as both are Cartesian lifts of
  $p(\beta_1)\circ p(\alpha_1)= p(\phi)$. Since every map from
  $\phi=\beta_0\circ \alpha_0$ to $\phi=\beta_1\circ \alpha_1$ is
  necessarily induced by a Cartesian lift of $p(\alpha_1)$, we see
  that this map is essentially unique.
  
  To prove that $\mathcal{E}''_{/e} \to
  \mathcal{B}'_{/p(e)}$ is an equivalence, we first observe that in the commutative diagram
  \[\begin{tikzcd}
  &  \xE'_{/e}\ar[rr]\ar[ld]\ar[dd]& & \xE_{/e}\ar[ld]\ar[dd]\\
  \xB'_{/p(e)} \ar[rr,crossing over]\ar[dd]&  & \xB_{/p(e)}&\\
  &    \xE'\ar[rr]\ar[ld] && \xE\ar[ld]\\
  \xB'\ar[rr] & & \xB\ar[from=uu, crossing over]\\
      \end{tikzcd}\]
      the bottom, front, and back squares are Cartesian by
      definition. Hence, the top square is a pullback and
      $\xE'_{/e}\to \xB'_{/p(e)}$ is a Cartesian fibration which restricts
      to a right fibration
      $\mathcal{E}''_{/e} \to \mathcal{B}'_{/p(e)}$. By
      \cite[Proposition 2.1.3.4]{ht}, it is a trivial fibration if all
      fibres are contractible. But this is clear because the fibre
      over an object $\psi\in \mathcal{B}'_{/p(e)}$ can be identified
      with the full subcategory of the fibre $(\xE'_{/e})_\psi$
      spanned by the Cartesian lifts of $\psi$, which is contractible.
\end{proof}

\begin{proof}[Proof of Proposition~\ref{propn:SegDFVcond}]
  The proof that (1) is equivalent to (2), (3), and (4) is the same as
  in the non-enriched case. By applying Lemma~\ref{lem:cofslice} to
  the Cartesian fibration
  $p\colon \DFVint\to \simp_{\mathbb{F},\text{int}}$ and the full
  subcategory $\simp_{\mathbb{F},\xel}$, we obtain that (1) is
  also equivalent to (5).
\end{proof}

\begin{propn}\label{propn:CtsDFV}
	Let $\emptyset$ denote the initial object in $\xV$.
  The following are equivalent for a Segal presheaf $\xxO \in
  \PSh_{\Seg}(\DFV)$:
  \begin{enumerate}[(1)]
  \item $\xxO$ is continuous.
  \item For every $n$, the presheaf 
    \[\xxO(\mathfrak{c}_{n}( \blank)) \colon \mathcal{V}^{\op}
    \simeq (\DF^{\xV,\op})_{\mathfrak{c}_{n}} \to \mathcal{S}\]
    preserves limits of diagrams of the form $\phi \colon
    \mathcal{I}^{\triangleright} \to \mathcal{V}^\op$ where $\phi(\infty)
    \simeq \emptyset$, and the natural map
    $\xxO(\mathfrak{c}_{n}(\emptyset)) \to
    \prod_{n+1}\xxO(\overline{\mathfrak{e}})$ is an equivalence.
  \item $\xxO$ is local with respect to the map $\coprod_{n+1}
    \overline{\mathfrak{e}} \to \mathfrak{c}_{n}(\emptyset)$ and the map
    $\colim_{\mathcal{I}^{\triangleleft}}\mathfrak{c}_{n}(\phi) \to
    \mathfrak{c}_{n}(\colim_{\mathcal{I}^{\triangleleft}} \phi)$ for
    every diagram $\phi$ such that $\phi(-\infty) \simeq \emptyset$.
  \item For every $n$, the presheaf 
    \[\xxO(\mathfrak{c}_{n}( \blank)) \colon \mathcal{V}^{\op}
    \simeq (\DFV)^{\op}_{\mathfrak{c}_{n}} \to \mathcal{S}\]
    preserves weakly contractible limits, and the natural map
    $\xxO(\mathfrak{c}_{n}(\emptyset)) \to
    \prod_{n+1}\xxO(\overline{\mathfrak{e}})$ is an equivalence.
  \item $\xxO$ is local with respect to the map $\coprod_{n+1}
    \overline{\mathfrak{e}} \to \mathfrak{c}_{n}(\emptyset)$ and the map
    $\colim_{\mathcal{I}}\mathfrak{c}_{n}(\phi) \to
    \mathfrak{c}_{n}(\colim_{\mathcal{I}} \phi)$ for every weakly
    contractible diagram in $\mathcal{V}$.
  \end{enumerate}
\end{propn}
\begin{proof}
By definition a Segal presheaf $\xxO$ is continuous if and only if the functor 
\[ \xxO(\xfc_n,\blank)\colon \mathcal{V}^{\op} \simeq
(\DFVop)_{\xfc_n} \to \xS_{/\xxO(\xfe)^{n+1}}\]
preserves small limits. Since the limit of a functor $\phi \colon \mathcal{I}
\to \mathcal{V}^{\op}$ is the same as the limit of its right Kan
extension along $\mathcal{I} \hookrightarrow
\mathcal{I}^{\triangleright}$, which takes $\infty$ to $*$, this is
equivalent to the preservation of such \emph{conical} limits and of
the terminal object. Moreover, by \cite[Lemma 2.2.6]{anmnd} the
preservation of conical limits is equivalent to the preservation of
weakly contractible limits.

Here the preservation of the terminal object is obviously equivalent to 
$\xxO(\mathfrak{c}_{n}(\emptyset))$ coinciding with the terminal object
$\xxO(\overline{\mathfrak{e}})^{n+1}\in \xS_{/\xxO(\xfe)^{n+1}}$.  Since
a functor $\mathcal{V}^{\op} \to \xS_{/\xxO(\xfe)^{n+1}}$ preserves
weakly contractible limits if and only its composition with the
forgetful functor $\xS_{/\xxO(\xfe)^{n+1}}\to \xS$ does, we see that
conditions (1), (2), and (4) are equivalent. The equivalence between
(4) and (5) follows from the fact that a
functor $\xV^\op\to \xS$ preserves all weakly contractible limits if
and only if is local with respect to all maps
$\colim_{\mathcal{I}}\mathfrak{c}_{n}(\phi) \to 
\mathfrak{c}_{n}(\colim_{\mathcal{I}} \phi)$ of presheaves where $\phi$ is a weakly
contractible diagram in $\mathcal{V}$; the same argument also shows
that (2) and (3) are equivalent.
\end{proof}

\begin{definition}\label{defn:ssat}
  We call a class $\mathbb{S}$ of morphisms in a cocomplete $\infty$-category
  $\xcc$ \emph{strongly saturated} if
  \begin{enumerate}[(i)]
  \item it satisfies the $2$-of-$3$ property,
  \item it is stable under pushouts along any morphism in $\xcc$,
  \item the full subcategory of $\xFun(\Delta^1, \xcc)$ spanned by
    $\mathbb{S}$ is stable under small colimits.
  \end{enumerate}
  For any class of morphisms $\mathbb{S}$, there exists a \emph{smallest}
  strongly saturated class $\overline{\mathbb{S}}$ that contains $\mathbb{S}$.
\end{definition}

\begin{definition}
  If $\mathbb{S}$ is a class of morphisms in $\mathcal{C}$, then we say an
  object $c \in \mathcal{C}$ is \emph{$\mathbb{S}$-local} if
  $\Map_{\mathcal{C}}(\blank, c)$ takes the elements of $\mathbb{S}$ to
  equivalences in $\mathcal{\mathbb{S}}$. A morphism $f \colon x \to y$ in
  $\mathcal{C}$ is then an \emph{$\mathbb{S}$-equivalence} if
  $f^{*} \colon \Map_{\mathcal{C}}(y,c) \to \Map_{\mathcal{C}}(x, c)$
  is an equivalence for every $\mathbb{S}$-local object $c$. Let $\name{Eq}(\mathbb{S})$
  denote the class of $\mathbb{S}$-equivalences; if $\mathcal{C}$ is
  cocomplete, then $\name{Eq}(\mathbb{S})$ is a strongly saturated class by
  \cite[Lemma 5.5.4.11]{ht}, and $\overline{\mathbb{S}} \subseteq \name{Eq}(\mathbb{S})$.
\end{definition}

\begin{remark}\label{rem Sloc}
  If $\mathbb{S}$ is a small set of morphisms in a presentable \icat{}
  $\mathcal{C}$, then $\name{Eq}(\mathbb{S}) = \overline{\mathbb{S}}$ by
  \cite[Proposition 5.5.4.15]{ht}.  Moreover, if $\mathcal{C}_{\mathbb{S}}$
  denotes the full subcategory of $\mathcal{C}$ spanned by the
  $\mathbb{S}$-local objects, then $\mathcal{C}_{\mathbb{S}}$ is again presentable and
  the inclusion $\mathcal{C}_{\mathbb{S}} \hookrightarrow \mathcal{C}$ admits a
  left adjoint, which exhibits $\mathcal{C}_{\mathbb{S}}$ as the localization
  of $\mathcal{C}$ that inverts the morphisms in $\mathbb{S}$.
\end{remark}

\begin{defn}\label{def:ssatgen}
  If $\mathbb{S}$ is a class of morphisms in a cocomplete \icat{}
  $\mathcal{C}$, we say that the class $\name{Eq}(\mathbb{S})$ is the strongly
  saturated class \emph{generated} by $\mathbb{S}$, and call the elements of
  $\mathbb{S}$ the \emph{generators} of $\name{Eq}(\mathbb{S})$.
\end{defn}

\begin{remark}
  Our use of the term ``generated by $\mathbb{S}$'' for the class
  $\name{Eq}(\mathbb{S})$ (rather than the class $\overline{\mathbb{S}}$, which might a
  priori be smaller than $\name{Eq}(\mathbb{S})$ when $\mathbb{S}$ is not small) is
  somewhat non-standard, but will be convenient for us: In practice we
  will be interested in ``weak equivalences'' of the form
  $\name{Eq}(\mathbb{S})$ where $\mathbb{S}$ is not small, and we will proceed to find a
  small set $\mathbb{S}'$ such that
  $\name{Eq}(\mathbb{S}) = \name{Eq}(\mathbb{S}') = \overline{\mathbb{S}'}$ (which also implies
  $\name{Eq}(\mathbb{S}) = \overline{\mathbb{S}}$).
\end{remark}

\begin{definition}
	We call elements in the strongly saturated class of morphisms in $\PSh(\simp)$ generated by the set of spine inclusions $\Delta^1\amalg_{\Delta^0}\ldots\amalg_{\Delta^0}\Delta^1\to
	\Delta^n$ \emph{Segal equivalences}. By Remark~\ref{rem Segal spaces} and Remark~\ref{rem Sloc}, $\PSeg(\simp)$ is given by a localization of $\PSh(\simp)$ with respect to Segal equivalences.
\end{definition}

\begin{definition}\label{def set s}
  We say that a map $F \to G$ in $\PSh(\DFV)$ is a \emph{Segal
    equivalence} if $\Map(G, \mathcal{O}) \to \Map(F, \mathcal{O})$ is
  an equivalence for every Segal presheaf $\mathcal{O}$, and a
  \emph{continuous Segal equivalence} if this is an equivalence for
  every continuous Segal presheaf $\mathcal{O}$. These are both
  strongly saturated classes of morphisms, since the Segal
  equivalences are of the form $\name{Eq}(\mathbb{S})$ where $\mathbb{S}$ consists of
  the maps listed in (2),
  (3), and (4) of Proposition~\ref{propn:SegDFVcond}, while the
  continous Segal equivalences are the $\mathbb{S}$-equivalences for these together with the
  maps listed in (3) or (5) of
  Proposition~\ref{propn:CtsDFV}. Explicitly, we can view the
  continuous Segal equivalences as generated by the 
  following morphsims:
  \begin{enumerate}
    \item $\overline I_{\Seg} \to \overline I$ for all $\overline I\in\DFV$,
    \item $\colim_{\mathcal{I}}\mathfrak{c}_{n}(\phi) \to
    \mathfrak{c}_{n}(\colim_{\mathcal{I}} \phi)$ for every weakly
    contractible diagram in $\mathcal{V}$,
    \item $\coprod_{n}
    \overline{\mathfrak{e}} \to \mathfrak{c}_{n}(\emptyset)$.
  \end{enumerate}
  Note, however, that this set of morphisms is \emph{not} small.
\end{definition}

Continuous Segal presheaves have an obvious functoriality in
colimit-preserving symmetric monoidal functors:
\begin{lemma}\label{lem DFV functor}
  Suppose $F \colon \mathcal{V} \to \mathcal{W}$ is a symmetric
  monoidal colimit-preserving functor between presentably symmetric
  monoidal \icats{}. Then $F$ induces a functor $\DFV \to
  \DF^{\mathcal{W}}$, which we also denote $F$, and composition with
  $F^{\op}$ induces a functor $\PCS(\DF^{\mathcal{W}}) \to \PCS(\DFV)$.
\end{lemma}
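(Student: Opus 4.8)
The plan is to build $F\colon \DFV \to \DF^{\mathcal{W}}$ from the universal property of the defining pullback squares, and then check directly that precomposition with $F^{\op}$ carries continuous Segal presheaves to continuous Segal presheaves.

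First I would promote the symmetric monoidal functor $F$ to a morphism of Cartesian fibrations over $\bbG$. Under straightening, the coCartesian fibration $\mathcal{V}^{\otimes} \to \bbGop$ and the associated Cartesian fibration $\mathcal{V}_{\otimes} \to \bbG$ are classified by one and the same functor $\bbGop \to \CatI$, and a symmetric monoidal functor $F$ is precisely a natural transformation of such classifying functors, whose component at $\angled{1}$ is $F$. This transformation also exhibits a functor $F_{\otimes}\colon \mathcal{V}_{\otimes} \to \mathcal{W}_{\otimes}$ over $\bbG$ that preserves Cartesian edges. Pulling back along $\CorDF^{\op}\colon \DF \to \bbG$ produces the asserted functor $F\colon \DFV \to \DF^{\mathcal{W}}$; it lies over $\DF$ and, being a pullback of an edge-preserving map between Cartesian fibrations, again preserves Cartesian morphisms. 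On the fibre over a corolla it restricts to $F\colon \mathcal{V} \to \mathcal{W}$, so $F(\mathfrak{c}_{n}, v) \simeq (\mathfrak{c}_{n}, Fv)$, while the contractible fibre over $\mathfrak{e}$ forces $F(\overline{\mathfrak{e}}) \simeq \overline{\mathfrak{e}}$.

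Next I would verify the Segal condition for $F^{*}\mathcal{O} := \mathcal{O}\circ F^{\op}$, for $\mathcal{O} \in \PCS(\DF^{\mathcal{W}})$. Because $F$ preserves Cartesian morphisms and lies over $\DF$, for every inert $\psi$ the Cartesian lift satisfies $F(\psi^{*}\overline{I}) \simeq \psi^{*}(F\overline{I})$; moreover the indexing category $\DFelIop$ depends only on $I \in \DF$, which $F$ leaves fixed. Consequently the map computing the Segal condition for $F^{*}\mathcal{O}$ at $\overline{I}$ is identified with the Segal map for $\mathcal{O}$ at $F\overline{I}$, which is an equivalence; hence $F^{*}\mathcal{O}$ is a Segal presheaf.

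Finally I would check continuity, which is where the hypothesis that $F$ preserves colimits is used. Since $F(\overline{\mathfrak{e}}) \simeq \overline{\mathfrak{e}}$ we have $(F^{*}\mathcal{O})(\overline{\mathfrak{e}}) \simeq \mathcal{O}(\overline{\mathfrak{e}})$, so the slice targets coincide and $(F^{*}\mathcal{O})(\mathfrak{c}_{n}, \blank)$ is identified with the composite $\mathcal{O}(\mathfrak{c}_{n}, \blank)\circ F^{\op}\colon \mathcal{V}^{\op} \to \xS_{/\mathcal{O}(\overline{\mathfrak{e}})^{n+1}}$. As $F$ preserves small colimits, $F^{\op}$ preserves small limits, and $\mathcal{O}(\mathfrak{c}_{n}, \blank)$ preserves small limits by continuity of $\mathcal{O}$, so the composite does too. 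Thus $F^{*}\mathcal{O}$ is a continuous Segal presheaf, yielding $\PCS(\DF^{\mathcal{W}}) \to \PCS(\DFV)$. The only genuinely delicate step is the first one---turning the symmetric monoidal $F$ into a Cartesian-fibration map preserving Cartesian edges over $\DF$---since the Segal reindexing relies entirely on $F$ respecting those Cartesian lifts; the remaining steps are then formal.
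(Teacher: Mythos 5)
Your proof is correct. The paper states this lemma without proof (introducing it as an ``obvious'' functoriality), and your argument --- promoting $F$ to a Cartesian-edge-preserving map $\mathcal{V}_{\otimes} \to \mathcal{W}_{\otimes}$ over $\bbG$, pulling back along $\CorDF^{\op}$, using preservation of Cartesian lifts to transport the Segal condition, and using that $F^{\op}$ preserves limits (together with $F(\overline{\mathfrak{e}})\simeq \overline{\mathfrak{e}}$) for continuity --- supplies exactly the details the authors leave implicit.
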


\subsection{Segal Presheaves vs. Algebras}\label{subsec SegAlg}
\begin{defn}
  Given a space $X$, we write $\DFX \to \DF$ for the right fibration associated to the functor $\DF^{\op} \to \mathcal{S}$ given by the right Kan extension of the functor $\{\mathfrak{e}\} \to
  \mathcal{S}$ with value $X$ along the inclusion $\{\mathfrak{e}\}
  \hookrightarrow \DF^{\op}$. We write $I(x_i)_i$ for an object in $\DFX$ and we view $\DFX^{\op}$ as living over
  $\xF_{*}$ via the composite map \[\DFX^{\op} \to \DF^{\op}
  \xto{\CorDF} \xF_{*}.\] For $X \in \mathcal{S}$ and
  $\mathcal{V}$ a symmetric monoidal
  \icat{}, we define $\DFXV := \DFX \times_{\DF}\DFV$ and we write $I(v_c, x_i)_{c,i}$ or $\overline I(x_i)_i$ for its objects.
\end{defn}
\begin{remark}
  The right Kan extension $\DF^{\op} \to \mathcal{S}$ takes an object $I$ to a product of copies
  of $X$ indexed by the number of edges of $I$.
\end{remark}
\begin{defn}
  If $\mathcal{V}^{\otimes}\to \xF_{*}$ is a symmetric monoidal
  \icat{}, then a \emph{$\DFX^{\op}$-algebra} in $\mathcal{V}$ is a
  functor $\DFX^{\op} \to \mathcal{V}^{\otimes}$ over $\xF_{*}$
  that takes the inert morphisms lying over $\rho_{i}$ to coCartesian
  morphisms. We write $\Alg_{\DFX^{\op}}(\mathcal{V})$ for the full
  subcategory of $\Fun_{\xF_{*}}(\DFX^{\op}, \mathcal{V}^{\otimes})$
  spanned by the algebras. This is clearly contravariantly functorial in $X$, and we
  write $\AlgDFS(\mathcal{V}) \to \mathcal{S}$ for the
  Cartesian fibration associated to the functor $\mathcal{S}^{\op} \to
  \CatI$ taking $X$ to $\Alg_{\DFX^{\op}}(\mathcal{V})$.
\end{defn}

Our goal in this subsection is to construct an equivalence between the
\icats{} $\PCS(\DFV)$ and $\AlgDFS(\mathcal{V})$:
\begin{thm}\label{thm:PCSisAlgLT}
  Let $\mathcal{V}$ be a presentably symmetric monoidal \icat{}. There is an equivalence of \icats{} over $\mathcal{S}$,
  \[
  \begin{tikzcd}
    \PCS(\DFV) \arrow{dr}{\name{ev}_{\mathfrak{e}}} \arrow{rr}{\sim} &
      & \AlgDFS(\mathcal{V}) \arrow{dl} \\
      & \mathcal{S}.
  \end{tikzcd}
  \]
\end{thm}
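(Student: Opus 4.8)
The plan is to prove the equivalence fibrewise over $\mathcal{S}$ and then assemble the fibrewise equivalences using the Cartesian fibration structures. First I would observe that both projections to $\mathcal{S}$ are Cartesian fibrations: the right-hand one is so by construction, and for $\name{ev}_{\xfe}\colon \PCS(\DFV)\to \mathcal{S}$ the Cartesian morphisms are given by restriction of colours, i.e.\ by pulling back a continuous Segal presheaf along a map of object-spaces $X'\to X$. Matching these with the restriction functors induced by $X'\to X$ on the algebra side, it suffices to produce, naturally in $X\in\mathcal{S}$, an equivalence of fibres $\PCS(\DFV)_X \simeq \Alg_{\DFX^{\op}}(\mathcal{V})$ compatible with these Cartesian morphisms; a fibrewise equivalence of Cartesian fibrations preserving Cartesian edges is an equivalence.

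For the fibrewise statement, fix $X\in\mathcal{S}$. An object of $\PCS(\DFV)_X$ is a continuous Segal presheaf $\mathcal{O}$ together with an identification $\mathcal{O}(\xfe)\simeq X$. Let $j_X\in\PSh(\DFV)$ denote the pullback of $i_X$ along the projection $\DFV\to \DF$, so that $j_X(\overline I)\simeq X^{\times \#\text{edges}(I)}$ and $j_X(\overline{\xfe})\simeq X$. The edge-boundary maps assemble into a canonical morphism $\mathcal{O}\to j_X$ which is an equivalence over $\xfe$, and this identifies $\PCS(\DFV)_X$ with the full subcategory of the slice $\PSh(\DFV)_{/j_X}$ spanned by the continuous Segal presheaves whose structure map is an equivalence over $\xfe$. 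Since the category of elements of $j_X$ is exactly $\DFXV$, the standard equivalence $\PSh(\DFV)_{/j_X}\simeq \PSh(\DFXV)$ of \cite{ht} lets me reinterpret such an $\mathcal{O}$ as a presheaf $\Phi$ on $\DFXV$ with $\Phi(\overline{\xfe}, x)\simeq *$ for every $x$, satisfying a fibrewise Segal condition and fibrewise limit-preservation at the corollas (the translations of the Segal and continuity conditions, via Propositions~\ref{propn:SegDFVcond} and \ref{propn:CtsDFV}).

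The final step is a relative representability argument. The projection $\DFXV\to\DFX$ is the Cartesian fibration pulled back from $\DFV\to\DF$, with fibre over $I(x_i)_i$ the \icat{} $\mathcal{V}^{\otimes}_{\CorDF(I)}\simeq \mathcal{V}^{\times \#\text{vertices}(I)}$. Because $\mathcal{V}$ is presentable, a presheaf on $\DFXV$ is fibrewise limit-preserving if and only if its restriction to each fibre is representable, and by the representability theorem such data is the same as a functor $\DFX^{\op}\to\mathcal{V}^{\otimes}$ over $\bbGop$, sending $I(x_i)_i$ to the representing object. Under this dictionary the normalization $\Phi(\overline{\xfe}, x)\simeq *$ is automatic (the edge has no vertices, so its fibre is $\mathcal{V}^{\otimes}_{\langle 0\rangle}\simeq *$); the fibrewise Segal condition---which, since the edge-terms are now contractible, reduces to the decomposition of $\Phi(\overline I)$ as a product over the vertices---corresponds exactly to the requirement that the functor carry the inert morphisms lying over the $\rho_i$ to coCartesian morphisms, i.e.\ to the defining condition of a $\DFX^{\op}$-algebra; and functoriality on active morphisms, which encodes operadic composition, is transported unchanged. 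This yields the desired equivalence $\PCS(\DFV)_X\simeq \Alg_{\DFX^{\op}}(\mathcal{V})$, and its construction is manifestly natural in $X$ and compatible with restriction of colours, completing the assembly into an equivalence over $\mathcal{S}$.

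I expect the main obstacle to be this last representability/straightening step: one has to check carefully that the fibrewise representing objects assemble into a genuine functor on $\DFX^{\op}$ (and not merely pointwise data), i.e.\ that ``fibrewise-continuous presheaf on the Cartesian fibration $\DFXV\to\DFX$'' and ``functor $\DFX^{\op}\to\mathcal{V}^{\otimes}$ over $\bbGop$'' carry the same information, and that under this correspondence the Segal condition is matched precisely with the coCartesian/algebra condition. Establishing this cleanly---rather than the bookkeeping of the slice and fibre reductions, which are routine---is where the presentability hypothesis on $\mathcal{V}$ is essential and where most of the care is required.
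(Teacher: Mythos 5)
Your proposal is correct and follows essentially the same route as the paper: the paper also factors the equivalence through presheaves on $\DFXV$ (its ``continuous $\DFXVop$-monoids''), using the identification of $\PSh(\DFV)_{/\overline{\imath}_X}$ with $\PSh(\DFXV)$ and then a representability/straightening argument over the Cartesian fibration $\DFXV \to \DFX$ to land in $\Alg_{\DFX^{\op}}(\mathcal{V})$, exactly as you describe (this is the content of Propositions~\ref{propn:SegisMon} and \ref{propn:PCMisAlg} and Lemmas~\ref{lem:pbadj} and \ref{lem:monLKEseg}). The step you flag as the main obstacle is indeed where the paper invests its effort, handling it via the equivalence $\Fun(\DFXVop,\mathcal{S}) \simeq \Fun_{\bbGop}(\DFXop,\PSh_{\bbGop}(\mathcal{V}^{\op,\otimes}))$ and then cutting out the limit-preserving (hence representable) functors fibrewise.
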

To prove this we
will show that both sides are equivalent to an intermediate model
given by continuous $\DFXVop$-monoids, in the following sense:
\begin{defn}
  We say a presheaf $F
  \in \PSh(\DFXV)$ is a \emph{$\DFXVop$-monoid} if for every object
  $\overline{I}$ in $\DFXV$ lying over $I$ in $\DF$, the natural map
  \[ F(\overline{I}) \to \prod_{v \in \CorDF(I)}
  F(v^{*}\overline{I})\]
  is an equivalence, where $v^{*}\overline{I} \to \overline{I}$
  denotes the Cartesian morphism in $\DFXV$ over an inert map $v \colon
  \mathfrak{c}_{n} \to I$ in $\DF$ corresponding to $v \in \CorDF(I)$. We say a
  $\DFXVop$-monoid is \emph{continous} if for every
  $\tilde{\mathfrak{c}}_{n}$ in $\DFX$ over $\mathfrak{c}_{n}$ in
  $\DF$ the presheaf
  \[ \mathcal{V}^{\op} \simeq (\DFXVop)_{\tilde{\mathfrak{c}}_{n}} \to
  \mathcal{S} \] preserves limits. We write $\PM(\DFXV)$ for the full
  subcategory of $\PSh(\DFXV)$ spanned by the $\DFXVop$-monoids and
  $\PCM(\DFXV)$ for the full subcategory spanned by the continuous
  $\DFXVop$-monoids.
\end{defn}

\begin{defn}
  Given a coCartesian fibration $\mathcal{E} \to \mathcal{C}$
  corresponding to a functor $F \colon \mathcal{C} \to \CatI$, let
  $\PSh_{\mathcal{C}}(\mathcal{E}) \to \mathcal{C}$ denote the
  Cartesian fibration associated to the functor 
  \[ \mathcal{C}^{\op} \xto{F^{\op}} \CatI^{\op} \xto{\PSh}
  \LCatI.\]
\end{defn}

\begin{propn}\label{propn:PCMisAlg}
  Let $\mathcal{V}$ be a presentably symmetric monoidal \icat{}. Then
  there is an equivalence of \icats{}
  $\PCM(\DFXV) \simeq \Alg_{\DFXop}(\mathcal{V})$, natural in
  $X \in \mathcal{S}$.
\end{propn}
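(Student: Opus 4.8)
The plan is to compare the two sides fibrewise over $\DFXop$, using the Day convolution on $\PSh(\mathcal{V})$ as an intermediary and invoking presentability of $\mathcal{V}$ only at the end, through a representability argument.

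First I would unwind the presheaf side by straightening. The projection $\DFXV \to \DFX$ is a Cartesian fibration, being the base change of $\DFV \to \DF$ along $\DFX \to \DF$; hence $\DFXVop \to \DFXop$ is a coCartesian fibration whose fibre over $\tilde{I}$ (lying over $I \in \DF$) is $(\mathcal{V}^{\op})^{\times |\CorDF(I)|}$. Packaging the standard identification of functors out of the total space of a coCartesian fibration with sections of an associated fibration of functor categories (this is the kind of identification the construction $\PSh_{\mathcal{C}}(\mathcal{E})$ is designed to encode), I would realize $\PSh(\DFXV) = \Fun(\DFXVop, \mathcal{S})$ as the $\infty$-category of sections over $\DFXop$ of a Cartesian fibration whose fibre over $\tilde{I}$ is $\PSh(\mathcal{V}^{\times |\CorDF(I)|})$, with transition functors given by restriction of presheaves along the structure maps of $\DFXV \to \DFX$. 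Under this identification, evaluating the $\DFXVop$-monoid condition $F(\overline{I}) \simeq \prod_{v} F(v^{*}\overline{I})$ at a fixed colouring says exactly that the fibrewise presheaf over $\tilde{I}$ is the external product of its restrictions to the corolla-subfibres; equivalently, that the section is determined by its values on corollas and lands fibrewise in the essential image of the fully faithful external-product functor $\PSh(\mathcal{V})^{\times |\CorDF(I)|} \hookrightarrow \PSh(\mathcal{V}^{\times |\CorDF(I)|})$.

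This lets me reinterpret monoids as algebras valued in the Day convolution symmetric monoidal structure on $\PSh(\mathcal{V})$: I would check that the monoid condition identifies $\PM(\DFXV)$ with $\Alg_{\DFXop}(\PSh(\mathcal{V}))$, the inert-preserving functors $\DFXop \to \PSh(\mathcal{V})^{\otimes}$ over $\bbG^{\op}$. Continuity then cuts this down further. By definition a monoid is continuous exactly when, for each corolla $\tilde{\mathfrak{c}}_{n}$, the corolla-presheaf $\mathcal{V}^{\op} \to \mathcal{S}$ preserves limits; since $\mathcal{V}$ is presentable, the representability theorem \cite[Proposition 5.5.2.2]{ht} identifies such limit-preserving presheaves with the representable ones, i.e.\ with the image of the Yoneda embedding $\mathcal{V} \hookrightarrow \PSh(\mathcal{V})$. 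Because the Segal/monoid condition forces the value at an arbitrary $\tilde{I}$ to be the external product of its corolla-values, a monoid is continuous iff the corresponding $\PSh(\mathcal{V})$-algebra factors through $\mathcal{V}^{\otimes} \hookrightarrow \PSh(\mathcal{V})^{\otimes}$, which (Yoneda being symmetric monoidal for Day convolution and fully faithful) is precisely $\Alg_{\DFXop}(\mathcal{V})$. This yields $\PCM(\DFXV) \simeq \Alg_{\DFXop}(\mathcal{V})$, and naturality in $X$ is then automatic, since $\DFX$, the straightening, the external-product description, and representability are all functorial in $X \in \mathcal{S}$.

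The main obstacle, to which I would devote the most care, is matching the two kinds of functoriality: the transition maps of the section $\infty$-category come from restriction of presheaves along the Cartesian structure maps of $\DFXV \to \DFX$, whereas the algebra structure maps into $\PSh(\mathcal{V})^{\otimes}$ are given by Day convolution, i.e.\ by left Kan extension along the corresponding maps of $\mathcal{V}^{\otimes}$. Reconciling these — they are related by the adjunction between restriction and left Kan extension, and they agree on the relevant objects precisely because of the monoid condition — is the technical heart, and it is entangled with the bookkeeping needed to relate the Cartesian fibration $\mathcal{V}_{\otimes} \to \bbG$ to the coCartesian fibration $\mathcal{V}^{\otimes} \to \bbG^{\op}$. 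Once the ambient equivalence $\PM(\DFXV) \simeq \Alg_{\DFXop}(\PSh(\mathcal{V}))$ is set up compatibly with these structures, the continuity/representability cutoff and the naturality in $X$ are comparatively formal.
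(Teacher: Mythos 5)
Your proposal is correct and follows essentially the same route as the paper: both straighten $\PSh(\DFXV)$ into sections over $\DFXop$ of the fibration $\PSh_{\bbGop}(\mathcal{V}^{\op,\otimes})$, read the monoid condition as saying the fibrewise value is the external product of its corolla restrictions, and use presentability to identify the limit-preserving (hence representable) fibrewise values with $\mathcal{V}^{\otimes}$. The only cosmetic difference is that you name the intermediate stage $\Alg_{\DFXop}(\PSh(\mathcal{V}))$ with the Day convolution before imposing continuity, whereas the paper performs the monoid and continuity reductions in one step by identifying $\Fun^{R}((\mathcal{V}^{\op})^{\times n}, \mathcal{S})$ with $\mathcal{V}^{\times n}$.
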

\begin{proof}
  Since $\DFXVop \to \DFXop$ is a coCartesian fibration, by
  \cite[Proposition 7.3]{laxcolimits} we can identify
 the \icat{} $\Fun(\DFXVop, \mathcal{S})$ with 
 \[ \Fun_{\DFXop}(\DFXop, \PSh_{\DFXop}(\DFXVop)) \simeq
 \Fun_{\xF_{*}}(\DFXop,
 \PSh_{\xF_{*}}(\mathcal{V}^{\op,\otimes})).\]
 If $M$ is a continuous $\DFXVop$-monoid, then the corresponding
 functor $\DFXop \to \PSh_{\xF_{*}}(\mathcal{V}^{\op,\otimes})$
 sends $\overline{I} \in \DFXop$ to an object in the full subcategory 
 \[ \Fun^{R}((\mathcal{V}^{\op})^{\times |\CorDF(I)|}, \mathcal{S})
 \hookrightarrow \Fun((\mathcal{V}^{\op})^{\times |\CorDF(I)|},
 \mathcal{S}) \simeq
 \PSh_{\xF_{*}}(\mathcal{V}^{\op,\otimes})_{\CorDF(I)}\]
 of functors that preserve limits. Since $\mathcal{V}$ is presentable,
 this \icat{} can be identified with $\mathcal{V}^{\times
   |\CorDF(I)|}$ under the Yoneda embedding, and the full subcategory
 of $\PSh_{\xF_{*}}(\mathcal{V}^{\op,\otimes})$ spanned by these
 objects for all $\langle n \rangle \in \xF_{*}$ can be identified with
 $\mathcal{V}^{\otimes}$. Furthermore, under this equivalence the full subcategory
 $\PCM(\DFXV)$ of $\PSh(\DFXV)$ is identified with 
 the full subcategory $\Alg_{\DFXop}(\mathcal{V})$ of $\Fun_{\xF_{*}}(\DFXop,
 \PSh_{\xF_{*}}(\mathcal{V}^{\op,\otimes}))$.
\end{proof}

\begin{propn}\label{propn:SegisMon}
  Let $\mathcal{E} \to \mathcal{S}$ denote the Cartesian fibration for
  the functor $\mathcal{S}^\op \to \CatI$ taking $X$ to
  $\PSh(\DFXV)$, and let $\mathcal{E}_{\name{CtsMon}}$ denote the full
  subcategory of $\mathcal{E}$ spanned by the continuous
  $\DFXVop$-monoids for all $X \in \mathcal{S}$. There is an equivalence
\[
\begin{tikzcd}
  \PCS(\DFV) \arrow{dr}{\name{ev}_{{\overline{\mathfrak{e}}}}}
  \arrow{rr}{\sim} & & \mathcal{E}_{\name{CtsMon}} \arrow{dl}\\
  & \mathcal{S}.
\end{tikzcd}
\]
\end{propn}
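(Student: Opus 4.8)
The plan is to compare the two sides fibrewise over $\mathcal{S}$: for each space $X \in \mathcal{S}$ I would produce an equivalence between the fibre of $\name{ev}_{\overline{\mathfrak{e}}}$ over $X$ (continuous Segal presheaves $\mathcal{O}$ with $\mathcal{O}(\overline{\mathfrak{e}}) \simeq X$) and the \icat{} $\PCM(\DFXV)$ of continuous $\DFXVop$-monoids, and then promote these to an equivalence over $\mathcal{S}$. Here $X$ plays the role of the space of objects (edges) of the enriched \iopd{}. The comparison functor $\Phi$ sends $\mathcal{O}$, with $X := \mathcal{O}(\overline{\mathfrak{e}})$, to the presheaf on $\DFXV$ whose value at $\overline{I}(x_\bullet)$ (an object over $\overline{I} \in \DFV$ with edges labelled by $(x_\bullet) \in X^{E(I)}$, writing $E(I)$ for the set of edges of $I$, i.e.\ the maps $\mathfrak{e} \to I$) is the fibre
\[ \Phi(\mathcal{O})(\overline{I}(x_\bullet)) := \mathrm{fib}_{(x_\bullet)}\bigl(\mathcal{O}(\overline{I}) \to \mathcal{O}(\overline{\mathfrak{e}})^{E(I)}\bigr), \]
where the map is induced by the edge inclusions $\overline{\mathfrak{e}} \to \overline{I}$. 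This is manifestly functorial: it is the pullback of $p^{*}\mathcal{O}$ along the tautological section of $p^{*}j_X$, where $p \colon \DFXV \to \DFV$ is the projection and $j_X \in \PSh(\DFV)$ is the pullback of $i_X$ along $\DFV \to \DF$. The point that reconciles the two descriptions is that the monoid condition forces $F(\overline{\mathfrak{e}}) \simeq *$ (an empty product over the vertices of the edge), so the edge-values of a monoid are trivial --- matching the fact that fixing $\mathcal{O}(\overline{\mathfrak{e}}) = X$ removes all freedom along the edges.

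First I would establish the fibrewise statement. The inverse functor $\Psi$ sends a continuous monoid $F$ to $\Psi(F)(\overline{I}) := \colim_{(x_\bullet) \in X^{E(I)}} F(\overline{I}(x_\bullet))$, reassembling the labelled values into a single space over $X^{E(I)}$; using $F(\overline{\mathfrak{e}}) \simeq *$ gives $\Psi(F)(\overline{\mathfrak{e}}) \simeq \colim_{x \in X} * \simeq X$. That $\Phi$ and $\Psi$ are mutually inverse is the standard fact that for a space $Z$ the identification $\mathcal{S}_{/Z} \simeq \Fun(Z, \mathcal{S})$ interchanges ``taking the fibre over a point'' with ``evaluating a functor'', inverse to ``colimit over $Z$''. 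The same identification matches the two conditions: continuity of $\mathcal{O}$ means $\mathcal{O}(\mathfrak{c}_{n}, \blank) \colon \mathcal{V}^{\op} \to \mathcal{S}_{/\mathcal{O}(\overline{\mathfrak{e}})^{n+1}}$ preserves limits, and since limits in $\mathcal{S}_{/X^{n+1}} \simeq \Fun(X^{n+1}, \mathcal{S})$ are computed pointwise, this holds \IFF{} each fibre functor $\mathcal{V}^{\op} \to \mathcal{S}$ preserves limits, which is exactly the continuity of $\Phi(\mathcal{O})$ (Proposition~\ref{propn:CtsDFV}). For the Segal/monoid comparison I would use Proposition~\ref{propn:SegDFVcond}: the Segal condition exhibits $\mathcal{O}(\overline{I})$ as an iterated fibre product of the corolla-values over the edge-values; taking the fibre over a fixed labelling $(x_\bullet)$ pins down all the gluing edges, so these iterated fibre products collapse to the plain product $\prod_{v \in \CorDF(I)} \Phi(\mathcal{O})(v^{*}\overline{I}(x_\bullet))$, which is precisely the monoid condition, and $\Psi$ rebuilds the iterated fibre product as a colimit of products over $X^{E(I)}$.

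To upgrade to an equivalence over $\mathcal{S}$ I would check that both projections are Cartesian fibrations and that $\Phi$ preserves Cartesian edges. On the right, $\mathcal{E}_{\name{CtsMon}} \to \mathcal{S}$ is Cartesian because $\mathcal{E} \to \mathcal{S}$ is, with base change along $f \colon X \to Y$ the restriction functor $\PSh(\simp_{\mathbb{F},Y}^{\mathcal{V}}) \to \PSh(\DFXV)$, which preserves the monoid and continuity conditions. On the left, $\name{ev}_{\overline{\mathfrak{e}}}$ restricted to $\PCS(\DFV)$ is Cartesian with base change $\mathcal{O} \mapsto \bigl(\overline{I} \mapsto \mathcal{O}(\overline{I}) \times_{Y^{E(I)}} X^{E(I)}\bigr)$; this preserves the Segal condition and, since pullback $\mathcal{S}_{/Y^{n+1}} \to \mathcal{S}_{/X^{n+1}}$ is a right adjoint, also continuity. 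The explicit fibre formula then gives $\Phi(f^{*}\mathcal{O}) \simeq f^{*}\Phi(\mathcal{O})$, so $\Phi$ sends Cartesian edges to Cartesian edges. As a functor of Cartesian fibrations over $\mathcal{S}$ that preserves Cartesian morphisms and is a fibrewise equivalence, $\Phi$ is then an equivalence.

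The main obstacle is the middle step: verifying precisely that $\Phi$ and $\Psi$ interchange the Segal condition with the monoid condition. This is the heart of the argument and requires carefully tracking how the iterated fibre products over the edge-spaces become products over vertices once the edge-labels are fixed (and dually as colimits under $\Psi$), using the identification $\mathcal{S}_{/Z} \simeq \Fun(Z, \mathcal{S})$ for a space $Z$ throughout. The remaining points --- functoriality of $\Phi$, the continuity matching, and the Cartesian-fibration bookkeeping --- are comparatively routine.
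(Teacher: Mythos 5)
Your proposal is correct, and its mathematical core coincides with the paper's: your fibrewise dictionary $\Phi$/$\Psi$ is exactly the un/straightening equivalence $\PSh(\DFV)_{/\overline{\imath}_{X}} \simeq \PSh(\DFXV)$ over the right fibration $\DFXV \to \DFV$ (your $\Psi$ is the left Kan extension $\pi_{!}$ along $\DFXVop \to \DFVop$), and your matching of the Segal condition with the monoid condition and of the two continuity conditions is precisely the content of the paper's Lemma~\ref{lem:monLKEseg}, proved by the same fibre-collapse argument. Where you genuinely diverge is in how the fibrewise equivalence is assembled over $\mathcal{S}$: the paper avoids ever checking that $\name{ev}_{\overline{\mathfrak{e}}}$ is a Cartesian fibration at this stage by instead embedding $\PSh(\DFV)$ fully faithfully into the Cartesian fibration $\mathcal{E}'$ for $X \mapsto \PSh(\DFV)_{/\overline{\imath}_{X}}$ via the adjunction-unit left adjoint of Lemma~\ref{lem:pbadj}, and then transporting subcategories; you instead verify directly that both projections to $\mathcal{S}$ are Cartesian fibrations and that $\Phi$ preserves Cartesian edges. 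Your route is viable (the Cartesian-fibration claim for the left-hand side is true, and is in fact proved later in the paper as Lemma~\ref{lem ff = Cartesian morphism}), and it has the virtue of making the comparison functor completely explicit; what it costs is that the one non-routine point shifts to showing that your objectwise formula for $\Phi$ really does assemble into a functor over $\mathcal{S}$ --- the ``pullback along the tautological section of $p^{*}j_{X}$'' description is the right idea, but making it coherent in $X$ is exactly the naturality statement the paper outsources to its citation for the slice/presheaf equivalence, so you should not treat that step as free.
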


For the proof we need the following two lemmas:
\begin{lemma}\label{lem:pbadj}
  Suppose
  \[ u : \mathcal{D} \rightleftarrows \mathcal{C} : \eta \]
  is an adjunction with unit transformation $\alpha \colon \id \to
  \eta u$, and let the \icat{} $\mathcal{E}$ be defined by
  the pullback square
  \csquare{\mathcal{E}}{\mathcal{D}^{\Delta^{1}}}{\mathcal{C}}{\mathcal{D}.}{G}{}{\name{ev}_{1}}{\eta}
  \begin{enumerate}[(i)]
  \item The functor $G$ has a left adjoint $F$, which sends $X \to Y$ to
    $(u(Y), X \to Y \xto{\alpha_{Y}} \eta u(Y))$.
  \item The composite functor
    $\mathcal{E} \xto{G} \mathcal{D}^{\Delta^{1}} \xto{\name{ev}_{0}}
    \mathcal{D}$
    has a left adjoint, given by
    \[X \mapsto (u(X), X \xto{\alpha_{X}} \eta u(X)).\] Moreover, this
    is fully faithful.
  \end{enumerate}
\end{lemma}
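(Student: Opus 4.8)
The plan is to exploit the explicit description of $\mathcal{E}$: an object is a triple $(c, d, \psi \colon d \to \eta c)$, with $G$ recording the arrow $\psi \in \mathcal{D}^{\Delta^{1}}$ and the left-hand projection $P \colon \mathcal{E} \to \mathcal{C}$ recording $c$. Since the defining square is a pullback of \icats{}, for $e_0, e_1 \in \mathcal{E}$ the mapping space decomposes as
\[ \Map_{\mathcal{E}}(e_0, e_1) \simeq \Map_{\mathcal{C}}(P e_0, P e_1) \times_{\Map_{\mathcal{D}}(\eta P e_0, \eta P e_1)} \Map_{\mathcal{D}^{\Delta^{1}}}(G e_0, G e_1). \]
Combined with the standard description of mapping spaces in $\mathcal{D}^{\Delta^{1}}$ as a fibre product of mapping spaces in $\mathcal{D}$, this is the computational engine for both parts.

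For (i), I would fix $\phi = (X \xto{f} Y)$ and verify that the proposed object $F\phi = (uY,\, X \xto{f} Y \xto{\alpha_Y} \eta u Y)$ corepresents the functor $e \mapsto \Map_{\mathcal{D}^{\Delta^{1}}}(\phi, G e)$; by the pointwise criterion for adjoints \cite[\S 5.2]{ht} these values then assemble into a left adjoint $F \dashv G$. Concretely, writing $e = (c, \psi \colon d \to \eta c)$ and unwinding the two nested fibre products in the formula above, $\Map_{\mathcal{E}}(F\phi, e)$ collapses to
\[ \Map_{\mathcal{C}}(uY, c) \times_{\Map_{\mathcal{D}}(X, \eta c)} \Map_{\mathcal{D}}(X, d), \]
where the first leg is $a \mapsto \eta(a) \circ \alpha_Y \circ f$ and the second is postcomposition with $\psi$. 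Applying the adjunction equivalence $\Map_{\mathcal{C}}(uY, c) \simeq \Map_{\mathcal{D}}(Y, \eta c)$ --- under which $a$ corresponds to its adjunct $\eta(a) \circ \alpha_Y$ --- turns the first leg into precomposition with $f$, and the resulting fibre product is precisely the standard presentation of $\Map_{\mathcal{D}^{\Delta^{1}}}(\phi, G e)$. The role of the unit $\alpha$ is exactly to make these two structure maps match.

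For (ii), I would recall that $\name{ev}_0 \colon \mathcal{D}^{\Delta^{1}} \to \mathcal{D}$ has a fully faithful left adjoint $L_0$, namely $d \mapsto (d \xto{\id} d)$. Since left adjoints compose, the composite $Q = \name{ev}_0 \circ G$ has left adjoint $F \circ L_0$, and evaluating the formula of (i) at the degenerate edge (the case $X = Y = d$, $f = \id$) gives $F L_0(d) = (u d,\, d \xto{\alpha_d} \eta u d)$, as claimed. For full faithfulness I would invoke the criterion that a left adjoint is fully faithful exactly when its unit is an equivalence. Here $Q F L_0(d) = d$ on objects, and tracing the composite unit $d \to Q F L_0(d)$ through the two constituent adjunctions shows it is the identity: the unit of $L_0 \dashv \name{ev}_0$ is an identity since $L_0$ is fully faithful with $\name{ev}_0 L_0 = \id$, while applying $\name{ev}_0$ to the unit of $F \dashv G$ at $(d \xto{\id} d)$ yields a commuting square whose source edge is $\id_d$. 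Alternatively, one can read off full faithfulness directly from (i), which gives $\Map_{\mathcal{E}}(F L_0 d_0, F L_0 d_1) \simeq \Map_{\mathcal{D}}(d_0, d_1)$.

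The main obstacle I anticipate is the bookkeeping in (i): after unwinding the iterated fibre products coming from the two pullback-mapping-space formulas, one must check that the structure maps agree on the nose (not merely that the total spaces are abstractly equivalent) and that the identification is natural in $e$ --- this naturality is what licenses passing from pointwise corepresentability to an honest adjoint functor.
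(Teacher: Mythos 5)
Your argument is correct, and part (ii) is essentially the paper's own proof: $\name{ev}_{0}$ has the fully faithful left adjoint $d \mapsto \id_{d}$ (left Kan extension along $\{0\} \hookrightarrow \Delta^{1}$), adjoints compose, and the composite unit is visibly an equivalence. The difference is in part (i): the paper disposes of it in one line by observing that $\name{ev}_{1}$ is a coCartesian fibration and citing (the dual of) a general lemma on pullbacks of coCartesian fibrations along functors admitting adjoints, whereas you prove the special case by hand, decomposing $\Map_{\mathcal{E}}(F\phi, e)$ via the pullback formula for mapping spaces in $\mathcal{C} \times_{\mathcal{D}} \mathcal{D}^{\Delta^{1}}$ and in $\mathcal{D}^{\Delta^{1}}$, cancelling the common factor $\Map_{\mathcal{D}}(\eta u Y, \eta c)$, and rewriting the remaining leg through the adjunction $\Map_{\mathcal{C}}(uY, c) \simeq \Map_{\mathcal{D}}(Y, \eta c)$. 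That computation checks out, and your final worry is resolved by the unit-map form of the criterion you cite: exhibiting the candidate unit $\phi \to GF\phi$ (the square with top $\id_{X}$ and bottom $\alpha_{Y}$) and checking that it induces the stated equivalence for every $e$ is already enough to produce the adjoint, with no separate naturality verification required. What the paper's route buys is brevity and a reusable general statement; what yours buys is a self-contained argument that makes explicit exactly where the unit $\alpha$ enters the identification.
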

\begin{proof}
  Since $\name{ev}_{1}$ is a coCartesian fibration, (i) is a special
  case of (the dual of) \cite[Lemma 4.14]{spans}. The functor
  $\name{ev}_{0}$ is given by composition with the inclusion
  $i_{0} \colon \{0\} \hookrightarrow \Delta^{1}$; it is right adjoint
  to the constant diagram functor
  $c \colon \mathcal{D} \to \mathcal{D}^{\Delta^{1}}$, since this can
  be described as left Kan extension along $i_{0}$. The composite
  $\name{ev}_{0}G$ is therefore right adjoint to $F c$, which indeed
  takes $X \in \mathcal{D}$ to $(u(X), X \xto{\alpha_{X}} \eta u(X))$. To see
  that $Fc$ is fully faithful, we must show that the unit map
  $X \to \name{ev}_{0}GFcX$ is an equivalence, which is clear.
\end{proof}

\begin{lemma}\label{lem:monLKEseg}
  Let $X$ be a space, and let $\pi$ denote the projection
  $\DFXVop \to \DFVop$. Then a presheaf
  $F \colon \DFXVop \to \mathcal{S}$ such that $F(\xfe(x)) \simeq *$
  for all $x \in X$ is a continuous $\DFXVop$-monoid \IFF{} the left
  Kan extension $\pi_{!}F$ is a continuous Segal presheaf.
\end{lemma}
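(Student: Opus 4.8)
The plan is to compute $\pi_{!}$ as a fibrewise colimit and then match both defining conditions by passing to fibres over $i_{X}$.

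First I would record the shape of $\pi$. The functor $\DFXV \to \DFV$ is the pullback of the Cartesian fibration $\DFX \to \DF$ classifying $i_{X}$, so it is a right fibration whose fibre over $\overline{I}$ is the space $i_{X}(I)$ of edge-labellings; dually $\pi \colon \DFXVop \to \DFVop$ is a left fibration with the same fibres. The inclusion of a fibre into the comma category computing the left Kan extension is then cofinal (it is a right adjoint, via coCartesian transport --- this is the situation of Lemma~\ref{lem:cofslice}), so
\[ (\pi_{!}F)(\overline{I}) \simeq \colim_{x \in i_{X}(I)} F(\overline{I}, x). \]
This colimit over the space $i_{X}(I)$ carries a canonical augmentation to $i_{X}(I)$ whose fibre over $x$ is $F(\overline{I}, x)$; thus $\pi_{!}F$ comes with a map $\pi_{!}F \to i_{X}$ recovering $F$ as $F(\overline{I}, x) \simeq \operatorname{fib}_{x}\big((\pi_{!}F)(\overline{I}) \to i_{X}(I)\big)$. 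I would also note that $i_{X}$, being right Kan extended from the edge, is itself a Segal presheaf, so $i_{X}(I) \simeq \lim_{\psi \in \DFelIop} i_{X}(\psi^{*}I)$.

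For the Segal condition I would apply $\operatorname{fib}_{x}$ to the Segal comparison map of $\pi_{!}F$. Since $i_{X}$ is Segal, fibres commute with the defining limit, giving a fibrewise comparison
\[ F(\overline{I}, x) \longrightarrow \lim_{\psi \in \DFelIop} F(\psi^{*}\overline{I}, x|_{\psi}). \]
Here the edge-terms become contractible, because the augmentation $\pi_{!}F \to i_{X}$ restricts to an equivalence on the edge $\overline{\xfe}$, so the right-hand limit collapses to the product $\prod_{v \in \CorDF(I)} F(v^{*}\overline{I}, x|_{v})$ over the vertices; that is, the fibre of the Segal map over $x$ is exactly the monoid comparison map of $F$ at $\overline{I}$. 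As a morphism of spaces over $i_{X}(I)$ is an equivalence \IFF{} it is fibrewise so, this shows $\pi_{!}F$ is Segal at every $\overline{I}$ \IFF{} $F$ is a $\DFXVop$-monoid. In practice I would carry this out on the explicit generators of Proposition~\ref{propn:SegDFVcond}, where the colimit over $i_{X}$ visibly distributes through the relevant finite products and the fibre products over internal edges.

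Continuity is then formal. Using $\mathcal{S}_{/(\pi_{!}F)(\xfe)^{n+1}} \simeq \Fun\big((\pi_{!}F)(\xfe)^{n+1}, \mathcal{S}\big)$, in which limits are pointwise, the functor $(\pi_{!}F)(\xfc_{n}, \blank)$ preserves limits \IFF{} each of its fibres does; over a labelling $(x_{1}, \ldots, x_{n}, y)$ this fibre is $w \mapsto F(\xfc_{n}, w; x_{1}, \ldots, x_{n}, y)$, which is precisely the functor $\mathcal{V}^{\op} \simeq (\DFXVop)_{\tilde{\xfc}_{n}} \to \mathcal{S}$ appearing in the continuity condition for $F$. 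Hence $\pi_{!}F$ is continuous \IFF{} $F$ is, and together with the previous paragraph this yields the claim; as in Proposition~\ref{propn:CtsDFV} it suffices to test preservation of terminal objects, filtered colimits and pullbacks. The step I expect to be the main obstacle is precisely the collapse of the Segal limit to the vertexwise product: a priori the limit over $\DFelIop$ produces fibre products over the internal edges, and one must check that passing to fibres over $i_{X}$ genuinely trivialises the edge-terms so that only the product over $\CorDF(I)$ survives. This is exactly the interplay between $i_{X}$ being a Segal presheaf and the augmentation $\pi_{!}F \to i_{X}$ being an equivalence on the edge, and it is cleanest to verify against the generators of Proposition~\ref{propn:SegDFVcond} rather than for a general $\overline{I}$ at once.
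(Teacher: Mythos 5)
Your proposal is correct and follows essentially the same route as the paper's proof: both compute $\pi_{!}F$ fibrewise over $i_{X}$ via cofinality of the fibre inclusion, identify the fibre of the Segal comparison map with the monoid comparison map (your "collapse of the edge-terms" is exactly the paper's pullback square relating $\lim_{\psi}\pi_{!}F(\psi^{*}\overline{I})$ to $\prod_{v}\pi_{!}F(v^{*}\overline{I})$ over the edge-labelling spaces), and then check continuity pointwise in $\Fun(X^{\times(n+1)},\mathcal{S})$. The only cosmetic difference is that the paper works with the explicit identification $\pi_{!}F(\overline{I})\to X^{\times|E_{\DF}(I)|}$ rather than the augmentation $\pi_{!}F\to i_{X}$, but these are the same thing.
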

\begin{proof}
  We first show that $\pi_!$ gives an identification between
  $\DFXVop$-monoids and Segal presheaves. At the end we will see that
  this restricts to an equivalence of continuous objects.

  Suppose $F \colon \DFXVop \to \mathcal{S}$ is a functor such that
  $F(\xfe(x)) \simeq *$ for all $x \in X$. For $I\in \DF$, let
  $E_{\DF}(I)$ denote the set of edges of $I$.  Since $\pi$ is a
  cocartesian fibration, we can compute the left Kan extension along
  $\pi$ fibrewise; for $\overline{I}$ in $\DFVop$ this gives
  \[\pi_{!}F(\overline{I}) \simeq \colim_{\overline{I}(x_{i})_{i} \in
      (\DFXVop)_{\overline{I}}} F(\overline{I}(x_{i})_{i}) \simeq
    \colim_{(x_{i})_{i} \in X^{\times |E_{\DF}(I)|}} F(\overline{I}(x_{i})_{i}).\]
  In particular, we have that
  \[\pi_{!}F(\xfe) \simeq \colim_{x \in X} * \simeq X.\]
  Moreover, the natural transformation from $F$ to the constant
  functor with value $*$ induces natural morphisms
  $\pi_{!}F(\overline{I}) \to X^{\times |E_{\DF}(I)|}$. Hence, there is a commutative square
  \nolabelcsquare{\pi_!F(\overline I)}{\lim_{\phi\in (\DFelI)^\op}
    \pi_!F(\phi^* \overline I)}{X^{\times |E_{\DF}(I)|}}{\lim_{J \to I
      \in
      (\DFelI)^\op} X^{\times |E_{\DF}(J)|},}
  where the bottom horizontal map is an equivalence, since the set
  $E_{\DF}(I)$ is isomorphic to
  \[\colim_{J \to I \in (\DFelI)^\op}
  E_{\DF}(J).\]

  It follows that the top horizontal morphism in the square is an
  equivalence \IFF{} it gives an equivalence on all fibres over
  $X^{\times |E_{\DF}(I)|}$. The functor $\colim \colon \Fun(T, \mathcal{S})
  \to \mathcal{S}_{/T}$ is an equivalence for any space $T$, with
  inverse given by taking fibres, so the fibre of $\pi_{!}F(\overline{I})
  \simeq \colim_{(x_{i})_{i} \in X^{\times |E_{\DF}(I)|}}
  F(\overline{I}(x_{i})_{i})$ at $(x_{i})_{i}$ is naturally equivalent
  to $F(\overline{I}(x_{i})_{i})$. Since limits commute, it follows
  that the map on fibres at $(x_{i})_{i}$ in the square is the natural
  map
  \[ F(\overline{I}(x_{i})_{i}) \to \lim_{\phi\in (\DFelI)^\op}
    F(\phi^{*}\overline{I}(x_{i})_{i}) \simeq \prod_{v \in V_{\DF}(I)}
    F(v^{*}\overline{I}(x_{i})_{i}),\]
  where the equivalence follows from the assumption that $F(\xfe(x))$
  is the terminal object for all $ x \in X$.

  It follows that $\pi_{!}F$ is a Segal presheaf \IFF{} the above
  squares are all pullbacks, and this in turn holds \IFF{} $F$ is a
  monoid.

By definition, $\pi_{!}F$ is continuous \IFF{} for all
$n$, the functor \[\pi_{!}F(\mathfrak{c}_{n}( \blank)) \colon
\mathcal{V}^{\op} \simeq (\DFVop)_{\mathfrak{c}_{n}} \to
\mathcal{S}_{/X^{\times (n+1)}} \simeq \Fun(X^{\times (n+1)},
\mathcal{S})\]
preserves limits. Limits in functor \icats{} are computed
objectwise, so this holds \IFF{} for all $(x_{i})_{i=1,\ldots,n+1} \in X^{\times
  (n+1)}$ the composite of this functor with evaluation at $(x_{i})_i$
preserves limits. But this composite can be identified with
$F(\mathfrak{c}_{n}(\blank, (x_{i})_i)) \colon \mathcal{V}^{\op} \to
\mathcal{S}$; this preserves limits for all $n$ and $(x_{i})_i \in
X^{\times (n+1)}$ precisely if $F$ is a continuous $\DFXVop$-monoid.
\end{proof}

\begin{proof}[Proof of Proposition~\ref{propn:SegisMon}]
  The functor
  $\name{ev}_{\overline{\mathfrak{e}}} \colon \PSh(\DFV) \to
  \mathcal{S}$
  has a right adjoint, which takes $X \in \mathcal{S}$ to the presheaf
  $\overline{\imath}_{X} \colon \DFVop \to \DF^{\op} \xto{i_{X}}
  \mathcal{S}$
  corresponding to the right fibration $\DFXV \to \DFV$.  Let
  $\mathcal{E}' \to \mathcal{S}$ be the Cartesian fibration for the
  functor $X \mapsto \PSh(\DFV)_{/\overline{\imath}_{X}}$. Then we can
  apply Lemma~\ref{lem:pbadj} to the pullback diagram
  \nolabelcsquare{\mathcal{E}'}{\PSh(\DFV)^{\Delta^{1}}}{\mathcal{S}}{\PSh(\DFV)}
  to conclude that the forgetful functor $\mathcal{E}' \to \PSh(\DFV)$
  has a fully faithful left adjoint, which takes a presheaf $F$ to the
  adjunction unit
  $F \to \overline{\imath}_{F(\overline{\mathfrak{e}})}$. The image of
  $\PSh(\DFV)$ consists of precisely those maps $G \to
  \overline{\imath}_{X}$ such that the restriction $G(\xfe) \to X$ is
  an equivalence.

  By \cite[Corollary 9.8]{laxcolimits}, left Kan extension along
  $\DFXVop \to \DFVop$ gives an equivalence between 
  $\PSh(\DFV)_{/\overline{\imath}_{X}} \simeq \xE'_{X}$ and
  $\PSh(\DFXV) \simeq \mathcal{E}_{X}$. Since this equivalence is
  natural in
  $X$, it gives an equivalence over $\xS$ between $\mathcal{E}$
  and $\mathcal{E}'$. Moreover, the image of $\PSh(\DFV)$ in $\mathcal{E}'$
  corresponds under this equivalence to the full subcategory of
  functors $F \colon \DFXVop \to \mathcal{S}$ such that
  $F(\mathfrak{e}(x)) \simeq *$ for all $x \in X$.

  It thus remains to show that this equivalence restricts further to
  $\PCS(\DFV)\simeq \mathcal{E}_{\name{CtsMon}}$; this is exactly the
  statement of Lemma~\ref{lem:monLKEseg}.
\end{proof}

\begin{proof}[Proof of Theorem~\ref{thm:PCSisAlgLT}]
  Combine Propositions~\ref{propn:SegisMon} and \ref{propn:PCMisAlg}.
\end{proof}

We record the functoriality of $\AlgDFS(\mathcal{V})$ in the symmetric
monoidal \icat{} $\mathcal{V}$:
\begin{propn}\ 
  \begin{enumerate}[(i)]
  \item If $F \colon \mathcal{V} \to \mathcal{W}$ is a lax monoidal functor,
    then $F$ induces a functor \[F_{*} \colon \AlgDFS(\mathcal{V}) \to \AlgDFS(\mathcal{W}).\]
  \item If $F \colon \mathcal{V} \to \mathcal{W}$ is a symmetric
    monoidal left adjoint, with (lax monoidal) right adjoint $G$, then
    there is an adjunction
    \[ F_{*} \colon \AlgDFS(\mathcal{V}) \rightleftarrows \AlgDFS(\mathcal{W}) : G_{*}.\]
  \item If $L \colon \mathcal{V} \to \mathcal{W}$ is a symmetric
    monoidal localization with (lax monoidal) fully faithful right
    adjoint $i$, then the right adjoint
    $i_{*} \colon \AlgDFS(\mathcal{W}) \to \AlgDFS(\mathcal{V})$ is
    fully faithful with image those algebras
    $A \colon \DFX^{\op} \to \mathcal{V}^{\otimes}$ such that for
    every corolla $\tilde{\mathfrak{c}}_{n}$ in $\DFX$ over
    $\mathfrak{c}_{n} \in \DF$, the image $A(\tilde{\mathfrak{c}}_{n})$ lies in $i(\mathcal{W})$.
  \end{enumerate}
\end{propn}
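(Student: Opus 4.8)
The plan is to reduce all three statements to assertions over a fixed point $X \in \mathcal{S}$ — where they become standard facts about algebras — and then to assemble these over $\mathcal{S}$, using that every functor in sight is given by postcomposition and hence commutes with the transition functors of the Cartesian fibration $\AlgDFS(\mathcal{V}) \to \mathcal{S}$, which are restriction along the maps $\DFXop \to \simp^{\op}_{\mathbb{F},Y}$ induced by a morphism $X \to Y$ in $\mathcal{S}$. For (i), I would first check that postcomposition with a lax monoidal $F \colon \mathcal{V}^{\otimes} \to \mathcal{W}^{\otimes}$ preserves the algebra condition: a morphism of $\DFXop$ over $\rho_i$ sent by an algebra $A$ to a coCartesian morphism is an inert morphism of $\mathcal{V}^{\otimes}$, and a lax monoidal functor preserves inert morphisms, so $F \circ A$ again sends these morphisms to coCartesian ones and remains a functor over $\bbGop$. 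This gives, for each $X$, a functor $\Alg_{\DFXop}(\mathcal{V}) \to \Alg_{\DFXop}(\mathcal{W})$; as postcomposition commutes strictly with restriction, it is natural in $X$, and so straightens to the desired functor $F_{*}$ over $\mathcal{S}$, preserving Cartesian morphisms.

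For (ii), a symmetric monoidal left adjoint $F$ is in particular lax monoidal, as is its right adjoint $G$, so by (i) both $F_{*}$ and $G_{*}$ preserve algebras. The adjunction $F \dashv G$ induces an adjunction $F_{*} \dashv G_{*}$ between the ambient functor \icats{} $\Fun_{\bbGop}(\DFXop, \mathcal{V}^{\otimes}) \rightleftarrows \Fun_{\bbGop}(\DFXop, \mathcal{W}^{\otimes})$, and since both functors carry the full subcategories of algebras into one another, this restricts to a fiberwise adjunction $F_{*} \colon \Alg_{\DFXop}(\mathcal{V}) \rightleftarrows \Alg_{\DFXop}(\mathcal{W}) : G_{*}$. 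Because $F_{*}, G_{*}$ and the unit/counit are all given by postcomposition, they commute with the restriction transition functors; the fiberwise adjunctions are therefore natural in $X$ and constitute an adjunction in the functor \icat{} $\Fun(\mathcal{S}^{\op}, \CatI)$, which under the straightening/unstraightening equivalence of \cite[\S 3.2]{ht} yields the asserted relative adjunction over $\mathcal{S}$.

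For (iii), I would apply (ii) with $F = L$ and $G = i$ to produce $i_{*}$ as a relative right adjoint. Since $i$ is fully faithful, each $i_{*} \colon \Alg_{\DFXop}(\mathcal{W}) \to \Alg_{\DFXop}(\mathcal{V})$ is fully faithful (postcomposition with a fully faithful functor is fully faithful on these subcategories of functor \icats{}); as $i_{*}$ also preserves Cartesian morphisms over $\mathcal{S}$, the total functor is fully faithful. For the essential image, observe that $i$ is a fully faithful symmetric monoidal functor, so its image is a full symmetric monoidal subcategory $i(\mathcal{W})^{\otimes} \subseteq \mathcal{V}^{\otimes}$, detected fiberwise by the condition that every component (pushforward along each $\rho_{i}$) lie in $i(\mathcal{W})$. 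An algebra $A$ lies in the image of $i_{*}$ precisely when it factors through $i(\mathcal{W})^{\otimes}$, i.e.\ when $A(\tilde I) \in i(\mathcal{W})^{\otimes}$ for every $\tilde I$. But for $\tilde I$ over $\CorDF(I)$, the algebra condition identifies the components of $A(\tilde I)$ with the values $A(v^{*}\tilde I)$ on the corollas $v^{*}\tilde I$; hence the objectwise condition collapses to requiring $A(\tilde{\mathfrak{c}}_{n}) \in i(\mathcal{W})$ for every corolla $\tilde{\mathfrak{c}}_{n}$, as claimed.

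The step I expect to be the main obstacle is the assembly in (ii): promoting the fiberwise adjunctions to a genuine adjunction over $\mathcal{S}$. Concretely, one must verify that $F_{*}$, $G_{*}$, and their unit and counit cohere into natural transformations of the functors $\mathcal{S}^{\op} \to \CatI$ — equivalently, that the Beck–Chevalley transformations comparing the fiberwise adjoints with the transition functors are equivalences. This is where the \emph{strict} commutation of postcomposition (by $F$, $G$, $\eta$, $\epsilon$) with restriction is used essentially; once this compatibility is in hand, the remaining content of (i) and (iii) is bookkeeping with the algebra condition and the reduction of values to corollas via the Segal decomposition.
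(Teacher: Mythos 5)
Your proposal is correct and follows essentially the same route as the paper, which disposes of (i) as immediate from the definition, cites \cite[Corollary 7.3.2.7]{ha} for the fiberwise adjunction in (ii), and deduces (iii) from (ii); your argument simply unpacks that citation (the relative adjunction $F^{\otimes} \dashv G^{\otimes}$ over $\bbGop$ and the resulting postcomposition adjunction on algebras) and makes explicit the assembly of the fiberwise adjunctions into a global one over $\mathcal{S}$, which the paper leaves implicit. The identification of the essential image in (iii) via the Segal reduction to corollas, using that $\mathcal{W}^{\otimes}$ sits inside $\mathcal{V}^{\otimes}$ as the full suboperad of objects with $\mathbb{S}$-local components, is likewise the intended argument.
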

\begin{proof}
  (i) is obvious from the definition of $\AlgDFS(\mathcal{V})$, (ii)
  follows from \cite[Corollary 7.3.2.7]{ha}, and (iii) is immediate
  from (ii).
\end{proof}

We can restate this in terms of continuous Segal presheaves:
\begin{cor}\label{cor:laxmonftr}\ 
  \begin{enumerate}[(i)]
  \item If $F \colon \mathcal{V} \to \mathcal{W}$ is a lax monoidal functor,
    then $F$ induces a functor $F_{*} \colon \PCS(\DFV) \to
    \PCS(\DF^{\mathcal{W}})$.
  \item If $F \colon \mathcal{V} \to \mathcal{W}$ is a symmetric
    monoidal left adjoint, with (lax monoidal) right adjoint $G$, then
    there is an adjunction
    \[ F_{*} \colon \PCS(\DFV) \rightleftarrows  \PCS(\DF^{\mathcal{W}}) : G_{*}.\]
    Moreover, the functor $G_{*}$ can be identified with $F^{*}$.
  \item If $L \colon \mathcal{V} \to \mathcal{W}$ is a symmetric
    monoidal localization with (lax monoidal) fully faithful right
    adjoint $i$, then the right adjoint
    $i_{*} \simeq L^{*} \colon \PCS(\DF^{\mathcal{W}}) \to \PCS(\DFV)$
    is fully faithful with image those $\mathcal{O} \in \PCS(\DFV)$
    where the presheaves $\mathcal{O}(\mathfrak{c}_{n}(
   \blank, x_{1},\ldots,x_{n},y))$ are representable by objects in $\mathcal{W}$.
  \end{enumerate}
\end{cor}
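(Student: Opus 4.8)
The plan is to obtain all three statements by transporting the corresponding parts of the preceding proposition across the equivalence $\PCS(\DFV) \simeq \AlgDFS(\mathcal{V})$ of Theorem~\ref{thm:PCSisAlgLT}, applied to both $\mathcal{V}$ and $\mathcal{W}$. For (i), a lax monoidal $F$ yields $F_{*} \colon \AlgDFS(\mathcal{V}) \to \AlgDFS(\mathcal{W})$ by part (i) of that proposition; conjugating by the two instances of Theorem~\ref{thm:PCSisAlgLT} produces the asserted functor $\PCS(\DFV) \to \PCS(\DF^{\mathcal{W}})$. In the same way the adjunction of (ii) and the full faithfulness of (iii) transport verbatim. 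Thus the only content beyond transport is to identify the transported right adjoint $G_{*}$ in (ii) with the restriction functor $F^{*}$ of Lemma~\ref{lem DFV functor}, and to translate the image condition in (iii) into a statement about representing objects.

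For the identification $G_{*} \simeq F^{*}$, note first that a symmetric monoidal left adjoint is colimit-preserving, so Lemma~\ref{lem DFV functor} applies and gives $F^{*} \colon \PCS(\DF^{\mathcal{W}}) \to \PCS(\DFV)$, induced by the functor $F \colon \DFV \to \DF^{\mathcal{W}}$ carrying $(\mathfrak{c}_{n}, v)$ to $(\mathfrak{c}_{n}, F(v))$. I would compare the two functors on representing objects. Under Theorem~\ref{thm:PCSisAlgLT} a continuous Segal presheaf $\mathcal{P}$ on $\DF^{\mathcal{W}}$ corresponds to the algebra sending a corolla $\tilde{\mathfrak{c}}_{n}$ with boundary $x_{1},\ldots,x_{n},y$ to the object $\mathcal{P}(x_{1},\ldots,x_{n};y) \in \mathcal{W}$ representing $\mathcal{P}(\mathfrak{c}_{n}, \blank)_{x_{1},\ldots,x_{n},y}$, so $G_{*}$ sends it to $G(\mathcal{P}(x_{1},\ldots,x_{n};y))$. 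On the other hand, the adjunction $F \dashv G$ gives
\[ (F^{*}\mathcal{P})(\mathfrak{c}_{n}, v)_{x_{1},\ldots,x_{n},y} \simeq \Map_{\mathcal{W}}(F(v), \mathcal{P}(x_{1},\ldots,x_{n};y)) \simeq \Map_{\mathcal{V}}(v, G(\mathcal{P}(x_{1},\ldots,x_{n};y))),\]
so that $F^{*}\mathcal{P}$ is the continuous Segal presheaf whose multimorphism objects are $G(\mathcal{P}(x_{1},\ldots,x_{n};y))$. These two descriptions agree, yielding $F^{*} \simeq G_{*}$.

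For (iii) I would specialize (ii) to $F = L$ and $G = i$, so that the adjunction $L_{*} \dashv i_{*}$ transports and $i_{*} \simeq L^{*}$ by the previous paragraph. Part (iii) of the proposition identifies the essential image of $i_{*}$ on algebras with those $A$ such that $A(\tilde{\mathfrak{c}}_{n}) \in i(\mathcal{W})$ for every corolla $\tilde{\mathfrak{c}}_{n}$; since $A(\tilde{\mathfrak{c}}_{n})$ is precisely the object representing $\mathcal{O}(\mathfrak{c}_{n}, \blank)_{x_{1},\ldots,x_{n},y}$, this condition becomes the requirement that each of these presheaves be representable by an object of $i(\mathcal{W}) \simeq \mathcal{W}$, and full faithfulness passes along the equivalence.

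I expect the main obstacle to be promoting the representing-object comparison above to a coherent natural equivalence $F^{*} \simeq G_{*}$ of functors, compatibly with the indexing over $\mathcal{S}$ as the space of objects $X$ varies. This should follow from the naturality of the equivalence in Theorem~\ref{thm:PCSisAlgLT} together with the naturality of the adjunction $F \dashv G$, but it is the step where the coherence bookkeeping is least automatic, since everything else is a formal transport of adjunctions and essential images along an equivalence of $\infty$-categories.
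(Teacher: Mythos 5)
Your proposal is correct and follows essentially the same route as the paper: transport the preceding proposition along the equivalence of Theorem~\ref{thm:PCSisAlgLT} and verify $G_{*}\simeq F^{*}$ via the adjunction $\Map_{\mathcal{W}}(F(v),-)\simeq\Map_{\mathcal{V}}(v,G(-))$ on representing objects. The only cosmetic difference is that the paper runs this mapping-space computation for general objects $I$ of $\DF$ with arbitrary labels rather than only for corollas, which sidesteps the reduction via the Segal condition that your version implicitly relies on.
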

\begin{proof}
  The only claim that requires proof is the identification of $G_{*}$
  with $F^{*}$ in (ii). For $\mathcal{O} \in \PCS(\DF^{\mathcal{W}})$,
  let $A$ be the corresponding object of $\AlgDFS(\mathcal{W})$. We have a natural equivalence
  \[ \Map_{\mathcal{W}^{|\CorDF(I)|}}((w_{c})_c, A(I(x_{i})_i)) \simeq \mathcal{O}(I(w_c,x_{i})_{c,i}).\]
  Then we get 
  \[ F^{*}\mathcal{O}(I(v_c,x_{i})_{c,i}) \simeq \mathcal{O}(I(Fv_{c},x_{i})_{c,i}) \simeq
  \Map_{\mathcal{W}^{|\CorDF(I)|}}(F(v_{c}), A(I(x_{i})_i)) \simeq
  \Map_{\mathcal{V}^{|\CorDF(I)|}}(v_{c}, G A(I(x_{i})_i)),\]
  i.e. $F^{*}\mathcal{O}$ corresponds to $G_{*}A$ under the
  identification of $\PCS(\DFV)$ with $\AlgDFS(\mathcal{V})$.
\end{proof}

\subsection{Enrichment in Presheaves}\label{subsec psh}
If $\mathcal{U}$ is a small symmetric monoidal \icat{}, then by
\cite[Proposition 4.8.1.12]{ha} the $\infty$-category $\PSh(\xU)$ of
presheaves on $\mathcal{U}$ has a unique symmetric monoidal structure
such that the tensor product preserves colimits in each variable and
the Yoneda embedding $y \colon \mathcal{U} \to \PSh(\mathcal{U})$ is
symmetric monoidal; this is the \emph{Day convolution} \cite{Glasman}.
This induces a fully faithful functor
$\DFU \to \DF^{\PSh(\mathcal{U})}$, which we also denote $y$, and our
goal in this section is to show that composition with $y$ induces an
equivalence of \icats{} between $\PCS(\DF^{\PSh(\mathcal{U})})$ and
the \icat{} $\PSeg(\DFU)$ of Segal presheaves on $\DFU$. In
particular, taking $\mathcal{U}$ to be a point, this implies that
continuous Segal presheaves on $\DF^{\mathcal{S}}$ are equivalent to
Segal presheaves on $\DF$.

We first prove that the functor
$y^{*} \colon \PSh(\DF^{\PSh(\mathcal{U})}) \to \PSh(\DFU)$ given by
composition with $y$ has a right adjoint given by right Kan extension,
despite $\DF^{\PSh(\mathcal{U})}$ being a large \icat{}:
\begin{propn}
  The restriction
  \[ \PSh(\DF^{\PSh(\mathcal{U})}) \xto{y^{*}} \PSh(\DFU) \]
  has a fully faithful right adjoint $y_{*}$, given by right Kan extension.
\end{propn}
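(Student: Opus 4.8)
The plan is to construct $y_*$ as the pointwise right Kan extension along $y^{\op}\colon \DFU^{\op}\to\DFPU^{\op}$. Since $y^*$ is restriction (precomposition with $y^{\op}$), its right adjoint, if it exists, is computed by right Kan extension, and the pointwise formula (\cite[\S 4.3.2--4.3.3]{ht}) gives, for $G\in\PSh(\DFU)$ and $\overline I\in\DFPU$,
\[ y_*(G)(\overline I)\;\simeq\;\lim_{\left((\DFU)_{/\overline I}\right)^{\op}} G\circ\mathrm{pr}, \qquad (\DFU)_{/\overline I}:=\DFU\times_{\DFPU}(\DFPU)_{/\overline I}, \]
the limit being taken along the projection $\mathrm{pr}$ to $\DFU$ of the comma \icat{} of objects $\overline J$ of $\DFU$ equipped with a map $y(\overline J)\to\overline I$. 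The only thing to verify is that this formula is well-posed: because $\DFPU$ is large, the indexing \icat{} is a priori large, so the limit need only land in large spaces, whereas we must produce a presheaf valued in \emph{small} spaces. Thus the sole obstruction is one of size, and this is exactly the point flagged in the statement.

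This is the heart of the argument, and it is settled by showing that every comma \icat{} $(\DFU)_{/\overline I}$ is \emph{essentially small}. Two observations suffice. First, $\DFU = \DF^{\mathcal U}$ is essentially small, since $\DF$ is a small $1$-category and $\mathcal U$ is small. Second, $\DFPU$ is \emph{locally small}: it sits in the Cartesian fibration $\DFPU\to\DF$ of Definition~\ref{def V vee} over the small category $\DF$, with fibres $\PU^{\times|\CorDF(I)|}$ (Remark~\ref{rem labeled tree}), and $\PU = \PSh(\mathcal U)$ is locally small; hence $\Map_{\DFPU}(y(\overline J),\overline I)$ is a small space for all $\overline J\in\DFU$ and $\overline I\in\DFPU$. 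Consequently an object of $(\DFU)_{/\overline I}$ is a pair consisting of $\overline J$ from an essentially small \icat{} together with a structure map in a small space, while its mapping spaces are fibres of the small mapping spaces of $\DFU$; therefore $(\DFU)_{/\overline I}$ is essentially small. Since $\xS$ is complete, the displayed limit exists in $\xS$, so the pointwise right Kan extension assembles into a functor $y_*\colon\PSh(\DFU)\to\PSh(\DFPU)$ right adjoint to $y^*$.

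It remains to see that $y_*$ is fully faithful, i.e.\ that the counit $y^*y_*\to\id$ is an equivalence. Fix $\overline J_0\in\DFU$. Full faithfulness of $y$ yields an equivalence $(\DFU)_{/y(\overline J_0)}\simeq(\DFU)_{/\overline J_0}$, and the right-hand side has a terminal object, namely $(\overline J_0,\id)$. Hence its opposite, which is the \icat{} indexing the limit computing $y_*(G)(y(\overline J_0))$, has an \emph{initial} object, so the limit reduces to the value of the diagram there:
\[ (y^*y_*G)(\overline J_0)\;\simeq\;y_*(G)(y(\overline J_0))\;\simeq\;G(\overline J_0). \]
This identification is natural in $\overline J_0$, so the counit is an equivalence and $y_*$ is fully faithful. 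The one genuine obstacle is the size estimate of the second paragraph; granted essential smallness of the slices, both the existence of $y_*$ and its full faithfulness are formal.
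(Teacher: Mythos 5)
Your proof is correct and follows essentially the same route as the paper's: both reduce the existence of the right Kan extension to the essential smallness of the comma \icats{}, established by combining essential smallness of $\DFU$ with local smallness of $\DF^{\PSh(\mathcal{U})}$, which ultimately rests on $\PSh(\mathcal{U})$ being locally small (your general Cartesian-fibration argument for local smallness is just a repackaging of the paper's explicit pullback decomposition of $\Map_{\DF^{\PSh(\mathcal{U})}}(y(\overline{J}),\overline{I})$). You additionally spell out the full faithfulness of $y_*$ via the counit and the terminal object of the slice, a standard point the paper leaves implicit.
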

\begin{proof}
  For a presheaf $F\in \PSh(\DFU)$, the right Kan extension $y_{*}F$
  along $y$ exists if for every $F \in \PSh(\DFU)$ and
  $\overline I = (I, (\phi_{c})_{c \in \CorDF(I)}) \in
  \DF^{\PSh(\mathcal{U})}$, the diagram
  \[ (\DFUop)_{\overline I/} := \DFUop
  \times_{\DF^{\PSh(\mathcal{U}),\op}}
  (\DF^{\PSh(\mathcal{U}),\op})_{\overline I/} \to \DFUop
  \xto{F}\mathcal{S} \]
  has a limit in $\mathcal{S}$. Since $\mathcal{S}$ has all small
  limits, it is enough to show the \icat{} $(\DFUop)_{\overline I/}$
  is essentially small. As a pullback of the left fibration
  $(\DF^{\PSh(\mathcal{U}),\op})_{\overline I/} \to
  (\DF^{\PSh(\mathcal{U}),\op}) $,
  the map $(\DFUop)_{\overline I/}\to \DFUop$ is a left fibration
  whose target is essentially small since  $\xU$ is essentially small. Hence, it
  suffices to show that the fibre over any object
  $\overline J=(J,(\psi_d)_{d\in\CorDF(J)})\in \DFUop$ is small. It
  follows from the definition of $\DF^{\PSh(\mathcal{U})}$ that this fibre
  can be identified with
  \[\xMap_{\DF^{\PSh(\mathcal{U})}}(y(\overline J),\overline I)\simeq
  \xMap_{\DF}(J,I)\times_{\Map_{\xF_{*}^{\op}}(\CorDF^\op(J),\CorDF^\op(I))}\xMap_{\PSh(\xU)_\otimes}((\psi_d)_d,(\phi_c)_c).\]
  It suffices to show that
  $\xMap_{\PSh(\xU)^\otimes}((\psi_d)_d,(\phi_c)_c)$ is small as the
  other two mapping spaces in the pullback are obviously small. The Cartesian fibration $\PSh(\xU)_\otimes\to \xF_{*}^{\op}$ induces a map
  \[\xMap_{\PSh(\xU)_\otimes}((\psi_d)_d,(\phi_c)_c)\to
  \Map_{\xF_{*}^{\op}}(\CorDF^\op(J),\CorDF^\op(I))\]
  whose fibre over $\alpha$ is given by
  \[\prod_{d\in \CorDF(J)}\xMap_{\PSh(\xU)}(\psi_d, \bigotimes_{\alpha(c)=d}\phi_c).\]
  The \icat{} $\PSh(\mathcal{U})$ is locally small by \cite[Example
  5.4.1.8]{ht}, so these mapping spaces are small.
\end{proof}

\begin{thm}\label{thm enr psh}
  Let $\xU^\otimes$ be a small symmetric monoidal \icat{}.  The fully
  faithful functor
  $y_{*} \colon \PSh(\DFU) \hookrightarrow
  \PSh(\DF^{\PSh(\mathcal{U})})$
  restricts to an equivalence
  $\PSeg(\DFU) \isoto \PCS(\DF^{\PSh(\mathcal{U})})$.
\end{thm}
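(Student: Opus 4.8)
The plan is to leverage the adjunction $y^{*} \dashv y_{*}$ from the previous proposition, in which $y_{*}$ is fully faithful, so that $y_{*}$ exhibits $\PSh(\DFU)$ as a reflective subcategory of $\PSh(\DFPU)$ with $y^{*}y_{*} \simeq \id$. It then suffices to show that under this embedding the Segal presheaves correspond exactly to the continuous Segal presheaves, i.e.\ to establish: (a) $y_{*}F \in \PCS(\DFPU)$ for every $F \in \PSeg(\DFU)$; and (b) every $G \in \PCS(\DFPU)$ lies in the essential image of $y_{*}$ and satisfies $y^{*}G \in \PSeg(\DFU)$. Granting these, $y_{*}$ restricts to a fully faithful, essentially surjective functor $\PSeg(\DFU) \to \PCS(\DFPU)$ with inverse $y^{*}$.

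For (a) I would argue by locality against the generators of Definition~\ref{def set s}. Since $y^{*}$ is a left adjoint, $y_{*}F$ is local with respect to a map $g$ iff $F$ is local with respect to $y^{*}g$. The mapping-space computation of the previous proposition shows that a map $y(\overline{J}) \to (\xfc_{n}, \phi)$ records at its image vertex an element of $\Map_{\PSh(\mathcal{U})}(\bigotimes_{d}\psi_{d}, \phi) \simeq \phi(\bigotimes_{d}\psi_{d})$ --- an evaluation of $\phi$, hence colimit-preserving in $\phi$, using that the Day convolution of representables is again representable. Consequently $y^{*}$ sends the continuity and $\overline{\xfe}$-generators to equivalences in $\PSh(\DFU)$, so $y_{*}F$ is continuous for every $F$. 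For the Segal generators $\overline{I}_{\Seg} \to \overline{I}$: when $\overline{I}$ has representable labels, full faithfulness of $y$ makes $y^{*}$ carry this to the defining Segal map of $\DFU$, to which a Segal $F$ is local; and since $y_{*}F$ is already continuous, the Segal condition at an object with arbitrary labels is the limit, over the categories of elements of those labels, of the Segal conditions at representable-labeled objects, and so is again an equivalence. Thus $y_{*}F$ is continuous Segal whenever $F$ is Segal.

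The bulk of the work is (b), the main obstacle, where continuity must be shown to force a Segal presheaf on $\DFPU$ to be right Kan extended from $\DFU$. First, $y^{*}G$ is Segal: full faithfulness of $y$ identifies the elementary slice $\DFel_{/\overline{K}}$ with $\DFel_{/y(\overline{K})}$, so the Segal condition for $y^{*}G$ at $\overline{K}$ coincides with that for $G$ at $y(\overline{K})$, which holds since $G$ is Segal. By (a) the right Kan extension $y_{*}y^{*}G$ is then continuous Segal, so both $G$ and $y_{*}y^{*}G$ are Segal; by Proposition~\ref{propn:SegDFVcond} the unit $G \to y_{*}y^{*}G$ is an equivalence once it is so on the elementary objects $\xfe$ and $(\xfc_{n}, \phi)$. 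On edges it is visibly an equivalence, so everything reduces to proving $G(\xfc_{n}, \phi) \simeq (y_{*}y^{*}G)(\xfc_{n}, \phi)$ for all $\phi \in \PSh(\mathcal{U})$.

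This final comparison is where continuity is used essentially. Writing $\phi \simeq \colim_{\alpha} u_{\alpha}$ as the canonical colimit of representables over its category of elements, continuity of $G$ (Definition~\ref{def cts Seg psh}) identifies $G(\xfc_{n}, \phi)$ with $\lim_{\alpha} G(\xfc_{n}, u_{\alpha}) = \lim_{\alpha} G\bigl(y(\xfc_{n}, u_{\alpha})\bigr)$, a limit over representable-labeled corollas; whereas the pointwise formula for the Kan extension expresses $(y_{*}y^{*}G)(\xfc_{n}, \phi)$ as the limit of $G \circ y$ over the comma category $y \downarrow (\xfc_{n}, \phi)$. The crux is a cofinality argument matching these: using the mapping-space description one shows the category of elements of $\phi$, viewed as the representable-labeled corollas mapping to $(\xfc_{n}, \phi)$, is cofinal in $y \downarrow (\xfc_{n}, \phi)$ --- the point being that, $G$ being Segal, the multi-vertex and higher objects of the comma category are redundant, while the surviving corolla labels are precisely the representables over $\phi$. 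I expect this cofinality, together with the bookkeeping of Day products, to be the one genuinely technical step; the remainder is formal manipulation of the adjunction and the two families of generators.
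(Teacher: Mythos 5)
Your overall architecture is the same as the paper's: use the adjunction $y^{*} \dashv y_{*}$, show $y_{*}$ carries $\PSeg(\DFU)$ into $\PCS(\DFPU)$ by testing $y^{*}$ against generators, and then get essential surjectivity by checking the unit $G \to y_{*}y^{*}G$ on elementary objects. The easy direction and the reduction of essential surjectivity to edges and corollas are fine (though the paper finishes (b) more cleanly than your cofinality argument: since both $G$ and $y_{*}y^{*}G$ are continuous Segal presheaves, they are local with respect to the generator $\colim_{\mathcal{C}^{\triangleleft}}(\mathfrak{c}_{n}, f^{\triangleleft}) \to (\mathfrak{c}_{n},\phi)$, so the equivalence at $(\mathfrak{c}_{n},\phi)$ follows formally from the equivalence at representable labels and at $\overline{\mathfrak{e}}$, with no analysis of the comma category needed).

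The genuine gap is in (a), at the sentence asserting that ``the Segal condition at an object with arbitrary labels is the limit, over the categories of elements of those labels, of the Segal conditions at representable-labeled objects.'' Continuity of $y_{*}F$ is a condition only on corollas; what your decomposition actually requires is that $y^{*}\overline{I}(\blank,\ldots,\blank)$ commutes with colimits of labels in each variable for an arbitrary multi-vertex $\overline{I}$, and this is false in the generality you need. The mapping space $\Map(\overline{J}, y^{*}\overline{I}_{c'}(X))$ is \emph{not} simply ``an evaluation of $\phi$'': its fibre over a map $\phi\colon J \to I$ under which the distinguished vertex $c'$ is sent to the basepoint is \emph{constant} in $X_{c'}$, and a constant functor preserves only weakly contractible colimits. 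Since the category of elements of a general presheaf on $\mathcal{U}$ is not weakly contractible (it is empty for $\emptyset$ and disconnected for a coproduct), your one-step ``limit over categories of elements'' argument breaks exactly there. This is why the paper proves only preservation of weakly contractible colimits (Lemma~\ref{lem:y*Ipreswccolim}), treats the empty label by a separate and nontrivial argument (Lemma~\ref{lem empty}, which needs the contractibility of $\Sub_{c'}(I)_{K/}$), and then runs the four-step induction in Proposition~\ref{prop pres segal equivalences}: representable labels, then labels equal to $\emptyset$, then finite coproducts of representables via pushouts (which \emph{are} weakly contractible colimits once the $\emptyset$ case is known), and finally general presheaves as sifted colimits of finite coproducts. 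Without some version of this stratification your step (a) does not go through, and since (b) relies on (a) to know that $y_{*}y^{*}G$ is a continuous Segal presheaf, the gap propagates to the whole proof.
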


Before we turn to the proof of this theorem, we prove the preliminary
result that the functors $y^{*}$ and $y_{*}$ restrict to functors
between the subcategories of Segal objects. We first consider the easy
case of $y^{*}$:
\begin{lemma}\label{lem y* pres segal}
  The left adjoint functor
  $y^* \colon \PSh(\DF^{\PSh(\mathcal{U})})\to
  \PSh(\DFU)$
  takes continuous Segal presheaves to Segal presheaves.
\end{lemma}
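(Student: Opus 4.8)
The plan is to reduce the Segal condition for $y^{*}F$ on $\DFU$ directly to the Segal condition that $F$ already satisfies on $\DFPU$, exploiting that $y$ is a functor over $\DF$ compatible with the Cartesian lifts of inert maps. First I would record that the functor $y \colon \DFU \to \DFPU$ lies over $\DF$: it is induced by the symmetric monoidal Yoneda embedding $\mathcal{U} \to \PSh(\mathcal{U})$, hence by a map $\mathcal{U}_{\otimes} \to \PSh(\mathcal{U})_{\otimes}$ of Cartesian fibrations over $\bbG$, and the squares defining $\DFU$ and $\DFPU$ (Definition~\ref{def V vee}) are pullbacks over $\DF$. Consequently, for $\overline{I} \in \DFU$ lying over $I \in \DF$ the image $y\overline{I}$ lies over the same $I$, so the indexing category $\DFelIop$ occurring in the Segal condition (Definition~\ref{def Segal presheaf}) is literally the same for $\overline{I}$ and for $y\overline{I}$.

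Next I would check the one point that actually requires an argument, namely that $y$ carries the Cartesian lift $\psi^{*}\overline{I} \to \overline{I}$ of an inert map $\psi$ to the corresponding Cartesian lift in $\DFPU$. These lifts are exactly the morphisms corresponding to coCartesian morphisms in $\mathcal{U}^{\otimes}$, and since $y$ is symmetric monoidal the induced functor $\mathcal{U}^{\otimes} \to \PSh(\mathcal{U})^{\otimes}$ preserves coCartesian morphisms; transporting this through the pullback gives $y(\psi^{*}\overline{I}) \simeq \psi^{*}(y\overline{I})$.

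With these two observations in hand the lemma is immediate: by definition $(y^{*}F)(\overline{I}) = F(y\overline{I})$, and the canonical map for the Segal condition of $y^{*}F$ at $\overline{I}$,
\[ (y^{*}F)(\overline{I}) \to \lim_{\psi \in \DFelIop} (y^{*}F)(\psi^{*}\overline{I}), \]
is identified, via $y(\psi^{*}\overline{I}) \simeq \psi^{*}(y\overline{I})$, with
\[ F(y\overline{I}) \to \lim_{\psi \in \DFelIop} F(\psi^{*}(y\overline{I})), \]
which is precisely the Segal condition for $F$ at $y\overline{I} \in \DFPU$ and hence an equivalence. Thus $y^{*}F$ is Segal. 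The argument never uses continuity of $F$, and indeed no continuity condition is even available on $\DFU$ since $\mathcal{U}$ need not be presentable; so there is no real obstacle here beyond the bookkeeping of the Cartesian lifts, which is the only step where symmetric monoidality of $y$ is genuinely used.
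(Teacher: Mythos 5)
Your argument is correct and is essentially the paper's proof: both compute $(y^{*}\mathcal{O})(\overline{I}) = \mathcal{O}(y\overline{I})$, apply the Segal condition for $\mathcal{O}$ at $y\overline{I}$, and use that the Yoneda embedding is symmetric monoidal to identify $\psi^{*}(y\overline{I})$ with $y(\psi^{*}\overline{I})$. Your closing remark that only the Segal condition (not continuity) is used also matches the paper's parenthetical observation.
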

\begin{proof}
  Consider an object $\overline I\in \DFU$ and an inert map $\psi\colon \mathfrak c_n\to I$ in $\DF$. Since the Yoneda embedding is symmetric monoidal, we have $\psi^*y(\overline I)\simeq \mathfrak c_n(yv_c)\simeq y(\psi^*\overline I)$ for some $v_c\in \xV$.
Hence, for every continuous Segal presheaf
  $\xxO\in \PSh_{\xCS}(\DF^{\PSh(\mathcal{U})})$ we have
  \[y^*\xxO(\overline I)=\xxO(y(\overline I))\simeq \lim_{\psi \in
    (\simp^\op_{\mathbb{F},\xel})_{I/}} \xxO(\psi^{*} y(\overline
  I))\simeq \lim_{\psi \in (\simp^\op_{\mathbb{F},\xel})_{ I/}} \xxO(
  y(\psi^{*}\overline I)) =\lim_{\psi \in (\simp^\op_{\mathbb{F},\xel})_{ I/}} y^{*}\xxO(\psi^{*}\overline I),\]
  where the first equivalence follows from the assumption that $\xxO$
  is a continuous Segal presheaf (and so in particular a Segal presheaf).
\end{proof}

Now we consider the opposite direction:
\begin{proposition}\label{prop pres segal equivalences}
  The functor
  $y_{*} \colon\PSh(\DFU) \hookrightarrow
  \PSh(\DF^{\PSh(\mathcal{U})})$
  takes objects in $\PSeg(\DFU) $ to objects in
  $\PCS(\DF^{\PSh(\mathcal{U})})$.
\end{proposition}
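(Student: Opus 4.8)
The plan is to exploit the adjunction $y^{*} \dashv y_{*}$ together with the explicit generators for the continuous Segal equivalences recorded in Definition~\ref{def set s}. Since $y_{*}$ is a right adjoint, for any map $g \colon A \to B$ in $\PSh(\DFPU)$ the object $y_{*}F$ is local with respect to $g$ \IFF{} $F$ is local with respect to $y^{*}g$. As $F$ is already a Segal presheaf, hence local with respect to every Segal equivalence in $\PSh(\DFU)$, it therefore suffices to show that $y^{*}$ carries each of the three generating families of continuous Segal equivalences --- the Segal maps $\overline{I}_{\Seg} \to \overline{I}$, the weakly contractible colimit comparisons $\colim_{\mathcal{I}}(\xfc_{n}, \phi) \to (\xfc_{n}, \colim_{\mathcal{I}} \phi)$, and the maps $\coprod \overline{\xfe} \to (\xfc_{n}, \emptyset)$ --- to Segal equivalences in $\PSh(\DFU)$.

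For the two continuity families I expect $y^{*}$ to send the generators not merely to Segal equivalences but to actual equivalences. The key point is that $y \colon \mathcal{U} \to \PSh(\mathcal{U})$ is symmetric monoidal, so a tensor product of representables is again representable, and mapping out of a representable is evaluation, which preserves all colimits. Concretely, $y^{*}(\xfc_{n}, \phi)$ evaluated at an object $\overline{K} \in \DFU$ is assembled from mapping spaces $\Map_{\PSh(\mathcal{U})}(\bigotimes_{d} u_{d}, \phi)$, where the $u_{d}$ are the representable labels of $\overline{K}$; as these are evaluations of $\phi$ they commute with the colimit $\colim_{\mathcal{I}}\phi$, and since $y^{*}$ preserves colimits this gives $y^{*}(\colim_{\mathcal{I}}(\xfc_{n},\phi)) \simeq y^{*}(\xfc_{n}, \colim_{\mathcal{I}}\phi)$. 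The same bookkeeping should identify $y^{*}(\xfc_{n}, \emptyset)$ with $\coprod_{n+1}\overline{\xfe}$: a map into $(\xfc_{n},\emptyset)$ can exist only when no vertex of the source is carried to the (empty) vertex of $\xfc_{n}$, i.e.\ when the source factors through the $n+1$ edges of $\xfc_{n}$, so $y^{*}$ sends the third family to equivalences as well.

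The genuinely delicate family is the Segal maps $\overline{I}_{\Seg} \to \overline{I}$, and this I expect to be the main obstacle: here the naive route through the adjunction is circular, since the statement that $y_{*}F$ is local with respect to $\overline{I}_{\Seg}\to\overline{I}$ is exactly the Segal condition we are trying to verify. Instead I would argue directly that $y^{*}(\overline{I}_{\Seg} \to \overline{I})$ is a Segal equivalence in $\PSh(\DFU)$. When the labels of $\overline{I}$ are representable, $\overline{I}$ lies in the image of $y$, the fully faithful symmetric monoidal $y$ is compatible with the elementary decomposition over $\DF$, and $y^{*}y \simeq \id$ identifies $y^{*}(\overline{I}_{\Seg}\to\overline{I})$ with the corresponding Segal generator $\overline{I}'_{\Seg}\to\overline{I}'$ for $\DFU$ (where $\overline I = y\overline I'$), which is a Segal equivalence by definition. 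For general labels I would write each label as a colimit of representables and commute $y^{*}$ and the tree-reorganizing Segal maps past these colimits, using precisely the colimit compatibility established for the continuity families in the previous step. The care needed to carry out this reduction when the labels are not representable --- keeping the underlying forest combinatorics, the fibration structure over $\DF$, and the colimits of labels mutually compatible --- is where the real work of the proof lies.
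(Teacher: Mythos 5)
Your overall architecture is the same as the paper's: use the adjunction $y^{*}\dashv y_{*}$ to reduce to showing that $y^{*}$ carries the three generating families of continuous Segal equivalences to Segal equivalences, and observe that the two continuity families are in fact sent to honest equivalences. However, there are two gaps. The smaller one concerns the family $\colim_{\mathcal{I}}(\xfc_{n},\phi)\to(\xfc_{n},\colim_{\mathcal{I}}\phi)$: you only analyse the fibres of the relevant mapping spaces over those maps $K\to\xfc_{n}$ in $\DF$ whose image under $\CorDF$ hits the vertex of $\xfc_{n}$, where the fibre is an evaluation of $\phi$ at a representable and hence commutes with all colimits. But there are also maps sending that vertex to the base point, and over these the fibre is \emph{constant} in $\phi$; the comparison $\colim_{\mathcal{I}}\mathrm{const}\to\mathrm{const}$ is an equivalence precisely because $\mathcal{I}$ is weakly contractible. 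This is the only place the hypothesis is used (the paper explicitly notes the statement fails otherwise), and your argument never invokes it, which signals the missed case.

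The more serious gap is in your treatment of the Segal maps $\overline{I}_{\Seg}\to\overline{I}$ for general labels. The plan to ``write each label as a colimit of representables and commute, using the colimit compatibility established for the continuity families'' would fail as stated: that compatibility holds only for \emph{weakly contractible} colimits, whereas the canonical presentation of a presheaf as a colimit of representables over $\mathcal{U}_{/\phi}$ is not weakly contractible in general. This is exactly where the real content lies, and the paper resolves it by a staged reduction: (1) corollas labelled by $\emptyset$ --- the colimit over the \emph{empty} category, hence not weakly contractible --- are handled by a separate combinatorial argument (Lemma~\ref{lem empty}) replacing $\overline{I}$ by a colimit of subtrees omitting the empty vertex; (2) finite coproducts of representables are then expressed as iterated pushouts over $\emptyset$, which \emph{are} weakly contractible colimits; (3) an arbitrary presheaf is a \emph{sifted}, hence weakly contractible, colimit of finite coproducts of representables. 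Without identifying weak contractibility as the operative condition and supplying this particular filtration (in particular the separate treatment of $\emptyset$), the reduction you sketch does not go through.
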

The proof of this requires some preliminary lemmas:
\begin{lemma}\label{lem:y*Ipreswccolim}
  For $I \in \DF$, the functor 
  \[y^{*}I(\blank,\ldots,\blank) \colon \PSh(\mathcal{U})^{\times |\CorDF(I)|} \to
  \PSh(\DFU) \]
  preserves weakly contractible colimits in each variable.
\end{lemma}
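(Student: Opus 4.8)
The plan is to reduce the claim to a pointwise computation and then to recognise the dependence of the mapping space on each label as a single evaluation of a presheaf, after which colimit-preservation is formal.

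First I would note that colimits in $\PSh(\DFU)$ are computed objectwise, so it suffices to fix an object $\overline{J} = (J, (\psi_{d})_{d \in \CorDF(J)})$ of $\DFU$ and to show that the functor $(\phi_{c})_{c} \mapsto y^{*}I(\phi_{c})_{c}(\overline{J})$ preserves weakly contractible colimits in each variable. By the convention of leaving the Yoneda embedding implicit, together with the definition of $y^{*}$ as restriction along $y$, this functor sends $(\phi_{c})_{c}$ to the mapping space $\Map_{\DF^{\PSh(\mathcal{U})}}(y\overline{J}, I(\phi_{c})_{c})$.

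Next I would invoke the mapping-space decomposition established in the proof of the preceding proposition (on the right adjoint $y_{*}$), namely
\[ \Map_{\DF^{\PSh(\mathcal{U})}}(y\overline{J}, I(\phi_{c})_{c}) \simeq \coprod_{\alpha} \Map^{\alpha}_{\DF}(J,I) \times \prod_{c \in \CorDF(I)} \Map_{\PSh(\mathcal{U})}\Bigl(\bigotimes_{d \in \alpha^{-1}(c)} y\psi_{d},\, \phi_{c}\Bigr), \]
where $\alpha$ ranges over $\Map_{\mathbb{F}_{*}}(\CorDF(J),\CorDF(I))$ and $\Map^{\alpha}_{\DF}(J,I)$ is the fibre of $\CorDF\colon \Map_{\DF}(J,I) \to \Map_{\mathbb{F}_{*}}(\CorDF(J),\CorDF(I))$ over $\alpha$. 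Since the Yoneda embedding is symmetric monoidal, $\bigotimes_{d} y\psi_{d} \simeq y(\bigotimes_{d}\psi_{d})$, and so by the Yoneda lemma each inner mapping space is the evaluation $\phi_{c}(\bigotimes_{d \in \alpha^{-1}(c)}\psi_{d})$ of $\phi_{c}$ at a fixed object of $\mathcal{U}$. Fixing a vertex $c_{0} \in \CorDF(I)$ and all labels $\phi_{c}$ with $c \neq c_{0}$, the resulting functor of the single variable $\phi_{c_{0}}$ then has the form $\phi_{c_{0}} \mapsto \coprod_{\alpha} K_{\alpha} \times \phi_{c_{0}}(u_{\alpha})$, where $K_{\alpha} := \Map^{\alpha}_{\DF}(J,I) \times \prod_{c \neq c_{0}} \phi_{c}(\bigotimes_{d \in \alpha^{-1}(c)}\psi_{d})$ is a space independent of $\phi_{c_{0}}$ and $u_{\alpha} := \bigotimes_{d \in \alpha^{-1}(c_{0})}\psi_{d} \in \mathcal{U}$.

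The conclusion is now formal: evaluation $\PSh(\mathcal{U}) \to \mathcal{S}$ at a fixed object preserves all colimits, as colimits of presheaves are computed pointwise; for a fixed space $K$ the functor $K \times (\blank)\colon \mathcal{S} \to \mathcal{S}$ preserves all colimits, being a left adjoint since $\mathcal{S}$ is Cartesian closed; and coproducts preserve colimits. Composing these, the functor preserves colimits in the variable $\phi_{c_{0}}$, and since $c_{0}$ and $\overline{J}$ were arbitrary the lemma follows. The main obstacle is the identification in the middle step: everything hinges on rewriting the dependence on each label $\phi_{c}$ as a single evaluation, collapsing $\Map_{\PSh(\mathcal{U})}(\bigotimes_{d} y\psi_{d}, \phi_{c})$ to $\phi_{c}(\bigotimes_{d}\psi_{d})$ via symmetric monoidality of $y$ and the Yoneda lemma. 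Once the functor is in this ``evaluation'' form the computation is routine, and in fact insensitive to weak contractibility, so the argument yields preservation of all small colimits in each variable; we record only the weakly contractible case, which is what feeds (through $\Map(\blank, F)$) into the weakly-contractible-limit part of the continuity of $y_{*}F$.
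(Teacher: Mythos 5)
Your overall strategy (reduce to mapping out of a representable $\overline{J}$, fibre the mapping space over $\Map_{\DF}(J,I)$, and analyse the fibre over each $\phi$) is the same as the paper's, but the decomposition you use for that fibre is wrong, and the error is not cosmetic. Since $\DF^{\PSh(\mathcal{U})}$ is defined as the pullback of the \emph{Cartesian} fibration $\PSh(\mathcal{U})_{\otimes}\to\bbG$ along $\CorDF^{\op}$, a morphism $y\overline{J}\to\overline{I}=I(\phi_{c})_{c}$ over $\phi\colon J\to I$ consists of maps \emph{from} the labels of $\overline{J}$ \emph{into} tensor products of the labels of $\overline{I}$: the fibre over $\phi$ is
\[
\Map_{(\DFPU)_{J}}\bigl(y\overline{J},\phi^{*}\overline{I}\bigr)\simeq \prod_{x\in\CorDF(J)}\Map_{\PU}\Bigl(y(v_{x}),\bigotimes_{c\in\CorDF(\phi)^{-1}(x)}\phi_{c}\Bigr)\simeq\prod_{x\in\CorDF(J)}\Bigl(\bigotimes_{c\in\CorDF(\phi)^{-1}(x)}\phi_{c}\Bigr)(v_{x}),
\]
not $\prod_{c\in\CorDF(I)}\Map_{\PU}(\bigotimes_{d\in\alpha^{-1}(c)}y\psi_{d},\phi_{c})$ as you write. (A sanity check: for $\overline{J}=\overline{\mathfrak{e}}$ and $\overline{I}=(\mathfrak{c}_{1},X)$ the mapping space is a two-point set independent of $X$, whereas your formula gives $2\times X(\bbone_{\mathcal{U}})$.) With your formula the variable $\phi_{c_{0}}$ always enters through a single evaluation $\phi_{c_{0}}(u_{\alpha})$, which is why you conclude preservation of \emph{all} small colimits.

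That conclusion is false, and the failure is exactly the point of the weak-contractibility hypothesis. With the correct fibre there are two cases: if $\CorDF(\phi)$ sends $c_{0}$ to a vertex $x$ of $J$, the dependence on $\phi_{c_{0}}$ is through the Day convolution $\bigl(\phi_{c_{0}}\otimes\bigotimes_{c\neq c_{0}}\phi_{c}\bigr)(v_{x})$ and does preserve all colimits; but if $\CorDF(\phi)$ sends $c_{0}$ to the base point, the fibre is \emph{constant} in $\phi_{c_{0}}$, and a constant functor preserves weakly contractible colimits only (its colimit over a non-connected or empty index is not the constant value). This second case is unavoidable and is incompatible with preservation of arbitrary colimits: taking $\phi_{c_{0}}=\emptyset=\colim_{\emptyset}$, preservation of all colimits would force $y^{*}\overline{I}\simeq\emptyset$, whereas Lemma~\ref{lem empty} shows $y^{*}\overline{I}\simeq y^{*}(\overline{I}\setminus c_{0})$, which is nonempty in general. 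So you need to restore the case distinction on whether $\phi$ hits $c_{0}$, and use weak contractibility of the indexing diagram precisely in the case where it does not.
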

\begin{proof}
  Pick a vertex $c' \in \CorDF(I)$. We fix the labels $X_{c} \in \PU$
  for all vertices $c \neq c'$, and write
  $\overline{I}_{c'}(\blank) \colon \PU \to \PSh(\DFPU)$ for the
  functor $X_{c'} \mapsto I(X_{c})_{c \in \CorDF(\mathcal{U})}$. Then
  we wish to show that the functor $y^{*}\overline{I}_{c'}$ preserves weakly
  contractible colimits, which is equivalent to the natural map
  \[ \Map_{\PSh(\DFU)}(\mathcal{F}, \colim
  y^{*}\overline{I}_{c'}(\phi)) \to \Map_{\PSh(\DFU)}(\mathcal{F},
  y^{*}\overline{I}_{c'}(\colim\phi)) \]
  being an equivalence of spaces for all $\mathcal{F} \in \PSh(\DFU)$
  and all diagrams $\phi \colon \mathcal{I} \to \PU$ with
  $\mathcal{I}$ weakly contractible. It suffices to check this map is
  an equivalence when $\mathcal{F}$ is (the Yoneda image of) an object
  $\overline{J} = J(v_{x})_{x \in \CorDF(J)}$ in $\DFU$. In this case we have $\Map_{\PSh(\DFU)}(\overline J, \colim
  y^{*}\overline{I}_{c'}(\phi))\simeq  \colim\Map_{\PSh(\DFU)}(\overline{J},
  y^{*}\overline{I}_{c'}(\phi))$ and therefore it suffices to prove that
  the natural map
  \[ \colim\Map_{\PSh(\DFU)}(\overline{J},
  y^{*}\overline{I}_{c'}(\phi)) \to \Map_{\PSh(\DFU)}(\overline{J},
  y^{*}\overline{I}_{c'}(\colim\phi)) \]
  is an equivalence. This map can be identified with the horizontal
  map in the commutative triangle
  \[
  \begin{tikzcd}
    \colim\Map_{\DFPU}(y(\overline{J}),
  \overline{I}_{c'}(\phi)) \arrow{rr} \arrow{dr} & &
  \Map_{\DFPU}(y(\overline{J}),
  \overline{I}_{c'}(\colim\phi)) \arrow{dl}\\
   & \Map_{\DF}(J, I).
  \end{tikzcd}
  \]
  It therefore suffices to show that this map gives an equivalence on
  the fibres over every map $\phi \colon J \to I$ in $\DF$. Since
  $\DFPU \to \DF$ is a Cartesian fibration and colimits of spaces are
  preserves by pullbacks, the map on fibres at $\phi$ is
  \[ \colim \Map_{(\DFPU)_{J}}(y(\overline{J}),
  \phi^{*}\overline{I}_{c'}(\phi)) \to
  \Map_{(\DFPU)_{J}}(y(\overline{J}),
 \phi^* \overline{I}_{c'}(\colim \phi)).\]
  Under the equivalence $(\DFPU)_{J} \simeq \PSh(\mathcal{U})^{\times
    |\CorDF(J)|}$, the object $y(\overline{J})$ corresponds to
  $y(v_{x})_{x \in \CorDF(J)}$ and the object
  $\phi^{*}\overline{I}_{c'}(X_{c'})$ corresponds to $(\bigotimes_{c
    \in \CorDF(\phi)^{-1}(x)} X_{c})_{x \in \CorDF(J)}$, so we have an equivalence
 \[
 \begin{split}
   \Map_{(\DFPU)_{J}}(y(\overline{J}),
   \phi^{*}\overline{I}_{c'}(X_{c'})) & \simeq \prod_{x \in \CorDF(J)}
   \Map_{\PU}\left(y(v_{x}), \bigotimes_{c
     \in \CorDF(\phi)^{-1}(x)} X_{c}\right) \\ & \simeq \prod_{x \in \CorDF(J)}
   \left(\bigotimes_{c  \in \CorDF(\phi)^{-1}(x)} X_{c}\right)(v_{x}).
 \end{split}
 \]
 There are two cases to consider: If $\CorDF(\phi)$ takes $c'$ to the
 base point, then $X_{c'}$ does not appear in this expression, and so
 the functor
 $\Map_{(\DFPU)_{J}}(y(\overline{J}), \phi^*\overline{I}_{c'}(\blank))$ is
 constant. The map
 \[ \colim \Map_{(\DFPU)_{J}}(y(\overline{J}),
 \phi^{*}\overline{I}_{c'}(\phi)) \to
 \Map_{(\DFPU)_{J}}(y(\overline{J}),\phi^* \overline{I}_{c'}(\colim \phi)).\]
 is therefore an equivalence because $\mathcal{I}$ is weakly
 contractible (which implies that the colimit cocone of a constant
 diagram is constant). (Note that in this case we do \emph{not} have
 an equivalence if $\mathcal{I}$ fails to be weakly contractible.) On
 the other hand, if $\CorDF(\phi)$ takes $c'$ to a corolla $x' \in
 \CorDF(I)$ then $\Map_{(\DFPU)_{J}}(y(\overline{J}),
 \phi^*\overline{I}_{c'}(\blank))$ can be identified with
 \[ \left( (\blank) \otimes \bigotimes_{\substack{c \in \CorDF(\phi)^{-1}(x')\\ c
       \neq c'}} X_{c} \right)(v_{x'}) \times \prod_{\substack{x \in \CorDF(J)\\ x \neq x'}}
 \left(\bigotimes_{c  \in \CorDF(\phi)^{-1}(x)} X_{c}\right)(v_{x}).
 \]
 This functor preserves colimits, since the tensor product on $\PU$
 preserves colimits in each variable, colimits in presheaves are
 computed objectwise, and the Cartesian product of spaces preserves
 colimits in each variable.
\end{proof}

\begin{defn}
  For $I \in \DF$ and $c' \in \CorDF(I)$, let $\Sub_{c'}(I)$ denote
  the partially ordered set of inert maps $I' \to I$ such that $c'$
  is not in the image of $I'$, and inert maps between them. We write $I\setminus c'$ for the
  colimit in $\PSh(\DF)$ of $I'$ over $I' \to I$ in $\Sub_{c'}(I)$.
\end{defn}

\begin{lemma}\label{lem empty}
  Suppose $\overline{I}$ is an object of $\DF^{\PSh(\mathcal{U})}$
  lying over $I \in \DF$ such that the corolla $c'$ is labelled by
  $\emptyset$. Let $\overline{I}\setminus c'$ denote the colimit in
  $\PSh(\DF^{\PSh(\mathcal{U})})$ of $\overline{I}'$ over all $I' \to
  I$ in $\Sub_{c'}(I)$, where
  $\overline{I}' \to \overline{I}$ is the Cartesian morphism over this
  map with target $\overline{I}$. Then $y^{*}(\overline{I}\setminus c') \to
  y^{*}\overline{I}$ is an equivalence.
\end{lemma}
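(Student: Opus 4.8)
The plan is to reduce the statement to an objectwise computation of mapping spaces and then exploit that a map out of a representable cannot ``use'' the vertex $c'$, whose label is the initial object $\emptyset$. Since $y^{*}$ is a left adjoint (by the preceding proposition) it preserves colimits, so $y^{*}(\overline{I}\setminus c') \simeq \colim_{I' \in \Sub_{c'}(I)} y^{*}\overline{I}'$; as colimits of presheaves are computed objectwise, it suffices to show that for every $\overline{J} = J(v_{x})_{x} \in \DFU$ the canonical map
\[ \colim_{I' \in \Sub_{c'}(I)} \Map_{\DFPU}(y\overline{J}, \overline{I}') \longrightarrow \Map_{\DFPU}(y\overline{J}, \overline{I}) \]
is an equivalence.

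Next I would invoke the description of the mapping spaces of $\DFPU$ from the proof of the preceding proposition. For a fixed $\beta \colon J \to I$ in $\DF$, the corresponding component $M(\beta)$ of $\Map_{\DFPU}(y\overline{J}, \overline{I})$ is a product, over the vertices $d$ of $J$, of mapping spaces in $\PU$ out of $y(v_{d})$ into the Day tensor of the labels $\phi_{c}$ over the vertices $c$ of $I$ carried to $d$ by $\CorDF(\beta)$; in particular $\phi_{c'} = \emptyset$ occurs as a tensor factor of some target precisely when $c'$ lies in the (inert) image subtree of $\beta$. Since $\emptyset \otimes (-) \simeq \emptyset$ for the Day convolution, such a target tensor is $\emptyset$, and then $\Map_{\PU}(y(v_{d}), \emptyset) \simeq \emptyset(v_{d}) = \emptyset$ by the Yoneda lemma; hence $M(\beta) = \emptyset$ whenever the image of $\beta$ contains $c'$. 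Thus $\Map_{\DFPU}(y\overline{J}, \overline{I})$ is the disjoint union of the $M(\beta)$ over only those $\beta$ whose image avoids $c'$, and the identical description applies to each $\overline{I}'$. Moreover the component attached to $\beta'\colon J \to I'$ agrees with that of its composite $\iota'\circ\beta'\colon J \to I$, since an inert inclusion $\iota'$ alters neither the image subtree nor the relevant preimages under $\CorDF$.

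It then remains to compute the colimit. I would pass to the total category of pairs $(I', \beta'\colon J \to I')$ with $I' \in \Sub_{c'}(I)$; by the above the value $M(\beta')$ depends only on the composite $\beta = \iota'\circ\beta'$, and projection to the discrete set of maps $\beta\colon J \to I$ avoiding $c'$ has, over each such $\beta$, fibre the poset $\{I' \in \Sub_{c'}(I) : K_{\beta} \subseteq I'\}$, where $K_{\beta}$ denotes the image subtree of $\beta$. This poset has the initial object $K_{\beta}$ (which itself avoids $c'$, hence lies in $\Sub_{c'}(I)$), so it is weakly contractible and the colimit of the constant diagram $M(\beta)$ over it is $M(\beta)$. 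Summing over $\beta$ identifies the colimit with $\coprod_{\beta} M(\beta) \simeq \Map_{\DFPU}(y\overline{J}, \overline{I})$, as required.

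The main obstacle is the bookkeeping in this last step: one must verify that the transition maps of the diagram $I' \mapsto \Map_{\DFPU}(y\overline{J}, \overline{I}')$ are componentwise equivalences induced by inert inclusions, so that the diagram is constant along each fixed $\beta$, and then that the poset of $c'$-avoiding subforests containing a fixed image subtree is weakly contractible. The only genuine inputs are the existence of the minimal such subforest $K_{\beta}$ and the vanishing $\emptyset\otimes(-)\simeq\emptyset$ together with $\Map_{\PU}(y(v_{d}),\emptyset)=\emptyset$; everything else is formal.
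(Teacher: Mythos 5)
Your argument is correct and follows essentially the same route as the paper's proof: reduce to mapping spaces out of representables $\overline{J}$, fibre over the discrete set $\Map_{\DF}(J,I)$, kill the components whose image meets $c'$ via $\emptyset\otimes(\blank)\simeq\emptyset$, and identify the remaining components using the weak contractibility of the poset of $c'$-avoiding subforests containing the inert image $K_{\beta}$. The only difference is packaging — you decompose the colimit via the Grothendieck construction over $\Sub_{c'}(I)$ and use the initial object $K_{\beta}$, where the paper phrases the same computation as a left Kan extension from $\Sub_{c'}(I)_{K/}$ — and the bookkeeping you flag (constancy of the diagram along each fibre) is exactly the point the paper also verifies.
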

\begin{proof}
  It suffices to show that the induced map
  \[ \Map_{\PSh(\DFU)}(\overline{J}, y^{*}(\overline{I}\setminus c'))
  \to \Map_{\PSh(\DFU)}(\overline{J}, y^{*}(\overline{I}))\]
  is an equivalence for all $\overline{J} \in \DFU$. This map can be
  identified with the top horizontal map in the commutative triangle
  \[
  \begin{tikzcd}
    \colim_{I' \to I}\Map_{\DFPU}(y(\overline{J}), \overline{I'}) \arrow{rr} \arrow{dr} & &
  \Map_{\DFPU}(y(\overline{J}), \overline{I}) \arrow{dl}\\
   & \Map_{\DF}(J, I).
  \end{tikzcd}
  \]
  It suffices to show that this map is an equivalence on the fibre
  over every map $\phi \colon J \to I$ in $\DF$. There are two cases
  to consider, according to whether the map $\CorDF(\phi)$ takes the
  corolla $c'$ to a corolla in $\CorDF(J)$ or to the base point. If
  this map sends $c'$ to a corolla
  $x'$ in $\CorDF(J)$, then the fibre of the left-hand map is
  empty (since the map to $\Map_{\DF}(J,I)$ factors through the subset
  $\colim_{I' \to I} \Map_{\DF}(J, I')$). On the other hand, if
  $\overline{J} := J(v_{x})_{x \in \CorDF(J)}$ and $\overline{I} :=
  I(X_{c})_{c \in \CorDF(I)}$ (with $X_{c'} = \emptyset$), then we can
  identify the fibre of the right-hand map with
  \[
  \left( \emptyset \otimes \bigotimes_{\substack{c \in
        \CorDF(\phi)^{-1}(x')\\ c \neq c'}} X_{c} \right)(v_{x'}) \times
  \prod_{\substack{x \in \CorDF(J)\\ x \neq x'}} \left(\bigotimes_{c
      \in \CorDF(\phi)^{-1}(x)} X_{c}\right)(v_{x}),\]
  as in the proof of Lemma~\ref{lem:y*Ipreswccolim}. Since the tensor
  product on $\PU$ preserves colimits, colimits in presheaves are
  computed objectwise, and the Cartesian product of spaces preserves
  colimits, this is also empty.

  It remains to consider the case where $\CorDF(\phi)$ takes $c'$ to
  the base point in $\CorDF(J)$. Observe that, since the maps $\psi
  \colon I' \to  I$ are inert, there is at most one map $\phi' \colon
  J \to I'$ such that $\phi = \psi \circ \phi'$. If the active-inert
  factorization of $\phi$ is $J \xto{\alpha} K \xto{\beta} I$, then
  such a map $\phi'$ exists \IFF{} $\beta$ factors through $I'$ (with
  such a factorization being unique if it exists). We can therefore
  identify the functor
  \[ (I' \to I) \mapsto \Map_{\DFPU}(y(\overline{J}),
  \overline{I'})_{\phi} \] 
  with the left Kan extension of its restriction to
  $\Sub_{c'}(I)_{K/}$. Moreover, we can identify this restriction with
  the constant functor with value $\Map_{(\DFPU)_{J}}(y(\overline{J}),
  \alpha^{*}\overline{K})$. This space is also equivalent to
  $\Map_{(\DFPU)_{J}}(y(\overline{J}), \phi^{*}\overline{I})$, which is
  the fibre of $\Map_{\DFPU}(y(\overline{J}), \overline{I})$ at
  $\phi$. Since the category $\Sub_{c'}(I)_{K/}$ is
  weakly contractible, and fibre products of spaces commute with
  colimits, this means that the map
  \[ \colim_{I' \to I}\Map_{\DFPU}(y(\overline{J}), \overline{I'}) \to
  \Map_{\DFPU}(y(\overline{J}), \overline{I})\]
  is an equivalence on the fibre over $\phi$.  
\end{proof}

\begin{proof}[Proof of Proposition~\ref{prop pres segal equivalences}]
  To prove that $y_{*}$ takes Segal presheaves to continuous Segal
  presheaves it suffices to show that the left adjoint $y^{*}$ takes a
  collection of generating continuous Segal equivalences in
  $\PSh(\DF^{\PSh(\mathcal{U})})$ to Segal equivalences in
  $\PSh(\DFU)$. We consider the generators
  \[ \coprod_{n+1} \overline{\mathfrak{e}} \to (\mathfrak{c}_{n},
  \emptyset), \quad (n \geq 0),\]
  \[ \colim_{\mathcal I} \mathfrak{c}_{n}(\phi) \to
  \mathfrak{c}_{n}(\colim_{\mathcal{I}} \phi), \quad (\phi \colon
  \mathcal{I} \to \PU\text{, $\mathcal{I}$ weakly contractible}),\]
  \[ \overline{I}_{\Seg} \to \overline{I} \quad (\overline{I} \in
  \DFPU).\]
  As special cases of Lemmas~\ref{lem empty} and
  \ref{lem:y*Ipreswccolim}, respectively, we have that $y^{*}$ takes
  the first two types of maps to equivalences. It then remains to show
  that $y^{*}\overline{I}_{\Seg}\to y^{*}\overline{I}$ is a Segal
  equivalence for all $\overline{I}$ in $\DFPU$. Writing $\overline{I} :=
  I(X_{c})_{c \in \CorDF(I)}$, we will prove this in several steps for
  increasingly general labels $X_{c} \in \PU$.

  \emph{Step 1}: First suppose all the labels $X_{c}$ lie in the essential image of
  the Yoneda embedding. Then $\overline{I}_{\Seg}\to \overline{I}$ is
  the image under $y$ of a generating Segal equivalence in
  $\PSh(\DFU)$ and so is obviously taken to a Segal equivalence by
  $y^{*}$.

  \emph{Step 2}: Next consider the case where the labels
  $X_{c}$ are either the initial object $\emptyset$ or in the
  essential image of the Yoneda embedding. We induct on the number of
  corollas labelled by $\emptyset$ (the case where this is zero being
  Step 1). Suppose the corolla $c'$ is labelled by $\emptyset$ and
  consider the commutative square
  \nolabelcsquare{(\overline{I}\setminus
    c')_{\Seg}}{\overline{I}\setminus c'}{\overline{I}_{\Seg}}{\overline{I},} where
  $(\overline{I}\setminus c')_{\Seg} := \colim_{I' \to I \in
    \Sub_{c'}(I)} \overline{I}'_{\Seg}$. This colimit can be
  identified with the colimit of the same diagram as
  $\overline{I}_{\Seg}$, except that the corolla $\mathfrak{c}_{n}(\emptyset)$ corresponding to $c'$ is replaced by $\mathfrak{c}_{n}(\emptyset)\setminus c' \simeq \coprod_{n+1}
  \overline{\mathfrak{e}}$. Thus by Lemma~\ref{lem empty}, the functor
  $y^{*}$ takes the vertical maps in the square to equivalences. By
  the inductive hypothesis, $y^{*}(\overline{I}\setminus c')_{\Seg}
  \to y^{*}(\overline{I}\setminus c')$ is a Segal equivalence, since
  it is a colimit of $\overline{I}'_{\Seg} \to \overline{I}'$ where
  the number of corollas in $\overline{I'}$ labelled by $\emptyset$ is at least one less than
  in $\overline{I}$. By the 2-of-3 property, we conclude that
  $y^{*}\overline{I}_{\Seg}\to  y^{*}\overline{I}$ is also a Segal
  equivalence.

  \emph{Step 3}: Now consider the case where the labels $X_{c}$ are
  all finite coproducts of elements in the essential image of the
  Yoneda embedding. We induct on the size of these coproducts (with
  the case where they are all of size 0 or 1 covered by Step
  2). Suppose the corolla $c'$ is labelled by $v \coprod Y$ with $v
  \in \mathcal{U}$. Then by Lemma~\ref{lem:y*Ipreswccolim} the map
  $y^{*}\overline{I}_{\Seg} \to y^{*}\overline{I}$ can be identified
  with the pushout
  \[ y^{*}\overline{I}_{c'}(v)_{\Seg}
  \amalg_{y^{*}\overline{I}_{c'}(\emptyset)_{\Seg}}
  y^{*}\overline{I}_{c'}(Y)_{\Seg} \to y^{*}\overline{I}_{c'}(v)
  \amalg_{y^{*}\overline{I}_{c'}(\emptyset)}
  y^{*}\overline{I}_{c'}(Y).\]
  This is a pushout of maps that are Segal equivalences by the
  inductive hypothesis, and hence this is also a Segal equivalence.

  \emph{Step 4}: Any presheaf on $\mathcal{U}$ can be written as a
  sifted (hence weakly contractible) colimit of finite coproducts of
  elements in $\mathcal{U}$. By Lemma~\ref{lem:y*Ipreswccolim} the map
  $y^{*}\overline{I}_{\Seg} \to y^{*}\overline{I}$ for general labels $X_{c}$ is
  therefore a colimit of the maps we proved were Segal equivalences in
  Step 3. This map is therefore also a Segal equivalence.
\end{proof}

It is now easy to complete the proof of Theorem~\ref{thm enr psh}:
 \begin{proof}[Proof of Theorem~\ref{thm enr psh}]
   Since $y_{*}$ is fully faithful, Proposition~\ref{prop pres segal
     equivalences} implies that it restricts to a fully faithful
   functor
   $\PSh_{\Seg}(\DFU) \to
   \PCS(\DF^{\PSh(\mathcal{U})})$.
   It therefore only remains to prove that this restricted functor is
   essentially surjective as well. Suppose
   $\xxO \in \PCS(\DF^{\PSh(\mathcal{U})})$; we will show
   that the unit map $u \colon \xxO \to y_{*} y^{*}(\xxO)$ is an
   equivalence. As $y^{*}\xxO$ lies in $\PSeg(\DFU)$ by
   Lemma~\ref{lem y* pres segal}, this is a map of continuous Segal presheaves
   by Proposition~\ref{prop pres segal equivalences}. To show that $u$
   is an equivalence it is therefore enough to show it gives an
   equivalence when evaluated at $\overline{\mathfrak{e}}$ and
   $\mathfrak{c}_{n}(\phi)$ with
   $\phi \in \PSh(\mathcal{U})$. Since $y_{*}$ is fully
   faithful, we have $y^{*}\circ y_{*} \simeq \id$ and so for
   $X \in \DFU$ we get
  \[y_{*}y^{*}(\xxO)(y(X)) \simeq \Map(y(X), y_{*} y^{*}(\xxO))
  \simeq \Map(X, y^{*} y_{*} y^{*}(\xxO)) \simeq \Map(X, y^{*}\xxO) \simeq
  \xxO(y(X)).\]
  In particular, this applies to $\mathfrak{e}$ and
  $(\mathfrak{c}_{n}, y(v))$ with $v \in \mathcal{U}$. But for a
  general $\phi \in \PSh(\mathcal{U})$ there exists a small diagram $f \colon \mathcal{C} \to
  \mathcal{U}$ with colimit $\phi$, and so a Segal
  equivalence $\colim_{\mathcal{C}^{\triangleleft}} (\mathfrak{c}_{n},
 f^\triangleleft ) \to \mathfrak{c}_{n}(\phi)$, which means $u$ is an equivalence
  also when evaluated at $\mathfrak{c}_{n}(\phi)$.
\end{proof}

\begin{corollary}\label{theo P(V)Seg = enriched V operads)}
There is an equivalence of \icats{}
\[  \PSeg(\DF)\simeq\PCS(\DF^{\mathcal{S}}),
\]
where the \icat{} $\mathcal{S}$ of spaces is equipped with the
Cartesian monoidal structure.
\end{corollary}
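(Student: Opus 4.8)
The plan is to deduce the statement from Theorem~\ref{thm enr psh} by specializing to the case where $\mathcal{U} = *$ is the terminal \icat{}, equipped with its unique (trivial) symmetric monoidal structure. For this I would identify the two endpoints of the equivalence produced by the theorem, namely $\PSeg(\DF^{*})$ and $\PCS(\DF^{\PSh(*)})$, with $\PSeg(\DF)$ and $\PCS(\DF^{\mathcal{S}})$ respectively. Two things need checking: that $\DF^{*} \simeq \DF$, and that the Day convolution structure on $\PSh(*) \simeq \mathcal{S}$ is the Cartesian one.

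For the first identification, recall from Definition~\ref{def V vee} that $\DF^{\mathcal{V}}$ is the pullback $\DF \times_{\bbG} \mathcal{V}_{\otimes}$ along $\CorDF^{\op} \colon \DF \to \bbG$. When $\mathcal{V} = *$, the symmetric monoidal structure $*^{\otimes} \to \bbGop$ has contractible fibres $*^{\otimes}_{\angled{n}} \simeq *^{\times n} \simeq *$, so it is an equivalence; hence the associated Cartesian fibration $*_{\otimes} \to \bbG$ is also an equivalence, and $\DF^{*} \simeq \DF \times_{\bbG} \bbG \simeq \DF$.

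For the second, I would invoke the characterization of the Day convolution recalled at the start of \S\ref{subsec psh} (from \cite[Proposition 4.8.1.12]{ha}): it is the unique symmetric monoidal structure on $\PSh(*) \simeq \mathcal{S}$ whose tensor product preserves colimits in each variable and for which the Yoneda embedding $y \colon * \to \mathcal{S}$ is symmetric monoidal. The Cartesian product on $\mathcal{S}$ is a symmetric monoidal structure with exactly these properties: it preserves colimits in each variable, its unit is the terminal space, and it renders $y$ (which sends the point to the terminal object, i.e.\ the Cartesian unit) symmetric monoidal. By the uniqueness clause, the Day convolution must therefore coincide with the Cartesian product.

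With both identifications in hand, Theorem~\ref{thm enr psh} applied to $\mathcal{U} = *$ yields the chain $\PSeg(\DF) \simeq \PSeg(\DF^{*}) \isoto \PCS(\DF^{\PSh(*)}) \simeq \PCS(\DF^{\mathcal{S}})$, as desired. I do not expect any genuine obstacle here beyond these two routine identifications, since the entire substance of the result is contained in Theorem~\ref{thm enr psh}; the only mild subtlety worth spelling out is the verification that the abstract Day convolution on presheaves of a point recovers the familiar Cartesian monoidal structure on spaces.
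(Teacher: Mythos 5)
Your proposal is correct and is essentially the paper's own argument: the paper likewise deduces the corollary by applying Theorem~\ref{thm enr psh} to the trivial symmetric monoidal \icat{} $*$ and identifying $\PSeg(\DF^{*})$ with $\PSeg(\DF)$. You merely spell out the two routine identifications (that $\DF^{*}\simeq\DF$ and that Day convolution on $\PSh(*)\simeq\mathcal{S}$ is the Cartesian structure) which the paper leaves implicit.
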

\begin{proof}
  Applying Theorem~\ref{thm enr psh} to the trivial \icat{} $*$ we get
  an equivalence
  $\PSeg(\DF^{*}) \simeq
  \PCS(\DF^{\mathcal{S}})$, where
  $\PSeg(\DF^{*})$ coincides with $\PSeg(\DF)$
  by definition.
\end{proof}

\subsection{Enrichment in Localizations of Presheaves and
  Presentability}\label{subsec enr loc}
In this subsection we will show that if
$\mathcal{V}$ is the symmetric monoidal
localization of $\PSh(\mathcal{U})$ at a set of
morphisms, then the \icat{} of continuous Segal presheaves on
$\DFV$ is equivalent to a certain
localization of  $\PSeg(\DF^{\mathcal{U}})$. This will also allow us
to show that $\PCS(\DFV)$ is presentable when $\mathcal{V}$ is
presentably symmetric monoidal.

Consider a small symmetric monoidal $\infty$-category $\xU$ and a set
$\mathbb{S}$ of morphisms in $\PSh(\xU)$ which is compatible with the
symmetric monoidal structure on $\PSh(\xU)$, in the sense that the
strongly saturated class generated by $\mathbb{S}$ is closed under
tensor products. We write $\PSh_{\mathbb{S}}(\mathcal{U})$ for the
full subcategory of $\PSh(\mathcal{U})$ spanned by the
$\mathbb{S}$-local objects. Then by \cite[Proposition 2.2.1.9]{ha} the
\icat{} $\PSh_{\mathbb{S}}(\mathcal{U})$ inherits a symmetric monoidal
structure such that the localization
$\PU \to \PSh_{\mathbb{S}}(\mathcal{U})$ is a symmetric monoidal
functor. 

\begin{definition}\label{def S Segal presheaves}
  If $\mathcal{U}$ and $\mathbb{S}$ are as above, then a presheaf in
  $\PSh(\DFPU)$ is a \emph{continuous $\mathbb{S}$-Segal presheaf} if
  it is a continuous Segal presheaf and it is local with respect to
  the maps $\mathfrak{c}_n(s)$ where $s$ is in
  $\mathbb{S}$. Similarly, a presheaf in $\PSh(\DFU)$ is an
  \emph{$\mathbb{S}$-Segal presheaf} if it is a Segal presheaf and is
  local with respect to the maps $y^{*}\mathfrak{c}_n(s)$ where $s$ is in
  $\mathbb{S}$. We write $\PCSS(\DFPU)$ and $\PSSeg(\DFU)$ for the
  full subcategories of $\PSh(\DFPU)$ and $\PSh(\DFU)$ spanned by the
  continous $\mathbb{S}$-Segal presheaves and the $\mathbb{S}$-Segal
  presheaves, respectively. These are by definition the localizations
  of $\PSh(\DFPU)$ and $\PSh(\DFU)$ at the \emph{continuous
    $\mathbb{S}$-Segal equivalences} and the \emph{$\mathbb{S}$-Segal
    equivalences}, these being the morphisms in the strongly saturated
  classes generated (in the sense of Definition~\ref{def:ssatgen}) by
  the continuous Segal equivalences together with
  the maps $\mathfrak{c}_n(s)$ and the Segal equivalences together
  with the maps  $y^{*}\mathfrak{c}_n(s)$, respectively.
\end{definition}

\begin{remark}\label{rmk S-seg repr}
  A Segal presheaf $\mathcal{O} \in \PSeg(\DFU)$ is an
  $\mathbb{S}$-Segal presheaf \IFF{} the presheaf
  $\mathcal{O}(\mathfrak{c}_{n}(\blank)) \colon \mathcal{U}^{\op} \to
  \mathcal{S}$
  is $\mathbb{S}$-local for all $n$. Similarly, a continuous Segal
  presheaf $\mathcal{O} \in \PCS(\DFPU)$ is a continuous
  $\mathbb{S}$-Segal presheaf \IFF{} the presheaf
  $\mathcal{O}(\mathfrak{c}_{n}(\blank,x_{1},\ldots,x_{n},y)) \colon
  \PU^{\op} \to \mathcal{S}$
  is representable, and the representing object in $\PU$ is
  $\mathbb{S}$-local: for $s \colon v \to v'$ in $\mathcal{V}$ we have
  a commutative diagram
  \[
    \begin{tikzcd}
      \Map(\mathfrak{c}_{n}(v'), \mathcal{O}) \arrow{rr}{\mathfrak{c}_{n}(s)^{*}} \arrow{dr} &
      & \Map(\mathfrak{c}_{n}(v), \mathcal{O}) \arrow{dl}\\
      & \Map(\coprod_{n+1}\mathfrak{e}, \mathcal{O}),
    \end{tikzcd}
  \]
  and $\mathfrak{c}_{n}(s)^{*}$ is an equivalence \IFF{} it gives an
  equivalence on fibres over each point of
  $\Map(\coprod_{n+1}\mathfrak{e}, \mathcal{O}) \simeq
  \mathcal{O}(\mathfrak{e})^{n+1}$, and over $(x_{1},\ldots,x_{n},y)$ we
  get the map
  \[ \Map_{\mathcal{V}}(v', \mathcal{O}(x_{1},\ldots,x_{n};y))
    \xto{s^{*}} \Map_{\mathcal{V}}(v,
    \mathcal{O}(x_{1},\ldots,x_{n};y)),\]
  where $\mathcal{O}(x_{1},\ldots,x_{n};y)$ is the object representing
  $\Map(\mathfrak{c}_{n}(\blank), \mathcal{O})_{(x_{1},\ldots,x_{n},y)}$.
\end{remark}

\begin{corollary}\label{cor PSeg=PCS}
  Let $\mathcal{U}$ and $\mathbb{S}$ be as above.
  \begin{enumerate}[(i)]
  \item The equivalence $y^{*} \colon \PCS(\DFPU) \isoto \PSeg(\DFU)$
    restricts to an equivalence
    \[ \PCSS(\DFPU) \isoto \PSSeg(\DFU).\]
  \item Let $L_{\mathbb{S}}$ denote the localization $\PU \to
    \PSh_{\mathbb{S}}(\mathcal{U})$. This is symmetric monoidal, and
    so induces a functor $\DFPU \to
    \DF^{\PSh_{\mathbb{S}}(\mathcal{U})}$, which we also denote
    $L_{\mathbb{S}}$. Then the functor $L_{\mathbb{S}}^{*} \colon
    \PSh(\DF^{\PSh_{\mathbb{S}}(\mathcal{U})}) \to \PSh(\DFPU)$
    given by composition with $L_{\mathbb{S}}^{\op}$ restricts to an
    equivalence
    \[ \PCS(\DF^{\PSh_{\mathbb{S}}(\mathcal{U})}) \isoto
    \PCSS(\DFPU).\]
  \end{enumerate}
  In particular, we have an equivalence
  $\PCS(\DF^{\PSh_{\mathbb{S}}(\mathcal{U})}) \isoto \PSSeg(\DFU)$, which
  describes the continuous Segal presheaves on
  $\DF^{\PSh_{\mathbb{S}}(\mathcal{U})}$ as a localization of
  $\PSh(\DFU)$.
\end{corollary}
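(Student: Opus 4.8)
The plan is to deduce both equivalences from results already in hand and then compose them; almost all the work has already been done, and what remains is essentially to match up two descriptions of the same subcategory.

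For part (i), I would start from the equivalence $y^{*} \colon \PCS(\DFPU) \isoto \PSeg(\DFU)$ of Theorem~\ref{thm enr psh}, whose inverse is the fully faithful functor $y_{*}$, so that $\mathcal{O} \simeq y_{*}y^{*}\mathcal{O}$ for every $\mathcal{O} \in \PCS(\DFPU)$. Fixing such an $\mathcal{O}$, the adjunction $y^{*} \dashv y_{*}$ together with this equivalence supplies, for each $s \colon s_{0} \to s_{1}$ in $\mathbb{S}$ and each $n$, natural equivalences
\[ \Map_{\PSh(\DFPU)}((\mathfrak{c}_{n}, s_{i}), \mathcal{O}) \simeq \Map_{\PSh(\DFU)}(y^{*}(\mathfrak{c}_{n}, s_{i}), y^{*}\mathcal{O}), \qquad i = 0, 1, \]
compatible with the map induced by $s$. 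Hence $\mathcal{O}$ is $(\mathfrak{c}_{n}, s)$-local \IFF{} $y^{*}\mathcal{O}$ is $y^{*}(\mathfrak{c}_{n}, s)$-local; since $y^{*}\mathcal{O}$ is already a Segal presheaf by Lemma~\ref{lem y* pres segal}, this says exactly that $\mathcal{O} \in \PCSS(\DFPU)$ \IFF{} $y^{*}\mathcal{O} \in \PSSeg(\DFU)$, so $y^{*}$ restricts to the desired equivalence.

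For part (ii), I would observe that $L_{\mathbb{S}} \colon \PU \to \PSU$ is a symmetric monoidal localization with fully faithful (lax monoidal) right adjoint $i$, so that Corollary~\ref{cor:laxmonftr}(iii) applies with $\mathcal{V} = \PU$ and $\mathcal{W} = \PSU$. This immediately gives that $L_{\mathbb{S}}^{*} \simeq i_{*} \colon \PCS(\DF^{\PSU}) \to \PCS(\DFPU)$ is fully faithful, with essential image the continuous Segal presheaves $\mathcal{O}$ for which each multimorphism presheaf $\mathcal{O}(\mathfrak{c}_{n}, \blank)_{x_{1},\ldots,x_{n},y}$ is representable by an object of $\PSU$, i.e.\ by an $\mathbb{S}$-local object of $\PU$. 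The crux of the argument is then to identify this essential image with $\PCSS(\DFPU)$, which is precisely the content of Remark~\ref{rmk S-seg repr}. This matching is the one genuine point to check, though it is essentially a bookkeeping step once the remark is granted; everything else is formal.

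Finally, I would conclude by composing the two equivalences, obtaining
\[ \PCS(\DF^{\PSU}) \xrightarrow{\ L_{\mathbb{S}}^{*}\ } \PCSS(\DFPU) \xrightarrow{\ y^{*}\ } \PSSeg(\DFU), \]
which is the asserted description of continuous Segal presheaves on $\DF^{\PSU}$ as a localization of $\PSh(\DFU)$. The only substantive inputs are Theorem~\ref{thm enr psh}, Corollary~\ref{cor:laxmonftr}(iii), and Remark~\ref{rmk S-seg repr}, and the main obstacle is merely reconciling the representability-and-locality characterization of $\PCSS(\DFPU)$ in the remark with the essential-image description produced by the corollary.
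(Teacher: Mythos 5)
Your proposal is correct and follows essentially the same route as the paper: part (i) is the observation that the two localizations correspond under the equivalence $y^{*}$ of Theorem~\ref{thm enr psh} (your adjunction computation just spells out why the locality conditions transfer), and part (ii) is exactly the paper's combination of Corollary~\ref{cor:laxmonftr}(iii) with the identification of the essential image via Remark~\ref{rmk S-seg repr}.
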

\begin{proof}
  Part (i) is obvious, since $\PCSS(\DFPU)$ and $\PSSeg(\DFU)$ are,
  respectively, the localization of $\PCS(\DFPU)$ at a certain set of
  maps and the localization of $\PSeg(\DFU)$ at the image of this
  set under the equivalence $y^{*}$ of Theorem~\ref{thm enr psh}.

  To prove (ii), first recall that
  the functor $L_{\mathbb{S}}^{*}$ restricts to a functor
  \[ \PCS(\DF^{\PSh_{\mathbb{S}}(\mathcal{U})}) \to \PCS(\DFPU)\]
  by Lemma~\ref{lem DFV functor}. Now observe that by
  Corollary~\ref{cor:laxmonftr}, this functor is fully faithful, and
  by Remark~\ref{rmk S-seg repr} its image is precisely
  $\PCSS(\DFPU)$.
\end{proof}

We now consider a key special case of this corollary:
\begin{defn}\label{defn:kappacts}
  Let $\mathcal{U}$ be a small symmetric monoidal \icat{}, and suppose
  $\kappa$ is a regular cardinal such that $\mathcal{U}$ has all
  $\kappa$-small colimits and the tensor product preserves these in each variable separately. A
  Segal presheaf $\mathcal{O} \in \PSeg(\DFU)$ is
  \emph{$\kappa$-continuous} if for every $n$,
  \begin{enumerate}[(1)]
  \item  the functor
    $\mathcal{O}(\mathfrak{c}_{n}(\blank)) \colon \mathcal{U}^{\op} \to
    \mathcal{S}$ preserves $\kappa$-small weakly contractible limits,
  \item $\mathcal{O}(\mathfrak{c}_{n}(\emptyset)) \to
    \prod_{n+1}\mathcal{O}(\mathfrak{e})$ is an equivalence.
  \end{enumerate}
  We write $\PkSeg(\DFU)$ for the full subcategory of $\PSh(\DFU)$
  spanned by the $\kappa$-continuous Segal presheaves.
\end{defn}

\begin{propn}\label{propn:kctsS}
  Let $\mathcal{U}$ be as in Definition~\ref{defn:kappacts}. Take
  $\mathbb{S}_{\kappa}$ to be the class of morphisms of the form
  $\colim y(\phi) \to y(\colim \phi)$ where $\phi$ is a $\kappa$-small
  diagram in $\mathcal{U}$ and $y$ is the Yoneda embedding; we take
  $\mathbb{S}'_{\kappa} \subseteq \mathbb{S}_{\kappa}$ to be a subset
  corresponding to a set of representatives for equivalence classes of
  pushouts and $\kappa$-small coproducts in $\mathcal{U}$. Then the
  following are equivalent for $\mathcal{O} \in \PSeg(\DFU)$:
  \begin{enumerate}[(i)]
  \item $\mathcal{O}$ is a $\kappa$-continuous Segal presheaf,
  \item $\mathcal{O}$ is an $\mathbb{S}_{\kappa}$-Segal presheaf,
  \item $\mathcal{O}$ is an $\mathbb{S}'_{\kappa}$-Segal presheaf.
  \end{enumerate}
\end{propn}
\begin{proof}
  By definition, $\mathcal{O}$ is an $\mathbb{S}_{\kappa}$-Segal
  presheaf if for every $\kappa$-small diagram $\phi \colon
  \mathcal{I} \to \mathcal{U}$, the morphism
  \[\Map(y^{*}\mathfrak{c}_{n}(y(\colim \phi)), \mathcal{O}) \to \Map(y^{*}\mathfrak{c}_{n}(\colim
   y(\phi)), \mathcal{O}) \]
  is an equivalence. This holds \IFF{} it holds for all $\kappa$-small
  weakly contractible diagrams and the initial object, since these
  generate the $\kappa$-small colimits. For $\kappa$-small weakly
  contractible colimits, we have $y^{*}\mathfrak{c}_{n}(\colim
    y(\phi))) \simeq \colim y^{*}\mathfrak{c}_{n}(y(\phi))$ by
    Lemma~\ref{lem:y*Ipreswccolim}, so we can identify the morphism
    with $\mathcal{O}(\mathfrak{c}_{n}(\colim \phi)) \to \lim
    \mathcal{O}(\mathfrak{c}_{n}(\phi))$, corresponding to condition
    (1) in Definition~\ref{defn:kappacts}. For the initial object, we
    instead have $y^{*}\mathfrak{c}_{n}(\emptyset) \simeq
    \coprod_{n+1}\mathfrak{e}$ by Lemma~\ref{lem empty}, corresponding
    to condition (2). This proves that (i) and (ii) are
    equivalent. The equivalence of (ii) and (iii) is immediate, since
    $\kappa$-small colimits are generated by pushouts and
    $\kappa$-small coproducts.
\end{proof}

\begin{cor}\label{cor:kappaseg}
  Let $\mathcal{U}$ be a small symmetric monoidal \icat{}, and suppose
  $\kappa$ is a regular cardinal such that $\mathcal{U}$ has all
  $\kappa$-small colimits and the tensor product preserves these. Then
  the Yoneda embedding induces an equivalence
  \[ \PCS(\DF^{\Ind_{\kappa}\mathcal{U}}) \isoto \PkSeg(\DFU).\]
\end{cor}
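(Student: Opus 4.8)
The plan is to realise $\Ind_{\kappa}\mathcal{U}$ as a symmetric monoidal localization of $\PU$ and then quote Corollary~\ref{cor PSeg=PCS}. First I would let $\mathbb{S}$ be the set of comparison morphisms
\[ \colim_{\mathcal{I}}(y \circ p) \to y\bigl(\colim_{\mathcal{I}} p\bigr) \]
in $\PU$, where $p \colon \mathcal{I} \to \mathcal{U}$ ranges over the $\kappa$-small diagrams (these have colimits in $\mathcal{U}$ by hypothesis). By \cite[Proposition 5.3.5.4]{ht} the $\mathbb{S}$-local presheaves are exactly those sending $\kappa$-small colimits in $\mathcal{U}$ to limits, so $\PSh_{\mathbb{S}}(\mathcal{U}) \simeq \Ind_{\kappa}\mathcal{U}$. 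Because the tensor product of $\mathcal{U}$ preserves $\kappa$-small colimits in each variable and Day convolution preserves all colimits in each variable, tensoring a generator of $\mathbb{S}$ with an arbitrary object again lands in the strongly saturated class generated by $\mathbb{S}$; hence $\mathbb{S}$ is compatible with the monoidal structure in the required sense, and the symmetric monoidal structure that $\PSh_{\mathbb{S}}(\mathcal{U})$ inherits is the standard cocontinuous one on $\Ind_{\kappa}\mathcal{U}$.

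With $\Ind_{\kappa}\mathcal{U}$ so identified, Corollary~\ref{cor PSeg=PCS} applies directly and yields equivalences
\[ \PCS(\DF^{\Ind_{\kappa}\mathcal{U}}) \isoto \PCSS(\DFPU) \isoto \PSSeg(\DFU), \]
whose composite is the restriction induced by the Yoneda embedding $\mathcal{U} \to \Ind_{\kappa}\mathcal{U}$. It then only remains to match the localization $\PSSeg(\DFU)$ with $\PkSeg(\DFU)$.

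For this I would invoke Remark~\ref{rmk S-seg repr}: a Segal presheaf $\mathcal{O} \in \PSeg(\DFU)$ is $\mathbb{S}$-Segal exactly when each presheaf $\mathcal{O}(\mathfrak{c}_{n}, \blank) \colon \mathcal{U}^{\op} \to \mathcal{S}$ is $\mathbb{S}$-local, i.e.\ lies in $\Ind_{\kappa}\mathcal{U}$. By the description of the $\mathbb{S}$-local objects recalled above, this says precisely that each $\mathcal{O}(\mathfrak{c}_{n}, \blank)$ sends $\kappa$-small colimits of $\mathcal{U}$ to limits in $\mathcal{S}$, which is the $\kappa$-continuity condition defining $\PkSeg(\DFU)$. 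Thus the two full subcategories of $\PSh(\DFU)$ coincide, and the chain of equivalences gives the statement.

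Essentially all the work is carried by Corollary~\ref{cor PSeg=PCS}, so the substance of the argument is the first paragraph. The step I expect to require the most care is the bookkeeping around the initial object of $\mathcal{U}$ in the final identification: one must check that the generator of $\mathbb{S}$ coming from the empty diagram — the one governing the behaviour at $\emptyset \in \mathcal{U}$ — corresponds under $y^{*}$, via Lemmas~\ref{lem:y*Ipreswccolim} and \ref{lem empty}, to the generating continuous Segal equivalence $\coprod_{n+1}\overline{\mathfrak{e}} \to (\mathfrak{c}_{n}, \emptyset)$, so that the $\kappa$-continuity and $\mathbb{S}$-locality conditions agree at the initial object as well as at the weakly contractible colimits. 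Once this matching of generators is confirmed, the equivalence of the two localizations is immediate.
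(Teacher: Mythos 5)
Your proposal is correct and is essentially the paper's own proof, which consists of exactly one sentence: apply Corollary~\ref{cor PSeg=PCS} with $\mathbb{S}$ the set of comparison maps $\colim y\circ\phi \to y(\colim\phi)$ for $\phi$ ranging over (representatives of) pushouts and $\kappa$-small coproducts in $\mathcal{U}$. Your $\mathbb{S}$ (all $\kappa$-small colimit comparison maps) generates the same strongly saturated class, and the extra verifications you supply --- compatibility of $\mathbb{S}$ with the Day convolution, and the identification $\PSh_{\mathbb{S}}(\mathcal{U})\simeq\Ind_{\kappa}\mathcal{U}$ via \cite[Corollary 5.3.5.4]{ht} --- are exactly what the paper leaves implicit.

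One caution about your middle paragraph: the assertion that ``$\mathcal{O}(\mathfrak{c}_{n},\blank)$ is $\mathbb{S}$-local, i.e.\ sends $\kappa$-small colimits to limits, which is the $\kappa$-continuity condition'' does not literally match the paper's definitions at non-connected diagrams. The definition of $\PkSeg(\DFU)$ only asks for preservation of $\kappa$-small \emph{weakly contractible} limits, while locality with respect to $y^{*}(\mathfrak{c}_{n},s)$ for $s$ the empty-coproduct (resp.\ binary-coproduct) generator amounts, via Lemma~\ref{lem empty} (resp.\ the decomposition in Lemma~\ref{lem:y*Ipreswccolim}), to $\mathcal{O}(\mathfrak{c}_{n},\emptyset)\simeq\mathcal{O}(\mathfrak{e})^{n+1}$ (resp.\ a fibre product over $\mathcal{O}(\mathfrak{e})^{n+1}$) rather than to $\mathcal{O}(\mathfrak{c}_{n},\blank)$ preserving the corresponding limit in $\mathcal{S}$; the conditions only agree once everything is read in the slice over $\mathcal{O}(\mathfrak{e})^{n+1}$, as in Proposition~\ref{propn:CtsDFV}(2). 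Your final paragraph correctly isolates this as the delicate point and names the right lemmas, so the argument goes through once that bookkeeping is done; the imprecision is inherited from Remark~\ref{rmk S-seg repr} rather than introduced by you.
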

\begin{proof}
  Apply Corollary~\ref{cor PSeg=PCS} with $\mathbb{S}$ being the set
  $\mathbb{S}'_{\kappa}$ from Proposition~\ref{propn:kctsS}.
\end{proof}

\begin{remark}\label{rmk PCS Ind pres}
  Since $\mathbb{S}'_{\kappa}$ is a \emph{small} set, the \icat{}
  $\PkSeg(\DFU)$ is the full subcategory of $\PSh(\DFU)$
  of presheaves that are local with respect to a small set of
  maps. It follows that $\PkSeg(\DFU)$ is an accessible localization
  of $\PSh(\DFU)$. In particular, the \icat{}
  $\PCS(\DF^{\Ind_{\kappa}\mathcal{U}}) \simeq \PkSeg(\DFU)$ is
  presentable. We can use this result to see that $\PCS(\DFV)$ is
  presentable for any presentably symmetric monoidal \icat{} $\mathcal{V}$:
\end{remark}

\begin{cor}\label{cor PCSDFV pres}
  Suppose $\mathcal{V}$ is a presentably symmetric monoidal
  \icat{}. Then $\PCS(\DFV)$ is a presentable \icat{}.
\end{cor}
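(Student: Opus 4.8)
The plan is to reduce the general case to the $\Ind_{\kappa}$ situation already treated in Remark~\ref{rmk PCS Ind pres} and Corollary~\ref{cor:kappaseg}, by writing $\mathcal{V}$ as a $\kappa$-filtered colimit completion of a small symmetric monoidal \icat{}.

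First I would choose a regular cardinal $\kappa$ large enough that $\mathcal{V}$ is $\kappa$-compactly generated and the full subcategory $\mathcal{V}^{\kappa} \subseteq \mathcal{V}$ of $\kappa$-compact objects is closed under the symmetric monoidal structure (it contains the unit $\bbone$ and is stable under $\otimes$). Such a $\kappa$ exists by the standard structure theory for presentably symmetric monoidal \icats{} (see \cite[\S4.8.1]{ha}): since the tensor product is accessible and preserves colimits separately in each variable, the tensor product of two $\kappa$-compact objects is again $\kappa$-compact once $\kappa$ is sufficiently large. Setting $\mathcal{U} := \mathcal{V}^{\kappa}$, this is then a small symmetric monoidal \icat{}. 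Because $\kappa$-compact objects are always closed under $\kappa$-small colimits, while $\otimes$ preserves all colimits in each variable, $\mathcal{U}$ has all $\kappa$-small colimits and its tensor product preserves them, so the hypotheses of Corollary~\ref{cor:kappaseg} are satisfied. Finally, as $\mathcal{V}$ is presentable it is idempotent complete, so $\mathcal{V}^{\kappa}$ is as well, and the canonical symmetric monoidal functor exhibits an equivalence $\mathcal{V} \simeq \Ind_{\kappa}(\mathcal{U})$ of presentably symmetric monoidal \icats{}.

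Next I would apply Corollary~\ref{cor:kappaseg} to this $\mathcal{U}$ and $\kappa$, obtaining equivalences $\PCS(\DFV) \simeq \PCS(\DF^{\Ind_{\kappa}\mathcal{U}}) \simeq \PkSeg(\DFU)$. By Remark~\ref{rmk PCS Ind pres}, the \icat{} $\PkSeg(\DFU)$ is the full subcategory of the presheaf \icat{} $\PSh(\DFU)$ spanned by the objects local with respect to a \emph{set} of maps --- namely the Segal maps $\overline{I}_{\Seg} \to \overline{I}$, which form a set since $\DFU$ is essentially small, together with the $\kappa$-continuity maps. Hence $\PkSeg(\DFU)$ is an accessible localization of the presentable \icat{} $\PSh(\DFU)$, and is therefore itself presentable. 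Transporting along the equivalence gives that $\PCS(\DFV)$ is presentable, as claimed.

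The main obstacle is the first step: verifying that $\kappa$ can be chosen so that $\mathcal{V}^{\kappa}$ is simultaneously a symmetric monoidal subcategory and that $\mathcal{V} \simeq \Ind_{\kappa}(\mathcal{V}^{\kappa})$ as symmetric monoidal \icats{}. The delicate point is controlling the interaction of the tensor product with $\kappa$-compactness, which is exactly the content of the cited structure theorem; once it is invoked, everything that follows is a formal application of Corollary~\ref{cor:kappaseg} and Remark~\ref{rmk PCS Ind pres}.
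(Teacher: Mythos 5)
Your proposal is correct and follows essentially the same route as the paper: choose $\kappa$ so that $\mathcal{V}^{\kappa}$ is a small symmetric monoidal subcategory with $\mathcal{V}\simeq \Ind_{\kappa}(\mathcal{V}^{\kappa})$, then apply Corollary~\ref{cor:kappaseg} and Remark~\ref{rmk PCS Ind pres}. The only difference is that where you invoke ``standard structure theory'' for the existence of such a $\kappa$, the paper proves this itself as Proposition~\ref{propn Vkappa symmon} (via Lemma~\ref{lem compact obj} and Proposition~\ref{prop kappa}), precisely because it is the one genuinely delicate point you correctly identify.
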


This follows from Remark~\ref{rmk PCS Ind pres} and the following
technical observation:

\begin{propn}\label{propn Vkappa symmon}
  Suppose $\mathcal{V}^\otimes$ is a presentably symmetric monoidal
  \icat{}. Then there exists a regular cardinal $\kappa$ such that
  $\mathcal{V}$ is $\kappa$-presentable and the full subcategory
  $\mathcal{V}^{\kappa}$ of $\kappa$-compact objects is closed under
  the tensor product.
\end{propn}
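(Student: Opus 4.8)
The plan is to reduce to the standard fact that an accessible functor between accessible \icats{} preserves $\kappa$-compact objects for cofinally many regular cardinals $\kappa$, and to apply this to the tensor bifunctor $\otimes \colon \mathcal{V} \times \mathcal{V} \to \mathcal{V}$. First I would record the formal inputs. Since $\mathcal{V}$ is presentable, so is $\mathcal{V} \times \mathcal{V}$, and compact objects of a product are detected componentwise: a pair $(a,b)$ is $\kappa$-compact in $\mathcal{V} \times \mathcal{V}$ \IFF{} both $a$ and $b$ are $\kappa$-compact in $\mathcal{V}$, since mapping spaces in the product are products of mapping spaces and $\kappa$-filtered colimits commute with finite products in $\mathcal{S}$.

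Next I would check that $\otimes$ is accessible. By assumption it preserves colimits in each variable separately, hence filtered colimits in each variable. For a filtered \icat{} $\mathcal{I}$ the diagonal $\mathcal{I} \to \mathcal{I} \times \mathcal{I}$ is cofinal, since filtered \icats{} are sifted \cite[\S 5.5.8]{ht}; combining the two one-variable colimit formulas with Fubini for colimits and this cofinality gives $\otimes(\colim_{i}(c_{i},d_{i})) \simeq \colim_{i}\colim_{j}(c_{i} \otimes d_{j}) \simeq \colim_{i}(c_{i} \otimes d_{i})$ for any filtered diagram $(c_{i},d_{i})$ in $\mathcal{V} \times \mathcal{V}$. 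Thus $\otimes$ preserves filtered colimits jointly and is an accessible functor between accessible \icats{}. (It is \emph{not} a left adjoint out of the product, as it fails to preserve coproducts jointly, but only accessibility is needed below.)

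Finally I would invoke the key result: for an accessible functor $F$ between accessible \icats{} there are cofinally many regular cardinals $\kappa$ at which the source and target are $\kappa$-accessible, $F$ is $\kappa$-accessible, and $F$ carries $\kappa$-compact objects to $\kappa$-compact objects \cite[Proposition 5.4.7.7]{ht}. Applying this to $\otimes$ produces a regular cardinal $\kappa$ for which $\mathcal{V}$ (hence also $\mathcal{V} \times \mathcal{V}$) is $\kappa$-presentable and $\otimes$ preserves $\kappa$-compact objects. For such $\kappa$, if $a, b \in \mathcal{V}^{\kappa}$ then $(a,b)$ is $\kappa$-compact in $\mathcal{V} \times \mathcal{V}$, so $a \otimes b = \otimes(a,b) \in \mathcal{V}^{\kappa}$; enlarging $\kappa$ once more if necessary we may also arrange $\bbone \in \mathcal{V}^{\kappa}$.

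The step I expect to be the main obstacle is exactly this last cardinal-arithmetic input. Because $\kappa$-accessibility is \emph{not} upward closed in $\kappa$, one cannot simply combine a cofinal set of $\kappa$ with $\mathcal{V}$ $\kappa$-presentable and a cofinal set with $\otimes$ preserving $\kappa$-compacts (an intersection of cofinal sets may be empty); one must instead use the uniformization theory of accessible \icats{} to produce a single $\kappa$ at which both hold simultaneously, which is what the cited result delivers. Everything else — the componentwise description of compacts in a product and the joint accessibility of $\otimes$ — is formal, and once such a $\kappa$ is in hand the conclusion is immediate.
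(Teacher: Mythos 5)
Your proof is correct, but it reaches the conclusion by a different route than the paper. The paper first proves an auxiliary lemma (Lemma~\ref{lem compact obj}): for $\kappa \ll \kappa'$, the $\kappa'$-compact objects of a $\kappa$-accessible \icat{} are exactly the $\kappa'$-small $\kappa$-filtered colimits of $\kappa$-compact objects. It then chooses $\kappa'\gg\kappa$ large enough that $\bbone$ and all tensor products of pairs of $\kappa$-compacts are $\kappa'$-compact, and bootstraps one variable at a time, using only that $\otimes$ preserves $\kappa$-filtered colimits \emph{separately} in each variable; this lets the paper state the result for merely \emph{accessibly} symmetric monoidal \icats{} (Proposition~\ref{prop kappa}), of which the presentable case is a corollary. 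You instead upgrade separate to joint preservation of filtered colimits via siftedness (cofinality of the diagonal) and then cite \cite[Proposition 5.4.7.7]{ht} for the two-variable functor $\otimes\colon \mathcal{V}\times\mathcal{V}\to\mathcal{V}$; that citation internally performs the same $\kappa\ll\kappa'$ uniformization the paper does by hand, so your argument is essentially a packaged version of theirs, shorter at the cost of a black box, and it would also generalize to the accessible setting with the same siftedness step. One small point: your closing ``enlarging $\kappa$ once more if necessary we may also arrange $\bbone\in\mathcal{V}^{\kappa}$'' trips over exactly the non-upward-closure issue you flag in the preceding sentence; you should instead choose $\kappa$ from the cofinal class supplied by \cite[Proposition 5.4.7.7]{ht} to lie above the compactness rank of $\bbone$ (this matters, since the downstream uses of the proposition need $\mathcal{V}^{\kappa}$ to be a symmetric monoidal subcategory, hence to contain the unit, as in the paper's proof).
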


To prove Proposition~\ref{propn Vkappa symmon} it is convenient to
prove a slightly more general result, for accessibly symmetric monoidal
\icats{} in the following sense:
\begin{definition}\label{def accessible monoidal}
  Given a regular cardinal $\kappa$, we say a symmetric monoidal
  $\infty$-category $\xV$ is \emph{$\kappa$-accessibly symmetric
    monoidal} if the underlying \icat{} $\mathcal{V}$ is
  $\kappa$-accessible and the tensor product preserves
  $\kappa$-filtered colimits separately in each variable. We say
  $\mathcal{V}$ is \emph{accessibly symmetric monoidal} if it is
  $\kappa$-accessibly symmetric monoidal for some cardinal $\kappa$.
\end{definition}

\begin{lemma}\label{lem compact obj}
  Let $\kappa$ and $\kappa'$ be two regular cardinals such that
  $\kappa\ll\kappa'$ (in the sense of \cite[Definition 5.4.2.8]{ht}). If $\xcc$ is a $\kappa$-accesible
  $\infty$-category, then an object in $\xcc$ is $\kappa'$-compact if
  and only if it is a $\kappa'$-small $\kappa$-filtered colimit of
  $\kappa$-compact objects.
\end{lemma}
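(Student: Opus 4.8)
The plan is to prove the two implications separately, using throughout that $\xcc \simeq \Ind_{\kappa}(\xcc^{\kappa})$, so that every object is a $\kappa$-filtered colimit of $\kappa$-compact objects and, in particular, all $\kappa$-filtered (hence all $\kappa'$-small $\kappa$-filtered) colimits exist in $\xcc$. For the ``if'' direction I would first observe that since $\kappa \leq \kappa'$ every $\kappa$-compact object is automatically $\kappa'$-compact, because a $\kappa'$-filtered diagram is in particular $\kappa$-filtered. It then suffices to know that $\kappa'$-compact objects are closed under $\kappa'$-small colimits, which is formal from the commutation of $\kappa'$-small limits with $\kappa'$-filtered colimits in $\xS$ (\cite[Proposition 5.3.3.3]{ht}): for a $\kappa'$-small diagram $(X_{j})$ of $\kappa'$-compact objects and a $\kappa'$-filtered diagram $(Z_{\alpha})$,
\[ \Map(\colim_{j} X_{j}, \colim_{\alpha} Z_{\alpha}) \simeq \lim_{j}\colim_{\alpha}\Map(X_{j}, Z_{\alpha}) \simeq \colim_{\alpha}\lim_{j}\Map(X_{j}, Z_{\alpha}) \simeq \colim_{\alpha}\Map(\colim_{j} X_{j}, Z_{\alpha}). \]
In particular a $\kappa'$-small $\kappa$-filtered colimit of $\kappa$-compact objects is $\kappa'$-compact; note this implication does not even use the $\kappa$-filtered hypothesis.

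For the converse, let $X$ be $\kappa'$-compact and write $X \simeq \colim_{i \in I} X_{i}$ with $I$ a $\kappa$-filtered \icat{} and each $X_{i} \in \xcc^{\kappa}$. The indexing \icat{} $I$ is typically not $\kappa'$-small, so the key step is to cut it down, and this is exactly where the hypothesis $\kappa \ll \kappa'$ enters. It guarantees that $I$ is the union of a $\kappa'$-filtered family $\{I_{\alpha}\}$ of $\kappa'$-small, $\kappa$-filtered full subcategories: to enlarge a given $\kappa'$-small subcategory to such an $I_{\alpha}$ one transfinitely closes up under a choice of cocone for every $\kappa$-small subdiagram, and the bound $\lambda^{<\kappa} < \kappa'$ for $\lambda < \kappa'$ supplied by $\kappa \ll \kappa'$ (\cite[Definition 5.4.2.8]{ht}) ensures the process stays $\kappa'$-small, while $\kappa$-filteredness of $I$ provides the cocones. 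Setting $Y_{\alpha} := \colim_{i \in I_{\alpha}} X_{i}$, each $Y_{\alpha}$ is a $\kappa'$-small $\kappa$-filtered colimit of $\kappa$-compact objects, and since $I = \colim_{\alpha} I_{\alpha}$ in $\CatI$ we get $X \simeq \colim_{\alpha} Y_{\alpha}$ as a $\kappa'$-filtered colimit. As $X$ is $\kappa'$-compact, the identity of $X$ factors through some $Y_{\alpha}$, exhibiting $X$ as a \emph{retract} of $Y_{\alpha}$.

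It then remains to absorb this retract, i.e.\ to show that a retract of a $\kappa'$-small $\kappa$-filtered colimit of $\kappa$-compact objects is again of that form. Here I would use that $\xcc^{\kappa}$ is idempotent complete, being the \icat{} of compact objects of an accessible \icat{} (\cite[Corollary 5.4.2.4]{ht}): the idempotent on $Y_{\alpha} = \colim_{I_{\alpha}} X_{i}$ restricts, via $\kappa$-compactness of the $X_{i}$ and $\kappa$-filteredness of $I_{\alpha}$, to a compatible system of self-maps of the diagram, which can be split termwise after passing to a cofinal $\kappa$-filtered $\kappa'$-small reindexing; the split summands are retracts of the $X_{i}$, hence still $\kappa$-compact by idempotent completeness of $\xcc^{\kappa}$, and their colimit is $X$. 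I expect this last step to be the main obstacle: making the splitting of the idempotent on the ind-diagram homotopy-coherent, so that one genuinely produces a diagram valued in $\xcc^{\kappa}$ with colimit $X$ rather than merely recovering $X$ as a retract. Both this and the combinatorial closure argument are standard within the theory of accessible \icats{} as developed in \cite[\S\S 5.3.4, 5.4.2]{ht}, but they are where the substantive work lies.
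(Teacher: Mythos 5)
Your overall strategy is sound and is essentially the one underlying the references the paper leans on: the ``if'' direction via closure of $\kappa'$-compact objects under $\kappa'$-small colimits is exactly what the paper does (citing \cite[Corollary 5.3.4.15]{ht}), and your decomposition of the indexing \icat{} $I$ into a $\kappa'$-filtered union of $\kappa'$-small $\kappa$-filtered subcategories using $\kappa \ll \kappa'$ is the content of the proof of \cite[Proposition 5.4.2.11]{ht}, which the paper simply invokes. Up to the point where you exhibit $X$ as a retract of some $Y_{\alpha}$, everything is fine.

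The genuine gap is the final retract-absorption step, which you correctly flag as ``where the substantive work lies'' but do not close, and the mechanism you sketch for closing it would not work as described. In an \icat{} an idempotent is not merely a self-map $e$ with $e^{2} \simeq e$ but an infinite tower of coherence data (a diagram indexed by $\name{Idem}$), and there is no well-defined operation of ``restricting'' it to a compatible system of self-maps of the ind-diagram $(X_{i})_{i \in I_{\alpha}}$ and ``splitting termwise''; making this homotopy-coherent is precisely the problem, not a routine verification. The paper sidesteps the issue entirely by a different final step: it considers the full subcategory $\xcc'$ of $\xcc$ spanned by \emph{all} colimits of $\kappa'$-small $\kappa$-filtered diagrams in $\xcc^{\kappa}$, checks that $\xcc'$ is essentially small and contained in $\xcc^{\kappa'}$, uses \cite[Proposition 5.4.2.11]{ht} to see that $\xcc'$ generates $\xcc$ under $\kappa'$-filtered colimits, so that $\xcc \simeq \xInd_{\kappa'}(\xcc')$ and hence $\xcc^{\kappa'}$ is the idempotent completion of $\xcc'$ by \cite[Lemma 5.4.2.4]{ht}; it then concludes by observing that $\xcc'$ is \emph{already} idempotent complete (\cite[Proposition 4.4.5.15]{ht}, since the splitting of an idempotent is computed by a colimit that can be taken inside $\xcc'$). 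This replaces your termwise splitting by a purely formal statement about the subcategory of all such colimits, which is the missing idea in your write-up. If you want to salvage your argument you should replace the last paragraph with this idempotent-completion argument rather than attempting to split the idempotent on the diagram itself.
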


\begin{proof}
  Let $\xcc^{\kappa}$ and $\xcc^{\kappa'}$ denote the full
  subcategories of $\xcc$ spanned by the $\kappa$-compact and
  $\kappa'$-compact objects, respectively. If we write $\xcc'$ for the
  full subcategory of $\xcc$ spanned by the colimits of all
  $\kappa'$-small $\kappa$-filtered diagrams in $\xcc^{\kappa}$, then
  $\xcc'$ is essentially small. This is because $\xcc$ is locally
  small and the collection of all equivalence classes of
  $\kappa'$-small $\kappa$-filtered diagrams is bounded. Since
  $\xcc^{\kappa'}$ is closed under $\kappa'$-small colimits by
  \cite[Corollary 5.3.4.15]{ht}, the $\infty$-category $\xcc'$ is a
  full subcategory of $\xcc^{\kappa'}$.
  
  The proof of \cite[Proposition 5.4.2.11]{ht} shows that $\xcc'$
  generates $\xcc$ under small $\kappa'$-filtered colimits. According
  to \cite[Lemma 5.4.2.4]{ht}, the $\infty$-category
  $\xcc^{\kappa'}\subseteq\xcc=\xInd_{\kappa'}(\xcc')$ is given by the
  idempotent completion of $\xcc'$. Since $\xcc'$ is already
  idempotent complete by \cite[Proposition 4.4.5.15]{ht}, the
  $\infty$-categories $\xcc'$ and $\xcc^{\kappa'}$ coincide.
\end{proof}

\begin{definition}\label{def Vkappa}
  Let $\xV^\otimes \to \xF_{*}$ be a symmetric monoidal
  $\infty$-category.  For a cardinal $\kappa$, let $\xV^\kappa$ denote
  the full subcategory of $\xV$ spanned by $\kappa$-compact objects in
  $\xV$. We write $\xV^{\kappa,\otimes}$ for the full subcategory of
  $\xV^\otimes$ spanned by objects lying in
  $(\xV^\kappa)^n\subseteq (\xV)^n\simeq\xV_{\langle n\rangle}$ for
  all $n\geq 1$.
\end{definition}
\begin{remark}
  We emphasize that $\xV^{\kappa,\otimes}$ is in general \emph{not}
  the full subcategory of the $\infty$-category $\xV^\otimes$ spanned
  by $\kappa$-compact objects. Moreover, it is in general also not a
  symmetric monoidal \icat{}, though it is an \iopd{}.
\end{remark}

\begin{proposition}\label{prop kappa}
  Suppose $\xV$ is a $\kappa$-accessibly symmetric monoidal
  $\infty$-category. Then there exists a regular cardinal $\kappa'$
  such that the full subcategory $\xV^{\kappa'}$ is a symmetric
  monoidal $\infty$-category.
\end{proposition}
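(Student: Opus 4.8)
The plan is to reduce the statement to the single assertion that, for a suitable $\kappa'$, the subcategory $\mathcal{V}^{\kappa'}$ is closed under the tensor product and contains the unit $\bbone$ (the empty tensor). Granting this, the full subcategory $\mathcal{V}^{\kappa',\otimes} \subseteq \mathcal{V}^{\otimes}$ of Definition~\ref{def Vkappa} is automatically a symmetric monoidal \icat{}: its fibre over $\langle n\rangle$ is $(\mathcal{V}^{\kappa'})^{n}$ by construction, so the Segal condition is immediate, and the only remaining point is that $\mathcal{V}^{\kappa',\otimes} \to \bbG^{\op}$ is a coCartesian fibration. A coCartesian lift in $\mathcal{V}^{\otimes}$ of a morphism $f \colon \langle m\rangle \to \langle n\rangle$ at $(x_{1},\ldots,x_{m})$ has $i$-th component $\bigotimes_{j \in f^{-1}(i)} x_{j}$ (with the empty tensor $\bbone$ when $f^{-1}(i) = \emptyset$); once $\mathcal{V}^{\kappa'}$ is closed under $\otimes$ and contains $\bbone$, this target lies in $\mathcal{V}^{\kappa',\otimes}$, and a coCartesian morphism of $\mathcal{V}^{\otimes}$ between objects of a full subcategory stays coCartesian therein (tested only against objects of the subcategory). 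Thus the real content is the tensor-closure of $\mathcal{V}^{\kappa'}$.

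To choose $\kappa'$, I would use that $\mathcal{V}^{\kappa}$ is essentially small, so there is a set of representatives for its objects. Every object of the $\kappa$-accessible \icat{} $\mathcal{V}$ is $\lambda$-compact for some regular $\lambda$: writing it as a $\kappa$-filtered colimit of $\kappa$-compact objects and picking $\lambda$ with $\kappa \ll \lambda$ large enough to bound the indexing diagram, Lemma~\ref{lem compact obj} shows it is $\lambda$-compact. Hence each of the set-many objects $v \otimes w$ with $v, w \in \mathcal{V}^{\kappa}$, as well as $\bbone$, has a regular cardinal witnessing its compactness. Using that $\ll$ is cofinal in the cardinals (see \cite[\S 5.4.2]{ht}), I then fix a regular $\kappa'$ with $\kappa \ll \kappa'$ exceeding all these witnesses; by construction $\bbone$ and every $v \otimes w$ with $v,w \in \mathcal{V}^{\kappa}$ are $\kappa'$-compact.

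It then remains to check closure under $\otimes$ for this $\kappa'$. Given $a, b \in \mathcal{V}^{\kappa'}$, since $\kappa \ll \kappa'$ Lemma~\ref{lem compact obj} lets me write $a \simeq \colim_{i \in I} a_{i}$ and $b \simeq \colim_{j \in J} b_{j}$ as $\kappa'$-small $\kappa$-filtered colimits with $a_{i}, b_{j} \in \mathcal{V}^{\kappa}$. As $\otimes$ preserves $\kappa$-filtered colimits separately in each variable,
\[ a \otimes b \simeq \colim_{i \in I}\colim_{j \in J}(a_{i} \otimes b_{j}) \simeq \colim_{(i,j) \in I \times J}(a_{i} \otimes b_{j}). \]
The indexing \icat{} $I \times J$ is $\kappa'$-small, and by the choice of $\kappa'$ each $a_{i} \otimes b_{j}$ is $\kappa'$-compact; since $\kappa'$-compact objects are closed under $\kappa'$-small colimits by \cite[Corollary 5.3.4.15]{ht}, $a \otimes b$ is $\kappa'$-compact. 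The unit is $\kappa'$-compact by construction, completing the verification.

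The only delicate point — and the step I expect to be the main obstacle — is arranging $\kappa'$ so that simultaneously the indexing \icat{} $I \times J$ remains $\kappa'$-small while every tensor $a_{i} \otimes b_{j}$ of $\kappa$-compact objects is \emph{already} $\kappa'$-compact. This is precisely what the interplay of Lemma~\ref{lem compact obj} (reexpressing $\kappa'$-compactness via $\kappa'$-small $\kappa$-filtered colimits of $\kappa$-compacts) and the cofinality of $\ll$ (letting $\kappa'$ absorb the set-many compactness witnesses produced by the essentially small $\mathcal{V}^{\kappa}$) is designed to deliver; everything else is routine bookkeeping.
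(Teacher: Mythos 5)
Your proof is correct and follows essentially the same route as the paper: choose $\kappa'\gg\kappa$ absorbing the set-many compactness witnesses of $\bbone$ and the tensors of $\kappa$-compact objects, then use Lemma~\ref{lem compact obj} and \cite[Corollary 5.3.4.15]{ht} to propagate $\kappa'$-compactness to tensors of arbitrary $\kappa'$-compact objects. The only (harmless) differences are that you expand both tensor factors at once over a $\kappa'$-small product index where the paper bootstraps one variable at a time, and that you spell out explicitly why tensor-closure plus the unit yields a symmetric monoidal subcategory, which the paper leaves implicit.
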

\begin{proof}
  Since $\xV$ is $\kappa$-accessibly symmetric monoidal, the
  $\infty$-category $\xV^{\kappa,\otimes}$ is essentially small and we
  can choose a cardinal $\kappa'$ such that $\kappa'\gg\kappa$,
  $\bbone\in\xV^{\kappa'} $ and $v\otimes w\in \xV^{\kappa'}$ for any
  pair of objects $v,w\in \xV^{\kappa}$.
  
  If $w\in \xV^{\kappa'}$, then, by Lemma~\ref{lem compact obj}, there
  exists a $\kappa'$-small $\kappa$-filtered colimit diagram
  $f\colon \xI^\triangleright\to \xV$ such that $f(i)\in \xV^{\kappa}$
  and $w=\colim_{i\in I} f(i)$. Since $\mathcal{V}$ is
  $\kappa$-accessibly symmetric monoidal, the tensor product preserves
  $\kappa$-filtered colimits in each variable. Therefore, for every
  $v\in \xV^{\kappa}$, we have that
  $v\otimes (\colim_{i\in I} f(i))\simeq \colim_{i\in I} (v\otimes
  f(i))$
  is a $\kappa'$-small colimit of $\kappa'$-compact objects, and is
  thus $\kappa'$-compact by \cite[Corollary 5.3.4.15]{ht}.  Thus we
  see that $v\otimes w\in \xV^{\kappa'}$, if $v\in \xV^{\kappa'}$ and
  $w\in\xV^{\kappa}$. Applying the same argument to the other variable, we deduce that
  $\xV^{\kappa'}$ is closed under tensor products, and so it is a
  symmetric monoidal subcategory of $\mathcal{V}$.
\end{proof}

Since every presentably symmetric monoidal \icat{} is by definition
$\kappa$-accessibly symmetric monoidal for some $\kappa$,
Proposition~\ref{propn Vkappa symmon} follows as a special case.

\subsection{Inner Anodyne Maps}\label{subsec inner anodyne}
Recall that a morphism in $\PSh(\simp)$ is \emph{inner anodyne} if it
is in the weakly saturated class generated by the inner horn
inclusions $\Lambda^{n}_{i} \to \Delta^{n}$ (viewed as discrete
simplicial spaces). In this subsection we will consider an analogous
class of inner anodyne maps in $\PSh(\DFV)$ and show that these are
Segal equivalences.

\begin{definition}
  Let $\xcc$ be a cocomplete $\infty$-category.
  \begin{enumerate}
    \item  We say a class of morphisms in $\xcc$ is \emph{weakly saturated} if it is closed under pushouts, transfinite compositions and retracts. 
    \item Given a class $\mathbb{S}$ of morphisms in $\xcc$, we write
      $\langle \mathbb{S}\rangle$ for the smallest weakly saturated
      class of morphisms in $\xcc$ containing $\mathbb{S}$ and we say
      that $\langle \mathbb{S}\rangle$ is \emph{generated} by
      $\mathbb{S}$.
  \end{enumerate}
\end{definition}

\begin{remark}
  In the model category literature, the 1-categorical analogue of a
  weakly saturated class is usually just called a \emph{saturated}
  class. Following \cite{ht}, we use the term \emph{weakly} saturated
  to avoid confusion with the \emph{strongly} saturated classes
  relevant to accessible localizations of presentable \icat{} (\cf{}
  Definition~\ref{defn:ssat}). It is then clear that from the definition that strongly saturated classes are also weakly saturated.
\end{remark}

\begin{definition}\label{def partical/lambda x}
Let $p$ denote the Cartesian
  fibration $\DF \to \simp$ and let $I$ be an object of $\DF$ with $p(I) =
  [n]$. For $1\leq k\leq n-1$, we define $\Lambda^{n}_k I$ by the pullback
  \nolabelcsquare{\Lambda^{n}_kI}{
    I}{p^{*}\Lambda^{n}_k}{p^*(\Delta^n).}
  If $\mathcal{V}$ is a symmetric monoidal \icat{} and $\overline{I}$
  is an object of $\DFV$ over $I$, we similarly define
  $\Lambda^{n}_k\overline{I}$ by the pullback
  \nolabelcsquare{\Lambda^{n}_k\overline{I}}{
    \overline{I}}{(pq)^{*}\Lambda^{n}_k}{(pq)^*(\Delta^n),}
  where $q$ denotes the Cartesian fibration $\DFV \to \DF$.
  We refer to the inclusions
  $\Lambda^{n}_{k}\overline{I} \hookrightarrow \overline{I}$ as
  \emph{inner horn inclusions} and we say a map in $\PSh(\DFV)$ is \emph{inner anodyne} if it lies in the weakly saturated class generated by the
  inner horn inclusions (for all $\overline{I}$ in $\DFV$).
\end{definition}

\begin{remark}
  In $\PSh(\simp)$, a presheaf is a Segal space \IFF{} it has the
  right lifting property with respect to the inner anodyne maps. The
  analogous statement for Segal presheaves on $\DFV$
  is not true for the inner anodyne maps as defined here,
  since they clearly do not generate the maps
  \[ \coprod_{i \in \mathbf{m}}\mathfrak c_{n_i}(v_i)
  \to ([1], \mathbf{n} \to \mathbf{m})(v_{1},\ldots,v_{m}),\]
  \[ \coprod_{i \in \mathbf{m}}\mathfrak e 
  \to ([0], \mathbf{m}).\]
  It should be possible to enlarge the class of inner anodyne maps to
  include ``inner horns'' related also to these maps, but we will not
  consider this issue here.
\end{remark}

Our goal is now to prove the following:
\begin{propn}\label{propn:iaDFV}
  The inner anodyne maps in $\PSh(\DFV)$ are Segal equivalences.
\end{propn}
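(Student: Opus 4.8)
The plan is to reduce to the classical simplicial statement by transporting it along the Cartesian fibration $pq \colon \DFV \to \simp$. Since the Segal equivalences form a strongly saturated, in particular weakly saturated, class (Definition~\ref{def set s}), and the inner anodyne maps are by definition the weakly saturated class generated by the inner horn inclusions $\Lambda^{n}_{k}\overline{I} \hookrightarrow \overline{I}$ (for $0 < k < n$ and all $\overline{I} \in \DFV$), it suffices to prove that each such inner horn inclusion is a Segal equivalence. The input from the Segal side is that every spine map $\overline{J}|_{\Delta^{n}_{\Seg}} \to \overline{J}$ is a Segal equivalence, which holds because these maps occur among the generators in Proposition~\ref{propn:SegDFVcond}(3).

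Fix $\overline{I} \in \DFV$ over $[n]$. Using that $pq$ is a Cartesian fibration, sending $\alpha \colon [m] \to [n]$ to the Cartesian pullback $\alpha^{*}\overline{I} \in \DFV$ defines a functor $\simp_{/[n]} \to \DFV$, whose left Kan extension along the Yoneda embedding is a colimit-preserving functor
\[ F \colon \PSh(\simp)_{/\Delta^{n}} \simeq \PSh(\simp_{/[n]}) \longrightarrow \PSh(\DFV). \]
For $\alpha \colon [m] \to [n]$ one has $\overline{I} \times_{(pq)^{*}\Delta^{n}} (pq)^{*}\Delta^{m} \simeq \alpha^{*}\overline{I}$, since a map from $\overline{J}$ into either side is a map $\overline{J}\to\overline{I}$ together with a factorization of $pq(\overline{J}) \to [n]$ through $\alpha$, the Cartesian property of $\alpha^{*}\overline{I}\to\overline{I}$ matching the two descriptions. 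As colimits in a presheaf \icat{} are universal, applying $\overline{I}\times_{(pq)^{*}\Delta^{n}}(pq)^{*}(\blank)$ to the decomposition of a simplicial set $K \to \Delta^{n}$ into its simplices shows that $F$ sends $(K \to \Delta^{n})$ to $\overline{I}\times_{(pq)^{*}\Delta^{n}}(pq)^{*}K$. Comparing with Definition~\ref{def partical/lambda x} this gives $F(\Delta^{n}\xrightarrow{\id}\Delta^{n}) = \overline{I}$ and $F(\Lambda^{n}_{k} \hookrightarrow \Delta^{n}) = (\Lambda^{n}_{k}\overline{I} \to \overline{I})$, while $F(\mathrm{Sp}^{n} \to \Delta^{n}) = (\overline{I}|_{\Delta^{n}_{\Seg}} \to \overline{I})$. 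More generally, for any $\Delta^{m} \xrightarrow{\alpha} \Delta^{n}$, $F$ carries the spine inclusion $\mathrm{Sp}^{m} \hookrightarrow \Delta^{m}$ (viewed over $\Delta^{n}$) to the spine map $(\alpha^{*}\overline{I})|_{\Delta^{m}_{\Seg}} \to \alpha^{*}\overline{I}$, which is a Segal equivalence by the first paragraph.

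Let $W$ be the class of morphisms $g$ in $\PSh(\simp)_{/\Delta^{n}}$ with $F(g)$ a Segal equivalence. As $F$ preserves colimits and the Segal equivalences are strongly saturated, $W$ is closed under cobase change, colimits in the arrow category, retracts and the $2$-out-of-$3$ property and contains the equivalences; hence $W$ is itself strongly saturated. By the previous paragraph $W$ contains every spine inclusion over $\Delta^{n}$, so it contains the strongly saturated class they generate. By the classical combinatorics of simplicial sets this class contains the inner horn inclusion $\Lambda^{n}_{k} \hookrightarrow \Delta^{n}$: indeed, inner anodyne maps of simplicial sets are inverted upon localizing at the spine inclusions (equivalently, factoring $\mathrm{Sp}^{n}\hookrightarrow\Lambda^{n}_{k}\hookrightarrow\Delta^{n}$, the first map lies in this class and $2$-out-of-$3$ applies); see e.g.\ \cite{ht,Rezk}. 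Therefore $(\Lambda^{n}_{k} \hookrightarrow \Delta^{n}) \in W$, that is, $\Lambda^{n}_{k}\overline{I} \to \overline{I}$ is a Segal equivalence, which is what we needed.

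The main obstacle is the second paragraph: setting up $F$ as a colimit-preserving functor out of the slice and, crucially, identifying its value on the horn inclusion with the pullback-defined map $\Lambda^{n}_{k}\overline{I} \to \overline{I}$ of Definition~\ref{def partical/lambda x}. Once this bookkeeping with the Cartesian fibration is in place, the remaining ingredient is the purely simplicial fact that inner horn inclusions are Segal equivalences, which is standard.
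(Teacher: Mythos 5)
Your argument is essentially the paper's proof in different packaging: both reduce to the inner horn inclusions $\Lambda^n_k\overline{I}\to\overline{I}$, transport the problem along the Cartesian fibration $pq$ using that pullback preserves colimits (your functor $F$ is exactly $\overline{I}\times_{(pq)^*\Delta^n}(pq)^*(\blank)$, and your identification of its values on representables is Lemma~\ref{lem cart lift}), identify the images of the spine inclusions with the generating Segal equivalences $(\sigma^*\overline{I})|_{\Delta^m_{\Seg}}\to\sigma^*\overline{I}$ from Proposition~\ref{propn:SegDFVcond}, and then invoke the Joyal--Tierney combinatorics.

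The one step you should tighten is the last one. Your class $W$ lives in $\PSh(\simp)_{/\Delta^{n}}$ and contains only the spine inclusions \emph{equipped with a map to $\Delta^{n}$}, so what you actually need is that $\Lambda^{n}_{k}\hookrightarrow\Delta^{n}$ lies in the strongly saturated class generated by these \emph{relative} spine inclusions in the slice. The absolute statement you cite (inner anodyne maps are Segal equivalences in $\PSh(\simp)$) does not formally imply this: the forgetful functor from the slice only shows that the sliced class is contained in the preimage of the absolute one, not the converse. The statement you need is nevertheless true, because the Joyal--Tierney construction exhibiting $\Lambda^{n}_{k}\hookrightarrow\Delta^{n}$ from spine inclusions uses only pushouts, compositions and right cancellation among subcomplexes mapping to $\Delta^{n}$, and hence lifts to the slice --- this is precisely what the notion of a weakly right-saturated class together with Propositions~\ref{propn:pbwsat} and~\ref{propn:JoyalTierney} is designed to capture. (Alternatively one can argue by locality, using that $\Delta^{n}$ is itself a Segal space, so that an object of the slice is local for the relative spine inclusions if and only if its underlying presheaf is a Segal space.) With that point made explicit, your proof is complete.
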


We will prove this at the end of this subsection after making some
formal observations. The starting point for the proof is the following
reinterpretation of the proof of \cite[Lemma
3.5]{JoyalTierney}:
\begin{propn}[Joyal--Tierney]\label{propn:JoyalTierney}
  Every inner horn inclusion
  $\Lambda^{n}_{k} \hookrightarrow \Delta^{n}$ can be constructed from the
  spine inclusions $\Delta^{i}_{\Seg} \to \Delta^{i}$ ($i \leq n$) by
  a finite sequence of compositions, pushouts, and right cancellations.
\end{propn}

\begin{proposition}\label{prop Segal equ = inner horn}
  In $\PSh(\simp)$ inner horn inclusions generate the strongly saturated class of Segal equivalences.
\end{proposition}
\begin{proof}
  Inner horn inclusions are Segal equivalences as they are generated
  by the spine inclusions according to the previous
  proposition. Hence, it suffices to show that the spine inclusions, which
  generate the Segal equivalences, lie in the strongly saturated class
  generated by the inner horn inclusions. This follows from
  \cite[Proposition 2.13]{Joyal_Quasicat} which says that spine
  inclusions are inner anodyne, and so even lie in the weakly
  saturated class generated by the inner horn inclusions.
\end{proof}

Since pullbacks in an $\infty$-topos preserve colimits, we get the
following:
\begin{propn}\label{propn:pbwsat}
  Suppose $\mathbb{S}$ is a set of morphisms in an $\infty$-topos
  $\mathcal{X}$. If \csquare{X}{Y}{X'}{Y'}{f}{\xi}{\eta}{f'} is a
  pullback square, let $\eta^*\mathbb{S}$ denotes the class of
  morphisms $\eta^{*}A \to \eta^{*}B$ given by pullbacks
  \[
    \begin{tikzcd}
      \eta^{*}A \arrow{r} \arrow{d} & \eta^{*}B \arrow{r} \arrow{d} &
      Y \arrow{d}{\eta}\\
      A \arrow{r}{s} & B \arrow{r} & Y',
    \end{tikzcd}
  \]
  for all $s \in \mathbb{S}$ and all maps $B \to Y'$.
  \begin{enumerate}[(i)]
  \item If $f'$ is a (possibly transfinite) composite of morphisms in
    $\mathbb{S}$, then $f$ is a (transfinite) composite of
    morphisms in $\eta^{*}\mathbb{S}$.
  \item If $f'$ is a cobase change of a morphism in $\mathbb{S}$,
    then $f$ is a cobase change of a morphism in $\eta^*\mathbb{S}$.
  \item If $f'$ is a retract of a morphism in $\mathbb{S}$, then
    $f$ is a retract of a morphism in $\eta^{*}\mathbb{S}$.
  \item If $f'$ is obtained from $\mathbb{S}$ by right cancellation
    (\ie{} there exists $g'$ in $\mathbb{S}$ such that $f'g'$ is in
    $\mathbb{S}$) then $f$ is obtained from $\eta^{*}\mathbb{S}$ by
    right cancellation.
  \end{enumerate}
\end{propn}

Combining this observation with the small object argument, we have:
\begin{cor}\label{cor:wsatpb}
  Suppose $\mathbb{S}$ is a set of morphisms in an $\infty$-topos
  $\mathcal{X}$. If \csquare{X}{Y}{X'}{Y'}{f}{\xi}{\eta}{f'} is a
  pullback square and $f'$ lies in the weakly saturated class
  $\angled{\mathbb{S}}$ generated by $\mathbb{S}$, then $f$ lies in
  the weakly saturated class $\angled{\eta^{*}\mathbb{S}}$. \qed
\end{cor}

\begin{lemma}\label{lem cart lift}
  Let $p\colon \xE\to \xcc$ be a Cartesian fibration and let
  $p^*\colon \PSh(\xcc)\to \PSh(\xE)$ denote the functor given by
  composition with $p$. If
  $\overline{s}\to \overline{t}$ is a $p$-Cartesian morphism lying over
  $s\to t$ in $\xcc$, then there is a pullback square in $\PSh(\xE)$
\[
\begin{tikzcd}
    \overline{s} \ar[r] \ar[d]    &   \overline{t} \ar[d]  \\
    p^* s \ar[r]             &   p^* t   ,  
\end{tikzcd}
\]
  where the vertical maps are the adjunction units $\overline{s}\to
        p^*p_!\overline{s}\simeq  p^* s$ and $\overline{t}\to
        p^*p_!\overline{t}\simeq  p^* t$, respectively.
\end{lemma}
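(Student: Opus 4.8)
The plan is to check that the square is a pullback objectwise, whereupon it becomes a direct restatement of the mapping-space characterization of Cartesian morphisms.

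First I would identify the vertical maps explicitly. The functor $p^{*} \colon \PSh(\xcc) \to \PSh(\xE)$ is restriction along $p$, so its left adjoint $p_{!}$ is left Kan extension; on representables this sends the presheaf represented by an object to the one represented by its image, i.e.\ $p_{!}\overline{s} \simeq s$ and $p_{!}\overline{t} \simeq t$ (this is immediate from the universal property of $p_{!} \dashv p^{*}$ together with the Yoneda lemma). Thus $p^{*}p_{!}\overline{s} \simeq p^{*}s$, and unwinding the adjunction unit shows that, when evaluated at an object $e \in \xE$, the map $\overline{s} \to p^{*}s$ is exactly the map
\[ \xMap_{\xE}(e, \overline{s}) \longrightarrow \xMap_{\xcc}(p(e), s) \]
induced by $p$; likewise for $\overline{t} \to p^{*}t$.

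Since limits in $\PSh(\xE) = \Fun(\xE^{\op}, \xS)$ are computed pointwise, the square is a pullback \IFF{} its evaluation at each $e \in \xE$ is a pullback of spaces. This evaluation is the square
\[
\begin{tikzcd}
\xMap_{\xE}(e, \overline{s}) \ar[r] \ar[d] & \xMap_{\xE}(e, \overline{t}) \ar[d] \\
\xMap_{\xcc}(p(e), s) \ar[r] & \xMap_{\xcc}(p(e), t),
\end{tikzcd}
\]
with horizontal maps given by postcomposition with $\overline{s} \to \overline{t}$ and with its image $s \to t$, and vertical maps the maps induced by $p$ identified above. But this is precisely the square appearing in \cite[Proposition 2.4.4.3]{ht}: the morphism $\overline{s} \to \overline{t}$ is $p$-Cartesian \IFF{} this square is a homotopy pullback for every $e \in \xE$. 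As $\overline{s} \to \overline{t}$ is assumed $p$-Cartesian, each such square is a pullback, and hence so is the original square in $\PSh(\xE)$. The only steps requiring care are the identification of the adjunction unit with the map $p$ induces on mapping spaces, and matching our pointwise square with the one in \cite{ht}; neither presents a real obstacle, as the lemma is in essence a reformulation of the definition of a Cartesian edge.
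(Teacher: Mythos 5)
Your proof is correct and is essentially the paper's own argument: the paper reduces to checking the square of mapping spaces out of representables (which by Yoneda is the same as your pointwise evaluation) and then invokes the mapping-space characterization of $p$-Cartesian morphisms, exactly as you do via \cite[Proposition 2.4.4.3]{ht}. Your identification of the adjunction units with the maps induced by $p$ on mapping spaces is also accurate, so no gaps remain.
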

\begin{proof}
  We have to show that for every presheaf $F\in \PSh(\xE)$, the commutative square
\[
\begin{tikzcd}
    \xMap_{\PSh(\xE)}(F,\overline{s}) \ar[r] \ar[d]    &   \xMap_{\PSh(\xE)}(F,\overline{t}) \ar[d]  \\
    \xMap_{\PSh(\xE)}(F,p^* s) \ar[r]     &
                \xMap_{\PSh(\xE)}(F,p^* t)  
\end{tikzcd}
\]
  is a pullback square of spaces. Since every object in the presheaf category $\PSh(\xE)$ is given by a colimit of representable objects we can assume without loss of generality that $F$ is represented by an object $\overline x\in\xE$ lying over $x\in \xcc$. In this case, the adjunction $(p_!,p^*)$ implies that the above square is of the form 
\[
\begin{tikzcd}
    \xMap_{\xE}(\overline{x},\overline{s}) \ar[r] \ar[d]    &    \xMap_{\xE}(\overline{x},\overline{t}) \ar[d]  \\
    \xMap_{\xcc}(x,s) \ar[r]     &    \xMap_{\xcc}(x,t)  ,
\end{tikzcd}
\]
  which is a pullback of spaces because the morphism $\overline{s}\to \overline{t}$ was $p$-Cartesian.
\end{proof}

\begin{defn}\label{def simple}
  Let $p\colon \xE\to \xcc$ be a Cartesian fibration between small
  $\infty$-categories and let $p^*\colon \PSh(\xcc)\to \PSh(\xE)$
  denote the functor on presheaves given by composition with $p$.  For
  $F \in \PSh(\xE)$ and $G \in \PSh(\xcc)$, we say a morphism
  $F \to p^*(G)$ is \emph{simple} if for every map $\sigma\colon c \to G$ where $c$
  is representable, in the pullback
  \nolabelcsquare{X}{F}{p^*(c)}{p^*(G)} the presheaf $X$ is
  representable, and the adjoint map $p_{!}X \to c$ is an equivalence
  (i.e.\ $X$ is representable by an object whose image under $p$ is
  $c$). In this case we often write $F_\sigma$ for the presheaf $X$.
\end{defn}

\begin{remark}\label{rem simple}
  The previous definition and Lemma~\ref{lem cart lift} immediately
  imply that for any $\overline t\in \xE$ the counit map
  $\overline t\to p^*t$ is simple and a pullback of a simple map is
  simple.
\end{remark}

\begin{notation}
  Let $\mathbb{S}$ be a class of morphisms $f\colon K\to L$ in
  $\PSh(\xcc)$ such that $L$ is representable. We define
  $p^*\mathbb{S}:=\{p^*s, s\in \mathbb{S}\}$ and we write
  $\mathbb{S}_{\mathcal{E}}$ for the class of morphisms $Y\to X$ in
  $\PSh(\xE)$ which fit in a pullback square
  \nolabelcsquare{Y}{X}{p^*K}{p^*L,} where $X$ is representable, the
  adjoint map $p(X)\simeq p_!X\to L $ is an equivalence and the bottom
  horizontal map lies in $p^*\mathbb{S}$.
\end{notation}

\begin{lemma}\label{lem ws}
  Let $p^*\colon \PSh(\xcc)\to \PSh(\xE)$ denote the functor induced
  by composition with a Cartesian fibration $p\colon \xE\to \xcc$
  between small $\infty$-categories and let $\mathbb{S}$ be a class of
  morphisms in $\PSh(\xcc)$ whose codomains are representable. Suppose
  there is a simple map $f\colon X\to p^*L$ and a map
  $K\to L\in \langle\mathbb{S}\rangle$, then the natural map
  $Y := p^{*}K \times_{p^{*}L}X \to X$ lies in
  $\langle\mathbb{S}_{\mathcal{E}}\rangle$.
\end{lemma}
\begin{proof}
  Since the functor
  $p^*$ is left adjoint to the functor $p_*$ given by right Kan
  extension, it preserves colimits. Therefore, by the small object
  argument it takes every element
  in $\langle\mathbb{S}\rangle$ to $\langle p^*\mathbb{S}\rangle$. In
  particular, the bottom horizontal morphism of the pullback square
  \nolabelcsquare{Y}{X}{p^*K}{p^*L} lies in
  $\langle p^*\mathbb{S}\rangle$.  Then Proposition~\ref{cor:wsatpb}
  implies that $Y\to X$ lies in $\langle f^*p^*\mathbb{S}\rangle$ and
  each generator $f^*p^*s$, $s\in \mathbb{S}$, is given by two
  pullback squares
  \[
  \begin{tikzcd}
  f^{*}p^{*}A \arrow{r}{f^*p^*s} \arrow{d} & f^{*}p^{*}B \arrow{r} \arrow{d} &
  X \arrow{d}{f}\\
  p^{*}A \arrow{r}{p^*s} & p^{*}B \arrow{r} & p^{*}L.
  \end{tikzcd}
  \]
  Therefore, to prove the lemma, it suffices to show that each of
  these maps $f^*p^*s$ lie in $\mathbb{S}_{\mathcal{E}}$. Since
  $s\in\mathbb{S}$ and its codomain $B$ is representable, the
  assumption that $f\colon X\to p^*L$ is simple implies that $f^*p^*B$
  is representable and $p(f^*p^*B)\simeq p^*B$. Hence, the map
  $f^*p^*s$ lies in $\mathbb{S}_{\mathcal{E}}$ by definition.
\end{proof}

\begin{proof}[Proof of Proposition~\ref{propn:iaDFV}]
  It's enough to show that every inner horn inclusion
  $\Lambda^{n}_{k}\overline{I}\to \overline{I}$ in $\DFV$ is a Segal
  equivalence: this implies that the class of inner anodyne maps,
  which is the weakly saturated class generated by these maps, must
  be contained in the strongly
  saturated class of Segal equivalences.  Given an inner horn
  inclusion $j \colon \Lambda^{n}_{k}\overline{I} \to \overline{I}$, by
  definition it comes with a pullback diagram
  \csquare{\Lambda^{n}_{k}\overline{I}}{\overline{I}}{(pq)^{*}\Lambda^{n}_{k}}{(pq)^{*}\Delta^{n}}{\overline{\jmath}}{}{\eta}{(pq)^{*}j}
  Let $\mathbb{S}$ denote the set of spine inclusions
  $\Delta^m_\Seg\to \Delta^m$, then
  Propositions~\ref{propn:JoyalTierney}
  and \ref{propn:pbwsat} imply that $\overline{\jmath}$ is obtained
  from $\eta^{*}\mathbb{S}$ by a finite sequence of pushouts,
  compositions, and right cancellations. It follows that
  $\overline{\jmath}$ is contained in the strongly saturated class
  generated by $\eta^{*}\mathbb{S}$. 

  Since the right vertical map $\eta \colon \overline I\to (pq)^{*}\Delta^{n}$ is
  simple by Remark~\ref{rem simple}, as in the proof of Lemma~\ref{lem
    ws} we see that the maps in $\eta^{*}\mathbb{S}$ are among the
  generating Segal equivalences of
  Proposition~\ref{propn:SegDFVcond}(3). Hence $\overline{\jmath}$
  must be contained in the strongly saturated class of Segal
  equivalences, as required.
\end{proof}

By applying Lemma~\ref{lem ws} to the set
$\mathbb{S}=\{\Lambda^n_k\to \Delta^n, n\geq 0, 0<k<n \}$ of inner
horn inclusions in $\PSh(\simp)$ we also obtain the following:
\begin{corollary}\label{cor pb of inner anodyne}
  Let $p\colon \DF \to \simp$ and $q\colon \DFV \to \DF$ denote the
  Cartesian fibrations given by the obvious projections.  If
  $f\colon X\to (pg)^*L$ is a simple map in $\PSh(\DFV)$ and $K\to L$
  is an inner anodyne map in $\PSh(\simp)$, then the natural map
  $Y := (pq)^{*}K \times_{(pq)^{*}L}X \to X$ in $\PSh(\DFV)$ is inner
  anodyne (and so in particular a Segal equivalence, by
  Proposition~\ref{propn:iaDFV}). \qed
\end{corollary}

\subsection{Tensoring with Segal Spaces}\label{sec tensor product}
Our goal in this section is to prove that for $\mathcal{V}$ a
presentably symmetric monoidal \icat{}, the \icat{} $\PCS(\DFV)$ is
a module over the symmetric monoidal \icat{} $\PSh_{\Seg}(\simp)$ of
Segal spaces. This will be useful in the next section as it allows us
to reduce a number of proofs to the case of Segal spaces.

\begin{defn}\label{def tensor}
  Let $\mathcal{U}$ be a small symmetric monoidal \icat{}, and let
  $p \colon \DFU \to \simp$ denote the composite of the two
  Cartesian fibrations $\DFU \to \DF$ and $\DF \to
  \simp$. Precomposition with $p^\op$ gives a functor
  $p^{*}\colon \PSh(\simp) \to \PSh(\DFU)$. This functor preserves products
  (since it is a right adjoint), and so can be viewed as a
  symmetric monoidal functor, with both \icats{} equipped with their
  Cartesian symmetric monoidal structures. In other words, $p^{*}$ is a
  morphism of commutative algebra objects in $\CatI$ (or even in
  $\PrL$) and so induces on $\PSh(\DFU)$ the structure of a module
  over $\PSh(\simp)$ (e.g.\ by \cite[Corollary 3.4.1.7]{ha}), given by the functor
  $\PSh(\DFU) \times \PSh(\simp) \to \PSh(\DFU)$ that takes
  $(\overline I, K)$ to $\overline I \times p^*(K)$. Since the
  Cartesian product in $\mathcal{S}$ preserves colimits in each
  variable, as does $p^{*}$, the functor $\blank \times p^{*}(\blank)$
  preserves colimits in each variable.
\end{defn}

Our main goal in this section is to prove the following result:
\begin{theorem}\label{theo tensor}
  For $\mathcal{U}$ a small symmetric monoidal \icat{}, the $\PSh(\simp)$-module structure on
  $\PSh(\DFU)$ induces a $\PSeg(\simp)$-module structure on
  $\PSeg(\DFU)$ where the tensoring
  \[\otimes \colon \PSeg(\DFU) \times \PSeg(\simp) \to \PSeg(\DFU)\] is
  given by
  $\mathcal{O} \otimes \mathcal{X} = L(\mathcal{O} \times
  p^{*}\mathcal{X})$,
  where $L$ denotes the localization $\PSh(\DFU) \to \PSeg(\DFU)$. In
  particular, the tensoring preserves colimits in each variable.
\end{theorem}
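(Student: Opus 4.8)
The plan is to establish that the tensoring descends to the localized subcategory by checking that the bifunctor $\blank \times p^{*}(\blank)$ on presheaves is compatible with the Segal localizations on both factors, and then to invoke a general result on module structures passing to localizations. Concretely, I want to show that if $\mathcal{O} \to \mathcal{O}'$ is a Segal equivalence in $\PSh(\DFU)$ and $\mathcal{X} \to \mathcal{X}'$ is a Segal equivalence in $\PSh(\simp)$, then $\mathcal{O} \times p^{*}\mathcal{X} \to \mathcal{O}' \times p^{*}\mathcal{X}'$ becomes an equivalence after applying the localization $L$. Since both classes of Segal equivalences are strongly saturated and the bifunctor preserves colimits separately in each variable, it suffices to check this on the generating Segal equivalences of each factor, and by the two-variable structure it is enough to fix a Segal-local object in one slot and vary the generators in the other.

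The key reduction is therefore to two separate statements. First, for a \emph{fixed} Segal presheaf $\mathcal{O} \in \PSeg(\DFU)$, the functor $\mathcal{O} \times p^{*}(\blank)$ sends spine inclusions $\Delta^{n}_{\Seg} \to \Delta^{n}$ in $\PSh(\simp)$ to Segal equivalences in $\PSh(\DFU)$; and second, for a fixed Segal space $\mathcal{X}$, the functor $(\blank) \times p^{*}\mathcal{X}$ sends the generating Segal equivalences of $\PSh(\DFU)$ (those listed in Proposition~\ref{propn:SegDFVcond}) to Segal equivalences. For the first statement I would use the inner-anodyne machinery just developed: by Proposition~\ref{propn:JoyalTierney} the inner horn inclusions generate the spine inclusions up to weak right-saturation, and I would argue that pulling back $p^{*}\Delta^{n}_{\Seg} \to p^{*}\Delta^{n}$ along the map to $\mathcal{O}$ produces a simple map, so that Corollary~\ref{cor pb of inner anodyne} identifies the resulting maps with genuine inner horn inclusions $\Lambda^{n}_{k}\overline{I} \to \overline{I}$ in $\PSh(\DFU)$, which are Segal equivalences by Proposition~\ref{propn:iaDFV}. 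The second statement is more routine: since $p^{*}\mathcal{X}$ is obtained by pullback along $p$ and the generating Segal equivalences all come from inert/active structure over $\DF$, taking the product with $p^{*}\mathcal{X}$ commutes with the relevant colimits and fibre products (using that Cartesian product in $\mathcal{S}$ preserves colimits in each variable), so the generators are preserved.

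Having verified compatibility, I would conclude by appealing to the general fact (e.g.\ \cite[Proposition 2.2.1.9]{ha} or the standard localization-of-module-objects result) that if a localization $L \colon \PSh(\DFU) \to \PSeg(\DFU)$ and a localization $\PSh(\simp) \to \PSeg(\simp)$ are compatible with a module action in the sense just checked, then the action descends to a module structure on the localized categories, with the localized tensoring given by $\mathcal{O} \otimes \mathcal{X} = L(\mathcal{O} \times p^{*}\mathcal{X})$. The preservation of colimits in each variable on $\PSeg(\DFU)$ then follows because $L$ is a left adjoint and the underlying bifunctor preserves colimits separately, so the composite does as well once we pass to the localized categories where $L$ inverts the relevant equivalences.

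\textbf{The main obstacle} I anticipate is the first compatibility statement, namely controlling $\mathcal{O} \times p^{*}(\Delta^{n}_{\Seg} \to \Delta^{n})$: one must verify that the map $\mathcal{O} \times p^{*}\Delta^{n} \to p^{*}\Delta^{n}$ (projection) is \emph{simple} in the sense of the definition preceding Corollary~\ref{cor pb of inner anodyne}, so that the base change along an inner-anodyne map lands among honest inner horn inclusions in $\PSh(\DFV)$ rather than some uncontrolled pullbacks. This requires unwinding that pulling back a representable $\Delta^{n} \to \Delta^{n}$ against $\mathcal{O} \to \PSh(\simp)$ yields a representable object in $\PSh(\DFU)$ lying over $[n]$, which should follow from the Segal condition on $\mathcal{O}$ together with the Cartesian-fibration structure of $p$, but the bookkeeping here is where the real work lies.
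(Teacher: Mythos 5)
Your overall architecture is the same as the paper's: reduce to showing the bifunctor $\blank\times p^{*}(\blank)$ carries (Segal equivalence, Segal equivalence) pairs to Segal equivalences and then invoke \cite[Proposition 2.2.1.9]{ha} (this is Proposition~\ref{propn Seg tensor} in the paper), and your treatment of the first half --- tensoring a fixed presheaf with spine/inner-horn inclusions via simple maps and Corollary~\ref{cor pb of inner anodyne} --- is exactly Proposition~\ref{propn prelim Seg tensor}. But there is a genuine gap in the second half, and you have misidentified where the difficulty lies: the ``main obstacle'' you flag (simplicity of $\overline{I}\times p^{*}\Delta^{n}\to p^{*}\Delta^{n}$) is dispatched in one line by Lemma~\ref{lem tensor simple}, whereas the step you call ``more routine'' is the substantive content of the proof.

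Concretely, your claim that for fixed $\mathcal{X}$ the functor $(\blank)\times p^{*}\mathcal{X}$ ``preserves the generators'' because the product ``commutes with the relevant colimits and fibre products'' is not an argument. The generating Segal equivalences of $\PSh(\DFU)$ are not colimit cocones that would be formally preserved: for a generator such as $\coprod_{i\in\mathbf{m}}([1],\mathbf{n_{i}}\to\mathbf{1},v_{i})\to([1],\mathbf{n}\to\mathbf{m},v_{1},\ldots,v_{m})$, the object $\overline{I}\times p^{*}\Delta^{1}$ is not representable and is not the source or target of any generator; by Lemma~\ref{lem decomposing} it decomposes as $\overline{I}^{+}\amalg_{\overline{I}}\overline{I}^{-}$ with $\overline{I}^{\pm}$ lying over $[2]$, and one must then check by hand (decomposing $\overline{I}^{\pm}$ along the spine of $\Delta^{2}$ and identifying the resulting pieces) that the induced map is a Segal equivalence. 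Worse, for the generators $\coprod_{i\in\mathbf{m}}([0],\mathbf{1})\to([0],\mathbf{m})$ the identification $\overline{\xfe}\times p^{*}(\Delta^{1})\simeq(\xfc_{1},\bbone)$ brings in the monoidal unit of $\mathcal{U}$, so this step genuinely uses the symmetric monoidal structure and cannot be a purely formal colimit-commutation argument. A smaller issue: your reduction ``fix a Segal-local object in one slot and vary the generators in the other'' is the wrong direction --- to run the two-variable argument you need $f\times p^{*}\mathcal{X}$ for \emph{arbitrary} $\mathcal{X}$ (which you then reduce to representables $\Delta^{n}$, and further to $\Delta^{1}$ via Proposition~\ref{propn prelim Seg tensor}(i)), not for Segal-local $\mathcal{X}$, which are more complicated than representables rather than less. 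As written, the proposal would not close.
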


By \cite[Proposition 2.2.1.9]{ha} to prove Theorem~\ref{theo tensor}
it suffices to show that the module structure on $\PSh(\DFU)$ is
compatible with the Segal equivalences in the following sense:
\begin{proposition}\label{propn Seg tensor}
  Suppose $f \colon F \to G$ is a Segal
  equivalence in $\PSh(\DFU)$ and
  $g \colon K \to L$ is a Segal equivalence in
  $\PSh(\simp)$. Then
  $f \times p^*(g) \colon F \times
  p^*(K) \to G \times p^{*}G$ is a Segal
  equivalence in $\PSh(\DFU)$.
\end{proposition}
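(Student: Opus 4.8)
The plan is to reduce the statement, by a factorisation together with saturation arguments, to two assertions about \emph{representable} objects, and then to deduce these from the inner anodyne machinery of \S\ref{subsec inner anodyne} (applied with $\mathcal{V}=\mathcal{U}$, so that $\DFU=\DF^{\mathcal{U}}$ is covered by the results of that subsection). First I factor $f\times p^{*}g$ as
$\mathcal{O}\times p^{*}\mathcal{X}\xrightarrow{\mathcal{O}\times p^{*}g}\mathcal{O}\times p^{*}\mathcal{Y}\xrightarrow{f\times p^{*}\mathcal{Y}}\mathcal{P}\times p^{*}\mathcal{Y}$.
Since the Segal equivalences form a strongly saturated class (Definition~\ref{def set s}) they satisfy the 2-of-3 property, so it suffices to treat each factor. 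Now $p^{*}$ preserves colimits (it has both a left and a right adjoint) and Cartesian product with a fixed object preserves colimits in an $\infty$-topos; hence for fixed $\mathcal{O}$ the class of $g$ with $\mathcal{O}\times p^{*}g$ a Segal equivalence is strongly saturated, and for fixed $\mathcal{Y}$ the class of $f$ with $f\times p^{*}\mathcal{Y}$ a Segal equivalence is strongly saturated. Using the generating Segal equivalences (the spine inclusions $\Delta^{n}_{\Seg}\to\Delta^{n}$ in $\PSh(\simp)$ by Remark~\ref{rem Segal spaces}, and the maps of Proposition~\ref{propn:SegDFVcond}(3) in $\PSh(\DFU)$), together with a further saturation in the fixed variable, I reduce to the following two assertions for a representable $\overline{I}\in\DFU$ over $[k]\in\simp$: (i) for each generator $s$ of Proposition~\ref{propn:SegDFVcond}(3) and each $r$, the map $s\times p^{*}\Delta^{r}$ is a Segal equivalence; and (ii) the map $\overline{I}\times p^{*}\Delta^{n}_{\Seg}\to\overline{I}\times p^{*}\Delta^{n}$ is a Segal equivalence.

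The key technical input is that the projection $\overline{I}\times p^{*}K\to p^{*}(\Delta^{k}\times K)$ is a \emph{simple} map for every $K\in\PSh(\simp)$: for a simplex $(\alpha,\gamma)\colon\Delta^{j}\to\Delta^{k}\times K$, the Cartesian property of $p\colon\DFU\to\simp$ identifies the pullback $\overline{I}\times_{p^{*}\Delta^{k}}p^{*}\Delta^{j}$ with the representable $\alpha^{*}\overline{I}$ lying over $[j]$, which is exactly the condition defining simplicity. Combined with the standard fact that inner anodyne maps in $\PSh(\simp)$ are stable under pushout-product with monomorphisms, and in particular under product with a representable, this disposes of (ii) and of the simplicial-spine generators of (i) via Corollary~\ref{cor pb of inner anodyne}: taking the simple map above and the inner anodyne map $\Delta^{k}\times\Delta^{n}_{\Seg}\to\Delta^{k}\times\Delta^{n}$ (respectively $\Delta^{k}_{\Seg}\times\Delta^{r}\to\Delta^{k}\times\Delta^{r}$), the base change is inner anodyne, hence a Segal equivalence by Proposition~\ref{propn:iaDFV}. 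Unwinding the pullbacks identifies it with the map at hand, using for the spine case that $\overline{I}\times_{p^{*}\Delta^{k}}p^{*}\Delta^{k}_{\Seg}$ is the simplicial spine $\overline{I}|_{\Delta^{k}_{\Seg}}$.

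The remaining part of (i), namely the two coproduct generators of Proposition~\ref{propn:SegDFVcond}(3), is genuinely different: these lie in the ``$\mathbb{F}$-direction'' and are not base changes of inner anodyne maps from $\simp$, so Corollary~\ref{cor pb of inner anodyne} does not apply. For these I use simplicity in the form of a colimit decomposition. Writing $\Delta^{k}\times\Delta^{r}=\colim_{\sigma}\Delta^{j}$ over its simplices and pulling back the simple projection exhibits the target $\overline{I}\times p^{*}\Delta^{r}$ as $\colim_{\sigma}y(\sigma_{1}^{*}\overline{I})$, where $\sigma_{1}\colon[j]\to[k]$ is the first component of $\sigma$; pulling the coproduct generator back along this colimit presents $s\times p^{*}\Delta^{r}$ as a colimit of the induced coproduct maps $\coprod_{i}y(\sigma_{1}^{*}\overline{J}_{i})\to y(\sigma_{1}^{*}\overline{I})$ on the restricted forests. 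Each such map is a Segal equivalence: refining each $\sigma_{1}^{*}\overline{J}_{i}$ into its own trees and applying 2-of-3, it is comparable with the forest-into-trees generator of $\sigma_{1}^{*}\overline{I}$ from Proposition~\ref{propn:SegDFVcond}(4). As Segal equivalences are closed under colimits, $s\times p^{*}\Delta^{r}$ is a Segal equivalence, which finishes (i) and hence the proof.

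I expect this last step to be the main obstacle. Whereas the simplicial generators are handled uniformly and cleanly by the simple-map/inner-anodyne formalism, the operadic coproduct generators are invisible to inner anodyne maps and must be decomposed by hand along the simplices of the base, with each piece then recognised as an $\mathbb{F}$-direction (forest-into-trees) Segal equivalence; verifying the compatibility of this decomposition with the Cartesian-fibration pullbacks is the point requiring the most care.
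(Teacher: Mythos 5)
Your proposal is correct, and most of it runs parallel to the paper's own argument: the paper likewise factors the map, reduces each variable to representables and to the generators of Proposition~\ref{propn:SegDFVcond}(3), and handles everything in the simplicial direction through the simplicity of $\overline{I}\times p^*(K)\to p^*(\Delta^k\times K)$ (Lemma~\ref{lem tensor simple}), the pushout-product stability of inner anodynes, Corollary~\ref{cor pb of inner anodyne} and Proposition~\ref{propn:iaDFV} --- the only cosmetic difference being that the paper works with inner horn inclusions and general inner anodyne $U\to\Delta^n$ where you work directly with spines. The genuine divergence is your treatment of the two coproduct generators. The paper first collapses the $\PSh(\simp)$-variable all the way to $\Delta^1$, invokes the explicit decomposition $\overline{I}\times p^*(\Delta^1)\simeq\overline{I}^+\amalg_{\overline{I}}\overline{I}^-$ of Lemma~\ref{lem decomposing}, handles $\coprod_i\overline{I}^{\pm}_i\to\overline{I}^{\pm}$ by a further spine argument over $[2]$, and computes $([0],\mathbf{n})\times p^*(\Delta^1)$ by hand for the third generator. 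You instead keep a general $\Delta^r$, decompose $\Delta^k\times\Delta^r$ over its whole category of simplices, use universality of colimits together with Lemma~\ref{lem tensor simple} to exhibit $s\times p^*\Delta^r$ as a colimit of maps $\coprod_i\sigma_1^*\overline{J}_i\to\sigma_1^*\overline{I}$, and identify each of these as (a two-out-of-three refinement of) a forest-into-trees generator from Proposition~\ref{propn:SegDFVcond}(4). I checked the case analysis this requires: when $\sigma_1$ surjects onto the last level the restricted map is literally a type-(4) generator, and when $\sigma_1$ is constant at an earlier level the refinement through individual trees works because $\mathbf{n}=\coprod_i\mathbf{n_i}$; the remaining care is, as you say, in matching the Cartesian pullbacks and their labels, which is routine. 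Your route is more uniform --- one mechanism covers both coproduct generators and all $r$ at once --- at the cost of heavier colimit bookkeeping; the paper's Lemma~\ref{lem decomposing} is essentially your simplex decomposition specialised to $\Delta^1\times\Delta^1$ and packaged as a finite cell structure, which the authors isolate because it is reused later (e.g.\ in Corollary~\ref{cor tensor S} and \S\ref{sec under icat}).
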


As already mentioned in Definition~\ref{def tensor} the tensor functor
$\blank \times p^*(\blank)$ preserves colimits in each variable. This
allows us to verify the claim of the proposition by reducing it to a
few key special cases of Segal equivalences.

\begin{propn}\label{propn prelim Seg tensor}
  Given an object $\overline I \in \DFU$ lying over $[n] \in \simp$ and a Segal equivalence $U \to \Delta^{n}$, we write $f\colon \overline I| _{U}\to \overline I$
  for the map $\overline I \times_{p^*(\Delta^n)}p^* (U)\to \overline{I}$. For every Segal equivalence $g\colon K \to L$ in $\PSh(\simp)$, the map
  \[f \times p^*(g) \colon  \overline I| _{U} \times
  p^*(K) \to \overline I \times p^* (L)\] is a Segal equivalence.
\end{propn}
\begin{proof}
  We first observe that we have a commutative diagram
  \[
  \begin{tikzcd}
  \overline I| _{U} \times p^*(K) \arrow{r}{f \times p^*(g)} \arrow{d} & {\overline I} \times p^* (L) \arrow{r} \arrow{d} &
  \overline I \arrow{d}\\
  p^*(U) \times p^* (K) \arrow{r} & p^*(\Delta^n) \times p^* (L)\arrow{r} & p^{*}(\Delta^n)
  \end{tikzcd}
  \]
  consisting of two pullback squares where the middle vertical map is
  simple in the sense of Definition~\ref{def simple}
  by Remark~\ref{rem simple}. We first assume that the maps $U\to \Delta^n$ and $K\to L$ are inner horn inclusions. Then \cite[Corollary 2.3.2.4]{ht} says that the product $U\times K\to \Delta^n\times L$ of inner horn inclusions is inner anodyne and Corollary~\ref{cor pb of inner anodyne} shows that the left upper horizontal map $f \times p^*(g)$ is inner anodyne and hence a Segal equivalence by Proposition~\ref{propn:iaDFV}.
  If $U\to \Delta^n$ is an arbitrary Segal equivalence in $\PSh(\simp)$, then according to Proposition~\ref{prop Segal equ = inner horn} it is strongly generated by inner horn inclusions. Since the class of morphisms $f$ such that $f\times p^*(\id_K)$ is a Segal equivalence is clearly strongly saturated, we see that $f\times p^*(\id_K)$ is a Segal equivalence by the assumption that $f$ lies over a Segal equivalence $U\to \Delta^n$. By fixing the object $\overline I\in \DFU$, the same argument shows that $\id_{\overline I}\times p^*(g)$ is a Segal equivalence for every Segal equivalence $g$. Hence, the map $f \times p^*(g)$ which is the composite of $f\times p^*(\id_K)$ and $\id_{\overline{I}}\times p^*(g)$ is a Segal equivalence.
\end{proof}

Before we complete the proof of Proposition~\ref{propn Seg tensor} we
need to understand more explicitly what the result of tensoring a
corolla with $\Delta^{1}$ looks like, for which it is convenient to
introduce some notation:
\begin{defn}\label{def X pm}
  For an object ${\overline I} \in \DFU$ lying over $[1] \in \simp$,
  let ${\overline I}^{+},{\overline I}^{-} \to {\overline I}$ denote
  the Cartesian morphisms lying over $s_{0},s_{1} \colon [2] \to [1]$,
  respectively. If ${\overline I} = \mathfrak{c}_n(v)$ then we
  also write ${\overline I}^{\pm}$ as $\mathfrak{c}_{n}^{\pm}(v)$
  --- these objects are clearly functorial in
  $v \in \mathcal{U}$. The objects
  $\mathfrak{c}_{n}^{\pm}(v)$ thus lie over
  $([2], \mathbf{n} \xto{\id} \mathbf{n} \to \mathbf{1})$ and
  $([2], \mathbf{n} \to \mathbf{1} \xto{\id} \mathbf{1})$,
  respectively, with the unary vertices labelled by $\bbone$ and the
  $n$-ary vertex by $v$ in both cases.
\end{defn}

\begin{lemma}\label{lem decomposing}
  For an object ${\overline I} \in \DFU$ lying over $[1] \in \simp$,
  there is an equivalence
  \[ {\overline I}^+ \amalg_{{\overline I}} {\overline I}^- \to
  {\overline I} \times p^*(\Delta^{1}).\]
  In particular, for ${\overline I}:= \xfc_n(v)$, we have an
  equivalence
  $\mathfrak{c}_{n}^{+}(v) \amalg_{\mathfrak{c}_n(v)}
  \mathfrak{c}_{n}^{-}(v) \to \mathfrak{c}_n(v) \times
  p^*(\Delta^{1}),$ natural in $v$.
\end{lemma}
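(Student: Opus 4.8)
The plan is to obtain the decomposition by pulling back the standard splitting of $\Delta^1 \times \Delta^1$ into its two nondegenerate $2$-simplices glued along their common diagonal. Recall that in $\PSh(\simp)$ there is a pushout
\[ \Delta^1 \times \Delta^1 \simeq \Delta^2 \amalg_{\Delta^1} \Delta^2, \]
where the two copies of $\Delta^2$ are spanned by $(0,0),(0,1),(1,1)$ and by $(0,0),(1,0),(1,1)$, and the glueing $\Delta^1$ is the diagonal $(0,0),(1,1)$. First I would observe, by inspecting first coordinates, that under the first projection $\Delta^1 \times \Delta^1 \to \Delta^1$ these three maps become $s_0, s_1 \colon [2] \to [1]$ and $\id_{[1]}$, respectively.

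Next I would apply $p^{*}$. Since $p^{*}$ preserves colimits and is symmetric monoidal for the Cartesian structures, applying it to the pushout gives
\[ p^{*}(\Delta^1) \times p^{*}(\Delta^1) \simeq p^{*}(\Delta^2) \amalg_{p^{*}(\Delta^1)} p^{*}(\Delta^2), \]
compatibly over the first copy of $p^{*}(\Delta^1)$ via $p^{*}(s_0)$, $p^{*}(s_1)$, and $\id$. I would then base change along the unit map $\overline I \to p^{*}(\Delta^1)$, which exists because $\overline I$ is representable lying over $[1]$, so $p_{!}\overline I \simeq \Delta^1$. Base-changing the first projection $p^{*}(\Delta^1) \times p^{*}(\Delta^1) \to p^{*}(\Delta^1)$ along this unit recovers $\overline I \times p^{*}(\Delta^1)$, and since $\PSh(\DFU)$ is an $\infty$-topos pullback preserves the pushout, whence
\[ \overline I \times p^{*}(\Delta^1) \simeq \bigl(\overline I \times_{p^{*}(\Delta^1)} p^{*}(\Delta^2)\bigr) \amalg_{\overline I} \bigl(\overline I \times_{p^{*}(\Delta^1)} p^{*}(\Delta^2)\bigr), \]
the glueing term being the pullback along $\id$, namely $\overline I$ itself.

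It then remains to identify the two outer pullbacks. Applying Lemma~\ref{lem cart lift} to the Cartesian fibration $p$ and the $p$-Cartesian morphisms $\overline I^{+}, \overline I^{-} \to \overline I$ lying over $s_0, s_1$, I obtain pullback squares identifying $\overline I \times_{p^{*}(\Delta^1)} p^{*}(\Delta^2)$ (along $p^{*}(s_0)$, resp. $p^{*}(s_1)$) with $\overline I^{+}$, resp. $\overline I^{-}$. This produces the asserted equivalence $\overline I^{+} \amalg_{\overline I} \overline I^{-} \to \overline I \times p^{*}(\Delta^1)$. For the final clause, taking $\overline I = (\mathfrak{c}_n, v)$ and using that $\overline I^{\pm} = (\mathfrak{c}_n^{\pm}, v)$ are functorial in $v$ (Definition~\ref{def X pm}), the entire construction is natural in $v$.

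The main—and essentially only—point requiring care is bookkeeping of conventions: verifying that the two nondegenerate $2$-simplices project precisely to the degeneracies $s_0, s_1$ (and the diagonal to $\id_{[1]}$) rather than to face maps, and confirming via Lemma~\ref{lem cart lift} that base-changing $p^{*}(s_i)$ along the unit yields exactly the Cartesian lift $\overline I^{\pm}$ as defined, so that the identifications line up with the statement. Everything else is formal, resting only on cocontinuity of $p^{*}$ and universality of colimits in the topos $\PSh(\DFU)$.
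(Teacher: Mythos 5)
Your proposal is correct and follows essentially the same route as the paper: decompose $\Delta^{1}\times\Delta^{1}$ into its two nondegenerate $2$-simplices glued along the diagonal, pull back using universality of colimits in $\PSh(\DFU)$, and identify the three pieces as $\overline{I}^{+}$, $\overline{I}^{-}$, and $\overline{I}$ via the Cartesian-lift pullback square. The only cosmetic difference is that the paper packages that last identification through Lemma~\ref{lem tensor simple} rather than invoking Lemma~\ref{lem cart lift} directly, and your bookkeeping of which $2$-simplex projects to $s_{0}$ versus $s_{1}$ matches the conventions of Definition~\ref{def X pm}.
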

\begin{proof}
  Let
  $\sigma^{\pm} \colon \Delta^{2} \to \Delta^{1} \times \Delta^{1}$
  denote the two non-degenerate 2-simplices of
  $\Delta^{1} \times \Delta^{1}$, taking $(0,1,2)$ to
  $((0,0), (0,1), (1,1))$ and $(0,0), (1,0), (1,1))$, respectively,
  and let $\delta \colon \Delta^{1} \to \Delta^{1} \times \Delta^{1}$
  denote the diagonal map. Then the map
  $\Delta^{2} \amalg_{\Delta^{1}} \Delta^{2} \to \Delta^{1}\times
  \Delta^{1}$
  induced by $\sigma^{\pm}$ and $\delta$ is an equivalence. Since
  pullbacks in $\PSh(\DFU)$ preserve colimits, we have a natural
  pullback square \csquare{({\overline I} \times
    p^*(\Delta^{1}))_{\sigma^{+}} \amalg_{({\overline I} \times
      p^*(\Delta^{1}))_{\delta}} ({\overline I} \times
    p^*(\Delta^{1}))_{\sigma^{-}}}{{\overline I} \times
    p^*(\Delta^{1})}{p^*(\Delta^2) \amalg_{p^*(\Delta^1)}
    p^*(\Delta^2)}{p^*(\Delta^1) \times
    p^*(\Delta^1).}{\sim}{}{}{\sim} 
The right vertical map is clearly the pullback of $\overline I\to p^*(\Delta^1)$ along the projection $p^*(\Delta^1)\times p^*(\Delta^1)\to p^*(\Delta^1)$ and therefore simple by Remark~\ref{rem simple}. This implies that
  $({\overline I} \times p^*(\Delta^{1}))_{\sigma^{\pm}} \simeq
  {\overline I}^{\pm}$
  and
  $({\overline I} \times p^*(\Delta^{1}))_{\delta} \simeq {\overline
    I}$, which completes the proof.
\end{proof}

\begin{proof}[Proof of Proposition~\ref{propn Seg tensor}]
  It remains to prove that the map
  $ F \times p^*(K)\to G\times
  p^*(K)$
  is a Segal equivalence in $\PSh(\DFU)$ for every
  $K \in \PSh(\simp)$ and every Segal equivalence
  $f \colon F \to G$. 
  Since Segal equivalences are closed under colimits and
  $\blank\times p^*(\blank)$ preserves colimits in each variable, we
  may assume that $K$ is a representable presheaf
  $\Delta^{n}$. Then, by Proposition~\ref{propn prelim Seg tensor}, the two vertical morphisms in the
  commutative diagram \nolabelcsquare{F \times
    p^{*}\Delta^n_{\Seg}}{G \times
    p^{*}\Delta^n_{\Seg}}{F \times
    p^*(\Delta^n)}{G \times p^*(\Delta^n)}
   are Segal equivalences. It therefore suffices to
  prove that the top horizontal morphism is a Segal equivalence,
  and using the definition of $\Delta^n_{\Seg}$ as a colimit we see
  that for this it is enough to show that $f \times p^*(\Delta^{1})$ is a Segal
  equivalence.

  It suffices to check this for $f$ in a class of generating Segal
  equivalences; by Proposition~\ref{propn:SegDFVcond} we can consider
  the morphisms
  \begin{itemize}
  \item $\overline{I}|_{\Delta^{n}_{\Seg}} := \overline{I}|_{01} \amalg_{\overline{I}|_{1}} \overline{I}|_{12} \amalg_{\overline{I}|_{2}} \cdots
  \amalg_{\overline{I}|_{n-1}} \overline{I}|_{(n-1)n} \to \overline{I}$
  for all $\overline{I} \in \DF$,
\item 
  $ \coprod_{i \in \mathbf{m}}([1], \mathbf{n_{i}} \to \mathbf{1}, v_{i})
  \to ([1], \mathbf{n} \to \mathbf{m}, v_{1},\ldots,v_{m}),$
  for all $\mathbf{n} \to \mathbf{m}$ (including $\mathbf{m} =
  \mathbf{0}$), 
\item $\coprod_{i \in \mathbf{m}}([0], \mathbf{1}) 
  \to ([0], \mathbf{m}),$
  for all $\mathbf{m}$ (including $\mathbf{m} = \mathbf{0}$).
  \end{itemize}
  For the first class of maps the result was proved in
  Proposition~\ref{propn prelim Seg tensor}. For $f \colon \coprod
  \overline{I}_{i} \to \overline{I}$ in the second class
  of maps, by Lemma~\ref{lem decomposing} we can identify $f \times
  p^{*}\Delta^{1}$ with 
  \[\coprod_i({\overline I}_i^+\amalg_{{\overline I}_i}  {\overline
    I}_i^-) \to {\overline I}^+\amalg_{\overline I} {\overline I}^-.\]
  Since colimits commute, it suffices to show that the maps
  $\coprod_i {\overline I}_i^{\pm}\to {\overline I}^\pm$ are Segal
  equivalences. We consider the case of ${\overline I}^+$; the proof
  of the other case is the same. We have a commutative square
  \nolabelcsquare{\coprod_i {\overline I}_i^+|
    _{\Delta^2_\Seg}}{{\overline I}^+| _{\Delta^2_\Seg}}{\coprod_i
    {\overline I}_i^+}{{\overline I}^+,} where the vertical maps are
  Segal equivalences. By the $2$-of-$3$ property, we only need to see
  that the upper horizontal map given by
  $\coprod_i({({\overline I}^+_i)}_{01}\amalg_{{({\overline
        I}^+_i)}_1} {({\overline I}^+_i)}_{12})\to {{\overline
      I}^+_{01}}\amalg_{{\overline I}_1^+} {\overline I}^+_{12}$
  is a Segal equivalence. It follows from Definition~\ref{def X pm}
  that
  ${({\overline I}^+_i)}_{k,k+1} \simeq {({\overline I}_{k,k+1}^+)}_i$
  for $0\leq k\leq 1$ and that there exists $\mathbf{n_i}$ such that
  $\mathbf{n_1}\amalg\ldots\amalg\mathbf{n_m}\simeq \mathbf{n}$ and
  ${({\overline I}^+_i)}_1\simeq \coprod_{j\in \mathbf{n_i}}\overline{\xfe}$.
  Therefore, the upper horizontal map of the square is a pushout of
  maps
  $\coprod_{j\in \mathbf{n_i}}\overline{\xfe}\to {\overline I}_1\simeq
  ([0],\mathbf{n})$
  and
  $\coprod_i {({\overline I}_{k,k+1}^+)}_i\to {\overline I}_{k,k+1}^+$
  which are Segal equivalences.
    
  It remains to consider the third class of maps. The canonical equivalence
  $\sigma\colon\Delta^1\hookrightarrow \Delta^0\times \Delta^1$
  induces a pullback square
  \csquare{([0],\mathbf{n})_\sigma}{([0],\mathbf{n})\times
    p^*(\Delta^1)}{p^*(\Delta^1)}{p^*(\Delta^0)\times
    p^*(\Delta^1)}{\sim}{}{}{\sim} where the horizontal maps are
  equivalences.  
  By Remark~\ref{rem simple}, the right vertical map, which is a pullback of $([0], \mathbf{n})\to p^*(\Delta^0)$ along the projection $p^*(\Delta^0)\times p^*(\Delta^1)\to p^*(\Delta^0)$, is simple. Therefore, the
  presheaf $([0],\mathbf{n})_\sigma$ is represented by
  $\tau^*([0],\mathbf{n})$ given by the Cartesian lift
  $\tau^* ([0],\mathbf{n})\to ([0],\mathbf{n})$ of the projection
  $\tau\colon \Delta^1\to \Delta^0$. Hence, the the presheaf
  $([0],\mathbf{n})\times p^*(\Delta^1)$ is represented by
  $I(\bbone_c)_{c\in \CorDF(I)}$, where
  $I=([1],\mathbf{n}\xto{\id}\mathbf{n})$ and $\bbone$ denotes the
  unit in $ \xU$. In particular, we have
  $\overline{\xfe}\times p^*(\Delta^1)\simeq \xfc_1(\bbone)$. Therefore, it is
  clear that the map
  \[ \coprod_{i\in \mathbf{n}} \overline{\mathfrak{e}}\times p^*(\Delta^1)\simeq
  \coprod_{i\in \mathbf{n}}\xfc_1(\bbone)\to I(\bbone_c)_{c\in
    \CorDF(I)}\simeq ([0], \mathbf{n})\times p^*(\Delta^1)\]
  is a Segal equivalence.    
\end{proof}

This completes the proof of Theorem~\ref{theo tensor}. As a
consequence, we get:
\begin{cor}\label{cor tensor S}
 Let $\mathcal{U}$ be a small symmetric monoidal \icat{} and
  $\mathbb{S}$ a set of morphisms in $\PSh(\mathcal{U})$ compatible
  with the symmetric monoidal structure. Then the
  $\PSeg(\simp)$-module structure on $\PSeg(\DFU)$ induces a
  $\PSeg(\simp)$-module structure on $\PSSeg(\DFU)$.
\end{cor}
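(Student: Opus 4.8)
The statement to prove is Corollary~\ref{cor tensor S}: for $\mathcal{U}$ a small symmetric monoidal \icat{} and $\mathbb{S}$ a set of morphisms in $\PSh(\mathcal{U})$ compatible with the monoidal structure, the $\PSeg(\simp)$-module structure on $\PSeg(\DFU)$ (constructed in Theorem~\ref{theo tensor}) descends to a $\PSeg(\simp)$-module structure on the further localization $\PSSeg(\DFU)$. Let me sketch how I would attack this.

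**The approach.** The plan is to invoke the same machinery used to prove Theorem~\ref{theo tensor} itself, namely \cite[Proposition 2.2.1.9]{ha}. Recall that $\PSSeg(\DFU)$ is by definition (Definition~\ref{def S Segal presheaves}) the localization of $\PSeg(\DFU)$ at the $\mathbb{S}$-Segal equivalences, i.e.\ the strongly saturated class generated over the Segal equivalences by the maps $y^{*}(\mathfrak{c}_{n}, s)$ for $s \in \mathbb{S}$. Since $\PSeg(\DFU)$ is already a $\PSeg(\simp)$-module with tensoring preserving colimits in each variable, by \cite[Proposition 2.2.1.9]{ha} it suffices to show that this module structure is compatible with the $\mathbb{S}$-Segal equivalences, in the sense that tensoring an $\mathbb{S}$-Segal equivalence $\mathcal{O} \to \mathcal{P}$ with any object $\mathcal{X} \in \PSeg(\simp)$ again yields an $\mathbb{S}$-Segal equivalence. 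First I would reduce, exactly as in the proof of Proposition~\ref{propn Seg tensor}, to checking this on generators: since the tensoring preserves colimits in the first variable and $\mathbb{S}$-Segal equivalences form a strongly saturated class, and since every $\mathcal{X}$ is a colimit of representables $\Delta^{n}$, it is enough to verify that $y^{*}(\mathfrak{c}_{n}, s) \otimes \Delta^{1}$ is an $\mathbb{S}$-Segal equivalence for each $s \in \mathbb{S}$ and each $n$ (the reduction from $\Delta^{n}$ to $\Delta^{1}$ going through the spine decomposition $\Delta^{n}_{\Seg}$, as in the proof of Proposition~\ref{propn Seg tensor}).

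**The key step.** The crux is then to compute $(\mathfrak{c}_{n}, s) \times p^{*}(\Delta^{1})$ using Lemma~\ref{lem decomposing}, which gives a natural (in the label) equivalence $(\mathfrak{c}_{n}^{+}, v) \amalg_{(\mathfrak{c}_{n}, v)} (\mathfrak{c}_{n}^{-}, v) \isoto (\mathfrak{c}_{n}, v) \times p^{*}(\Delta^{1})$. Naturality in $v$ lets me apply this termwise to the map $s \colon v \to w$ underlying $(\mathfrak{c}_{n}, s)$, so that $(\mathfrak{c}_{n}, s) \times p^{*}(\Delta^{1})$ is identified with the induced map on the pushouts $(\mathfrak{c}_{n}^{+}, \blank) \amalg_{(\mathfrak{c}_{n}, \blank)} (\mathfrak{c}_{n}^{-}, \blank)$. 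Since $\mathbb{S}$-Segal equivalences are closed under pushout (being strongly saturated), it suffices to show that each of $(\mathfrak{c}_{n}^{+}, s)$, $(\mathfrak{c}_{n}^{-}, s)$, and $(\mathfrak{c}_{n}, s)$ is an $\mathbb{S}$-Segal equivalence. The middle one is a generator by definition. For $(\mathfrak{c}_{n}^{\pm}, s)$, I would use the description of $\mathfrak{c}_{n}^{\pm}$ from Definition~\ref{def X pm}: these lie over $([2], \mathbf{n} \xto{\id} \mathbf{n} \to \mathbf{1})$ and $([2], \mathbf{n} \to \mathbf{1} \xto{\id} \mathbf{1})$, with only the $n$-ary vertex carrying the variable label $v$ and the unary vertices labelled by the unit $\bbone$. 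Applying the spine Segal equivalence $\overline{I}|_{\Delta^{2}_{\Seg}} \to \overline{I}$ (a generator of the ordinary Segal equivalences, hence also an $\mathbb{S}$-Segal equivalence) together with the $2$-of-$3$ property reduces $(\mathfrak{c}_{n}^{\pm}, s)$ to the single nontrivial edge-restriction, which is exactly $(\mathfrak{c}_{n}, s)$ (the other edge-restriction being the identity-labelled unary corolla, on which $s$ acts as the identity). Thus $(\mathfrak{c}_{n}^{\pm}, s)$ is an $\mathbb{S}$-Segal equivalence.

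**Main obstacle.** I expect the genuinely delicate step to be the identification of $(\mathfrak{c}_{n}, s) \times p^{*}(\Delta^{1})$ with a pushout of maps built from $(\mathfrak{c}_{n}^{\pm}, s)$ and $(\mathfrak{c}_{n}, s)$ \emph{with the naturality in the label handled correctly}. Lemma~\ref{lem decomposing} is stated for a fixed object $\overline{I}$, so one must check that the decomposition is natural along the morphism $s$ of labels rather than along an isomorphism; this is asserted in the ``natural in $v$'' clause of that lemma, but I would want to verify that the pushout diagrams glue into a map of pushouts compatibly. Once that naturality is in hand, the rest is a formal assembly via the $2$-of-$3$ property and closure of strongly saturated classes under pushout and colimit, closely paralleling the final paragraphs of the proof of Proposition~\ref{propn Seg tensor}. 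I would then conclude by citing \cite[Proposition 2.2.1.9]{ha} to obtain the desired $\PSeg(\simp)$-module structure on $\PSSeg(\DFU)$.
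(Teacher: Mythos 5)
Your proposal is correct and follows essentially the same route as the paper's proof: reduce via \cite[Proposition 2.2.1.9]{ha} and Proposition~\ref{propn prelim Seg tensor}(i) to the case $\mathcal{X}=\Delta^{1}$, apply Lemma~\ref{lem decomposing} to write $y^{*}(\mathfrak{c}_{n},s)\times p^{*}\Delta^{1}$ as a map of pushouts, and then handle $y^{*}(\mathfrak{c}_{n}^{\pm},s)$ by comparing against the spine decomposition of $\mathfrak{c}_{n}^{\pm}$ and using the $2$-of-$3$ property. The naturality point you flag is exactly how the paper resolves it (the decomposition of Lemma~\ref{lem decomposing} is stated naturally in the label $v$), so there is no gap.
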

\begin{proof}
  We must show that for $s \colon X \to Y$ in $\mathbb{S}$, the map
  \[ y^{*}\mathfrak{c}_n(s) \times p^{*}K \colon  y^{*}\mathfrak{c}_n(X) \times p^{*}K \to
  y^{*}\mathfrak{c}_{n}(Y) \times p^{*}K\]
  is an $\mathbb{S}$-Segal equivalence for all $K$ in
  $\PSh(\simp)$. As in the proof of Proposition~\ref{propn Seg
    tensor}, we can use Proposition~\ref{propn prelim Seg tensor}
  to reduce to the case where $K = \Delta^{1}$. 

  Using Lemma~\ref{lem decomposing} we can (since
  $y^{*}\mathfrak{c}_n(X)$ is a colimit of the objects considered
  there) write the map $y^{*}\mathfrak{c}_n(s) \times
  p^{*}\Delta^{1}$ as
  \[ y^{*}\mathfrak{c}_{n}^{+}(X) \amalg_{y^{*}(\mathfrak{c}_{n},
    X)} y^{*}\mathfrak{c}_{n}^{-}(X) \to y^{*}\mathfrak{c}_{n}^{+}(Y) \amalg_{y^{*}\mathfrak{c}_{n}(Y)} y^{*}\mathfrak{c}_{n}^{-}(Y).\]
  It then suffices to show that the morphisms
  $y^{*}\mathfrak{c}_{n}^{\pm}(X) \to y^{*}\mathfrak{c}_{n}^{\pm}(Y)$
  are both $\mathbb{S}$-Segal equivalences. We consider the case of
  $\mathfrak{c}_{n}^{+}$; the proof for $\mathfrak{c}_{n}^{-}$ is the
  same. We then have a commutative diagram
  \nolabelcsquare{\left(\coprod_{n}\mathfrak{c}_{1}(\bbone)\right)
    \amalg_{\coprod_{n} \overline{\mathfrak{e}}} y^*\mathfrak{c}_{n}(X)}{\left(\coprod_{n} \mathfrak{c}_{1}(\bbone)\right)
    \amalg_{\coprod_{n} \overline{\mathfrak{e}}} y^*\mathfrak{c}_{n}(Y)}{y^*\mathfrak{c}^{+}_{n}(X)}{y^*\mathfrak{c}^{+}_{n}(Y).}
  Here the top horizontal map is an $\mathbb{S}$-Segal equivalence, as
  it is a pushout of
  $y^{*}\mathfrak{c}_n(X) \to y^{*}\mathfrak{c}_{n}(Y)$.
  Since the Yoneda embedding $y\colon \xU\to \PSh(\xU)$ is symmetric monoidal, the vertical maps are of the form $y^*\xfc^+_n(Z)_\Seg\to y^*\xfc^+_n(Z)$. In particular, they are Segal equivalences and 
by the 2-of-3 property, the bottom horizontal map
  is also an $\mathbb{S}$-Segal equivalence.
\end{proof}

As a special case, we get:
\begin{cor}\label{cor:PCStensor}
  Let $\mathcal{V}$ be a  presentably symmetric monoidal \icat{}. Then
  $\PCS(\DFV)$ has a $\PSeg(\simp)$-module structure such that the
  tensoring preserves colimits in each variable.
\end{cor}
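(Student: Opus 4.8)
The plan is to deduce this immediately from Corollary~\ref{cor tensor S} by realizing $\PCS(\DFV)$ as an instance of $\PSSeg(\DFU)$ for a suitable small symmetric monoidal \icat{} $\mathcal{U}$ and compatible set $\mathbb{S}$. First I would invoke Proposition~\ref{propn Vkappa symmon} to choose a regular cardinal $\kappa$ for which $\mathcal{V}$ is $\kappa$-presentable and $\mathcal{U} := \mathcal{V}^{\kappa}$ is a symmetric monoidal subcategory. Since $\mathcal{V}$ is $\kappa$-presentable, $\mathcal{U}$ is closed under $\kappa$-small colimits by \cite[Corollary 5.3.4.15]{ht}, and as $\otimes$ preserves colimits in each variable on $\mathcal{V}$ it preserves $\kappa$-small colimits on $\mathcal{U}$; moreover $\mathcal{V} \simeq \Ind_{\kappa}(\mathcal{U})$ as symmetric monoidal \icats{}. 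Thus $\mathcal{U}$ satisfies the hypotheses of Corollary~\ref{cor:kappaseg}.

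Next I would apply Corollary~\ref{cor:kappaseg} to obtain an equivalence
\[ \PCS(\DFV) = \PCS(\DF^{\Ind_{\kappa}\mathcal{U}}) \isoto \PkSeg(\DFU). \]
By construction in the proof of that corollary, $\PkSeg(\DFU)$ is exactly $\PSSeg(\DFU)$ for $\mathbb{S}$ the set of maps $\colim (y \circ \phi) \to y(\colim \phi)$ with $\phi$ ranging over pushouts and $\kappa$-small coproducts in $\mathcal{U}$; this $\mathbb{S}$ is compatible with the symmetric monoidal structure, as was already required in order to invoke Corollary~\ref{cor PSeg=PCS} there.

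I would then feed this $\mathbb{S}$ into Corollary~\ref{cor tensor S}, which endows $\PSSeg(\DFU)$ with a $\PSeg(\simp)$-module structure; transporting it along the equivalence above yields the desired $\PSeg(\simp)$-module structure on $\PCS(\DFV)$. For the colimit-preservation claim I would note that the tensoring on $\PSeg(\DFU)$ preserves colimits in each variable by Theorem~\ref{theo tensor} (being the localization of the colimit-preserving bifunctor $\blank \times p^{*}\blank$), that the further localization to $\PSSeg(\DFU)$ retains this since localizations are left adjoints, and that the equivalence of Corollary~\ref{cor:kappaseg} preserves colimits. The only points needing care --- and these are bookkeeping rather than a genuine obstacle --- are verifying that $\mathcal{U} = \mathcal{V}^{\kappa}$ really meets the colimit hypotheses of Corollary~\ref{cor:kappaseg} and that the generating set $\mathbb{S}$ supplied there is precisely the compatible set demanded by Corollary~\ref{cor tensor S}. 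Once these match up, the statement is a formal consequence.
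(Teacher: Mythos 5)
Your proposal is correct and follows essentially the same route as the paper: choose $\kappa$ via Proposition~\ref{propn Vkappa symmon} so that $\mathcal{V}^{\kappa}$ is a symmetric monoidal subcategory with $\mathcal{V} \simeq \Ind_{\kappa}\mathcal{V}^{\kappa}$, and then apply Corollary~\ref{cor tensor S} with $\mathbb{S}$ as in the proof of Corollary~\ref{cor:kappaseg}. The extra bookkeeping you flag (closure of $\mathcal{V}^{\kappa}$ under $\kappa$-small colimits, compatibility of $\mathbb{S}$, transport of colimit-preservation along the equivalence) is exactly what the paper leaves implicit, and you handle it correctly.
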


\begin{proof}
  Since $\mathcal{V}$ is presentably symmetric monoidal, by
  Proposition~\ref{propn Vkappa symmon} we can choose
  a regular cardinal $\kappa$ such that $\mathcal{V}^{\kappa}$ is a
  symmetric monoidal subcategory of $\mathcal{V}$, and $\mathcal{V}
  \simeq \Ind_{\kappa}\mathcal{V}^{\kappa}$ as a symmetric monoidal
  \icat{}. The result is then a special case of
  Corollary~\ref{cor tensor S} applied to $\mathcal{V}^{\kappa}$ with
  $\mathbb{S}$ as in the proof of Corollary~\ref{cor:kappaseg}.
\end{proof}

Applying the adjoint functor theorem, we get:
\begin{corollary}\label{cor Alg(-,-)}
  Let $\mathcal{V}$ be a presentably symmetric monoidal \icat{}. The tensor product
  $\otimes\colon \PCS(\DFV)\times \PSh_{\Seg}(\simp)\to
  \PSh_{\xCS}({\DF^\xV})$ induces 
  \[\xAlg^\xV_{(\blank)}(\blank)\colon (\PSh_{\xCS}({\DF^\xV}))^\op\times
  \PSh_{\xCS}({\DF^\xV}) \to \PSh_{\Seg}(\simp)\] such that
  \[ \Map_{\PSeg(\simp)}(\mathcal{X}, \Alg^{\mathcal{V}}_{\mathcal{O}}(\mathcal{P}))
  \simeq \Map_{\PCS(\DFV)}(\mathcal{O} \otimes \mathcal{X},
  \mathcal{P})\]
  and a cotensor product 
  \[(\blank)^{(\blank)}\colon
  \PSh_{\xCS}({\DF^\xV})\times (\PSh_{\Seg}(\simp))^\op\to
  \PSh_{\xCS}({\DF^\xV})\]
  such that
  \[\Map_{\PCS(\DFV)}(\mathcal{O}, \mathcal{P}^{\mathcal{X}}) \simeq
  \Map_{\PCS(\DFV)}(\mathcal{O} \otimes \mathcal{X}, \mathcal{P}).\]
  Moreover, both of these functors preserve limits in each variable.
\end{corollary}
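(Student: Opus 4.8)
The plan is to obtain both functors as parametrized right adjoints to the tensoring, using that all the \icats{} in sight are presentable. Recall that $\PCS(\DFV)$ is presentable by Corollary~\ref{cor PCSDFV pres}, that $\PSeg(\simp)$ is presentable as a localization of $\PSh(\simp)$, and that the tensoring $\otimes \colon \PCS(\DFV) \times \PSeg(\simp) \to \PCS(\DFV)$ preserves colimits in each variable by Corollary~\ref{cor:PCStensor}. Writing $\mathcal{C} := \PCS(\DFV)$ and $\mathcal{D} := \PSeg(\simp)$, the first step is, for each fixed $\mathcal{O} \in \mathcal{C}$, to apply the adjoint functor theorem \cite[Corollary 5.5.2.9]{ht} to the colimit-preserving functor $\mathcal{O} \otimes (\blank) \colon \mathcal{D} \to \mathcal{C}$ to produce a right adjoint $\Alg^{\mathcal{V}}_{\mathcal{O}}(\blank) \colon \mathcal{C} \to \mathcal{D}$, and, for each fixed $\mathcal{X} \in \mathcal{D}$, to the colimit-preserving functor $(\blank) \otimes \mathcal{X} \colon \mathcal{C} \to \mathcal{C}$ to produce a right adjoint $(\blank)^{\mathcal{X}} \colon \mathcal{C} \to \mathcal{C}$. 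These immediately give the asserted mapping-space equivalences.

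Next I would upgrade these pointwise adjoints to functors of two variables. The functor adjoint to $\otimes$ in its second variable is a functor $\mathcal{C} \to \Fun^{L}(\mathcal{D}, \mathcal{C})$ landing in colimit-preserving functors; composing with the equivalence $\Fun^{L}(\mathcal{D}, \mathcal{C}) \simeq \Fun^{R}(\mathcal{C}, \mathcal{D})^{\op}$ given by passage to right adjoints (a consequence of the adjoint functor theorem; cf.\ \cite[Corollary 5.5.3.4]{ht}) and using $\Fun(\mathcal{C}, \mathcal{D})^{\op} \simeq \Fun(\mathcal{C}^{\op}, \mathcal{D}^{\op})$, one uncurries to obtain $\Alg^{\mathcal{V}}_{(\blank)}(\blank) \colon \mathcal{C}^{\op} \times \mathcal{C} \to \mathcal{D}$, contravariant in $\mathcal{O}$ and covariant in $\mathcal{P}$ as required. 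Symmetrically, currying $\otimes$ in its first variable gives $\mathcal{D} \to \Fun^{L}(\mathcal{C}, \mathcal{C})$, and the same passage to adjoints followed by uncurrying yields the cotensor $(\blank)^{(\blank)} \colon \mathcal{C} \times \mathcal{D}^{\op} \to \mathcal{C}$.

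Finally I would check that both functors preserve limits in each variable, which follows formally from the mapping-space formulas together with the Yoneda lemma. In the covariant variable each functor is a pointwise right adjoint, hence preserves limits; in the contravariant variable the claim is that colimits are sent to limits, which holds because $\otimes$ preserves colimits in the corresponding variable: for instance $\Map_{\mathcal{D}}(\mathcal{X}, \Alg^{\mathcal{V}}_{\colim_{\alpha} \mathcal{O}_{\alpha}}(\mathcal{P})) \simeq \Map_{\mathcal{C}}((\colim_{\alpha} \mathcal{O}_{\alpha}) \otimes \mathcal{X}, \mathcal{P}) \simeq \lim_{\alpha} \Map_{\mathcal{D}}(\mathcal{X}, \Alg^{\mathcal{V}}_{\mathcal{O}_{\alpha}}(\mathcal{P}))$, and similarly for the cotensor. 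The only non-formal point is the assembly of the pointwise adjoints into honest bifunctors with the correct variance, which is exactly what the $\Fun^{L} \simeq (\Fun^{R})^{\op}$ equivalence handles; I expect this bookkeeping of opposite categories to be the main thing to get right, everything else being a direct application of presentability and the adjoint functor theorem.
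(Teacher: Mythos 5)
Your proposal is correct and follows exactly the route the paper intends: the paper derives this corollary with the single phrase ``Applying the adjoint functor theorem, we get:'', relying on the presentability of $\PCS(\DFV)$ and the colimit-preservation of the tensoring from Corollary~\ref{cor:PCStensor}, and your write-up simply fills in the standard details of that argument (pointwise right adjoints, assembly into bifunctors via $\Fun^{L}\simeq(\Fun^{R})^{\op}$, and the formal limit-preservation check).
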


\begin{remark}
  Since $\PCS(\DFV)$ is a module over $\PSeg(\simp)$, for
  $\mathcal{O} \in \PCS(\DFV)$ and $\mathcal{X}, \mathcal{Y} \in
  \PSeg(\simp)$ we have a
  natural equivalence 
  \[ (\mathcal{O} \otimes \mathcal{X}) \otimes \mathcal{Y} \simeq
  \mathcal{O} \otimes (\mathcal{X} \times \mathcal{Y}).\]
  This induces a natural equivalence
  \[ \mathcal{O}^{\mathcal{X} \times \mathcal{Y}} \simeq (\mathcal{O}^{\mathcal{X}})^{\mathcal{Y}}.\]
\end{remark}

\subsection{The Underlying Enriched $\infty$-Category}\label{sec under
  icat}
In this subsection we define the underlying enriched \icat{} of an
enriched \iopd{}, by extracting from a (continuous) Segal presheaf on
$\DFV$ a (continuous) Segal presheaf on $\simp^{\mathcal{V}}$. Here we
assume without further comment that the analogues of the results of
\S\ref{subsec psh}--\ref{sec tensor product} also hold for Segal
presheaves on $\simp^{\mathcal{V}}$, by simpler versions of the
arguments given above.

\begin{defn}
  Let $u \colon \simp \to \DF$ denote the functor taking $[n]$
  to
  $([n], \mathbf{1} \xto{\id} \mathbf{1} \to \cdots \to \mathbf{1})$.
  We write $\overline{u} \colon \simp^\xV\hookrightarrow \DFV $ for
  the map given by the pullback
  \[
  \begin{tikzcd}
    \simp^\xV \ar[hookrightarrow]{r}{\overline{u}} \arrow{d} & \DFV \arrow{d}\\
    \simp \arrow[hookrightarrow]{r}{u} &\DF.
  \end{tikzcd}
  \]
\end{defn}

\begin{propn}
  Let $\mathcal{U}$ be a small symmetric monoidal \icat{}. The functors $\overline{u}_{!}$ and $\overline{u}^{*}$ satisfy:
  \begin{enumerate}[(i)]
  \item $\overline{u}_{!} \colon \PSh(\simp^{\mathcal{U}}) \to
    \PSh(\DFU)$ preserves Segal presheaves.
  \item $\overline{u}_{!}$ preserves Segal equivalences.
  \item
    $\overline{u}^{*} \colon \PSh(\DFU) \to \PSh(\simp^{\mathcal{U}})$
    preserves Segal presheaves.
  \item The adjunction $\overline{u}_{!} \dashv \overline{u}^{*}$
    restricts to an adjunction between the \icats{} of Segal presheaves.
  \end{enumerate}
\end{propn}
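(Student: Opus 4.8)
The plan is to isolate one combinatorial fact about $u$ and deduce all four assertions from it formally. The functor $u\colon\simp\to\DF$ is the inclusion of \emph{linear trees}: $u[n]$ is the forest all of whose vertices are unary. It is fully faithful — a morphism $(\alpha,\phi)\colon u[m]\to u[n]$ forces $\phi$ to be the identity natural transformation of the constant functor $\mathbf 1$, so $(\alpha,\phi)$ is determined by $\alpha$ — it preserves inert and active maps, and it sends the two elementary objects $[0],[1]$ to $\xfe$ and $\xfc_{1}$. The decisive point, and the only place I expect genuine combinatorial work, is that \emph{because $u[n]$ has only unary vertices, every inert map from an elementary object of $\DF$ into $u[n]$ is $u$ of an elementary map into $[n]$}; precisely, $u$ induces equivalences of elementary slices $(\simp_{\xel})_{/[n]}\isoto\DFel_{/u[n]}$, and after base change to $\mathcal U$ an equivalence $(\simp^{\mathcal U}_{\xel})_{/\overline I}\isoto\DFUel_{/\overline{u}\,\overline I}$ for $\overline I\in\simp^{\mathcal U}$. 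This is the engine behind (iii).

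Granting this, (iii) is immediate: for $G\in\PSeg(\DFU)$ and $\overline I=([n],v_{1},\dots,v_{n})$ we have $\overline{u}^{*}G(\overline I)=G(\overline{u}\,\overline I)$, and the slice equivalence identifies the Segal limit of $\overline{u}^{*}G$ over the elementary objects of $\overline I$ with the Segal limit of $G$ over the elementary objects of the linear tree $\overline{u}\,\overline I$; since $G$ is Segal the comparison is an equivalence, so $\overline{u}^{*}G$ is Segal. For (ii) I would argue by saturation: $\overline{u}_{!}$ preserves colimits, so the class of maps it carries to Segal equivalences is strongly saturated, and it suffices to treat generators, which for $\simp^{\mathcal U}$ are the enriched spine inclusions $\overline I_{\Seg}\to\overline I$ (there are no disconnected generators since objects of $\simp$ are connected). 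As $\overline{u}_{!}$ preserves colimits and representables, and $\overline I_{\Seg}=\colim_{J}J$ over $(\simp^{\mathcal U}_{\xel})_{/\overline I}$, the slice equivalence gives $\overline{u}_{!}(\overline I_{\Seg})\simeq(\overline{u}\,\overline I)_{\Seg}$, so $\overline{u}_{!}$ sends the spine inclusion to $(\overline{u}\,\overline I)_{\Seg}\to\overline{u}\,\overline I$, the decomposition of the linear tree into its unary corollas, which is a Segal equivalence by Proposition~\ref{propn:SegDFVcond}. Hence (ii).

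Finally (iv), and with it (i), follows formally from (iii) by the standard interaction of an adjunction with reflective localizations: since $\overline{u}^{*}$ carries $\PSeg(\DFU)$ into $\PSeg(\simp^{\mathcal U})$, the restriction $\overline{u}^{*}\colon\PSeg(\DFU)\to\PSeg(\simp^{\mathcal U})$ has a left adjoint given by $L\circ\overline{u}_{!}$, where $L$ is the Segalification $\PSh(\DFU)\to\PSeg(\DFU)$; indeed for $F\in\PSeg(\simp^{\mathcal U})$, $\mathcal O\in\PSeg(\DFU)$ one computes $\Map(L\,\overline{u}_{!}F,\mathcal O)\simeq\Map(\overline{u}_{!}F,\mathcal O)\simeq\Map(F,\overline{u}^{*}\mathcal O)$, the last step using that $\overline{u}^{*}\mathcal O$ is again Segal. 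This is the restricted adjunction of (iv), and its left adjoint is the induced functor of (i); that $\overline{u}_{!}$ descends to Segal presheaves at all is exactly (ii). The main obstacle to be aware of is that the presheaf-level $\overline{u}_{!}$ does \emph{not} preserve Segal objects strictly: no forest with a vertex of arity $\neq 1$, and no disjoint union of linear trees, admits any map to a connected linear tree, so $\overline{u}_{!}F$ is forced to be empty there while the Segal condition demands a nonempty product — hence (i) must be routed through the Segalification $L$ and the equivalence-preservation (ii), rather than by evaluating $\overline{u}_{!}$ directly on a Segal input.
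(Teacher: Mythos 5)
Your arguments for (ii) and (iii) are essentially the paper's. The paper also proves (ii) by observing that $\overline{u}_{!}$ carries the generating spine inclusions of $\PSh(\simp^{\mathcal{U}})$ to generating Segal equivalences of $\PSh(\DFU)$ --- which is exactly your slice identification: the only inert maps from elementary objects into a linear tree $\overline{u}\,\overline{I}$ are edges and $1$-corollas, so $\overline{u}_{!}(\overline{I}_{\Seg}) \simeq (\overline{u}\,\overline{I})_{\Seg}$ --- and it then deduces (iii) by adjunction, whereas you verify (iii) directly from the same identification; the two are interchangeable.

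The substantive divergence is at (i) and (iv), and there your diagnosis is correct: the paper's own one-line proof of (i) is the flawed step. The paper justifies (i) by noting that $\overline{u}_{!}F(\overline{I}) \simeq \emptyset$ for $I$ not in the image of $u$, but this emptiness is precisely what violates the Segal condition. For $I = ([0],\mathbf{m})$ with $m \geq 2$ there is no morphism $I \to u[k]$ in $\DF$ (it would require an injection $\mathbf{m} \hookrightarrow \mathbf{1}$), so $\overline{u}_{!}F(([0],\mathbf{m})) \simeq \emptyset$, while locality with respect to $\coprod_{i \in \mathbf{m}}([0],\mathbf{1}) \to ([0],\mathbf{m})$ in Proposition~\ref{propn:SegDFVcond}(3) forces the value $F([0])^{\times m}$, which is nonempty as soon as $F$ has an object. (One small correction to your inventory of counterexamples: the empty forest $([0],\mathbf{0})$ \emph{does} map to every linear tree, via the empty injection, and there $\overline{u}_{!}F$ takes the value $F([0])$ rather than $\emptyset$; the Segal condition still fails, since it demands the terminal space.) So (i) as literally stated, and (iv) with $\overline{u}_{!}$ itself as the left adjoint, are false, and your repair --- taking the left adjoint of the restricted $\overline{u}^{*}$ to be $L\circ\overline{u}_{!}$, which is adjoint by (iii) and descends along Segal equivalences by (ii) --- is the right one. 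It is also all that the rest of the paper actually uses: every later appeal to this adjunction (e.g.\ Lemma~\ref{lem equivalence}, Corollary~\ref{cor:PCSu}) either computes mapping spaces into Segal presheaves, where $\Map(L\overline{u}_{!}F,\mathcal{O}) \simeq \Map(\overline{u}_{!}F,\mathcal{O})$, or feeds $\overline{u}_{!}\mathcal{C}$ into a construction that is already localized.
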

\begin{proof}
  For a Segal presheaf
  $F \colon \simp^{\mathcal{U},\op} \to \mathcal{S}$, we must show
  that $\overline{u}_{!}F$ is a Segal presheaf on $\DFU$. This is clear, since
  $\overline{u}_{!}F(\overline{I}) \simeq \emptyset$ for any $I$ not in the image
  of $u$. Moreover, the images of the generating Segal equivalences in
  $\PSh(\simp^{\mathcal{U}})$ under $\overline{u}_{!}$ are clearly
  among the generating Segal equivalences in $\PSh(\DFU)$. This proves
  (ii), and (iii) is an immediate consequence of (ii). Then (iv)
  follows since both functors preserve the full subcategories of Segal
  presheaves.
\end{proof}

The \icat{} $\PSeg(\simp^{\mathcal{U}})$ is tensored and cotensored
over $\PSeg(\simp)$, with $\mathcal{C} \otimes \xX$ for $\mathcal{C} \in
\PSeg(\simp^{\mathcal{U}})$ and $\xX \in \PSeg(\simp)$ given by the
localization of $\mathcal{C} \times q^{*}\xX$ for $q$ the projection
$\simp^{\mathcal{U}} \to \simp$. This is compatible with the tensoring
of $\PSeg(\DFU)$:
\begin{propn}
  For $\mathcal{C} \in \PSeg(\simp^{\mathcal{U}})$ and $\xX \in \PSeg(\simp)$
  there is a natural equivalence
  \[\overline{u}_{!}(\mathcal{C} \otimes \xX) \isoto (\overline{u}_{!}\mathcal{C})
  \otimes \xX.\]
\end{propn}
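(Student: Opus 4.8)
The plan is to reduce everything to a projection formula for the left Kan extension $\overline{u}_{!}$, exploiting that by the preceding proposition $\overline{u}_{!}$ preserves both Segal presheaves (part (i)) and Segal equivalences (part (ii)).

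First I would record the compatibility of the base projections. The composite $\simp^{\mathcal{U}} \xto{\overline{u}} \DFU \to \DF \to \simp$ agrees with $\simp^{\mathcal{U}} \xto{q} \simp \xto{u} \DF \to \simp$, and since $u$ sends $[n]$ to a linear tree over $[n]$, the composite $\DF \to \simp$ restricted to the image of $u$ is the identity. Hence $p \circ \overline{u} = q$, so for every $\mathcal{X} \in \PSh(\simp)$ we have $q^{*}\mathcal{X} \simeq \overline{u}^{*}p^{*}\mathcal{X}$.

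Next comes the key step, the projection formula: for every $G \in \PSh(\DFU)$ and $\mathcal{C} \in \PSh(\simp^{\mathcal{U}})$ the canonical map
\[ \overline{u}_{!}(\mathcal{C} \times \overline{u}^{*}G) \to (\overline{u}_{!}\mathcal{C}) \times G \]
is an equivalence. This map is the one adjoint to the identity under the equivalence $\overline{u}^{*}\big((\overline{u}_{!}\mathcal{C}) \times G\big) \simeq \overline{u}^{*}\overline{u}_{!}\mathcal{C} \times \overline{u}^{*}G \simeq \mathcal{C} \times \overline{u}^{*}G$ coming from full faithfulness of $\overline{u}$. To see it is an equivalence I would observe that $\overline{u}$ exhibits $\simp^{\mathcal{U}}$ as a \emph{sieve} in $\DFU$: a morphism $\overline{J} \to \overline{L}$ lies over $J \to L$ in $\DF$, and a forest admitting a map to a linear tree is itself linear, so $\overline{L}$ in the image of $\overline{u}$ forces $\overline{J}$ in the image. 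Consequently $\overline{u}_{!}$ is \emph{extension by the initial object}: by the colimit formula $\overline{u}_{!}F(\overline{J}) = \colim_{\overline{J} \to \overline{u}(\overline{I})} F(\overline{I})$ vanishes unless $\overline{J}$ is in the image (the indexing category being empty), in which case the comma category has an initial object and the value is $F$ — exactly the vanishing already used in the preceding proof. Both sides of the displayed map therefore vanish away from the image of $\overline{u}$ and restrict along $\overline{u}^{*}$ to $\mathcal{C} \times \overline{u}^{*}G$, so the map is an equivalence objectwise, hence an equivalence.

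Finally I would assemble the pieces. Writing $L$ for the Segal localization on $\PSh(\DFU)$, and combining the first two steps with $G = p^{*}\mathcal{X}$ gives
\[ \overline{u}_{!}(\mathcal{C} \times q^{*}\mathcal{X}) \simeq \overline{u}_{!}(\mathcal{C} \times \overline{u}^{*}p^{*}\mathcal{X}) \simeq (\overline{u}_{!}\mathcal{C}) \times p^{*}\mathcal{X}. \]
Since $\mathcal{C} \otimes \mathcal{X}$ is obtained from $\mathcal{C} \times q^{*}\mathcal{X}$ by a Segal equivalence, and $\overline{u}_{!}$ preserves Segal equivalences, the map $\overline{u}_{!}(\mathcal{C} \times q^{*}\mathcal{X}) \to \overline{u}_{!}(\mathcal{C} \otimes \mathcal{X})$ is a Segal equivalence; its target is a Segal presheaf because $\overline{u}_{!}$ preserves Segal presheaves and $\mathcal{C}\otimes\mathcal{X}$ is one. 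A Segal equivalence into a Segal presheaf exhibits the target as the localization of the source, so using Theorem~\ref{theo tensor} for the tensoring on $\PSeg(\DFU)$,
\[ \overline{u}_{!}(\mathcal{C} \otimes \mathcal{X}) \simeq L\big((\overline{u}_{!}\mathcal{C}) \times p^{*}\mathcal{X}\big) = (\overline{u}_{!}\mathcal{C}) \otimes \mathcal{X}, \]
with naturality in $\mathcal{C}$ and $\mathcal{X}$ inherited from the projection map. The main obstacle is the projection formula: what makes it hold without invoking a general Beck--Chevalley argument is precisely the sieve property of $\overline{u}$, which forces $\overline{u}_{!}$ to be extension by the initial object.
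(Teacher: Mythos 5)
Your route --- establish a presheaf-level projection formula for $\overline{u}_{!}$ and then apply the Segal localization to both sides --- is genuinely different from the paper's, which instead builds the comparison map $\overline{u}_{!}(\mathcal{C}\otimes\xX)\to\overline{u}_{!}\mathcal{C}\otimes\xX$ from the unit/counit of $\overline{u}_{!}\dashv\overline{u}^{*}$ and the universal property of localization, and then uses colimit- and Segal-equivalence-preservation in both variables to reduce the verification to the single case $\mathcal{C}=([1],v)$, $\xX=\Delta^{1}$, where it follows from the explicit decomposition of Lemma~\ref{lem decomposing}. Unfortunately the key step of your argument rests on a false claim: $\simp^{\mathcal{U}}$ is \emph{not} a sieve in $\DFU$. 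A forest admitting a map to a linear tree need not be linear. The empty forests $([m],\mathbf{0}\to\cdots\to\mathbf{0})$ are objects of $\DF$ (the paper explicitly allows $\mathbf{m}=\mathbf{0}$ in the Segal generators of Proposition~\ref{propn:SegDFcond}), and they admit morphisms to every linear tree: the map $([0],\mathbf{0})\to([0],\mathbf{1})=\mathfrak{e}$ given by $(\id_{[0]},\mathbf{0}\hookrightarrow\mathbf{1})$ satisfies the injectivity and pullback conditions of the definition of $\DF$ vacuously, yet $([0],\mathbf{0})$ is not in the image of $u$. (These are the only counterexamples: the pullback condition forces a forest mapping to a linear tree to be constant, hence either linear or empty.)

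Consequently $\overline{u}_{!}$ is not ``extension by the initial object'': at an empty forest the comma category indexing the colimit $\overline{u}_{!}F(\overline{J})=\colim_{\overline{J}\to\overline{u}(\overline{I})}F(\overline{I})$ is non-empty (for $([0],\mathbf{0})$ it even has a cofinal object, giving the value $F([0])$ rather than $\emptyset$), and the two sides of your projection formula evaluate there to $\colim\bigl(\mathcal{C}(\overline{I})\times G(\overline{u}(\overline{I}))\bigr)$ versus $\colim\bigl(\mathcal{C}(\overline{I})\times G(\overline{J})\bigr)$, which differ for a general $G$ --- already for $G=p^{*}\xX$ with $\xX$ non-constant, since the comparison map is induced by restriction along $\overline{J}\to\overline{u}(\overline{I})$. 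So the objectwise equivalence you assert is false, and applying $L$ to both sides of a non-equivalence does not yield the statement. The gap is plausibly repairable --- every Segal presheaf takes the value $*$ on empty forests, so one could try to show the projection map is a \emph{Segal} equivalence rather than an objectwise one --- but that is an additional argument you do not supply, and it is precisely the kind of pointwise bookkeeping the paper's reduction to generators via Lemma~\ref{lem decomposing} is designed to avoid.
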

\begin{proof}
  For $\mathcal{O}$ a Segal presheaf on $\DFU$ and $\xX \in
  \PSeg(\simp)$, the object $\mathcal{O} \otimes \xX$ is defined as the
  localization of $\mathcal{O} \times p^{*}\xX$, where $p$ is the composite $\DFU\to \simp^\xU\overset{q}{\to}\simp$ of the obvious projections. We clearly have
  $\overline{u}^{*}(\mathcal{O} \times p^{*}\xX) \simeq
  \overline{u}^{*}\mathcal{O} \times q^{*}\xX$. Moreover,
  $\overline{u}^{*}(\mathcal{O} \otimes \xX)$ is a Segal presheaf since
  $\overline{u}^{*}$ preserves these. Hence, the map
  $\overline{u}^{*}(\mathcal{O} \times p^{*}\xX) \to \overline{u}^{*}(\mathcal{O} \otimes \xX)$
  factors through the localization $(\overline{u}^{*}\mathcal{O}) \otimes \xX$ to
  give a natural map $(\overline{u}^{*}\mathcal{O}) \otimes \xX \to
  \overline{u}^{*}(\mathcal{O} \otimes \xX)$. Applying this with $\mathcal{O} =
  \overline{u}_{!}\mathcal{C}$ and combining it with the counit for the
  adjunction we get a natural map \[\mathcal{C} \otimes \xX \to
  \overline{u}^{*}\overline{u}_{!}\mathcal{C} \otimes \xX \to
  \overline{u}^{*}(\overline{u}_{!}\mathcal{C} \otimes \xX),\]
  which induces the required natural map
  \[ \overline{u}_{!}(\mathcal{C} \otimes \xX) \to
  \overline{u}_{!}\mathcal{C} \otimes \xX\]
  by adjunction. Since $\overline{u}_{!}$ preserves Segal equivalences
  and colimits, as do the two tensor products in each variable, it
  suffices to check that the map
  \[ \overline{u}_{!}([1] (v) \otimes \Delta^{1}) \to
  \overline{u}_{!}[1] (v) \otimes \Delta^{1} \simeq
  \mathfrak{c}_{1}(v) \otimes \Delta^{1}\]
  is an equivalence, which is clear from Lemma~\ref{lem decomposing}
  and its analogue for $\simp^{\mathcal{U}}$.
\end{proof}

\begin{cor}
  There is a natural equivalence $\overline{u}^{*}(\mathcal{O}^{\xX})
  \simeq (\overline{u}^{*}\mathcal{O})^{\xX}$.
\end{cor}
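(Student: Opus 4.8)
The plan is to deduce this from the preceding proposition by a purely formal Yoneda-and-adjunction argument, trading the statement about tensors for one about cotensors. Since $\overline{u}^{*}$ preserves Segal presheaves (part (iii) of the proposition on $\overline{u}_{!}$ and $\overline{u}^{*}$) and cotensors are by construction right adjoints landing in Segal presheaves, both $\overline{u}^{*}(\mathcal{O}^{\xX})$ and $(\overline{u}^{*}\mathcal{O})^{\xX}$ lie in $\PSeg(\simp^{\mathcal{U}})$. By the Yoneda lemma it therefore suffices to exhibit, naturally in $\mathcal{C} \in \PSeg(\simp^{\mathcal{U}})$, an equivalence of mapping spaces $\Map(\mathcal{C}, (\overline{u}^{*}\mathcal{O})^{\xX}) \simeq \Map(\mathcal{C}, \overline{u}^{*}(\mathcal{O}^{\xX}))$.

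First I would rewrite the left-hand side using the tensor--cotensor adjunction on $\PSeg(\simp^{\mathcal{U}})$ followed by the adjunction $\overline{u}_{!} \dashv \overline{u}^{*}$:
\[ \Map(\mathcal{C}, (\overline{u}^{*}\mathcal{O})^{\xX}) \simeq \Map(\mathcal{C} \otimes \xX, \overline{u}^{*}\mathcal{O}) \simeq \Map(\overline{u}_{!}(\mathcal{C} \otimes \xX), \mathcal{O}). \]
Next I would invoke the preceding proposition, which supplies a natural equivalence $\overline{u}_{!}(\mathcal{C} \otimes \xX) \isoto (\overline{u}_{!}\mathcal{C}) \otimes \xX$, and then run the dual chain of adjunctions on $\PSeg(\DFU)$:
\[ \Map(\overline{u}_{!}(\mathcal{C} \otimes \xX), \mathcal{O}) \simeq \Map((\overline{u}_{!}\mathcal{C}) \otimes \xX, \mathcal{O}) \simeq \Map(\overline{u}_{!}\mathcal{C}, \mathcal{O}^{\xX}) \simeq \Map(\mathcal{C}, \overline{u}^{*}(\mathcal{O}^{\xX})). \]
Composing these equivalences yields the required identification of mapping spaces.

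The only real point to check is naturality: each individual equivalence above is induced by a unit/counit of one of the two $\overline{u}_{!} \dashv \overline{u}^{*}$ adjunctions, a tensor--cotensor adjunction, or the natural equivalence from the proposition, so the whole composite is natural in $\mathcal{C}$ (and in fact in $\mathcal{O}$ and $\xX$ as well). The hard part has already been done in the preceding proposition --- establishing the compatibility $\overline{u}_{!}(\mathcal{C} \otimes \xX) \simeq (\overline{u}_{!}\mathcal{C}) \otimes \xX$ --- and here nothing beyond the bookkeeping of adjunctions remains; by Yoneda the natural equivalence of mapping spaces upgrades to the claimed natural equivalence $\overline{u}^{*}(\mathcal{O}^{\xX}) \simeq (\overline{u}^{*}\mathcal{O})^{\xX}$.
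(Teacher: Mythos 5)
Your argument is correct and is essentially identical to the paper's proof: both reduce the statement, via the Yoneda lemma, to a chain of mapping-space equivalences obtained from the adjunctions $\overline{u}_{!} \dashv \overline{u}^{*}$ and $(\blank)\otimes\xX \dashv (\blank)^{\xX}$ together with the compatibility $\overline{u}_{!}(\mathcal{C}\otimes\xX)\simeq(\overline{u}_{!}\mathcal{C})\otimes\xX$ from the preceding proposition. The extra remarks on naturality and on both sides being Segal presheaves are fine but not needed beyond what the paper records.
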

\begin{proof}
  We have natural equivalences 
  \[\Map(\mathcal{C}, \overline{u}^{*}\mathcal{O}^{\xX}) \simeq
  \Map(\overline{u}_{!}\mathcal{C} \otimes \xX, \mathcal{O}) \simeq
  \Map(\overline{u}_{!}(\mathcal{C} \otimes \xX), \mathcal{O}) \simeq
  \Map(\mathcal{C}, (\overline{u}^{*}\mathcal{O})^{\xX}).\qedhere\]
\end{proof}

The preceding results are compatible with localization, in the
following sense:
\begin{cor}\label{cor:PSu}
  Suppose $\mathcal{U}$ is a small symmetric monoidal \icat{} and
  $\mathbb{S}$ is a set of morphisms in $\PSh(\mathcal{U})$ compatible
  with the symmetric monoidal structure. Then:
  \begin{enumerate}[(i)]
  \item The adjunction $\overline{u}_{!} \dashv \overline{u}^{*}$
    restricts to an adjunction
    \[ \overline{u}_{!} : \PSSeg(\simp^{\mathcal{U}}) \rightleftarrows
    \PSSeg(\DFU) : \overline{u}^{*}.\]
  \item The functor $\overline{u}_{!}$ is compatible with the
    tensoring with $\PSeg(\simp)$ in the sense that there is a natural
    equivalence $\overline{u}_{!}(\mathcal{C}
    \otimes \xX) \simeq \overline{u}_{!}(\mathcal{C}) \otimes \xX$ for $\mathcal{C} \in
    \PSSeg(\simp^{\mathcal{U}})$ and $\xX \in \PSeg(\simp)$.
  \item There is a natural equivalence
    $\overline{u}^{*}(\mathcal{O}^{\xX}) \simeq
    (\overline{u}^{*}\mathcal{O})^{\xX}$ for $\mathcal{O} \in
    \PSSeg(\DFU)$ and $\xX \in \PSeg(\simp)$.    
  \end{enumerate}
\end{cor}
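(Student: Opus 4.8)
The plan is to deduce all three parts from their already-established non-localized counterparts (the two unlabelled propositions and the corollary immediately preceding this statement, dealing with plain Segal presheaves) by the standard localization formalism; the only genuine content is the behaviour of $\overline{u}_{!}$ and $\overline{u}^{*}$ on the extra generating $\mathbb{S}$-Segal equivalences, and I expect part (i) to be the main obstacle.

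For (i) I would show that each of $\overline{u}_{!}$ and $\overline{u}^{*}$ carries $\mathbb{S}$-Segal presheaves to $\mathbb{S}$-Segal presheaves; since both subcategories are full and the adjunction $\overline{u}_{!} \dashv \overline{u}^{*}$ already holds, the restricted functors then automatically form an adjunction (mapping spaces are computed in the ambient categories). Recall from the non-localized results above that $\overline{u}_{!}$ and $\overline{u}^{*}$ preserve Segal presheaves and that $\overline{u}_{!}$ preserves Segal equivalences. The key observation is that $\overline{u}$ sends the unary corolla $([1],v)$ of $\simp^{\mathcal{U}}$ to the corolla $(\mathfrak{c}_{1},v)$ of $\DFU$, while no higher corolla $\mathfrak{c}_{n}$ with $n \neq 1$ lies in the image of $u$. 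Consequently, writing $y^{*}(\mathfrak{c}_{n},s)$ for the extra generators of Definition~\ref{def S Segal presheaves} and using that $\overline{u}_{!}$ sends representables to representables, $\overline{u}_{!}$ takes the generator $y^{*}([1],s)$ to $y^{*}(\mathfrak{c}_{1},s)$; hence $\overline{u}_{!}$ preserves all $\mathbb{S}$-Segal equivalences, which in turn forces $\overline{u}^{*}$ to preserve $\mathbb{S}$-local objects. That $\overline{u}_{!}$ itself preserves $\mathbb{S}$-Segal objects I would check via the representability criterion of Remark~\ref{rmk S-seg repr}: for an $\mathbb{S}$-Segal presheaf $F$ on $\simp^{\mathcal{U}}$ the presheaf $\overline{u}_{!}F$ is Segal, its value on $(\mathfrak{c}_{1},\blank)$ equals $F([1],\blank)$ and is therefore $\mathbb{S}$-local, while $\overline{u}_{!}F(\mathfrak{c}_{n},\blank) \simeq \emptyset$ for $n \neq 1$ since $\overline{u}_{!}F$ is supported on the linear forests in the image of $u$; thus $\overline{u}_{!}F$ is again $\mathbb{S}$-Segal.

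For (ii) I would use that the tensoring on $\PSSeg(\DFU)$ is, by Corollary~\ref{cor tensor S}, obtained from the Segal-level tensoring by a further $\mathbb{S}$-localization. Let $\ell \colon \mathcal{C} \otimes \xX \to \mathcal{C} \otimes \xX$ denote the $\mathbb{S}$-localization map from the Segal-level tensor product to the $\mathbb{S}$-Segal one. Since $\overline{u}_{!}$ preserves $\mathbb{S}$-Segal equivalences by (i), it carries $\ell$ to an $\mathbb{S}$-Segal equivalence; combining this with the non-localized compatibility established above (which identifies $\overline{u}_{!}$ of the Segal-level tensor with $\overline{u}_{!}\mathcal{C} \otimes \xX$ at the Segal level) and with the fact that $\overline{u}_{!}(\mathcal{C} \otimes \xX)$ is already $\mathbb{S}$-Segal (by (i)), I would identify $\overline{u}_{!}(\mathcal{C} \otimes \xX)$ with the $\mathbb{S}$-localization of $\overline{u}_{!}\mathcal{C}$ tensored with $\xX$, i.e.\ with $\overline{u}_{!}\mathcal{C} \otimes \xX$. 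Naturality is inherited from the non-localized statement.

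For (iii), the cotensor on $\PSSeg(\DFU)$ exists by applying the adjoint functor theorem to the colimit-preserving tensoring of Corollary~\ref{cor tensor S}, exactly as in Corollary~\ref{cor Alg(-,-)}. The claimed equivalence then follows by adjunction and the Yoneda lemma: for $\mathcal{C} \in \PSSeg(\simp^{\mathcal{U}})$ the chain
\[ \Map(\mathcal{C}, \overline{u}^{*}(\mathcal{O}^{\xX})) \simeq \Map(\overline{u}_{!}\mathcal{C}, \mathcal{O}^{\xX}) \simeq \Map(\overline{u}_{!}\mathcal{C} \otimes \xX, \mathcal{O}) \simeq \Map(\overline{u}_{!}(\mathcal{C} \otimes \xX), \mathcal{O}) \simeq \Map(\mathcal{C} \otimes \xX, \overline{u}^{*}\mathcal{O}) \simeq \Map(\mathcal{C}, (\overline{u}^{*}\mathcal{O})^{\xX}) \]
is natural in $\mathcal{C}$, where the middle equivalence uses part (ii). By Yoneda this yields $\overline{u}^{*}(\mathcal{O}^{\xX}) \simeq (\overline{u}^{*}\mathcal{O})^{\xX}$. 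The hardest step is the bookkeeping in (i), specifically verifying the vanishing of $\overline{u}_{!}F$ on non-unary corollas so that the higher $\mathbb{S}$-locality conditions become vacuous; once (i) is in place, (ii) and (iii) are essentially formal.
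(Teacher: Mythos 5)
Your strategy --- reduce to the non-localized proposition and corollary immediately above and track what $\overline{u}_{!}$ does to the extra generators $y^{*}(\mathfrak{c}_{n},s)$ --- is exactly what the paper intends (it gives no argument beyond asserting compatibility with localization), and your (ii) and (iii) are the right formal consequences once (i) is in place. The gap is in (i), at the step you yourself single out as the crux. The inference ``$\overline{u}_{!}F(\mathfrak{c}_{n},\blank)\simeq\emptyset$ for $n\neq 1$, hence the $\mathbb{S}$-locality conditions at these corollas are vacuous'' is not valid: the constant empty presheaf on $\mathcal{U}$ is $\mathbb{S}$-local only if no map in $\mathbb{S}$ has initial source and non-initial target. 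In the paper's principal application (Corollary~\ref{cor:kappaseg}) the set $\mathbb{S}$ contains the empty-coproduct comparison map $\emptyset\to y(\emptyset_{\mathcal{U}})$, and locality with respect to $y^{*}(\mathfrak{c}_{0},\emptyset\to y(\emptyset_{\mathcal{U}}))$ forces $\mathcal{O}(\mathfrak{c}_{0},\emptyset_{\mathcal{U}})\simeq\mathcal{O}(\overline{\mathfrak{e}})$, which fails for $\overline{u}_{!}F$ whenever $F([0])\neq\emptyset$. Worse, the same vanishing already conflicts with the plain Segal condition you import from the preceding proposition: the object $([0],\mathbf{2})$ admits no map to any $u([m])$ (the component $\phi_{0}\colon\mathbf{2}\to\mathbf{1}$ of such a map would have to be injective), so $\overline{u}_{!}F(([0],\mathbf{2}))=\emptyset$, whereas the Segal condition demands $\mathcal{O}(([0],\mathbf{2}))\simeq\mathcal{O}(\overline{\mathfrak{e}})^{\times 2}$. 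So $\overline{u}_{!}$ does not literally preserve ($\mathbb{S}$-)Segal presheaves, and (i) cannot be established in the form stated.

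The statement, and your proofs of (ii) and (iii), survive once the left adjoint of the restricted adjunction is taken to be $L\circ\overline{u}_{!}$, where $L$ denotes the localization of $\PSh(\DFU)$ onto $\PSSeg(\DFU)$. For that, the only substantive input is the part of your argument that is sound in spirit: $\overline{u}_{!}$ carries the generating $\mathbb{S}$-Segal equivalences of $\PSh(\simp^{\mathcal{U}})$ to $\mathbb{S}$-Segal equivalences of $\PSh(\DFU)$. Even here the identification $\overline{u}_{!}y^{*}([1],s)\simeq y^{*}(\mathfrak{c}_{1},s)$ needs a short justification, since $y^{*}([1],X)$ is not representable; one should write it as a colimit of representables and use that $\overline{u}_{!}$ preserves colimits and representables and that both sides vanish on objects admitting no map into the linear part of $\DFU$. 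Granting this, $\overline{u}^{*}$ preserves $\mathbb{S}$-Segal presheaves and $L\overline{u}_{!}\dashv\overline{u}^{*}$ between the localized subcategories; your arguments for (ii) and (iii) then go through with $L\overline{u}_{!}$ in place of $\overline{u}_{!}$, since they use only the adjunction, preservation of local equivalences, and the non-localized compatibility.
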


As usual, this extends to the presentable case by choosing a suitable
subcategory of $\kappa$-compact objects, giving:
\begin{cor}\label{cor:PCSu}
  Suppose $\mathcal{V}$ is a presentably symmetric monoidal \icat{}. Then
  \begin{enumerate}[(i)]
  \item The adjunction $\overline{u}_{!} \dashv \overline{u}^{*}$
    restricts to an adjunction
    \[ \overline{u}_{!} : \PCS(\simp^{\mathcal{V}}) \rightleftarrows
    \PCS(\DFV) : \overline{u}^{*}.\]
  \item The functor $\overline{u}_{!}$ is compatible with the
    tensoring with $\PSeg(\simp)$ in the sense that there is a natural
    equivalence $\overline{u}_{!}(\mathcal{C}
    \otimes \xX) \simeq \overline{u}_{!}(\mathcal{C}) \otimes \xX$ for $\mathcal{C} \in
    \PCS(\simp^{\mathcal{V}})$ and $\xX \in \PSeg(\simp)$.
  \item There is a natural equivalence
    $\overline{u}^{*}\mathcal{O}^{\xX} \simeq
    (\overline{u}^{*}\mathcal{O})^{\xX}$ for $\mathcal{O} \in
    \PCS(\DFV)$ and $\xX \in \PSeg(\simp)$.
  \end{enumerate}
\end{cor}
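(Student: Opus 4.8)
The plan is to reduce to the small case treated in Corollary~\ref{cor:PSu}. By Proposition~\ref{propn Vkappa symmon} we may choose a regular cardinal $\kappa$ for which the full subcategory $\mathcal{V}^{\kappa}$ of $\kappa$-compact objects is a symmetric monoidal subcategory and $\mathcal{V} \simeq \Ind_{\kappa}\mathcal{V}^{\kappa}$ as symmetric monoidal \icats{}. Setting $\mathcal{U} := \mathcal{V}^{\kappa}$ and letting $\mathbb{S}$ be the set of maps used in the proof of Corollary~\ref{cor:kappaseg}, we have $\mathcal{V} \simeq \PSh_{\mathbb{S}}(\mathcal{V}^{\kappa})$, and Corollary~\ref{cor PSeg=PCS} provides an equivalence $\Phi \colon \PCS(\DFV) \isoto \PSSeg(\DF^{\mathcal{V}^{\kappa}})$; the analogous results for $\simp^{\mathcal{V}}$ (assumed at the start of this subsection) give an equivalence $\Psi \colon \PCS(\simp^{\mathcal{V}}) \isoto \PSSeg(\simp^{\mathcal{V}^{\kappa}})$. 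Both $\Phi$ and $\Psi$ are restriction functors, along the composite symmetric monoidal functors $\DF^{\mathcal{V}^{\kappa}} \to \DFV$ and $\simp^{\mathcal{V}^{\kappa}} \to \simp^{\mathcal{V}}$ induced by $\mathcal{V}^{\kappa} \xto{y} \PSh(\mathcal{V}^{\kappa}) \xto{L_{\mathbb{S}}} \mathcal{V}$.

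The key point is that $\Phi$ and $\Psi$ intertwine the functors $\overline{u}^{*}$ at the two enrichment levels. This rests on the commutativity of the square relating $\overline{u} \colon \simp^{\mathcal{V}^{\kappa}} \to \DF^{\mathcal{V}^{\kappa}}$ and $\overline{u} \colon \simp^{\mathcal{V}} \to \DFV$ along the vertical functors above. Since $\simp^{\mathcal{W}}$ and $\DF^{\mathcal{W}}$ are both defined by pullback of $\mathcal{W}_{\otimes} \to \bbG$ (along $c$ and $\CorDF^{\op}$ respectively), since $\overline{u}$ is in each case the pullback of the fixed functor $u \colon \simp \to \DF$ (using $\CorDF^{\op} \circ u = c$), and since the vertical functors are induced by the map $\mathcal{V}^{\kappa}_{\otimes} \to \mathcal{V}_{\otimes}$ over $\bbG$, this square commutes, being assembled from the pullback definitions. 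Passing to presheaves and restriction functors gives $\overline{u}^{*} \circ \Phi \simeq \Psi \circ \overline{u}^{*}$, and taking left adjoints (an equivalence being left adjoint to its inverse) yields the corresponding compatibility for $\overline{u}_{!}$.

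With this compatibility established, each part transports from Corollary~\ref{cor:PSu}. Part (i) is the transport of the adjunction $\overline{u}_{!} \dashv \overline{u}^{*}$ between $\PSSeg(\simp^{\mathcal{V}^{\kappa}})$ and $\PSSeg(\DF^{\mathcal{V}^{\kappa}})$ to $\PCS(\simp^{\mathcal{V}})$ and $\PCS(\DFV)$ along $\Phi$ and $\Psi$. For part (ii), recall that the $\PSeg(\simp)$-module structure on $\PCS(\DFV)$ of Corollary~\ref{cor:PCStensor} was by definition the transport along $\Phi$ of the module structure on $\PSSeg(\DF^{\mathcal{V}^{\kappa}})$ from Corollary~\ref{cor tensor S}, and likewise for $\simp^{\mathcal{V}}$; the desired natural equivalence $\overline{u}_{!}(\mathcal{C} \otimes \xX) \simeq \overline{u}_{!}(\mathcal{C}) \otimes \xX$ is thus the transport of Corollary~\ref{cor:PSu}(ii). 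Finally, part (iii) concerns the cotensor, which by Corollary~\ref{cor Alg(-,-)} is the right adjoint of $(\blank) \otimes \xX$ and hence also a transport; so $\overline{u}^{*}(\mathcal{O}^{\xX}) \simeq (\overline{u}^{*}\mathcal{O})^{\xX}$ follows from Corollary~\ref{cor:PSu}(iii).

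I expect the main obstacle to be precisely the compatibility check of the second paragraph: one must verify carefully that the equivalences of Corollary~\ref{cor PSeg=PCS} genuinely commute with the $\overline{u}$-functors. Once the relevant square of base \icats{} is seen to commute --- which is formal given that both $\simp^{(\blank)}$ and $\DF^{(\blank)}$ are pullbacks against $\bbG$ and $\overline{u}$ comes from the fixed base functor $u$ --- the transport of the adjunction, the module structure, and the cotensor are all routine.
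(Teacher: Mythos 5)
Your proposal is correct and is exactly the argument the paper intends: the paper's entire justification is the sentence ``As usual, this extends to the presentable case by choosing a suitable subcategory of $\kappa$-compact objects,'' i.e.\ the reduction via Proposition~\ref{propn Vkappa symmon} and Corollary~\ref{cor:kappaseg} to Corollary~\ref{cor:PSu}, just as you carry out (and as is done explicitly in the proof of Corollary~\ref{cor:PCStensor}). Your second paragraph merely makes explicit the compatibility of the equivalences with the $\overline{u}$-functors, which the paper leaves implicit.
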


\begin{warning}
  Note that here the functors $\overline{u}_{!}$ and
  $\overline{u}^{*}$ for continuous Segal presheaves for $\mathcal{V}$
  refer to those obtained from the analogoues functors for Segal
  presheaves on $\mathcal{V}^{\kappa}$ for a suitable cardinal
  $\kappa$. In particular, $\overline{u}_{!}$ is \emph{not} just given
  by a left Kan extension along $\simp^{\mathcal{V},\op} \to \DFVop$.
\end{warning}

\begin{remark}\label{rem underlying cat}
  Since the $\infty$-category $\PCS(\simp^\xV)$ models $\xV$-enriched
  $\infty$-categories by \cite[Theorem 4.5.3]{enriched}, we call
  $\overline{u}^*(\xxO)$ the \emph{underlying $\xV$-enriched \icat{} of $\xxO$}
  for every continuous Segal presheaf $\xxO\in \PCS(\DFV)$.
\end{remark}

\section{The Completion Theorem}\label{sec ffes}
So far we have considered the ``algebraic'' theory of enriched
\iopds{}. However, just as in the case of Segal spaces, to produce the
correct homotopy theory of enriched \iopds{} we need to invert the
class of fully faithful and essentially surjective moprhisms. 
In \cite{Rezk} Rezk showed that the localization of Segal spaces at
the fully faithful and essentially surjective morphisms is given by
the full subcategory of \emph{complete} objects, which are those Segal
spaces whose space of objects is equivalent to their ``classifying
space of equivalences''. Our main goal in this section is to prove the
analogous result for enriched $\infty$-operads, in the form of
continuous Segal presheaves on $\DFV$.

In \S\ref{subsec css} we review the definition of complete objects in
Segal spaces and in enriched \icats{}, and in \S\ref{subsec cspsh} we
extend this to define complete continuous Segal presheaves on
$\DFV$. Then we introduce fully faithful and essentially surjective
functors in \S\ref{subsec ffes}, and a notion of
``pseudo-equivalences'' in \S\ref{subsec pseudo}; these are morphisms
with a ``pseudo-inverse'', i.e.\ an inverse up to natural
equivalence. Lastly, in \S\ref{subsec compl} we prove our analogue of
Rezk's completion theorem.

\subsection{Completeness for Segal Spaces and Enriched
  $\infty$-Categories}\label{subsec css}
To fix notation, we recall the definition of complete Segal spaces
from \cite{Rezk} and the analogous definition of complete enriched
\icats{} from \cite{enriched}.

\begin{definition}\label{def En}
  Let $E^{n}$ denote the (contractible) category with $n+1$ objects
  $0,1,\ldots,n$ and a unique morphism between any pair of
  objects.  We
  also denote the Segal space corresponding to this category by $E^{n}$.
\end{definition}
\begin{remark}
  The category $E^{1}$ is the ``generic isomorphism'', so giving a morphism
  of Segal spaces $E^{1} \to X$ corresponds to giving two objects of
  $X$ and an equivalence between them. Similarly, giving a map $E^n\to
  X$ amounts to specifying 
  $n+1$ equivalent objects in $X$.
\end{remark}

\begin{defn}
  A Segal space $F$ is \emph{complete} if the map
  \[F_{0} \to \Map(E^{1}, F)\]
  induced by the map $E^{1} \to E^{0} \simeq *$, is an equivalence in $\xS$.  We
  write $\PCoS(\simp)$ for the full subcategory of $\PSeg(\simp)$
  spanned by the complete Segal spaces.
\end{defn}

\begin{defn}
  We can view the category $E^{n}$ as enriched in the initial monoidal
  \icat{} $*$ and so by transferring the enrichment we get for any
  symmetric monoidal \icat{} $\mathcal{V}$ an enriched \icat{}
  $E^{n}_{\mathcal{V}}$ with morphisms
  $E^{n}_{\mathcal{V}}(i,j) \simeq \bbone_{\mathcal{V}}$ for all
  $i,j$. We will usually denote this simply as $E^{n}$, leaving
  $\mathcal{V}$ implicit. A $\mathcal{V}$-enriched \icat{} is
  \emph{complete} if it is local with respect to $E^{1} \to
  E^{0}$. Viewing enriched \icats{} as continuous Segal presheaves on
  $\simp^{\mathcal{V}}$ (for $\mathcal{V}$ presentably symmetric
  monoidal) we write $\PCCS(\simp^{\mathcal{V}})$ for the full
  subcategory of $\PSh(\simp^{\mathcal{V}})$ spanned by the complete
  continuous Segal presheaves.
\end{defn}

\begin{definition}
  For every $\xcc\in \PSh_{\xCS}(\simp^\xV)$, let $\iota_n(\xcc)$
  denote the space $\xMap_{\PSh_{\xCS}(\simp^\xV)}(E^n,\xcc) $. We
  call a map $E^1\to \xcc$ in $\PSh_{\xCS}(\simp^\xV)$ an
  \emph{equivalence in $\xcc$} and the space $\iota_1(\xcc)$ the
  \emph{space of equivalences} of $\xcc$.  We write
  $\iota \mathcal{C}$ for the colimit of the simplicial object
  $ \iota_\bullet(\xcc) =
  \xMap_{\PSh_{\xCS}(\simp^\xV)}(E^\bullet,\xcc)$
  and call this the \emph{classifying space of equivalences} in
  $\mathcal{C}$.
\end{definition}

\begin{remark}
  We refer the reader to \cite[\S\S 5.1--5.2]{enriched} for a much
  more substantial treatment of equivalences in enriched \icats{}.
\end{remark}

\subsection{Complete Segal Presheaves}\label{subsec cspsh}
\begin{defn}
  If $\mathcal{O}$ is a Segal presheaf on $\DF$, then the underlying
  \icat{} $u^{*}\mathcal{O}$ (see \S\ref{sec under icat}) is a Segal
  space. We say $\mathcal{O}$ is \emph{complete} if its underlying
  Segal space $u^{*}\mathcal{O}$ is a complete Segal
  space. Similarly, $\mathcal{O} \in \PCS(\DFV)$ is \emph{complete} if
  $\overline{u}^{*}\mathcal{O}$ is a complete object in
  $\PCS(\simp^{\mathcal{V}})$; we write $\PCCS(\DFV)$ for the full
  subcategory of $\PCS(\DFV)$ spanned by the complete objects.
\end{defn}

\begin{notation}
  We will also write $\OpdIV$ for the \icat{} $\PCCS(\DFV)$ when we do
  not wish to emphasize the specific implementation of enriched
  \iopds{} as continuous Segal presheaves.
\end{notation}
 
\begin{notation}
  For $\xxO\in \PSh_{\xCS}(\DFV )$ we abbreviate $\iota_{n}\xxO :=
  \iota_n \overline{u}^*(\mathcal{O})$. Then $\mathcal{O}$ is complete
  \IFF{} the canonical map $\iota_1 \xxO\to \iota_0 \xxO$ is an
  equivalence in $\mathcal{S}$.
\end{notation}

\begin{lemma}\label{lem equivalence}
  For every continuous Segal presheaf $\xxO\in \PSh_{\xCS}(\DFV)$, the
  objects $\iota_0 \xxO$ and $\xxO(\overline{\xfe})$ are equivalent
  in $\xS$.
\end{lemma}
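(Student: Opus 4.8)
The plan is to reduce the statement to the Yoneda lemma by recognizing $E^{0}$ as a representable presheaf on $\simp^{\xV}$. By definition $\iota_{0}(\xxO) = \xMap_{\PCS(\simp^{\xV})}(E^{0}, \overline{u}^{*}\xxO)$, so it suffices to understand maps out of $E^{0}$ and then to evaluate $\overline{u}^{*}\xxO$ appropriately. The key observation is that $E^{0}$ --- the $\xV$-enriched category with a single object and endomorphism object $\bbone$, regarded as a continuous Segal presheaf on $\simp^{\xV}$ --- coincides with the presheaf represented by the terminal object $[0] \in \simp^{\xV}$ (which carries no labels, so matches $\overline{\xfe}$).

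To establish this I would compute values directly. A morphism $([n], v_{1}, \ldots, v_{n}) \to [0]$ in $\simp^{\xV}$ lies over the unique collapse map $[n] \to [0]$ in $\simp$, under which each label $v_{j}$ is forced to map to the empty tensor product $\bbone$; hence
\[ \xMap_{\simp^{\xV}}(([n], v_{1}, \ldots, v_{n}), [0]) \simeq \prod_{j=1}^{n} \Map_{\xV}(v_{j}, \bbone). \]
On the other hand, $E^{0}([1], v) \simeq \Map_{\xV}(v, \bbone)$, and since $E^{0}$ satisfies the Segal condition, $E^{0}([n], v_{1}, \ldots, v_{n}) \simeq \prod_{j} \Map_{\xV}(v_{j}, \bbone)$. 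The unique point of $E^{0}([0]) \simeq *$ determines by Yoneda a map from the representable presheaf $[0]$ to $E^{0}$, and the computation above shows it is an equivalence on all values; thus $E^{0}$ is representable by $[0]$. (In particular $E^{0} \in \PCS(\simp^{\xV})$, the continuity being clear since $\Map_{\xV}(\blank, \bbone)$ sends colimits in $\xV$ to limits.)

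Granting this, for any $\mathcal{C} \in \PCS(\simp^{\xV})$ the Yoneda lemma gives
\[ \iota_{0}(\mathcal{C}) = \xMap_{\PCS(\simp^{\xV})}(E^{0}, \mathcal{C}) \simeq \xMap_{\PSh(\simp^{\xV})}([0], \mathcal{C}) \simeq \mathcal{C}([0]), \]
using that $\PCS(\simp^{\xV})$ is a full subcategory of $\PSh(\simp^{\xV})$. It then remains to evaluate $\overline{u}^{*}\xxO$ at $[0]$. Since $u([0]) = ([0], \mathbf{1}) = \xfe$ and the fibre $(\DFV)_{\xfe} \simeq *$, we have $\overline{u}([0]) = \overline{\xfe}$, so that $\overline{u}^{*}\xxO([0]) = \xxO(\overline{\xfe})$. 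Applying the previous display to $\mathcal{C} = \overline{u}^{*}\xxO$ (which lies in $\PCS(\simp^{\xV})$ by Remark~\ref{rem underlying cat}) yields $\iota_{0}(\xxO) = \iota_{0}(\overline{u}^{*}\xxO) \simeq \xxO(\overline{\xfe})$, as desired.

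The one step requiring genuine care is the identification $E^{0} \simeq [0]$: beyond matching values one must confirm the comparison map is natural and an equivalence, which reduces to the mapping-space computation above. Alternatively, this identification --- equivalently, the statement that $\iota_{0}$ computes the space of objects of an enriched \icat{} --- is contained in the treatment of equivalences in enriched \icats{} in \cite[\S\S 5.1--5.2]{enriched} and could simply be cited.
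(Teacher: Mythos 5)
Your argument is correct and is essentially the paper's: the paper's one-line proof computes $\iota_0(\xxO)=\xMap(E^0,\overline{u}^*\xxO)\simeq\xMap(\overline{u}_!E^0,\xxO)\simeq\xMap(\overline{\xfe},\xxO)\simeq\xxO(\overline{\xfe})$ via the adjunction $\overline{u}_!\dashv\overline{u}^*$ and Yoneda, which rests on exactly the identification $E^0\simeq [0]$ (equivalently $\overline{u}_!E^0\simeq\overline{\xfe}$) that you verify. You merely apply Yoneda on the $\simp^{\xV}$ side and then evaluate $\overline{u}^*\xxO$ at $[0]$ instead of transporting $E^0$ across the adjunction first, and you supply the representability check that the paper leaves implicit.
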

\begin{proof}
  For every object $\xxO\in \PSh_{\xCS}(\DFV)$, we have the following
  equivalences:
  \[
  \iota_0 \xxO= \xMap(E^0,\overline{u}^*(\xxO))\simeq\xMap(\overline{u}_!E^0,\xxO)\simeq \xMap(\overline\xfe,\xxO)\simeq \xxO(\overline\xfe).\qedhere
  \] 
\end{proof}
\begin{remark}
  This implies that the functor $\iota_0$ preserves colimits.
\end{remark}

\begin{remark}\label{rem completeness}
  For a presentably symmetric monoidal $\infty$-category
  $\xV^\otimes$, there exists a unique colimit-preserving functor
  $F_0\colon\xS\to \xV$ extending $\ast\mapsto\bbone$. This is
  symmetric monoidal, and it has a lax monoidal right adjoint $G
  \colon \mathcal{V} \to \mathcal{S}$ given by $\xMap_{\mathcal{V}}(\bbone, \blank)$.
  By \cite[Theorem 4.4.7, Theorem
  4.5.3]{enriched}, the right adjoint induces a map
  $G_*\colon \PSh_{\xCS}(\simp^\xV)\to\PSh_{\Seg}(\simp)$ which
  carries a Segal presheaf to its underlying Segal space. Moreover,
  \cite[Proposition 5.1.11]{enriched} implies that this functor
  detects complete objects, meaning $\mathcal{C}\in \PSh_{\xCS}(\simp^\xV)$ is
  complete if and only if $G_*(\mathcal{C})$ is a complete Segal space.
\end{remark}
\begin{definition}
   A morphism in $\PSh_{\xCS}(\DFV)$ is called a
  \emph{local equivalence} if it lies in the strongly saturated class
  of maps generated by $u_!(s^0)$, where  $s^0$ denotes the canonical map
  $E^1\to E^0$ in $\PSh_{\xCS}(\DFV)$. Equivalently, a morphism $f
  \colon \mathcal{O} \to \mathcal{O}'$ is a local equivalence \IFF{}
  for every complete object $\mathcal{P}$ the induced map
  $\Map(\mathcal{O}', \mathcal{P}) \to \Map(\mathcal{O}, \mathcal{P})$
  is an equivalence.
\end{definition}

\subsection{Fully Faithful and Essentially Surjective
  Functors}\label{subsec ffes}

\begin{definition}\label{def ffes}
Given two objects $\xxO,\xxP\in\PSh_{\xCS}(\DFV)$, we say that a morphism $f\colon \xxO\to \xxP$ is 
\begin{itemize}
  \item \emph{fully faithful} if the commutative square
  \csquare{\xxO(\overline \xfc_n)}{\xxP(\overline \xfc_n) }{\xxO(\overline\xfe)^{n+1}}{\xxP(\overline\xfe)^{n+1}   }{f(\overline \xfc_n)}{}{}{f(\overline \xfe)^{n+1}}
  is Cartesian for every $\overline \xfc_n\in\DFV$ lying over a
        corolla $\mathfrak{c}_{n}$ in $\DF$.
  \item \emph{essentially surjective} if the induced functor $\overline u^*(f)\colon \overline u^*(\xxO)\to\overline  u^*(\xxP)$ of the underlying $\xV$-enriched $\infty$-categories is essentially surjective, i.e. the map $\pi_0(\iota f)\colon \pi_0(\iota \xxO)\to \pi_0(\iota \xxP)$ is a surjection of sets.
\end{itemize}
\end{definition}

\begin{remark}
  A morphism $f \colon \mathcal{O} \to \mathcal{P}$ of Segal
  presheaves on $\DFV$ is fully faithful \IFF{} for all
  $\overline \xfc_n \in \DFV$ and
  $x_{1},\ldots,x_{n},y\in \xxO(\overline\xfe)$ the natural map
  $\mathcal{O}(\overline \xfc_n(x_{1},\ldots,x_{n},y)) \to
  \mathcal{P}(\overline \xfc_n(f x_{1},\ldots,f x_{n},f y))$
  is an equivalence. Since these are both representable presheaves in
  $v$, this is equivalent to the representing morphism
  $\mathcal{O}(x_{1},\ldots,x_{n};y) \to \mathcal{P}(f
  x_{1},\ldots, f x_{n}; f y)$
  being an equivalence in $\mathcal{V}$, which is what we would expect
  the notion of fully faithful to mean.
\end{remark}

\begin{remark}\label{rem surj on pi0}
  By \cite[Lemma 5.3.4]{enriched}, a functor of Segal presheaves is
  essentially surjective in our sense if and only for every object
  $x\in \iota_0\xxP$ there exists an equivalence (i.e. a functor of
  $\xV$-enriched $\infty$-categories $E^1 \to \overline{u}^*(\xxP)$)
  connecting $x$ to an object lying in the image of $\iota_0(f)$ in
  $\iota_0 \xxP$.
\end{remark}
\begin{lemma}\label{lem ff = Cartesian morphism}
  Evaluation at $\overline \xfe\in \DFV$ gives a map
  $\xev_{\overline{\mathfrak{e}}}\colon \PSh_{\xCS}(\DFV)\to \xS$,
 which is a Cartesian
  fibration by Proposition~\ref{propn:SegisMon}. A morphism $f\colon
  \xxO\to \xxP$ in $\PSh_{\xCS}(\DFV)$ is
  $\xev_{\overline{\mathfrak{e}}}$-Cartesian \IFF{} is fully faithful.
\end{lemma}
\begin{proof}
  Suppose $f$ lies over
  $\xev_{\overline{\mathfrak{e}}}(f)\colon {X}\to {Y}\in \xS$.
  We factor the morphism $f$ in
  $\PSh_{\xCS}(\DFV)$ into a morphism
  $g\colon \xxO\to \xev_{\overline{\mathfrak{e}}}(f)^*(\xxP)$ lying
  over $\id_{X}$ followed by an
  $\xev_{\overline{\mathfrak{e}}}$-Cartesian morphism
  $h\colon \xev_{\overline{\mathfrak{e}}}(f)^*(\xxP)\to \xxP$ lying
  over $\xev_{\overline{\mathfrak{e}}}(f)$. If $f$ is fully faithful,
  then we wish to show that $g$ is an equivalence in
  $\PSh_{\xCS}(\DFV)$, which is equivalent to requiring $g$ to be an
  equivalence in the fibre $\PSh_{\xCS}(\DFV)_{X}$.  Hence, we need to
  show that
  $g(\overline I)\colon \xxO(\overline I)\to
  \xev_{\overline{\mathfrak{e}}}(f)^*(\xxP)(\overline I)$
  is an equivalence for every object $\overline I\in \DFV$. For every
  object $(\overline \xfc_n)$ in $\DFV$ lying over a corolla, the
  $\infty$-groupoid
  $\xev_{\overline{\mathfrak{e}}}(f)^*(\xxP)(\overline \xfc_n)$ is
  given by the pullback
  ${X}^{n+1}\times_{{Y}^{n+1}} \xxP(\overline \xfc_n)$ which is
  equivalent to $\xxO(\overline \xfc_n)$ since $f$ is fully faithful.
  The Segal condition in the definition of $\PSh_{\xCS}(\DFV)_{X}$
  then implies that $\xxO(\overline I)$ is equivalent to
  $\xev_{\overline{\mathfrak{e}}}(f)^*(\xxP)(\overline I)$ for every
  $\overline I\in \DFV$.
  
  Conversely, if $f$ is an $\xev_{\overline{\mathfrak{e}}}$-Cartesian
  morphism over $\xev_{\overline{\mathfrak{e}}}(f)\colon {X}\to {Y}$,
  then $\xxO(\overline \xfc_n)$ is equivalent to
  ${X}^{n+1}\times_{{Y}^{n+1}} \xxP(\overline \xfc_n)$, and thus $f$
  is fully faithful.
\end{proof}
\begin{proposition}\label{prop 2 of 3}
  Fully faithful and essentially surjective morphisms in
  $\PSh_{\xCS}(\DFV)$ satisfy the $2$-of-$3$ property.
\end{proposition}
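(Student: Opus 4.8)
The plan is to verify the three instances of the $2$-of-$3$ property separately for the two defining conditions, fully faithful (ff) and essentially surjective (es), exploiting that these are controlled by rather different structures. For the ff part I would use Lemma~\ref{lem ff = Cartesian morphism}, which identifies the ff maps with the $\xev_{\overline{\mathfrak{e}}}$-Cartesian morphisms; for the es part I would work entirely with the induced map $\pi_0(\iota f) \colon \pi_0(\iota\xxO) \to \pi_0(\iota\xxP)$ of sets of equivalence classes of objects, via the definition of es and Remark~\ref{rem surj on pi0}. A single preliminary observation ties the two together: a fully faithful $f$ induces an \emph{injective} map $\pi_0(\iota f)$, since the unary part of full faithfulness makes $\overline{u}^*(f)$ a fully faithful functor of $\mathcal{V}$-enriched $\infty$-categories, and such functors reflect equivalences of objects (cf.\ the treatment of equivalences in \cite[\S\S 5.1--5.2]{enriched}).

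Given composable $f \colon \xxO \to \xxP$ and $g \colon \xxP \to \xxQ$, two of the three instances are then straightforward. If $f$ and $g$ are both ff and es, then $gf$ is ff because a composite of Cartesian morphisms is Cartesian by \cite[Proposition 2.4.1.7]{ht}, and $gf$ is es because $\pi_0(\iota(gf)) = \pi_0(\iota g) \circ \pi_0(\iota f)$ is a composite of surjections. If $g$ and $gf$ are ff and es, then $f$ is ff by the right-cancellation half of \cite[Proposition 2.4.1.7]{ht} (a $p$-Cartesian $g$ together with $p$-Cartesian $gf$ forces $f$ to be $p$-Cartesian), and $f$ is es because $\pi_0(\iota(gf))$ is surjective while $\pi_0(\iota g)$ is injective (as $g$ is ff), so the remaining factor $\pi_0(\iota f)$ must be surjective. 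In the last instance, where $f$ and $gf$ are ff and es and we must show $g$ is ff and es, the es part is immediate: surjectivity of $\pi_0(\iota(gf))$ forces its final factor $\pi_0(\iota g)$ to be surjective.

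The one genuinely nontrivial point, and the main obstacle, is the full faithfulness of $g$ in this last instance: here one would be asking for a \emph{left} cancellation of Cartesian morphisms, which fails for a general Cartesian fibration, so essential surjectivity of $f$ must be brought in. Concretely, I would check full faithfulness of $g$ fibrewise, showing that for each corolla and each tuple $(p_1,\dots,p_n,q)$ of objects of $\xxP$ the induced map on multimorphism objects $\xxP(p_1,\dots,p_n;q) \to \xxQ(gp_1,\dots,gp_n;gq)$ is an equivalence. Since $f$ is essentially surjective, each $p_i$ and $q$ is equivalent in $\xxP$ to an object $f(x_i)$, $f(y)$ in the image of $f$ (Remark~\ref{rem surj on pi0}); invoking the invariance of multimorphism objects under equivalences of their entries (again from the analysis of equivalences in \cite[\S\S 5.1--5.2]{enriched}) together with the fact that $g$ carries equivalences to equivalences, this reduces the claim to tuples in the image of $f$. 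There the relevant map fits into the chain $\xxO(x_1,\dots,x_n;y) \to \xxP(fx_1,\dots,fx_n;fy) \to \xxQ(gfx_1,\dots,gfx_n;gfy)$ whose composite is an equivalence (ff of $gf$) and whose first map is an equivalence (ff of $f$), so the second map is an equivalence as well. By Lemma~\ref{lem ff = Cartesian morphism} this shows $g$ is fully faithful, completing the argument.
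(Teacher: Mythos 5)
Your argument is correct and is essentially the proof the paper intends: the paper's own proof is just a citation of \cite[Proposition 5.3.9]{enriched}, and your write-up is the operadic transcription of that strategy --- the only genuinely nontrivial case is full faithfulness of $g$ when $f$ and $gf$ are fully faithful and essentially surjective, which you handle exactly as there, by using essential surjectivity of $f$ together with invariance of multimorphism objects under equivalences of their entries to reduce to tuples in the image of $f$ and then applying 2-of-3 for equivalences in $\mathcal{V}$. The remaining cases, via composition and right-cancellation of $\xev_{\overline{\mathfrak{e}}}$-Cartesian morphisms (Lemma~\ref{lem ff = Cartesian morphism} and \cite[Proposition 2.4.1.7]{ht}) and the set-level bookkeeping on $\pi_0\iota$ (including the observation that fully faithful maps are injective on $\pi_0\iota$), also match.
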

\begin{proof}
  As \cite[Proposition 5.3.9]{enriched}.
\end{proof}

\begin{proposition}\label{prop ffes are equ}
A fully faithful and essentially surjective morphism in $\PSh_{\xCS}(\DFV)$ between complete objects is an equivalence. 
\end{proposition}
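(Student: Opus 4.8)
The plan is to reduce the statement to the corresponding result for enriched \icats{} via the underlying-category functor $\overline{u}^{*}$, exploiting the Cartesian fibration structure of $\xev_{\overline{\xfe}}$ established in Lemma~\ref{lem ff = Cartesian morphism}.

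First I would use that lemma directly: since $f \colon \xxO \to \xxP$ is fully faithful, it is an $\xev_{\overline{\xfe}}$-Cartesian morphism for the Cartesian fibration $\xev_{\overline{\xfe}} \colon \PSh_{\xCS}(\DFV) \to \xS$. In any Cartesian fibration a morphism that is Cartesian and lies over an equivalence is itself an equivalence, while an equivalence is always Cartesian and lies over an equivalence; hence $f$ is an equivalence in $\PSh_{\xCS}(\DFV)$ \IFF{} the map of spaces $\xev_{\overline{\xfe}}(f) \colon \xxO(\overline{\xfe}) \to \xxP(\overline{\xfe})$ is an equivalence. It therefore suffices to prove that $f$ induces an equivalence on the spaces of objects.

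Next I would pass to the underlying enriched \icats{}. By hypothesis $\xxO$ and $\xxP$ are complete, which by definition means $\overline{u}^{*}\xxO$ and $\overline{u}^{*}\xxP$ are complete objects of $\PCS(\simp^{\xV})$. I claim $\overline{u}^{*}(f) \colon \overline{u}^{*}\xxO \to \overline{u}^{*}\xxP$ is fully faithful and essentially surjective as a functor of $\xV$-enriched \icats{}. Essential surjectivity is immediate, since essential surjectivity of $f$ in Definition~\ref{def ffes} is \emph{defined} in terms of $u^{*}(f)$ (see also Remark~\ref{rem surj on pi0}). For full faithfulness, recall that $u$ sends $[1] \in \simp$ to the corolla $\xfc_{1} = ([1], \mathbf{1} \to \mathbf{1})$ and $[0]$ to the edge $\xfe$; consequently the full-faithfulness square for $\overline{u}^{*}(f)$ at an object $([1], v)$ is precisely the full-faithfulness square for $f$ at $(\xfc_{1}, v)$, which is Cartesian by the full faithfulness of $f$ (the case $n = 1$ of Definition~\ref{def ffes}).

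Finally I would invoke the enriched-category analogue of the present statement: a fully faithful and essentially surjective functor between complete $\xV$-enriched \icats{} is an equivalence, proved in \cite{enriched} (this is the companion of the results cited in the proof of Proposition~\ref{prop 2 of 3} and in Remark~\ref{rem surj on pi0}). Applied to $\overline{u}^{*}(f)$, this shows it is an equivalence in $\PCS(\simp^{\xV})$, hence induces an equivalence on spaces of objects $\iota_{0}(\xxO) \isoto \iota_{0}(\xxP)$. By Lemma~\ref{lem equivalence} this map is identified with $\xev_{\overline{\xfe}}(f) \colon \xxO(\overline{\xfe}) \to \xxP(\overline{\xfe})$, which is therefore an equivalence; by the first step $f$ is an equivalence, as desired. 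The main obstacle is the bookkeeping of the second step: one must be sure that full faithfulness of $f$ genuinely restricts to full faithfulness of $\overline{u}^{*}(f)$ in the sense demanded by the enriched theory, which rests on the identification $u([1]) = \xfc_{1}$ — so that the unary part of the operad coincides with the morphism objects of its underlying enriched category — together with the availability of the enriched completion theorem of \cite{enriched} as a black box.
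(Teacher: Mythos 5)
Your proposal is correct and follows essentially the same route as the paper's proof: reduce to the underlying $\xV$-enriched \icat{} via $\overline{u}^{*}$, invoke the completeness result of \cite{enriched} to see that $\overline{u}^{*}(f)$ is an equivalence, and then conclude via Lemma~\ref{lem ff = Cartesian morphism} that $f$, being a Cartesian lift of an equivalence, is itself an equivalence. The extra care you take in checking that full faithfulness of $f$ restricts to full faithfulness of $\overline{u}^{*}(f)$ (via $u([1]) = \xfc_{1}$) is a detail the paper leaves implicit, but the argument is the same.
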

\begin{proof}
  Let $f\colon \xxO\to \xxP$ be a fully faithful and essentially surjective
  morphism in $\PSh_{\xCS}(\DFV)$. It follows
  from Definition~\ref{def ffes} that the map $\overline{u}^*(f)$ between the
  underlying $\xV$-enriched $\infty$-categories is fully faithful and
  essentially surjective, and it is then an equivalence by
  \cite[Corollary 5.3.8]{enriched}. This implies that the Cartesian
  fibration
  $\xev_{\overline{\mathfrak{e}}}\colon \PSh_{\xCS}(\DFV)\to \xS$ induced
  by the evaluation at $\overline\xfe$ carries $f$ to an equivalence
 in $\xS$. Since the fully faithful map $f$ is then an
  $\xev_{\overline{\mathfrak{e}}}$-Cartesian lift of the equivalence $\xev_{\overline{\mathfrak{e}}}(f)$ by Lemma~\ref{lem
    ff = Cartesian morphism}, it has to be an equivalence as well.
\end{proof}

\subsection{Pseudo-Equivalences}\label{subsec pseudo}

In this subsection we consider morphisms that admit an inverse up to
natural equivalence, which we call \emph{pseudo-equivalences}. We will
show that these are both local equivalences and fully faithful and
essentially surjective, which will be used to prove the completion
theorem in the next subsection.

\begin{definition}\label{def nat equivalences}
  Let $d^0,d^1\colon E^0\to E^1$ be the maps induced by the two
  inclusions $\mathbf{1}\hookrightarrow\mathbf{2}$ of the sets of
  objects. A \emph{natural equivalence} of morphisms from
  $\mathcal{O}$ to $\mathcal{P}$ is a morphism
  $h\colon \xxO\otimes E^1\to \xxP$ ($\xxO\times E^1\to \xxP$ in case
  of $\PSh_{\Seg}(\simp)$). We say that $f$ and $g$ are
  \emph{naturally equivalent} if there exists a natural equivalence
  $h$ such that $h \circ (\id \otimes d^{0}) \simeq f$ and
  $h \circ (\id \otimes d^{1}) \simeq g$.
\end{definition}

\begin{definition}\label{def pseudoequivalence}
  For a morphism $f\colon \xxO\to \xxP$ in $\PSh_{\xCS}(\DFV)$ (or $\PSh_{\Seg}(\simp)$), a \emph{pseudo-inverse} of $f$ is a morphism $g\colon \xxP\to \xxO$ such that there exist natural equivalences $\phi\colon \id_\xxO\to g\circ f$ and $\psi\colon f\circ g\to \id_\xxP$. We call a morphism $f$ a \emph{pseudo-equivalence} if it has a pseudo-inverse $g$ and we call the quadruple $(f,g,\phi,\psi)$ a \emph{pseudo-equivalence datum}. 
\end{definition}

The following proposition is an operadic variant of \cite[Proposition
5.5.3]{enriched}.
\begin{proposition}\label{prop pseudoequ are ffes}
  If $f\colon \xxO\to \xxP$ is a pseudo-equivalence, then it is fully faithful and essentially surjective.
\end{proposition}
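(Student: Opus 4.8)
The plan is to deduce essential surjectivity by reducing to the already‑established case of enriched $\infty$‑categories, and to prove full faithfulness directly at every arity by analysing how a natural equivalence acts on multimorphism objects. Let $(f,g,\phi,\psi)$ be a pseudo‑equivalence datum, so that $\phi\colon\xxO\otimes E^1\to\xxO$ and $\psi\colon\xxP\otimes E^1\to\xxP$ witness $\id_\xxO\simeq g\circ f$ and $f\circ g\simeq\id_\xxP$.

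First I would treat essential surjectivity. Passing to mates under the tensor--cotensor adjunction of Corollary~\ref{cor Alg(-,-)}, the data $\phi,\psi$ are encoded by maps $\xxO\to\xxO^{E^1}$ and $\xxP\to\xxP^{E^1}$ compatible with the two structure maps $\xxO^{E^1}\to\xxO$ induced by $d^0,d^1\colon E^0\to E^1$. Since $\overline{u}^{*}$ preserves cotensors by Corollary~\ref{cor:PCSu}(iii) and these structure maps are natural, applying $\overline{u}^{*}$ and passing back through the adjunction yields natural equivalences $\id_{\overline{u}^{*}\xxO}\simeq\overline{u}^{*}(g)\circ\overline{u}^{*}(f)$ and $\overline{u}^{*}(f)\circ\overline{u}^{*}(g)\simeq\id_{\overline{u}^{*}\xxP}$. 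Thus $\overline{u}^{*}(f)$ is a pseudo‑equivalence of $\xV$‑enriched $\infty$‑categories, so by \cite[Proposition~5.5.3]{enriched} it is fully faithful and essentially surjective; in particular $f$ is essentially surjective, by Definition~\ref{def ffes}.

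For full faithfulness I would first isolate the key lemma: if $a,b\colon\xxO\to\xxP$ are naturally equivalent, then for all $x_1,\dots,x_n,y\in\xxO(\overline{\xfe})$ the induced maps on multimorphism objects
\[ a_*\colon \xxO(x_1,\dots,x_n;y)\to\xxP(ax_1,\dots,ax_n;ay),\qquad b_*\colon \xxO(x_1,\dots,x_n;y)\to\xxP(bx_1,\dots,bx_n;by) \]
agree up to the equivalences $a(x_i)\simeq b(x_i)$ and $a(y)\simeq b(y)$ supplied on objects by the natural equivalence; that is, $b_*$ is $a_*$ followed by the equivalence $\xxP(ax;ay)\simeq\xxP(bx;by)$ these object‑equivalences induce. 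This is the operadic counterpart of the corresponding step in the proof of \cite[Proposition~5.5.3]{enriched}, proved the same way; the only new input is the decomposition $(\xfc_n,v)\otimes\Delta^1\simeq(\xfc_n^{+},v)\amalg_{(\xfc_n,v)}(\xfc_n^{-},v)$ of Lemma~\ref{lem decomposing}, which is already available at every arity $n$. Evaluating the natural equivalence on this pushout exhibits $a_*$ and $b_*$ as the two legs, glued along the object‑equivalences. Granting this, full faithfulness is formal: applying it to $g\circ f\simeq\id_\xxO$ shows $(g\circ f)_*=g_*\circ f_*$ agrees with an object‑equivalence map and is hence an equivalence on every multimorphism object of $\xxO$, and likewise $(f\circ g)_*=f_*\circ g_*$ is an equivalence on those of $\xxP$; therefore $f_*$ admits $g_*$ as a two‑sided homotopy inverse and is an equivalence for all $n$ and all objects. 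By the Remark following Definition~\ref{def ffes} this is exactly full faithfulness of $f$.

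The hard part will be the key lemma, i.e.\ making precise, uniformly in $n$, that a natural equivalence acts on multimorphism objects through the object‑level equivalences; the bookkeeping of the $n+1$ legs of a corolla and the coherence of the induced equivalence $\xxP(ax;ay)\simeq\xxP(bx;by)$ is where the genuine work lies. By contrast, the reduction of essential surjectivity to the enriched case and the final two‑sided‑inverse argument are purely formal.
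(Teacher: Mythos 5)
Your proposal is correct and takes essentially the same route as the paper: full faithfulness is proved there by exactly your argument, exhibiting the fibrewise restrictions $f_x$ and $g_y$ as mutual inverses up to the object-level equivalences coming from $\phi$ and $\psi$ (and the paper asserts the content of your ``key lemma'' with no more detail than you give). The only cosmetic difference is in essential surjectivity, where the paper applies $\psi$ directly --- $f(g(y))\simeq y$ for each $y\in\iota_0(\xxP)$, so Remark~\ref{rem surj on pi0} applies --- instead of passing through $\overline{u}^{*}$ and the enriched-category result.
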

\begin{proof}
  Let $(f,g,\phi,\psi)$ be a pseudo-equivalence datum associated to
  the pseudo-equivalence $f$. By Remark~\ref{rem surj on pi0}, to
  prove the essential surjectivity it suffices to find for every
  object $y\in \iota_{0} \xxP$ an object $x$ and an equivalence $f(x)\simeq y$. But by defining $x:= g(y)$, we get an equivalence $f\circ g(y)\simeq y$ induced by the natural equivalence $\psi$.
  
  For the full faithfulness of $f$, we have to show that for every corolla $\xfc_n\in \DFV$ and every object $\bar x=(x_1,\ldots,x_n, x)\in \xxO(\mathfrak{e})^{n+1}$
  the map $f(\overline\xfc_n)\colon \xxO(\overline
  \xfc_n)\to\xxP(\overline \xfc_n)$ restricts to an equivalence of
  fibres $f_{\bar x}\colon \xxO(\overline \xfc_n(\bar x))\to
  \xxP(\overline \xfc_n(f\bar x))$. Using the natural equivalence $\phi\colon \id_\xxO\to g\circ f$, the map $g(\overline\xfc_n)$ restricts to a map $g_{(f\bar x,f\bar y)}$ of fibres $\xxP(\overline \xfc_n(f\bar x))\to \xxO(\overline \xfc_n(gf\bar x))$ and we have
  $\id\simeq g_{f\bar x}\circ f_{\bar x}$.
  It remains to see that $f_{\bar x}\circ g_{f\bar x}\simeq \id$. The natural equivalence $\psi\colon f\circ g\to \id_\xxP$ induces an equivalence $f_{gf\bar x}\circ g_{f\bar x}\simeq \id$ and $\phi\colon \id_\xxO \to g\circ f$ gives an equivalence $f_{gf\bar x} \simeq f_{\bar x}$. Hence, we have $f_{\bar x}\circ g_{f\bar x}\simeq f_{gf\bar x}\circ g_{f\bar x}\simeq  \id$.
\end{proof}

\begin{corollary}\label{cor cat equ betw cp obj are equ}
  Pseudo-equivalences between complete objects are equivalences in $\PCS(\DFV)$.
\end{corollary}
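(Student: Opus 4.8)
The plan is simply to compose the two results immediately preceding the statement, since the corollary is a direct consequence of them. First I would take a pseudo-equivalence $f \colon \xxO \to \xxP$ with $\xxO$ and $\xxP$ complete objects of $\PCS(\DFV)$. By Proposition~\ref{prop pseudoequ are ffes}, any pseudo-equivalence is automatically fully faithful and essentially surjective, so $f$ has these two properties without any further work. Then, because $\xxO$ and $\xxP$ are complete, Proposition~\ref{prop ffes are equ} applies verbatim: a fully faithful and essentially surjective morphism between complete objects is an equivalence. Hence $f$ is an equivalence in $\PCS(\DFV)$.

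There is no genuine obstacle to overcome, as all the substantive content has already been isolated in the two propositions; this corollary merely records the composite implication $\text{pseudo-equivalence} \Rightarrow \text{ffes} \Rightarrow \text{equivalence (between complete objects)}$. The one point worth flagging is purely formal: the equivalence is asserted in $\PCS(\DFV)$ rather than in the subcategory $\PCCS(\DFV)$ of complete objects. Since $\PCCS(\DFV)$ is by definition a \emph{full} subcategory of $\PCS(\DFV)$, a morphism between two of its objects is an equivalence in the one precisely when it is an equivalence in the other, so nothing extra needs to be verified and the argument above is complete as stated.
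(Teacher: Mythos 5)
Your proof is correct and is exactly the paper's argument: apply Proposition~\ref{prop pseudoequ are ffes} to see the map is fully faithful and essentially surjective, then Proposition~\ref{prop ffes are equ} to conclude it is an equivalence between complete objects. The extra remark about fullness of $\PCCS(\DFV)$ is harmless but not needed, since the statement is already phrased in $\PCS(\DFV)$.
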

\begin{proof}
  By the previous proposition, pseudo-equivalences in $\PCS(\DFV)$ are
  fully faithful and essentially surjective, and Proposition~\ref{prop
    ffes are equ} implies that such maps between complete objects are
  equivalences in $\PCS(\DFV)$.
\end{proof}
Similarly to \cite[Lemma 5.5.7]{enriched}, we have the following result:
\begin{lemma}\label{lem OC to OD is cat equ}
For every object $\xxO\in \PSh_{\xCS}(\DFV)$ and every pseudo-equivalence $f\colon \xcc\to \xdd$ in $\PSh_{\Seg}(\simp)$, the cotensor product $\xxO^f\colon \xxO^\xdd\to \xxO^\xcc$ is a pseudo-equivalence in $\PSh_{\xCS}(\DFV)$.
\end{lemma}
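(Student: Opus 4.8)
The plan is to exhibit $\xxO^{g}$ as a pseudo-inverse of $\xxO^{f}$, where $(f,g,\phi,\psi)$ is a pseudo-equivalence datum for $f$; thus $g \colon \xdd \to \xcc$ and $\phi,\psi$ are natural equivalences witnessing $\id_{\xcc} \simeq g \circ f$ and $f \circ g \simeq \id_{\xdd}$ in $\PSh_{\Seg}(\simp)$. Since $(\blank)^{(\blank)}$ is contravariantly functorial in its $\PSh_{\Seg}(\simp)$-variable, it is compatible with composition and identities, so that $\xxO^{g} \circ \xxO^{f} \simeq \xxO^{f \circ g}$, $\xxO^{f} \circ \xxO^{g} \simeq \xxO^{g \circ f}$, and $\xxO^{\id} \simeq \id$. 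It will therefore be enough to produce, in $\PSh_{\xCS}(\DFV)$, natural equivalences between $\id_{\xxO^{\xdd}}$ and $\xxO^{f \circ g}$ and between $\xxO^{g \circ f}$ and $\id_{\xxO^{\xcc}}$.

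The heart of the argument is to show that $\xxO^{(\blank)}$ sends a natural equivalence in $\PSh_{\Seg}(\simp)$ to a natural equivalence in $\PSh_{\xCS}(\DFV)$. Given a natural equivalence $h \colon \xcc \times E^{1} \to \xdd$ between $a,b \colon \xcc \to \xdd$, I would apply the cotensor and invoke the identification $\xxO^{\xcc \times E^{1}} \simeq (\xxO^{\xcc})^{E^{1}}$ from the remark following Corollary~\ref{cor Alg(-,-)} to obtain
\[ \xxO^{h} \colon \xxO^{\xdd} \to \xxO^{\xcc \times E^{1}} \simeq (\xxO^{\xcc})^{E^{1}}. \]
Under the tensor--cotensor adjunction of Corollary~\ref{cor Alg(-,-)} this corresponds to a map $H \colon \xxO^{\xdd} \otimes E^{1} \to \xxO^{\xcc}$, the candidate natural equivalence. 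To identify its endpoints I would restrict along $d^{0},d^{1} \colon E^{0} \to E^{1}$; using naturality of the identification above together with contravariant functoriality, $H \circ (\id \otimes d^{0})$ and $H \circ (\id \otimes d^{1})$ are identified with $\xxO^{h \circ (\id \times d^{0})} \simeq \xxO^{a}$ and $\xxO^{h \circ (\id \times d^{1})} \simeq \xxO^{b}$, so that $H$ is a natural equivalence from $\xxO^{a}$ to $\xxO^{b}$.

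Applying this construction to $\phi$ and $\psi$ yields natural equivalences $\id_{\xxO^{\xcc}} \simeq \xxO^{g \circ f}$ and $\xxO^{f \circ g} \simeq \id_{\xxO^{\xdd}}$. To match the directions demanded by the definition of a pseudo-inverse I would then use that natural equivalences are symmetric: the swap automorphism $\sigma \colon E^{1} \to E^{1}$ interchanging the two objects satisfies $\sigma \circ d^{0} \simeq d^{1}$ and $\sigma \circ d^{1} \simeq d^{0}$, so precomposition with $\id \otimes \sigma$ reverses a natural equivalence. Reversing the two equivalences above then produces the required $\id_{\xxO^{\xdd}} \simeq \xxO^{f \circ g}$ and $\xxO^{g \circ f} \simeq \id_{\xxO^{\xcc}}$, exhibiting $\xxO^{g}$ as a pseudo-inverse of $\xxO^{f}$. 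I expect the only delicate point to be the second paragraph --- tracking the $E^{1}$-endpoints through the adjunction and the identification $\xxO^{\xcc \times E^{1}} \simeq (\xxO^{\xcc})^{E^{1}}$ --- while the remaining variance bookkeeping is formal, exactly parallel to \cite[Lemma 5.5.7]{enriched}.
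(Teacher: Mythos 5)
Your proposal is correct and follows essentially the same route as the paper's proof: apply the cotensor to the natural equivalences $\phi,\psi$, use the identification $\xxO^{\xcc\times E^{1}}\simeq(\xxO^{\xcc})^{E^{1}}$, and pass to the adjoint maps $\xxO^{\xdd}\otimes E^{1}\to\xxO^{\xcc}$ to obtain the required pseudo-equivalence datum for $\xxO^{f}$. The only difference is that you explicitly handle the reversal of direction coming from contravariance via the swap automorphism of $E^{1}$, a point the paper's proof subsumes under ``one readily checks.''
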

\begin{proof}
  If $(f,g,\phi,\psi)$ is a pseudo-equivalence datum associated to $f$, then the natural equivalences $\phi\colon \xcc\times E^1\to \xcc$ and $\psi\colon \xdd\times E^1\to \xdd$ in $\PSh_{\Seg}(\simp)$ induce maps $\xxO^\xcc\to \xxO^{\xcc\times E^1}\simeq (\xxO^\xcc)^{E^1}$ and $\xxO^\xdd\to \xxO^{\xdd\times E^1}\simeq (\xxO^\xdd)^{E^1}$ in $\PSh_{\xCS}(\DFV)$, respectively. If  
  $\phi^f\colon \xxO^\xcc\otimes E^1\to \xxO^\xcc$ and $\psi^f\colon \xxO^\xdd\otimes E^1\to \xxO^\xdd$ denote the corresponding adjoint maps, then $\phi^f$ and $\psi^f$ are natural equivalences and one readily checks that $(\xxO^f,\xxO^g, \phi^{f},\psi^f)$ is a pseudo-equivalence datum associated to $\xxO^f$.
\end{proof}
\begin{lemma}\label{lem Os0 is equ}
  For every complete object $\xxO\in\PCCS(\DFV)$, the map $\xxO^{s_0}\colon \xxO\simeq \xxO^{E^0}\to \xxO^{E^1}$ induced by the unique map $s^0\colon E^1\to E^0$ in $\PSh_{\Seg}(\simp)$ is an equivalence.
\end{lemma}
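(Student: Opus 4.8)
The plan is to exhibit $\xxO^{s_0}$ as a pseudo-equivalence between two \emph{complete} objects, and then invoke Corollary~\ref{cor cat equ betw cp obj are equ}. Concretely, I would proceed in three steps. First, show that $s^0\colon E^1\to E^0$ is a pseudo-equivalence in $\PSh_{\Seg}(\simp)$. Second, transport this statement to $\PSh_{\xCS}(\DFV)$ by Lemma~\ref{lem OC to OD is cat equ}, obtaining that $\xxO^{s_0}\colon \xxO^{E^0}\to \xxO^{E^1}$ is a pseudo-equivalence. Third, verify that the target $\xxO^{E^1}$ is again complete; since the source $\xxO^{E^0}\simeq \xxO$ (as $E^0$ is the unit of $\PSh_{\Seg}(\simp)$) is complete by hypothesis, Corollary~\ref{cor cat equ betw cp obj are equ} then forces $\xxO^{s_0}$ to be an equivalence.

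For the first step, take the pseudo-inverse to be $d^0\colon E^0\to E^1$. Then $s^0\circ d^0=\id_{E^0}$, while $d^0\circ s^0$ is naturally equivalent to $\id_{E^1}$: as $E^1$ is the codiscrete category on two objects (Definition~\ref{def En}), any two of its endofunctors are uniquely naturally isomorphic, and the required natural equivalence in the sense of Definition~\ref{def nat equivalences} is realized by a functor $E^1\times E^1\to E^1$ restricting to $\id_{E^1}$ and to $d^0\circ s^0$ on the two copies of $E^1$. This yields a pseudo-equivalence datum $(s^0,d^0,\phi,\psi)$ with $\psi$ the identity, so $s^0$ is a pseudo-equivalence (Definition~\ref{def pseudoequivalence}), and Lemma~\ref{lem OC to OD is cat equ} applied with $f=s^0$ shows that $\xxO^{s_0}$ is a pseudo-equivalence in $\PSh_{\xCS}(\DFV)$.

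The main obstacle is the third step, namely the completeness of $\xxO^{E^1}$. By definition this amounts to checking that the underlying enriched \icat{} is complete, and by Corollary~\ref{cor:PCSu}(iii) we have $\overline{u}^{*}(\xxO^{E^1})\simeq(\overline{u}^{*}\xxO)^{E^1}$. Since $\xxO$ is complete, $\overline{u}^{*}\xxO$ is a complete enriched \icat{}, so it suffices to know that complete enriched \icats{} are closed under cotensoring with Segal spaces. This follows from Remark~\ref{rem completeness}: completeness of an enriched \icat{} is detected by its underlying Segal space, the underlying-Segal-space functor (being a right adjoint compatible with the $\PSh_{\Seg}(\simp)$-module structures) commutes with the cotensor, and complete Segal spaces are closed under cotensoring with arbitrary Segal spaces by Rezk's results and \cite{enriched}. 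Alternatively one argues directly, using $E^{n}\otimes E^{1}\simeq E^{2n+1}$ and the cotensor adjunction to get $\iota_n(\xxO^{E^1})\simeq\iota_{2n+1}(\xxO)$, together with the fact that $\iota_k(\xxO)\simeq\iota_0(\xxO)$ for all $k$ when $\xxO$ is complete, whence $\iota_1(\xxO^{E^1})\to\iota_0(\xxO^{E^1})$ is an equivalence. With $\xxO^{E^1}$ complete, Corollary~\ref{cor cat equ betw cp obj are equ} shows that the pseudo-equivalence $\xxO^{s_0}$ between the complete objects $\xxO^{E^0}\simeq\xxO$ and $\xxO^{E^1}$ is an equivalence.
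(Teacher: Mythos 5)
Your proof is correct and follows essentially the same route as the paper: exhibit $\xxO^{s_0}$ as a pseudo-equivalence via Lemma~\ref{lem OC to OD is cat equ}, show that $\xxO^{E^1}$ is complete, and conclude with Corollary~\ref{cor cat equ betw cp obj are equ}. Of your two arguments for the completeness of $\xxO^{E^1}$, the second one (identifying $\iota_n(\xxO^{E^1})$ with mapping spaces out of $E^n\otimes E^1$ and using that the relevant map of $E$'s is a local equivalence) is exactly the paper's computation, and is the one to rely on: the first variant needs the underlying-Segal-space functor to commute with cotensors over $\PSh_{\Seg}(\simp)$, a compatibility that is plausible but not established in the paper.
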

\begin{proof}
  By \cite[Definition 5.5.6]{enriched}, the map $s_0$ is a
        pseudo-equivalence in $\PSh_{\Seg}(\simp)$ and Lemma~\ref{lem
          OC to OD is cat equ} therefore implies that $\xxO^{s_0}$ is a pseudo-equivalence in $\PSh_{\xCS}(\DFV)$. Since $\xxO^{E^0}\simeq \xxO$ is complete by assumption, it suffices to show that $\xxO^{E^1}$ is also complete by Corollary~\ref{cor cat equ betw cp obj are equ}.
  
  The adjunctions $\overline{u}_!\dashv\overline{u}^*$ and $\blank\otimes E^1\dashv(\blank)^{E^1}$ provide a chain of equivalences  
\[\begin{split}
\iota_0(\xxO^{E^1})=\xMap(E^0,\overline{u}^*(\xxO^{E^1}))\simeq \xMap(\overline{u}_!(E^0\otimes E^1),\xxO)\\
\simeq \xMap(\overline{u}_!(E^1),\xxO)\simeq\xMap(E^1,\overline{u}^*(\xxO)).
\end{split}
\]
  Similarly, we have the equivalence $\iota_1(\xxO^{E^1})\simeq
        \xMap(E^1\otimes E^1,\overline{u}^*(\xxO))$. It follows that the map
        $\iota_0(\xxO^{E^1})\to \iota_1(\xxO^{E^1})$ can be
        identified with $\xMap(\id_{E^1}\otimes
        s^0,\overline{u}^*(\xxO))\colon\xMap(E^1,\overline{u}^*(\xxO))\to \xMap(E^1\otimes
        E^1,\overline{u}^*(\xxO))$. Since $\xxO$ is complete and
        $\id_{E^1}\otimes s^0$ is a local equivalence by \cite[Lemma 5.4.7]{enriched}, the map $\iota_0(\xxO^{E^1})\to \iota_1(\xxO^{E^1})$ is an equivalence, and hence $\xxO^{E^1}$ is complete.
\end{proof}
\begin{lemma}\label{lem id otimes s0 is loc equ}
  For every object $\xxO\in \PSh_{\xCS}(\DFV)$, the map $\id_\xxO\otimes s^0\colon \xxO\otimes E^1\to \xxO\otimes E^0\simeq \xxO$ induced by $s^0\colon E^1\to E^0$ is a local equivalence. 
\end{lemma}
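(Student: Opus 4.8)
The plan is to test the map against complete objects and then invoke Lemma~\ref{lem Os0 is equ}. Recall that, by definition, a morphism in $\PSh_{\xCS}(\DFV)$ is a local equivalence precisely when precomposition with it induces an equivalence on $\Map(-,\mathcal{P})$ for every complete object $\mathcal{P}\in\PCCS(\DFV)$. Applied to $f=\id_{\mathcal{O}}\otimes s^0\colon \mathcal{O}\otimes E^1\to \mathcal{O}\otimes E^0\simeq\mathcal{O}$, this reduces the statement to showing that for every complete $\mathcal{P}$ the map
\[ \Map(\mathcal{O},\mathcal{P})\to\Map(\mathcal{O}\otimes E^1,\mathcal{P}) \]
induced by $\id_{\mathcal{O}}\otimes s^0$ is an equivalence of spaces.

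First I would rewrite both sides using the tensor--cotensor adjunction of Corollary~\ref{cor Alg(-,-)}, which supplies natural equivalences $\Map(\mathcal{O}\otimes\mathcal{X},\mathcal{P})\simeq\Map(\mathcal{O},\mathcal{P}^{\mathcal{X}})$. Since $E^0\simeq\ast$ is the unit of the $\PSeg(\simp)$-module structure from Corollary~\ref{cor:PCStensor}, we have $\mathcal{O}\otimes E^0\simeq\mathcal{O}$ and hence $\mathcal{P}^{E^0}\simeq\mathcal{P}$. Under these identifications the displayed map becomes
\[ \Map(\mathcal{O},\mathcal{P}^{E^0})\to\Map(\mathcal{O},\mathcal{P}^{E^1}), \]
namely $\Map(\mathcal{O},\mathcal{P}^{s^0})$, where $\mathcal{P}^{s^0}\colon \mathcal{P}^{E^0}\to\mathcal{P}^{E^1}$ is the cotensor of the morphism $s^0\colon E^1\to E^0$.

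Finally, since $\mathcal{P}$ is complete, Lemma~\ref{lem Os0 is equ} asserts exactly that $\mathcal{P}^{s^0}\colon\mathcal{P}\simeq\mathcal{P}^{E^0}\to\mathcal{P}^{E^1}$ is an equivalence in $\PSh_{\xCS}(\DFV)$, whence $\Map(\mathcal{O},\mathcal{P}^{s^0})$ is an equivalence of spaces. As nothing was assumed about $\mathcal{O}$, this would establish the lemma for every $\mathcal{O}\in\PSh_{\xCS}(\DFV)$. The argument is entirely formal once Lemma~\ref{lem Os0 is equ} is in hand, so I do not expect a genuine obstacle; the only points needing care are the naturality of the adjunction equivalence (so that precomposition with $\id_{\mathcal{O}}\otimes s^0$ corresponds to postcomposition with $\mathcal{P}^{s^0}$) and the identification of $E^0$ with the unit of the module structure, both of which follow directly from Corollaries~\ref{cor:PCStensor} and~\ref{cor Alg(-,-)}.
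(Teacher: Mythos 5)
Your argument is correct and is essentially identical to the paper's own proof: both reduce the claim to checking against complete objects $\mathcal{P}$, pass through the tensor--cotensor adjunction to identify the induced map with $\Map(\xxO,\xxP)\to\Map(\xxO,\xxP^{E^1})$, and conclude via Lemma~\ref{lem Os0 is equ}. Your added remarks on naturality and the unit identification are sound but not points the paper dwells on.
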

\begin{proof}
The map $\id_\xxO\otimes s^0$ is a local equivalence if and only if the induced map $$(\id_\xxO\otimes s^0)^*\colon\xMap(\xxO,\xxP)\to \xMap(\xxO\otimes E^1,\xxP)$$ is an equivalence for every complete object $\xxP$. By adjunction, this is equivalent to requiring $\xMap(\xxO,\xxP)\to \xMap(\xxO,\xxP^{E^1})$ to be an equivalence, which is true for every complete object $\xxP$ by the previous lemma.
\end{proof}

\begin{proposition}\label{prop pseudoequ are loc equ}
  Every pseudo-equivalence is a local equivalence.
\end{proposition}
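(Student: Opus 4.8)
The plan is to use the internal characterisation of local equivalences: a morphism $f\colon \xxO \to \xxP$ is a local equivalence \IFF{} the map $f^{*}\colon \Map(\xxP, \xxQ) \to \Map(\xxO, \xxQ)$ is an equivalence for every complete object $\xxQ \in \PCCS(\DFV)$. Let $(f,g,\phi,\psi)$ be a pseudo-equivalence datum for $f$ (Definition~\ref{def pseudoequivalence}), so that $g\circ f$ is naturally equivalent to $\id_{\xxO}$ and $f\circ g$ is naturally equivalent to $\id_{\xxP}$. Fixing a complete $\xxQ$ and abbreviating $(-)^{*} = \Map(-,\xxQ)$, I would show that $f^{*}$ and $g^{*}$ are mutually inverse equivalences; since $\xxQ$ is arbitrary this yields the claim.

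The crux, which I would isolate as the main step, is the following: if two morphisms $a,b\colon \xxO \to \xxP$ are naturally equivalent (Definition~\ref{def nat equivalences}), then $a^{*} \simeq b^{*}\colon \Map(\xxP, \xxQ) \to \Map(\xxO, \xxQ)$ for every complete $\xxQ$. Indeed, a natural equivalence is a map $h\colon \xxO \otimes E^{1} \to \xxP$ with $h\circ(\id \otimes d^{0}) \simeq a$ and $h\circ(\id \otimes d^{1}) \simeq b$, whence $a^{*} = (\id \otimes d^{0})^{*}\circ h^{*}$ and $b^{*} = (\id \otimes d^{1})^{*}\circ h^{*}$. It therefore suffices to prove that $(\id \otimes d^{0})^{*} = (\id \otimes d^{1})^{*}$ on $\Map(\xxO\otimes E^{1}, \xxQ)$. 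Since $d^{0}$ and $d^{1}$ are both sections of $s^{0}\colon E^{1}\to E^{0}$, the composites $(\id \otimes s^{0})\circ(\id \otimes d^{i})$ agree with $\id_{\xxO}$ under $\xxO \otimes E^{0}\simeq \xxO$, so that $(\id \otimes d^{i})^{*}\circ (\id \otimes s^{0})^{*} = \id$ for $i = 0,1$. By Lemma~\ref{lem id otimes s0 is loc equ} the map $\id \otimes s^{0}$ is a local equivalence, so $(\id \otimes s^{0})^{*}$ is an equivalence for complete $\xxQ$; cancelling it forces $(\id \otimes d^{0})^{*} = (\id \otimes d^{1})^{*}$, as desired.

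With this in hand the proof concludes formally. Applying the main step to the natural equivalence $\psi$ between $f\circ g$ and $\id_{\xxP}$ gives $g^{*}\circ f^{*} = (f\circ g)^{*} \simeq \id$ on $\Map(\xxP, \xxQ)$, and applying it to $\phi$ gives $f^{*}\circ g^{*} = (g\circ f)^{*} \simeq \id$ on $\Map(\xxO, \xxQ)$. Hence $f^{*}$ is an equivalence for every complete $\xxQ$, and $f$ is a local equivalence. I expect the only genuine obstacle to lie in the bookkeeping of the main step — identifying $(\id \otimes s^{0})\circ(\id \otimes d^{i})$ with the identity and tracking the variances correctly — rather than in anything conceptually hard, since this is the operadic transcription of the argument of \cite[\S 5.5]{enriched}.
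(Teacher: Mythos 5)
Your proposal is correct and follows essentially the same route as the paper: both reduce to showing $(\id\otimes d^{0})^{*}\simeq(\id\otimes d^{1})^{*}$ on mapping spaces into a complete object, and both obtain this from the fact that each is a one-sided inverse of $(\id\otimes s^{0})^{*}$, which is an equivalence by Lemma~\ref{lem id otimes s0 is loc equ}. Isolating the statement ``naturally equivalent morphisms induce equivalent maps into complete objects'' as a separate step is a clean repackaging, but the substance is identical.
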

\begin{proof}
  Let $f\colon \xxO\to \xxP$ be a pseudo-equivalence in $\PSh_{\xCS}(\DFV)$ and let $(f,g,\phi,\psi)$ be a corresponding pseudo-equivalence datum. We want to show that the map $f^*\colon \xMap(\xxP,\xxQ )\to \xMap(\xxO, \xxQ)$ is an equivalence for every complete object $\xxQ$. It follows from Definition~\ref{def pseudoequivalence} that 
\[
  f^*g^*\simeq (\id\otimes d^0)^* \phi^* \text{ and }
  \id^*\simeq (\id\otimes d^1)^* \phi^* .
\]
  Similarly, we have
\[
  g^* f^*\simeq (\id\otimes d^1)^* \psi^*\text{ and }\id^*\simeq (\id\otimes d^0)^* \psi^*.
\]
  Hence, we only need to show that the morphisms $(\id\otimes d^0)^*$ and $(\id\otimes d^1)^*$ are equivalent in $\xS$.
  
        For every $\xxO'\in \PSh_{\xCS}(\DFV)$, the map  \[
  (\id\otimes s^0)^*\circ(\id\otimes d^i)^*\colon \xMap(\xxO',\xxQ)\to \xMap(\xxO',\xxQ)
\]
  is equivalent to the identity. Since $(\id\otimes s^0)$ is a local equivalence by Lemma~\ref{lem id otimes s0 is loc equ}, the map $(\id\otimes s^0)^*$ is an equivalence for every complete object $\xxQ$. Therefore, the maps $(\id\otimes d^0)^*$ and $(\id\otimes d^1)^*$ are equivalent, because both are right inverses of the equivalence $(\id\otimes s^0)^*$.
\end{proof}

\begin{proposition}\label{prop tensor}
  The tensor product
  $\otimes\colon \PSh_{\xCS}({\DF^\xV})\times \PSh_{\Seg}(\simp)\to
  \PSh_{\xCS}({\DF^\xV})$
  of Corollary~\ref{cor:PCStensor} induces a tensor product on the
  complete objects
  \[ \otimes\colon \OpdIV \times \xCat_\infty\to \OpdIV\]
  (i.e.\ $\PCCS(\DFV) \times \PCoS(\simp) \to \PCCS(\DFV)$),
  which preserves colimits in each variable.
\end{proposition}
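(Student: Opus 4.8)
The plan is to realise the asserted tensoring as the descent of the $\PSeg(\simp)$-module structure on $\PCS(\DFV)$ from Corollary~\ref{cor:PCStensor} along the two localizations $L\colon \PCS(\DFV)\to \PCCS(\DFV)$ (at the local equivalences) and $\PSeg(\simp)\to \PCoS(\simp)\simeq\CatI$ (the Rezk completion, which is a symmetric monoidal localization since a product of complete Segal spaces is complete). Since $\otimes$ preserves colimits in each variable and $L$ preserves colimits, by the module-theoretic form of \cite[Proposition 2.2.1.9]{ha} (used in the same way as for $\PSh(\DFU)$ in Theorem~\ref{theo tensor}) it suffices to check that $L\circ\otimes$ inverts (i) every map $f\otimes\mathcal{X}$ with $f$ a local equivalence and $\mathcal{X}\in\PSeg(\simp)$ arbitrary, and (ii) every map $\mathcal{O}\otimes g$ with $\mathcal{O}\in\PCS(\DFV)$ arbitrary and $g$ a completion equivalence in $\PSeg(\simp)$. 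Granting these, the action $(\mathcal{O},\mathcal{X})\mapsto L(\mathcal{O}\otimes\mathcal{X})$ descends to a $\PCoS(\simp)$-module structure on $\PCCS(\DFV)$; preservation of colimits in each variable is then automatic, because colimits in $\PCCS(\DFV)$ and $\PCoS(\simp)$ are computed by applying the respective localizations to colimits in $\PCS(\DFV)$ and $\PSeg(\simp)$, and condition (i) is exactly what lets one commute the localization unit past the tensor.

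Condition (ii) I would dispatch by saturation. Fixing $\mathcal{O}\in\PCS(\DFV)$, the class of morphisms $g$ in $\PSeg(\simp)$ with $\mathcal{O}\otimes g$ a local equivalence is the preimage of the strongly saturated class of local equivalences under the colimit-preserving functor $\mathcal{O}\otimes(\blank)$, and is therefore itself strongly saturated. By Lemma~\ref{lem id otimes s0 is loc equ} it contains the generator $s^{0}\colon E^{1}\to E^{0}$, and since the completion equivalences are precisely the strongly saturated class generated by $s^{0}$, it contains all of them. This settles (ii) for every $\mathcal{O}$.

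Condition (i) is the substantive point. Using the cotensor adjunction of Corollary~\ref{cor Alg(-,-)}, $f\otimes\mathcal{X}$ is a local equivalence for every local equivalence $f$ if and only if $\mathcal{P}^{\mathcal{X}}$ is complete for every complete $\mathcal{P}$: indeed $\Map(f\otimes\mathcal{X},\mathcal{P})\simeq\Map(f,\mathcal{P}^{\mathcal{X}})$, so completeness of $\mathcal{P}^{\mathcal{X}}$ makes the right-hand side an equivalence for all local $f$, and conversely testing against the localization unit shows $\mathcal{P}^{\mathcal{X}}$ is local. Thus (i) amounts to the claim that cotensoring a complete object by an arbitrary Segal space preserves completeness. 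By definition $\mathcal{P}^{\mathcal{X}}$ is complete exactly when $\overline{u}^{*}(\mathcal{P}^{\mathcal{X}})$ is a complete $\mathcal{V}$-enriched \icat{}, and Corollary~\ref{cor:PCSu}(iii) identifies this with $(\overline{u}^{*}\mathcal{P})^{\mathcal{X}}$. Hence (i) reduces to the corresponding statement for enriched \icats{}—that the cotensor of a complete enriched \icat{} by a Segal space is again complete—which is part of the descent of the $\PSeg(\simp)$-module structure on $\PCS(\simp^{\mathcal{V}})$ to the $\CatI$-module structure on its complete objects established in \cite{enriched}. I expect this reduction to the enriched case to be the only delicate step; condition (ii) and the colimit-preservation then follow formally from the lemmas already in hand.
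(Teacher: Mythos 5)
Your proof is correct and follows essentially the same route as the paper: reduce to the two generating compatibility conditions, dispatch $\mathcal{O}\otimes s^0$ via Lemma~\ref{lem id otimes s0 is loc equ}, and reduce the remaining condition to the enriched-categorical input from \cite{enriched} via the compatibility of $\overline{u}$ with the (co)tensoring. The only cosmetic difference is that for that second condition you argue through the cotensor adjunction and Corollary~\ref{cor:PCSu}(iii) (showing $\mathcal{P}^{\mathcal{X}}$ remains complete for every Segal space $\mathcal{X}$), whereas the paper runs the adjoint argument through $\overline{u}_!$ and Corollary~\ref{cor:PCSu}(ii); these are equivalent formulations resting on the same external result.
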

\begin{proof}
  It suffices to show that for $\mathcal{O} \in \PCS(\DFV)$,
  the map $\mathcal{O} \otimes E^{1} \to \mathcal{O} \otimes E^{0}$ is
  a local equivalence, and for $\mathcal{C} \in \PSeg(\simp)$ the map
  $\overline{u}_{!}E^{1} \otimes \mathcal{C} \to \overline{u}_{!}E^{0} \otimes \mathcal{C}$
  is a local equivalence.

  By adjunction, the first claim is equivalent to
  $ \mathcal{P} \simeq \mathcal{P}^{E^0}\to \mathcal{P}^{E^1}$ being
  an equivalence for every complete object $\mathcal{P}$, which
  follows from Lemma~\ref{lem Os0 is equ}.

  By Corollary~\ref{cor:PCSu}(ii) we have equivalences
  $\overline{u}_!(E^1_\xV)\otimes \xcc\simeq \overline{u}_!(E^1_\xV\otimes \xcc)$ and
  $\overline{u}_!(E^0_\xV)\otimes \xcc\simeq \overline{u}_!(E^0_\xV\otimes \xcc)$. The map
  $E^1_\xV\otimes \xcc\to E^0_\xV\otimes \xcc$ is then a local
  equivalence in $\PSh_\Seg(\simp)$ by \cite[Proposition 5.5.9]{enriched} and the claim
  follows from the fact that $\overline{u}_!$ obviously preserves local
  equivalences.
\end{proof}

Applying the adjoint functor theorem, we get:
\begin{corollary}
  The tensor product
  $\otimes\colon \OpdIV \times \CatI \to \OpdIV$ induces 
  \[\xAlg^\xV_{(\blank)}(\blank)\colon (\OpdIV)^\op\times \OpdIV \to
  \CatI\] such that
  \[ \Map_{\CatI}(\mathcal{X}, \Alg^{\mathcal{V}}_{\mathcal{O}}(\mathcal{P}))
  \simeq \Map_{\OpdIV}(\mathcal{O} \otimes \mathcal{X},
  \mathcal{P})\]
  and a cotensor product 
  \[(\blank)^{(\blank)}\colon
  \OpdIV \times \CatI^\op\to \OpdIV\]
  such that
  \[\Map_{\OpdIV}(\mathcal{O}, \mathcal{P}^{\mathcal{X}}) \simeq
  \Map_{\OpdIV}(\mathcal{O} \otimes \mathcal{X}, \mathcal{P}).\]
  Moreover, both of these functors preserve limits in each variable.
\end{corollary}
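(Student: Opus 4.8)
The plan is to mirror the proof of Corollary~\ref{cor Alg(-,-)} and simply invoke the adjoint functor theorem, now at the level of complete objects. The first point to record is that both $\OpdIV = \PCCS(\DFV)$ and $\CatI = \PCoS(\simp)$ are presentable: the latter is the usual localization of $\PSeg(\simp)$ at the map $s^{0}$, while the former is the localization of the presentable \icat{} $\PCS(\DFV)$ (Corollary~\ref{cor PCSDFV pres}) at the strongly saturated class of local equivalences, which by definition is generated by a single morphism. Since localizing a presentable \icat{} at a small set of maps again yields a presentable \icat{}, $\OpdIV$ is presentable.

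Next I would feed in Proposition~\ref{prop tensor}, which supplies the tensor product $\otimes\colon \OpdIV \times \CatI \to \OpdIV$ preserving colimits in each variable. Fixing $\mathcal{X} \in \CatI$, the colimit-preserving functor $(\blank)\otimes \mathcal{X}\colon \OpdIV \to \OpdIV$ between presentable \icats{} admits a right adjoint by the adjoint functor theorem \cite[Corollary 5.5.2.9]{ht}; this right adjoint is the cotensor $(\blank)^{\mathcal{X}}$ and produces the second displayed adjunction. Dually, fixing $\mathcal{O} \in \OpdIV$, the colimit-preserving functor $\mathcal{O}\otimes(\blank)\colon \CatI \to \OpdIV$ has a right adjoint $\Alg^{\mathcal{V}}_{\mathcal{O}}(\blank)$, giving the first displayed adjunction.

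To promote these pointwise right adjoints to the functors of two variables $\Alg^{\mathcal{V}}_{(\blank)}(\blank)$ and $(\blank)^{(\blank)}$, I would observe that $\otimes$ exhibits $\OpdIV$ as a presentable module over the presentably symmetric monoidal \icat{} $\CatI = \PCoS(\simp)$, so that the functoriality of passing to adjoints in a family (via straightening, exactly as in the argument underlying Corollary~\ref{cor Alg(-,-)}) assembles the fibrewise adjoints into the asserted functors. The limit-preservation clause is then formal: in the covariant ($\mathcal{P}$) variable both functors are right adjoints and hence preserve limits, while in the contravariant variable it follows by adjunction from the fact that $(\blank)\otimes \mathcal{X}$ and $\mathcal{O}\otimes(\blank)$ preserve colimits, so that the composites $\Map(\mathcal{O}\otimes\mathcal{X},\mathcal{P})$ send colimits in $\mathcal{O}$, resp.\ $\mathcal{X}$, to limits.

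The existence of the adjoints themselves is an immediate consequence of presentability together with Proposition~\ref{prop tensor}, so the only genuine content, as with Corollary~\ref{cor Alg(-,-)}, is the bookkeeping required to pass from the fibrewise adjunctions to honest two-variable functors; I expect this to be the main (though entirely routine) obstacle, and it is handled by the same machinery already used in the non-complete case.
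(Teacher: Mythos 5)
Your proposal is correct and follows exactly the route the paper takes: the paper derives this corollary directly from Proposition~\ref{prop tensor} by "applying the adjoint functor theorem," just as the analogous Corollary~\ref{cor Alg(-,-)} is derived from Corollary~\ref{cor:PCStensor}. Your additional remarks on presentability of $\OpdIV$ and on assembling the fibrewise adjoints into two-variable functors fill in details the paper leaves implicit, but do not change the argument.
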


\subsection{Completion}\label{subsec compl}
Our goal in this subsection is to prove that $\PCCS(\DFV)$ is the
localization of $\PCS(\DFV)$ at the fully faithful and essentially
surjective morphisms. We do this by the strategy introduced by
Rezk~\cite{Rezk} and applied to enriched \icats{} in
\cite[5.5]{enriched}: We define a functor that takes every continuous
Segal presheaf $\mathcal{O}$ to a complete object
$\widehat{\mathcal{O}}$ with a natural map
$\mathcal{O} \to \widehat{\mathcal{O}}$, and check that this map is
both a local equivalence and fully faithful and essentially
surjective. This functor is defined as follows:
\begin{definition}\label{def completion}
  Given an object $\xxO\in\PSh_{\xCS}(\DFV)$, we write
  $\widehat{\xxO}$ for the colimit in $\PCS(\DFV)$ of the simplicial
  object $ \xxO^{E^\bullet}$, and
  $l_{\mathcal{O}} \colon \mathcal{O} \to \widehat{\mathcal{O}}$ for
  the natural map from $\mathcal{O} \simeq \mathcal{O}^{E^{0}}$ to
  this colimit.
\end{definition}

For the rest of this section we choose a regular cardinal $\kappa$
such that $\PCS(\DFV) \simeq \PkSeg(\DFVk)$. The key observation that
makes the proof work is that the colimit $\widehat{\mathcal{O}}$ can
be computed in presheaves on $\DFVk$:
\begin{propn}\label{propn F Segal}
  For $\xxO \in \PkSeg(\DFVk)$, the geometric realization
  $\widehat{\xxO} := | \xxO^{E^{\bullet}}| $, computed in $\PSh(\DFVk)$, is a
  $\kappa$-continuous Segal presheaf.
\end{propn}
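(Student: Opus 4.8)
The plan is to compute the geometric realization objectwise in $\PSh(\DFVk)$ and then to commute it past the limits that define the $\kappa$-continuous Segal conditions, exploiting the fact that for each fixed object the relevant simplicial space is a \emph{groupoid object} in $\xS$. First I would record the reductions. By the cotensoring of Corollary~\ref{cor Alg(-,-)} (which preserves $\PCS(\DFV)\simeq\PkSeg(\DFVk)$) each $\mathcal{O}^{E^m}$ is a $\kappa$-continuous Segal presheaf, and since colimits in $\PSh(\DFVk)$ are computed objectwise we have $|\mathcal{O}^{E^\bullet}|(\overline I)\simeq|\mathcal{O}^{E^\bullet}(\overline I)|$ for every $\overline I$. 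Thus the task is to show that the objectwise realization still satisfies the Segal condition of Proposition~\ref{propn:SegDFVcond} and the continuity condition of Definition~\ref{def cts Seg psh}; since both conditions assert that certain diagrams of values are limit diagrams, the content is that $|\blank|$ commutes with these limits. The naive obstruction is that a realization is only a sifted colimit, so it does not commute with the pullbacks occurring in the Segal limit, nor with the weakly contractible limits in the continuity condition.

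The structural input that resolves this is that for every $\overline I\in\DFVk$ the simplicial space
\[ [m]\mapsto \mathcal{O}^{E^m}(\overline I)\simeq \Map_{\PCS(\DFV)}(\overline I\otimes E^m,\mathcal{O}) \]
is a groupoid object in $\xS$. Indeed $[m]\mapsto E^m$ is a cogroupoid object of $\PSeg(\simp)$: the spine decompositions give $E^{k+l}\simeq E^k\amalg_{E^0}E^l$ and $E^1$ is invertible, so it is a groupoid object of $\PSeg(\simp)^{\op}$. Since $\overline I\otimes\blank$ preserves colimits (Corollary~\ref{cor:PCStensor}) and $\Map(\blank,\mathcal{O})$ carries colimits in $\PCS(\DFV)$ to limits in $\xS$, the composite is a groupoid object. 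For $\overline I=\overline{\xfe}$ this recovers, via Lemma~\ref{lem equivalence}, the groupoid object $\iota_\bullet(\mathcal{O})$ with realization $\iota(\mathcal{O})$, and the $n+1$ edges of a corolla induce a map of groupoid objects $\mathcal{O}^{E^\bullet}(\xfc_n,v)\to\iota_\bullet(\mathcal{O})^{n+1}$.

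The heart of the argument is the following \emph{base-change claim}: for every corolla $(\xfc_n,v)$ and every $m$ the square with corners $\mathcal{O}^{E^m}(\xfc_n,v)$, $\iota_m(\mathcal{O})^{n+1}$, $|\mathcal{O}^{E^\bullet}(\xfc_n,v)|$, $\iota(\mathcal{O})^{n+1}$ is Cartesian, i.e.\ the map above exhibits $\mathcal{O}^{E^\bullet}(\xfc_n,v)$ as the base change of the groupoid object $\iota_\bullet(\mathcal{O})^{n+1}$ along $|\mathcal{O}^{E^\bullet}(\xfc_n,v)|\to\iota(\mathcal{O})^{n+1}$; at level $m=0$ this reads
\[ \mathcal{O}(\xfc_n,v)\simeq |\mathcal{O}^{E^\bullet}(\xfc_n,v)|\times_{\iota(\mathcal{O})^{n+1}}\iota_0(\mathcal{O})^{n+1}. \]
I would prove this by analysing the cosimplicial object $(\xfc_n,v)\otimes E^\bullet$ together with its edge maps explicitly: Lemma~\ref{lem decomposing} and its iterates decompose $(\xfc_n,v)\otimes\Delta^k$ into copies of the objects $(\xfc_n^{+},v)$ and $(\xfc_n^{-},v)$ glued along corollas and edges, which reduces the claim to the corresponding statement for the underlying Segal space, where it is Rezk's computation that the classifying diagram of a Segal space is a base change of its space of equivalences. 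This explicit control of $(\xfc_n,v)\otimes E^\bullet$ is the step I expect to be the main obstacle; everything else is formal $\infty$-topos theory.

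Granting the base-change claim, the two required conditions follow from descent in the $\infty$-topos $\xS$. For $\kappa$-continuity, recall that continuity is phrased in Definition~\ref{def cts Seg psh} as preservation of limits by the functor $\mathcal{V}^{\op}\to\xS_{/\mathcal{O}(\overline{\xfe})^{n+1}}$. The map $\iota_0(\mathcal{O})^{n+1}\to\iota(\mathcal{O})^{n+1}$ is an effective epimorphism (a finite product of the effective epimorphism $\iota_0(\mathcal{O})\to\iota(\mathcal{O})$ attached to the groupoid object $\iota_\bullet(\mathcal{O})$), so base change along it is conservative and limit-preserving; by the claim this base change carries $v\mapsto|\mathcal{O}^{E^\bullet}(\xfc_n,v)|$ to $v\mapsto\mathcal{O}(\xfc_n,v)$, whence the $\kappa$-continuity of $\mathcal{O}$ transports to that of $|\mathcal{O}^{E^\bullet}|$. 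For the Segal condition, the Segal limit over $\DFelIop$ is a finite limit assembled by pullback of corolla-values over edge-values; by the claim (applied to the corollas and edges of $I$) all of the groupoid objects $\mathcal{O}^{E^\bullet}(\psi^{*}\overline I)$ are base changes of powers of $\iota_\bullet(\mathcal{O})$, so descent — equivalently universality of colimits — lets $|\blank|$ commute with this limit, giving $|\mathcal{O}^{E^\bullet}|(\overline I)\simeq\lim_{\psi\in\DFelIop}|\mathcal{O}^{E^\bullet}|(\psi^{*}\overline I)$. Together these exhibit $|\mathcal{O}^{E^\bullet}|$ as a $\kappa$-continuous Segal presheaf.
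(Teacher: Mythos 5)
Your overall architecture does match the paper's: everything hinges on showing that the squares comparing $\xxO^{E^{m}}(\xfc_n,v)$ with $\xxO^{E^{m}}(\overline{\xfe})^{n+1}$ are Cartesian, promoting this to the geometric realization by descent in the $\infty$-topos $\xS$ (\cite[Theorem 6.1.3.9]{ht}), and then transporting the continuity and Segal conditions across the resulting pullbacks using that $\xxO(\overline{\xfe})^{n+1}\to |\xxO^{E^{\bullet}}(\overline{\xfe})^{n+1}|$ is surjective on $\pi_{0}$ and that limits commute with pullbacks. Your transport step (conservativity and limit-preservation of base change along this map) is essentially the paper's cube argument in different words.

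The gap is exactly where you flagged it: the ``base-change claim'' is asserted rather than proved, and the route you propose for it is both harder than necessary and unlikely to go through as described. Iterating Lemma~\ref{lem decomposing} only controls $(\xfc_n,v)\otimes\Delta^{k}$, whereas $E^{m}$ is the $0$-coskeletal, infinite-dimensional nerve of a contractible groupoid, so ``explicit control of $(\xfc_n,v)\otimes E^{m}$'' is a substantial colimit computation in its own right; moreover the values $\xxO(\xfc_n,v)$ for general $n$ and $v$ are invisible to the underlying Segal space $\overline{u}^{*}\xxO$, so the claim does not simply reduce to Rezk's computation for Segal spaces. The missing idea is that the paper already has the needed input from \S\ref{subsec pseudo}: every map $E^{m}\to E^{n}$ is a pseudo-equivalence, hence $\xxO^{E^{n}}\to\xxO^{E^{m}}$ is a pseudo-equivalence by Lemma~\ref{lem OC to OD is cat equ} and therefore fully faithful by Proposition~\ref{prop pseudoequ are ffes} --- and full faithfulness is, by Definition~\ref{def ffes}, precisely the statement that the squares over corollas are Cartesian. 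This exhibits $\xxO^{E^{\bullet}}(\xfc_n,v)\to\xxO^{E^{\bullet}}(\overline{\xfe})^{n+1}$ as a Cartesian transformation of simplicial objects, and \cite[Theorem 6.1.3.9]{ht} then yields your base-change squares with no further computation. I would also drop the groupoid-object digression: it rests on the unproved assertion that $E^{k}\amalg_{E^{0}}E^{l}\to E^{k+l}$ is a Segal equivalence in $\PSeg(\simp)$ (as opposed to an equivalence after completion), and it is not needed, since \cite[Theorem 6.1.3.9]{ht} applies to arbitrary Cartesian transformations of simplicial objects and $U_{0}\to |U_{\bullet}|$ is an effective epimorphism for any simplicial object of an $\infty$-topos.
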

\begin{proof}
  For every object $(\overline\xfc_n)\in \DFVk$ lying over a corolla
  and every morphism $[m]\to [n]$ in $\simp^\op$, there is a
  commutative diagram
  \nolabelcsquare{\xxO^{E^m}(\overline\xfc_n)}{\xxO^{E^n}(\overline\xfc_n)}{\xxO^{E^m}(\overline{\xfe})^{k+1}
  }{ \xxO^{E^n}(\overline{\xfe})^{k+1}.}  Since $E^n\to E^m$ is a
  pseudo-equivalence by \cite[Corollary 5.5.6]{enriched},
  Lemma~\ref{lem OC to OD is cat equ} and Proposition~\ref{prop
    pseudoequ are ffes} imply that the functor
  $ \xxO^{E^m}\to \xxO^{E^n}$ is fully faithful, which by definition
  means this commutative square is Cartesian. In other words, the
  natural transformation
  $\tau\colon \xxO^{E^\bullet} (\overline\xfc_n)\to \xxO^{E^\bullet}
  (\overline{\xfe})^{k+1}$ between the two simplicial diagrams is
  Cartesian in the sense of \cite[Definition 6.1.3.1]{ht}.  Since
  $\xS$ is an $\infty$-topos, by \cite[Theorem 6.1.3.9]{ht} the
  commutative square
  \nolabelcsquare{\xxO^{E^0}(\overline{\xfc}_n)}{{|}\xxO^{E^\bullet}(\overline{\xfc}_n){|}}{\xxO^{E^0}(\overline{\xfe})^{k+1}}{{|}\xxO^{E^\bullet}
    (\overline{\xfe})^{k+1}{|}} is also Cartesian. The surjectivity of
  the bottom horizontal map on connected components and the pullback
  condition then imply that each fibre of
  $ | \xxO^{E^\bullet} (\overline\xfc_n)| \to | \xxO^{E^\bullet}
  (\overline{\xfe})^{k+1}| $ is equivalent to one of
  $ \xxO^{E^0}(\overline\xfc_n)\to \xxO^{E^0}(\overline{\xfe})^{k+1}$.
	
  Using this we first check that $\widehat{\mathcal{O}}$ is
  $\kappa$-continuous (in the sense of
  Definition~\ref{defn:kappacts}), starting with condition (2). To see
  that the map $|\mathcal{O}^{E^{\bullet}}(\xfc_{n}(\emptyset))| \to
  |\mathcal{O}^{E^{\bullet}}(\mathfrak{e})|^{\times (n+1)}$, we
  observe that this factors as
  \[ |\mathcal{O}^{E^{\bullet}}(\xfc_{n}(\emptyset))| \to
    |\mathcal{O}^{E^{\bullet}}(\mathfrak{e})^{\times (n+1)}| \to
    |\mathcal{O}^{E^{\bullet}}(\mathfrak{e})|^{\times (n+1)},\] where
  the first map is an equivalence since it is a colimit of
  equivalences (as $\mathcal{O}^{E^{n}}$ is a $\kappa$-continuous
  Segal presheaf for each $n$) and the second map is an equivalence
  since simplicial colimits commute with products.

  We now check condition (1) from Definition~\ref{defn:kappacts}. Fix a $\kappa$-small weakly
  contractible diagram $q\colon K \to \mathcal{V}^{\kappa}$.  We want
  to show that the canonical map
  $\widehat{\mathcal{O}}(\mathfrak{c}_{k}, \colim_{K} q) \to \lim_{K}
  \widehat{\mathcal{O}}(\mathfrak{c}_{k}, q) $ is an equivalence in
  $\xS_{/ | \xxO^{E^\bullet}(\xfe)^{k+1}| }$. To see this, it suffices
  to verify that the front square in the following commutative diagram
  in $\xS$
\[
\begin{tikzcd}
{}	&   \xxO^{E^0}(\mathfrak{c}_{k}, \colim_{K} q)  \ar[rr] \ar[dd]\ar[ld]  &  &   \lim_{K} \xxO^{E^0}(\mathfrak{c}_{k}, q)\ar[dd] \ar[ld] \\
{|}\xxO^{E^\bullet}(\mathfrak{c}_{k}, \colim_{K} q){|} \ar[rr, crossing
over] \ar[dd]  &  &  \lim_{K}  {|} \xxO^{E^\bullet} (\mathfrak{c}_{k}, q) {|} \ar[dd]   &  \\ 
	&  	\xxO^{E^0}(\xfe)^{k+1}\ar[rr,"\id" near start] \ar[ld] &  & \xxO^{E^0}(\xfe)^{k+1} \ar[ld]  \\
	{|}   \xxO^{E^\bullet} (\xfe)^{k+1}{|}  \ar[rr,"\id"]   &  &       {|} \xxO^{E^\bullet} (\xfe)^{k+1} {|} \ar[from=uu, crossing over], &\\
\end{tikzcd}\]
is a pullback square. The horizontal maps of the back square are equivalences by the
assumption that $ \xxO^{E^0}(\simeq \xxO)$ is a continuous Segal
presheaf. We also saw above that the square on the left side is
Cartesian. Moreover, since $K$ is weakly contractible (so the limit
cone on a constant diagram is constant) and limits commute with
pullbacks, the square on the right side is also Cartesian. This
implies that the front square is a pullback too, and so
$| \xxO^{E^\bullet}| $ satisfies the required condition. The Segal
condition for $| \xxO^{E^{\bullet}}|$ holds by a completely analogous
argument, so $| \xxO^{E^{\bullet}}| $ is indeed a $\kappa$-continuous
Segal presheaf.
\end{proof}

\begin{cor}\label{cor u* complete}
  For every $\xxO\in \PCS(\DFV)$, the canonical map
  \[\widehat{\overline{u}^{*}\mathcal{O}} \to
  \overline{u}^*\widehat{\mathcal{O}} \]
  is an equivalence, where by $\widehat{\overline{u}^{*}\mathcal{O}}$
  we mean the colimit $|(\overline{u}^{*}\mathcal{O})^{E^{\bullet}}|$
  in $\PCS(\simp^{\mathcal{V}})$.
\end{cor}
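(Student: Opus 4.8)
The plan is to reduce both completions to geometric realizations formed in ordinary presheaf \icats{}, where $\overline{u}^{*}$ is merely a restriction functor and hence automatically preserves all colimits. As in the rest of the section, I would fix a regular cardinal $\kappa$ with $\PCS(\DFV) \simeq \PkSeg(\DFVk)$ (via Corollary~\ref{cor:kappaseg}), chosen—using the analogues of \S\ref{subsec psh}--\ref{sec tensor product} for $\simp^{\mathcal{V}}$—large enough that simultaneously $\PCS(\simp^{\mathcal{V}}) \simeq \PkSeg(\simp^{\mathcal{V}^{\kappa}})$. Writing $\overline{u}_{\kappa}\colon \simp^{\mathcal{V}^{\kappa}} \to \DFVk$ for the functor induced by $u$, the commutative square $\overline{u}\circ y_{\simp} \simeq y_{\DF}\circ \overline{u}_{\kappa}$ (with the $y$'s the symmetric monoidal Yoneda embeddings underlying Theorem~\ref{thm enr psh} and Corollary~\ref{cor:kappaseg}) induces, on restriction functors, an identification of $\overline{u}^{*}$ with the plain restriction $\overline{u}_{\kappa}^{*}\colon \PSh(\DFVk) \to \PSh(\simp^{\mathcal{V}^{\kappa}})$ under the two equivalences above. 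In particular the functor realizing $\overline{u}^{*}$ at the presheaf level preserves all colimits, being given by composition with $\overline{u}_{\kappa}^{\op}$.

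The key steps then run as follows. First, by Proposition~\ref{propn F Segal}, for $\mathcal{O}\in\PkSeg(\DFVk)$ the realization $|\mathcal{O}^{E^{\bullet}}|$ computed in $\PSh(\DFVk)$ is already a $\kappa$-continuous Segal presheaf, so it agrees with the colimit $\widehat{\mathcal{O}}$ of Definition~\ref{def completion} taken in $\PCS(\DFV)$; the same holds on the $\simp^{\mathcal{V}}$-side for $\widehat{\overline{u}^{*}\mathcal{O}}$. Second, $\overline{u}_{\kappa}^{*}$ preserves these presheaf-level realizations. Third, Corollary~\ref{cor:PCSu}(iii) supplies a natural equivalence $\overline{u}^{*}(\mathcal{O}^{E^{n}}) \simeq (\overline{u}^{*}\mathcal{O})^{E^{n}}$, compatibly in $[n]$, i.e.\ an equivalence of simplicial objects $\overline{u}_{\kappa}^{*}(\mathcal{O}^{E^{\bullet}}) \simeq (\overline{u}^{*}\mathcal{O})^{E^{\bullet}}$. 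Chaining these yields
\[ \overline{u}^{*}\widehat{\mathcal{O}} \simeq \overline{u}_{\kappa}^{*}\,|\mathcal{O}^{E^{\bullet}}| \simeq |\overline{u}_{\kappa}^{*}(\mathcal{O}^{E^{\bullet}})| \simeq |(\overline{u}^{*}\mathcal{O})^{E^{\bullet}}| \simeq \widehat{\overline{u}^{*}\mathcal{O}}, \]
and one checks that this composite is inverse to the canonical comparison map, which finishes the argument.

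The main obstacle is the very first reduction: a priori $\overline{u}^{*}$ need not commute with $\widehat{(\blank)}$, since that colimit is formed in the localization $\PCS$ rather than in the ambient presheaf \icat{}, and restriction functors only preserve colimits computed at the presheaf level. This is exactly what Proposition~\ref{propn F Segal} and its $\simp^{\mathcal{V}}$-analogue buy us: they identify the localized colimit with the honest presheaf realization, after which the exactness of the restriction $\overline{u}_{\kappa}^{*}$ gives compatibility for free. The remaining care is bookkeeping—verifying that the restriction square genuinely commutes under the equivalences and that the resulting equivalence is the canonical map rather than merely an abstract one—which is routine.
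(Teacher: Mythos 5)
Your proposal is correct and follows essentially the same route as the paper's proof: use Proposition~\ref{propn F Segal} to identify $\widehat{\mathcal{O}}$ with the presheaf-level realization in $\PSh(\DFVk)$, use the fact that $\overline{u}^{*}$ is a colimit-preserving restriction functor at that level together with the equivalence $\overline{u}^{*}(\mathcal{O}^{E^{\bullet}}) \simeq (\overline{u}^{*}\mathcal{O})^{E^{\bullet}}$ from Corollary~\ref{cor:PCSu}, and conclude that the image is already a $\kappa$-continuous Segal presheaf (the paper gets this last point from Corollary~\ref{cor:PSu} rather than from the $\simp^{\mathcal{V}}$-analogue of Proposition~\ref{propn F Segal}, but this is an immaterial variation).
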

\begin{proof}
  Since there is a natural equivalence
  $\overline{u}^{*}(\mathcal{O}^{E^{\bullet}}) \simeq
  (\overline{u}^{*}\mathcal{O})^{E^{\bullet}}$ by
  Corollary~\ref{cor:PCSu}, there is a natural map from the colimit
  $\widehat{\overline{u}^{*}\mathcal{O}} \simeq
  |(\overline{u}^{*}\mathcal{O})^{E^{\bullet}}|$ to
  $\overline{u}^{*}\widehat{\mathcal{O}}$.

  Let $F$ denote the colimit of $\mathcal{O}^{E^{\bullet}}$ in
  $\PSh(\DFVk)$. Then $\widehat{\mathcal{O}}$ is the localization of $F$ at the
  $\kappa$-continuous Segal equivalences, but by Proposition~\ref{propn F Segal} the presheaf $F$
  is already a $\kappa$-continuous Segal presheaf, and so
  $\widehat{\mathcal{O}} \simeq F$. The functor $\overline{u}^{*}
  \colon \PSh(\DFVk) \to \PSh(\simp^{\mathcal{V}^{\kappa}})$ preserves
  colimits, so $\overline{u}^{*}F$ is the colimit of
  $\overline{u}^{*}(\mathcal{O}^{E^{\bullet}}) \simeq
  (\overline{u}^{*}\mathcal{O})^{E^{\bullet}}$. Moreover,
  $\overline{u}^{*}$ preserves $\kappa$-continuous Segal presheaves by
  Corollary~\ref{cor:PSu}, and so
  $\overline{u}^{*}F$ is a $\kappa$-continuous Segal presheaf. Thus $
  \overline{u}^{*}F$ is the colimit
  $\widehat{\overline{u}^{*}\mathcal{O}}$ in
  $\PCS(\simp^{\mathcal{V}})$, as required.
\end{proof}

\begin{cor}\label{cor:Ohatcomplete}
  For any $\mathcal{O}$ in $\PCS(\DFV)$, the object
  $\widehat{\mathcal{O}}$ is complete and the map $l_{\mathcal{O}}
  \colon \mathcal{O} \to \widehat{\mathcal{O}}$ is a local
  equivalence.
\end{cor}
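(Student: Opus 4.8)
The plan is to establish the two assertions separately, in each case reducing to results proved earlier: completeness of $\widehat{\mathcal{O}}$ will be detected on the underlying enriched $\infty$-category by Corollary~\ref{cor u* complete}, while the local-equivalence statement will follow from the fact that the structure maps of the simplicial object $\mathcal{O}^{E^\bullet}$ are all pseudo-equivalences. Both halves are essentially formal once Proposition~\ref{propn F Segal} and Corollary~\ref{cor u* complete} are in hand.

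For completeness, I would argue that by definition $\widehat{\mathcal{O}}$ is complete precisely when its underlying enriched $\infty$-category $\overline{u}^{*}\widehat{\mathcal{O}}$ is a complete object of $\PCS(\simp^{\mathcal{V}})$. By Corollary~\ref{cor u* complete} there is a natural equivalence $\overline{u}^{*}\widehat{\mathcal{O}} \simeq \widehat{\overline{u}^{*}\mathcal{O}}$, and the right-hand side is the completion $|(\overline{u}^{*}\mathcal{O})^{E^\bullet}|$ of the enriched $\infty$-category $\overline{u}^{*}\mathcal{O}$. Since the completion of an enriched $\infty$-category is complete by the corresponding result for enriched $\infty$-categories \cite[\S 5.5]{enriched}, it follows that $\overline{u}^{*}\widehat{\mathcal{O}}$ is complete, and hence so is $\widehat{\mathcal{O}}$.

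For the local-equivalence statement, I would first recall that every coface and codegeneracy map of the cosimplicial object $E^\bullet$ is a pseudo-equivalence (\cite[Corollary 5.5.6]{enriched}); applying the cotensor $\mathcal{O}^{(\blank)}$ and Lemma~\ref{lem OC to OD is cat equ} shows that every structure map of the simplicial object $\mathcal{O}^{E^\bullet}$ in $\PCS(\DFV)$ is a pseudo-equivalence, hence a local equivalence by Proposition~\ref{prop pseudoequ are loc equ}. Now let $L \colon \PCS(\DFV) \to \PCCS(\DFV)$ be the reflective localization at the local equivalences, which exists since $\PCS(\DFV)$ is presentable (Corollary~\ref{cor PCSDFV pres}) and these are generated by the single map $\overline{u}_{!}(s^{0})$; its local objects are exactly the complete objects, since by the adjunction $\overline{u}_{!} \dashv \overline{u}^{*}$ an object $\mathcal{O}$ is local with respect to $\overline{u}_{!}(s^{0})$ iff $\overline{u}^{*}\mathcal{O}$ is local with respect to $s^{0}$, i.e.\ complete. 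As $L$ preserves colimits, $L\widehat{\mathcal{O}} \simeq |L\mathcal{O}^{E^\bullet}|$. Every structure map of $L\mathcal{O}^{E^\bullet}$ is an equivalence, so this simplicial object is essentially constant with value $L\mathcal{O}$; because $\simp^{\op}$ is weakly contractible, the canonical map $L\mathcal{O} = L\mathcal{O}^{E^{0}} \to |L\mathcal{O}^{E^\bullet}| \simeq L\widehat{\mathcal{O}}$ is an equivalence. This map is $L(l_{\mathcal{O}})$, and since $L$ inverts exactly the local equivalences we conclude that $l_{\mathcal{O}}$ is a local equivalence.

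The only point requiring genuine care will be this final reduction: one must be sure that the colimit defining $\widehat{\mathcal{O}}$ is preserved by the localization $L$ and that $L\mathcal{O}^{E^\bullet}$ really becomes essentially constant (so that the geometric realization of a diagram of equivalences over the weakly contractible $\simp^{\op}$ recovers its value). All of the substantive input — that the realization $|\mathcal{O}^{E^\bullet}|$ is already a $\kappa$-continuous Segal presheaf (Proposition~\ref{propn F Segal}) and that $\overline{u}^{*}$ commutes with completion (Corollary~\ref{cor u* complete}) — has been handled in advance, so I expect no further obstacle beyond assembling these formal steps.
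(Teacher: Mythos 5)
Your proof is correct and follows essentially the same route as the paper: completeness via Corollary~\ref{cor u* complete} together with the completion theorem for enriched $\infty$-categories, and the local-equivalence claim by showing the structure maps of $\mathcal{O}^{E^\bullet}$ are pseudo-equivalences (Lemma~\ref{lem OC to OD is cat equ} applied to \cite[Corollary 5.5.6]{enriched}) and hence local equivalences by Proposition~\ref{prop pseudoequ are loc equ}. Your final step — constructing the reflective localization at $\overline{u}_{!}(s^{0})$ independently of $\widehat{(\blank)}$ and using that it preserves colimits — is a valid, non-circular way of making explicit the paper's terser appeal to local equivalences being closed under colimits.
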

\begin{proof}
  By Corollary~\ref{cor u* complete} the underlying enriched \icat{}
  $\overline{u}^{*}\widehat{\mathcal{O}}$ is equivalent to
  $\widehat{\overline{u}^{*}\mathcal{O}}$, which is complete by 
  \cite[Theorem 5.6.2]{enriched}; by definition, this means
  $\widehat{\mathcal{O}}$ is also complete.

  As the class of local equivalences is strongly saturated, it is
  closed under colimits of morphisms. Therefore, to see that $l_{\mathcal{O}}$ is a
  local equivalence it suffices to show that for every map
  $[m] \to [n]$ in $\simp$ the map $\xxO^{E^n}\to \xxO^{E^m}$ is a
  local equivalence. The map $E^{m}\to E^n$ is a pseudo-equivalence in
  $\PSeg(\simp)$ by \cite[Corollary 5.5.6]{enriched}. Then
  Lemma~\ref{lem OC to OD is cat equ} implies that
  $\xxO^{E^n}\to \xxO^{E^m}$ is a pseudo-equivalence and so a local
  equivalence by Proposition~\ref{prop pseudoequ are loc equ}.
\end{proof}

\begin{cor}\label{cor Ohat adj}
  The functor $\widehat{(\blank)} \colon \PCS(\DFV) \to \PCCS(\DFV)$
  is left adjoint to the inclusion $\PCCS(\DFV) \hookrightarrow \PCS(\DFV)$.
\end{cor}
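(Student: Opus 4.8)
The plan is to exhibit $\widehat{(\blank)}$, together with the natural maps $l_{\mathcal{O}}$, as the reflection of $\PCS(\DFV)$ onto the full subcategory $\PCCS(\DFV)$, so that the adjunction follows from the standard recognition principle for reflective localizations. Essentially all of the mathematical content has already been established in Corollary~\ref{cor:Ohatcomplete}; what remains is to package it correctly.

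First I would check that $\widehat{(\blank)}$ is genuinely a functor landing in $\PCCS(\DFV)$ and that the maps $l_{\mathcal{O}}$ are natural. Functoriality is automatic from Definition~\ref{def completion}: $\widehat{\mathcal{O}}$ is the colimit of the diagram $\mathcal{O}^{E^{\bullet}}$, which depends functorially on $\mathcal{O}$ (the cotensor $(\blank)^{E^{n}}$ being a functor of two variables by Corollary~\ref{cor Alg(-,-)}), and $l \colon \id \to \widehat{(\blank)}$ is the natural transformation given by the inclusion of the zeroth term $\mathcal{O} \simeq \mathcal{O}^{E^{0}}$ into this colimit. By Corollary~\ref{cor:Ohatcomplete} each $\widehat{\mathcal{O}}$ is complete, so $\widehat{(\blank)}$ factors through $\PCCS(\DFV)$.

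Next I would invoke the universal property. By Corollary~\ref{cor:Ohatcomplete} the map $l_{\mathcal{O}} \colon \mathcal{O} \to \widehat{\mathcal{O}}$ is a local equivalence, which by the very definition of local equivalence means that for every complete object $\mathcal{P}$ the restriction
\[ \Map(\widehat{\mathcal{O}}, \mathcal{P}) \to \Map(\mathcal{O}, \mathcal{P}) \]
is an equivalence of spaces. Since the inclusion $\PCCS(\DFV) \hookrightarrow \PCS(\DFV)$ is fully faithful, this identifies $\Map_{\PCCS(\DFV)}(\widehat{\mathcal{O}}, \mathcal{P})$ with $\Map_{\PCS(\DFV)}(\mathcal{O}, \mathcal{P})$ naturally in $\mathcal{P}$; that is, $l_{\mathcal{O}}$ exhibits $\widehat{\mathcal{O}}$ as a $\PCCS(\DFV)$-localization of $\mathcal{O}$. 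As this holds for every $\mathcal{O} \in \PCS(\DFV)$, the recognition criterion for reflective subcategories \cite[Proposition~5.2.7.8]{ht} shows that the inclusion admits a left adjoint, which by construction is $\widehat{(\blank)}$ with unit $l$.

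There is no serious obstacle remaining, as the two nontrivial inputs---completeness of $\widehat{\mathcal{O}}$ and the local-equivalence property of $l_{\mathcal{O}}$---are precisely the content of Corollary~\ref{cor:Ohatcomplete}. The only point requiring a little care is the verification that the pointwise data $(\widehat{\mathcal{O}}, l_{\mathcal{O}})$ is genuinely functorial and natural, which is what licenses the passage from the objectwise universal property to a bona fide adjunction.
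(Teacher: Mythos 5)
Your proof is correct and follows essentially the same route as the paper: both invoke \cite[Proposition~5.2.7.8]{ht} and reduce the adjunction to the two facts from Corollary~\ref{cor:Ohatcomplete}, namely that $\widehat{\mathcal{O}}$ is complete and that $l_{\mathcal{O}}$ is a local equivalence. The extra remarks on functoriality of $\widehat{(\blank)}$ and naturality of $l$ are a reasonable (if brief) supplement to what the paper leaves implicit.
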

\begin{proof}
  By \cite[Proposition 5.2.7.8]{ht}, the functor $\widehat{(\blank)}$
  is left adjoint to the inclusion
  $\PCCS(\DFV)\subseteq\PSh_{\xCS}(\DFV)$ if and only if for every
  $\xxO\in \PSh_{\xCS}(\DFV)$ and every $\xxP\in\PCCS(\DFV)$ the
  canonical map $l_{\xxO} \colon \mathcal{O}  \to \widehat{\mathcal{O}}$
  induces an equivalence
  \[\xMap_{\PCCS(\DFV)}(\widehat{\xxO}, \xxP)\to \xMap_{\PSh_{\xCS}(\DFV)}(\xxO,\xxP).\]
  Since $\xxP$ is complete, this follows from $l_{\mathcal{O}}$ being
  a local equivalence, which we just saw in Corollary~\ref{cor:Ohatcomplete}.
\end{proof}

\begin{propn}\label{propn lO ffes}
  The map $l_{\mathcal{O}} \colon \mathcal{O} \to
  \widehat{\mathcal{O}}$ is fully faithful and essentially surjective
  for all $\mathcal{O} \in \PCS(\DFV)$.
\end{propn}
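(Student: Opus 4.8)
The plan is to check the two conditions of Definition~\ref{def ffes} separately, in each case reducing to work already done above. The crucial input is that, having chosen $\kappa$ with $\PCS(\DFV) \simeq \PkSeg(\DFVk)$, Proposition~\ref{propn F Segal} lets us compute the completion objectwise: the geometric realization $|\mathcal{O}^{E^\bullet}|$ taken in $\PSh(\DFVk)$ is already a $\kappa$-continuous Segal presheaf, so $\widehat{\mathcal{O}}(\overline{I}) \simeq |\mathcal{O}^{E^\bullet}(\overline{I})|$ for every $\overline{I}$.

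First I would establish full faithfulness. Fix an object $\overline{\xfc}_n \in \DFV$ lying over a corolla. The proof of Proposition~\ref{propn F Segal} shows exactly that the square
\[
\begin{tikzcd}
\mathcal{O}^{E^0}(\overline{\xfc}_n) \arrow{r} \arrow{d} & |\mathcal{O}^{E^\bullet}(\overline{\xfc}_n)| \arrow{d} \\
\mathcal{O}^{E^0}(\overline{\xfe})^{n+1} \arrow{r} & |\mathcal{O}^{E^\bullet}(\overline{\xfe})^{n+1}|
\end{tikzcd}
\]
is Cartesian, this being the application of \cite[Theorem 6.1.3.9]{ht} to the Cartesian natural transformation of simplicial diagrams $\mathcal{O}^{E^\bullet}(\overline{\xfc}_n) \to \mathcal{O}^{E^\bullet}(\overline{\xfe})^{n+1}$. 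Since $\mathcal{O}^{E^0} \simeq \mathcal{O}$ and geometric realization in $\xS$ commutes with finite products (it is a sifted colimit), the bottom right corner may be identified with $|\mathcal{O}^{E^\bullet}(\overline{\xfe})|^{n+1} \simeq \widehat{\mathcal{O}}(\overline{\xfe})^{n+1}$. The square is then precisely the one in Definition~\ref{def ffes} for $l_{\mathcal{O}}$, so $l_{\mathcal{O}}$ is fully faithful.

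For essential surjectivity I would pass to underlying enriched \icats{}. By Corollary~\ref{cor u* complete}, together with the identification $\overline{u}^{*}(\mathcal{O}^{E^\bullet}) \simeq (\overline{u}^{*}\mathcal{O})^{E^\bullet}$ from Corollary~\ref{cor:PCSu}, the map $\overline{u}^{*}(l_{\mathcal{O}})$ is equivalent to the completion map $l_{\overline{u}^{*}\mathcal{O}} \colon \overline{u}^{*}\mathcal{O} \to \widehat{\overline{u}^{*}\mathcal{O}}$ of the enriched \icat{} $\overline{u}^{*}\mathcal{O}$, which is fully faithful and essentially surjective by the completion theorem of \cite{enriched}. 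As essential surjectivity of $l_{\mathcal{O}}$ is \emph{by definition} essential surjectivity of $\overline{u}^{*}(l_{\mathcal{O}})$, this finishes the proof. (Alternatively this is elementary: using Lemma~\ref{lem equivalence}, the map $\iota_{0}(\mathcal{O}) \simeq \mathcal{O}(\overline{\xfe}) \to \widehat{\mathcal{O}}(\overline{\xfe}) \simeq |\mathcal{O}^{E^\bullet}(\overline{\xfe})|$ is the level-$0$ coprojection into a geometric realization, hence $\pi_{0}$-surjective, and $\pi_{0}\iota_{0} \to \pi_{0}\iota$ is always surjective.)

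Since the substantive content was already carried out in Proposition~\ref{propn F Segal} and Corollary~\ref{cor u* complete}, this statement is largely assembly. The one genuinely load-bearing observation, and the only point I expect to require any care, is the identification of the bottom-right corner of the full faithfulness square --- namely that forming $(n+1)$-fold products commutes with the geometric realization defining $\widehat{\mathcal{O}}$ --- which holds because sifted colimits commute with finite products in $\xS$.
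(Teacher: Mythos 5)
Your argument is correct and coincides with the paper's own proof: full faithfulness is obtained from the Cartesian square established in the proof of Proposition~\ref{propn F Segal} together with the commutation of geometric realization with finite products in $\xS$, and essential surjectivity by passing to underlying enriched $\infty$-categories via Corollary~\ref{cor u* complete} and the completion theorem of \cite{enriched}. Your alternative elementary argument for essential surjectivity (the level-$0$ coprojection into a geometric realization is $\pi_0$-surjective, and $\pi_0\iota_0 \to \pi_0\iota$ is surjective) is also valid and somewhat more self-contained than the citation, but it does not change the overall shape of the proof.
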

\begin{proof}
  For the essential surjectivity we only need to check that the map
  $\overline{u}^*(l_\xxO)\colon \overline{u}^*(\xxO)\to
  \overline{u}^*(\widehat{\xxO})$
  of the underlying enriched $\infty$-categories is essentially
  surjective. This easily follows from Corollary~\ref{cor u* complete} and \cite[Theorem 5.6.2]{enriched}. To see that the
  map is fully faithful, recall from the proof of
  Proposition~\ref{propn F Segal} that we have a Cartesian square
  \nolabelcsquare{\xxO^{E^0}(\overline\xfc_n)}{ {|} \xxO^{E^\bullet}
    (\overline\xfc_n){|}}{\xxO^{E^0}(\xfe)^{n+1}}{{|} \xxO^{E^\bullet}
    (\xfe)^{n+1}{|}.}

  By Proposition~\ref{propn F Segal} (and the fact that geometric
  realization commutes with finite products in $\mathcal{S}$) this
  says that the commutative square
  \nolabelcsquare{\xxO(\overline\xfc_n)}{\widehat{\xxO}(\overline\xfc_n)}{\xxO(\mathfrak{e})^{n+1}}{\widehat{\xxO}(\mathfrak{e})^{n+1}}
  is Cartesian. In other words, $\xxO \to \widehat{\xxO}$ is fully
  faithful.
\end{proof}

Putting together our results so far, we can now prove the main result
of this section:
\begin{theorem}\label{theo cp obj are loc}
  A map in $\PSh_{\xCS}(\DFV)$ is a local equivalence if and only if
  it is fully faithful and essentially surjective.
\end{theorem}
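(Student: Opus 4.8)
The plan is to deduce the theorem formally from the completion functor $\widehat{(\blank)} \colon \PCS(\DFV) \to \PCCS(\DFV)$ and the natural map $l_{\mathcal{O}} \colon \mathcal{O} \to \widehat{\mathcal{O}}$, exploiting the fact that $l_{\mathcal{O}}$ is \emph{simultaneously} a local equivalence (Corollary~\ref{cor:Ohatcomplete}) and a fully faithful and essentially surjective morphism (Proposition~\ref{propn lO ffes}). For a morphism $f \colon \mathcal{O} \to \mathcal{P}$, the naturality of $l$ (which is a natural transformation since $\widehat{(\blank)}$ is a functor by Corollary~\ref{cor Ohat adj}) produces a commutative square
\[
\begin{tikzcd}
\mathcal{O} \arrow{r}{f} \arrow{d}{l_{\mathcal{O}}} & \mathcal{P} \arrow{d}{l_{\mathcal{P}}} \\
\widehat{\mathcal{O}} \arrow{r}{\widehat{f}} & \widehat{\mathcal{P}},
\end{tikzcd}
\]
in which the target objects $\widehat{\mathcal{O}}$ and $\widehat{\mathcal{P}}$ are complete. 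Both classes of maps in play satisfy the $2$-of-$3$ property: the local equivalences because they form a strongly saturated class, and the fully faithful and essentially surjective morphisms by Proposition~\ref{prop 2 of 3}.

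For the implication ``fully faithful and essentially surjective $\Rightarrow$ local equivalence'', I would argue as follows. Suppose $f$ is fully faithful and essentially surjective. Since the vertical maps $l_{\mathcal{O}}$ and $l_{\mathcal{P}}$ are fully faithful and essentially surjective, the $2$-of-$3$ property forces $\widehat{f}$ to be so as well; being a fully faithful and essentially surjective map between complete objects, $\widehat{f}$ is then an equivalence by Proposition~\ref{prop ffes are equ}. In particular $\widehat{f}$ is a local equivalence, and since $l_{\mathcal{O}}$ and $l_{\mathcal{P}}$ are local equivalences, applying $2$-of-$3$ for local equivalences to the square shows that $f$ is a local equivalence.

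For the converse, suppose $f$ is a local equivalence. Then $2$-of-$3$ for local equivalences (using again that $l_{\mathcal{O}}$ and $l_{\mathcal{P}}$ are local equivalences) shows that $\widehat{f}$ is a local equivalence between the complete objects $\widehat{\mathcal{O}}$ and $\widehat{\mathcal{P}}$. Because completion realizes $\PCCS(\DFV)$ as a reflective localization of $\PCS(\DFV)$ whose local objects are exactly the complete ones (Corollary~\ref{cor Ohat adj}), a local equivalence between complete objects is necessarily an equivalence; hence $\widehat{f}$ is an equivalence, and in particular fully faithful and essentially surjective. Feeding this into the $2$-of-$3$ property for fully faithful and essentially surjective maps (Proposition~\ref{prop 2 of 3}), together with the fact that $l_{\mathcal{O}}$ and $l_{\mathcal{P}}$ are such maps, yields that $f$ is fully faithful and essentially surjective.

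I expect this argument to be entirely formal, with no serious computational obstacle: all the genuine content has already been extracted in showing that $l_{\mathcal{O}}$ is at once a local equivalence and fully faithful and essentially surjective (Corollary~\ref{cor:Ohatcomplete} and Proposition~\ref{propn lO ffes}), and that fully faithful and essentially surjective maps between complete objects are equivalences (Proposition~\ref{prop ffes are equ}). The only point that will require a moment's care is the dual statement used in the converse direction --- that a local equivalence between complete objects is an equivalence --- but this is a standard consequence of the reflective localization of Corollary~\ref{cor Ohat adj} via the Yoneda lemma in the localized \icat{}.
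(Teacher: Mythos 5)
Your proposal is correct and follows essentially the same route as the paper: both form the naturality square for the completion map $l$, use that $l_{\mathcal{O}}$ and $l_{\mathcal{P}}$ are simultaneously local equivalences and fully faithful and essentially surjective, and then transfer the question to $\widehat{f}$ between complete objects via the two $2$-of-$3$ properties. The only difference is cosmetic --- you spell out explicitly why a local equivalence between complete objects is an equivalence, a point the paper leaves implicit.
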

\begin{proof}
  Every map $f\colon \xxO\to \xxP$ in $\PSh_{\xCS}(\DFV)$ gives a
  commutative diagram \csquare{\xxO}{\xxP}{\widehat{\xxO}
  }{\widehat{\xxP} }{f}{l_\xxO}{l_\xxP}{\widehat{f}} where the
  vertical maps are local equivalences by Corollary~\ref{cor:Ohatcomplete} as well as fully faithful and essentially surjective
  by Proposition~\ref{propn lO ffes}. Since fully faithful and
  essentially surjective maps satisfy the $2$-of-$3$ property by
  Propositon~\ref{prop 2 of 3}, the map $f$ is fully faithful and
  essentially surjective if and only if $\widehat{f}$ is
  so. Corollary~\ref{cor:Ohatcomplete} implies that the map
  $\widehat{f}$ is a map between complete objects and so by
  Proposition~\ref{prop ffes are equ} it is fully faithful if and only
  if it is an equivalence. Similarly, using the 2-of-3 property for
  local equivalences we see that $f$ is a local
  equivalence \IFF{} $\widehat{f}$ is an equivalence. Thus the map
  $f$ is a local equivalence \IFF{} it is fully faithful and
  essentially surjective.
\end{proof}

From Corollary~\ref{cor Ohat adj} and Theorem~\ref{theo cp obj are
  loc} we immediately get:
\begin{corollary}\label{cor cp obj are loc}
  The adjunction 
  \[ \widehat{(\blank)} : \PCS(\DFV) \rightleftarrows \PCCS(\DFV)
  \]
  (where the right adjoint is the inclusion) exhibits $\PCCS(\DFV)$ as
  the localization of $\PCS(\DFV)$ with respect to the class of fully
  faithful and essentially surjective morphisms.
\end{corollary}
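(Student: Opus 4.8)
The plan is to treat this as a formal consequence of the reflective adjunction established in Corollary~\ref{cor Ohat adj} together with the identification of local equivalences in Theorem~\ref{theo cp obj are loc}; there is essentially no new computation to carry out. First I would invoke Corollary~\ref{cor Ohat adj}, which gives that the completion functor $\widehat{(\blank)} \colon \PCS(\DFV) \to \PCCS(\DFV)$ is left adjoint to the fully faithful inclusion. This exhibits $\PCCS(\DFV)$ as a reflective subcategory of $\PCS(\DFV)$, and hence, by the general theory of localizations of \icats{} (as in \cite[\S 5.2.7]{ht}), as the localization of $\PCS(\DFV)$ at the class $S$ of morphisms that the reflector $\widehat{(\blank)}$ carries to equivalences.

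The next step is to identify this class $S$ explicitly. By the universal property of the reflection, a morphism $f \colon \mathcal{O} \to \mathcal{O}'$ lies in $S$ \IFF{} the induced map $\Map(\mathcal{O}', \mathcal{P}) \to \Map(\mathcal{O}, \mathcal{P})$ is an equivalence for every object $\mathcal{P}$ in the essential image of the inclusion, i.e.\ for every complete object $\mathcal{P} \in \PCCS(\DFV)$. Comparing this with the definition of local equivalence recorded above, I see that $S$ is precisely the class of local equivalences. Finally, Theorem~\ref{theo cp obj are loc} identifies the local equivalences with the fully faithful and essentially surjective morphisms, so $S$ is exactly this class, and the claim follows.

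I do not expect any genuine obstacle here, since both inputs are already in hand: the only thing to verify is that the two descriptions of $S$ — as the maps inverted by $\widehat{(\blank)}$, and as the local equivalences — agree, which is immediate from the respective definitions. The one point worth stating with care is the standard fact that a reflective localization presents its target as the localization at the inverted maps; for this I would point to \cite[\S 5.2.7]{ht}, the same circle of ideas already used to produce the adjunction in Corollary~\ref{cor Ohat adj}.
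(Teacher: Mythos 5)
Your argument is correct and matches the paper's, which derives the corollary immediately from Corollary~\ref{cor Ohat adj} and Theorem~\ref{theo cp obj are loc}; the only difference is that you spell out the standard identification of a reflective localization with the localization at the maps inverted by the reflector, which the paper leaves implicit.
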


\begin{remark}
  Suppose $\mathcal{V}$ is a large symmetric monoidal \icat{}, not
  necessarily presentable, and let $\OpdIV$ denote the full
  subcategory of $\AlgDFS(\mathcal{V})$ spanned by the complete
  objects. By embedding $\mathcal{V}$ in a presentably symmetric
  monoidal \icat{} in a larger universe, it follows by exactly the
  same argument as in the proof of \cite[Theorem 5.6.6]{enriched} that
  the inclusion $\OpdIV \hookrightarrow \AlgDFS(\mathcal{V})$ has a
  left adjoint that exhibits $\OpdIV$ as the localization at the fully
  faithful and essentially surjective morphisms.
\end{remark}

\begin{propn}
  The \icat{} $\OpdIV$ is functorial in $\mathcal{V}$ with respect to
  lax symmetric monoidal functors. Moreover, if $F \colon \mathcal{V}
  \to \mathcal{W}$ is a colimit-preserving symmetric monoidal functor
  then $F_{*} \colon \OpdIV \to \OpdI^{\mathcal{W}}$ preserves
  colimits; thus $\OpdI^{(\blank)}$ gives a functor
  $\name{CAlg}(\PrL) \to \PrL$.
\end{propn}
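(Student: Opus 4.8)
The plan is to derive everything from two facts established earlier: the functoriality of $\PCS(\DFV)$ in lax symmetric monoidal functors (Corollary~\ref{cor:laxmonftr}), and the identification of $\OpdIV = \PCCS(\DFV)$ as the reflective localization of $\PCS(\DFV)$ at the fully faithful and essentially surjective maps (Corollary~\ref{cor cp obj are loc}). First I would record that $\mathcal{V} \mapsto \PCS(\DFV) \simeq \AlgDFS(\mathcal{V})$ is functorial in lax symmetric monoidal functors: via Theorem~\ref{thm:PCSisAlgLT} this is immediate from the manifest functoriality of $\AlgDFS(\mathcal{V})$ under post-composition with a map of \iopds{} $\mathcal{V}^{\otimes} \to \mathcal{W}^{\otimes}$, which is the coherent upgrade of Corollary~\ref{cor:laxmonftr}(i).

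The key step is to show that for every lax symmetric monoidal $F \colon \mathcal{V} \to \mathcal{W}$ the functor $F_{*} \colon \PCS(\DFV) \to \PCS(\DF^{\mathcal{W}})$ preserves local equivalences; by Theorem~\ref{theo cp obj are loc} this is the same as preserving fully faithful and essentially surjective maps. For full faithfulness I would use the identification of $F_{*}$ with post-composition by $F$: under the algebra equivalence the multimorphism object $\mathcal{O}(x_{1},\ldots,x_{n};y)$ is $A_{\mathcal{O}}(\tilde{\mathfrak{c}}_{n})$, and since $F_{*}$ fixes the space of objects $\mathcal{O}(\overline{\mathfrak{e}})$, the representing morphism of $F_{*}\phi$ is $F$ applied to that of $\phi$; as $F$ preserves equivalences, $F_{*}\phi$ is again fully faithful. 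For essential surjectivity I would pass to underlying enriched \icats{} using the compatibility $\overline{u}^{*}F_{*} \simeq F_{*}\overline{u}^{*}$ (both sides are restriction along $u \colon \simp \to \DF$ followed by post-composition with $F$), reducing to the corresponding statement for enriched \icats{} in \cite{enriched}; here one uses that $F_{*}$ fixes the object space and sends equivalences to equivalences, so surjectivity on $\pi_{0}(\iota(\blank))$ is preserved.

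Granting this, the induced functor on localizations follows formally: the composite $\PCS(\DFV) \xrightarrow{F_{*}} \PCS(\DF^{\mathcal{W}}) \xrightarrow{\widehat{(\blank)}} \OpdI^{\mathcal{W}}$ inverts local equivalences and hence factors essentially uniquely through $\widehat{(\blank)} \colon \PCS(\DFV) \to \OpdIV$, yielding $F_{*} \colon \OpdIV \to \OpdI^{\mathcal{W}}$ with $\widehat{F_{*}(\blank)} \simeq F_{*}\widehat{(\blank)}$. To upgrade this to genuine functoriality in $\mathcal{V}$ I would apply a fiberwise localization of the coCartesian fibration classifying $\PCS(\DF^{(\blank)})$ along the local equivalences, exactly as in the enriched case of \cite{enriched}: since all pushforwards $F_{*}$ preserve local equivalences, the full subcategories of complete objects assemble into a coCartesian subfibration whose pushforwards are $\widehat{(\blank)} \circ F_{*}$, classifying the desired functor $\OpdI^{(\blank)}$ together with a natural completion transformation. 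I expect this coherence step to be the main difficulty, since one must assemble the pointwise localizations into a single fibration; this is possible precisely because of the preservation result of the previous paragraph.

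For the colimit and presentability claims: each $\OpdIV$ is presentable, being a reflective localization of the presentable \icat{} $\PCS(\DFV)$ (Corollary~\ref{cor PCSDFV pres}). If $F$ is a colimit-preserving symmetric monoidal functor, then $F_{*} \colon \PCS(\DFV) \to \PCS(\DF^{\mathcal{W}})$ admits the right adjoint $G_{*}$ of Corollary~\ref{cor:laxmonftr}(ii) and hence preserves colimits. Since colimits in the reflective localization $\OpdIV$ are computed by applying $\widehat{(\blank)}$ to the colimit formed in $\PCS(\DFV)$, and since $F_{*}$ both commutes with $\widehat{(\blank)}$ (by the previous paragraph) and preserves colimits in $\PCS$, while local equivalences are closed under colimits, the induced functor $F_{*} \colon \OpdIV \to \OpdI^{\mathcal{W}}$ again preserves colimits. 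Hence $\OpdI^{(\blank)}$ takes values in $\PrL$ and restricts to the asserted functor $\name{CAlg}(\PrL) \to \PrL$.
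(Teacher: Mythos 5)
Your proposal is correct and follows essentially the same route as the paper: the paper's proof simply delegates the three key points (coherent assembly via fibrewise localization, preservation of fully faithful and essentially surjective maps by $F_{*}$, and colimit preservation via the adjoint $G_{*}$) to \cite[Proposition 5.7.4, Lemmas 5.7.5 and 5.7.7]{enriched}, whose arguments you have reconstructed in the operadic setting. The details you supply (full faithfulness via the multimorphism objects, essential surjectivity via the underlying enriched \icat{}, and the interaction of completion with colimits) match what those cited proofs do.
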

\begin{proof}
The previous remark and \cite[Proposition 5.7.4]{enriched} imply that the functor $F_{*}\colon \AlgDFS(\mathcal{V}) \to \AlgDFS(\mathcal{W})$ induced by a lax monoidal
functor $F \colon \mathcal{V} \to \mathcal{W}$ gives a functor $\OpdIV \to \OpdI^{\mathcal{W}}$ if $F_{*}$
  preserves fully faithful and essentially surjective functors. This
  can be proven analogously to \cite[Lemma 5.7.5]{enriched}. The
  second claim follows from an argument similar to that used in the proof of \cite[Lemma 5.7.7]{enriched}.
\end{proof}

\section{Enriched $\infty$-Operads as Dendroidal Segal
  Presheaves}\label{sec dendr}

In this section we consider an enriched version of the dendroidal
Segal spaces of Cisinski--Moerdijk; our main result is that this
approach is equivalent to that using $\DF$ we have discussed so
far. We will prove this using a variant of the argument in the
unenriched case from \cite{ChuHaugsengHeuts}.

We begin in \S\ref{sec dendroidal cat} by briefly reviewing the
definiton and basic properties of the dendroidal category $\bbO$. In
\S\ref{subsec: Seg OV} we then introduce \icats{} $\bbOV$ for
$\mathcal{V}$ a symmetric monoidal \icat{} and define (continuous)
Segal presheaves on $\bbOV$; we also discuss the dendroidal analogues
of many of the results from \S\ref{sec segal presheaves}. As a
preliminary to the comparison result, in \S\ref{subsec DFi} we observe
that we can replace the \icat{} $\DFV$ by a full subcategory $\DFiV$,
before proving the comparison in \S\ref{sec preparation for
  comparison}.

\subsection{The Dendroidal Category}\label{sec dendroidal cat}
The dendroidal category $\bbO$ was first introduced by Moerdijk and Weiss
in \cite{MoerdijkWeiss} as a category of trees whose morphisms are given by
maps of free operads. Here we recall a more combinatorial
reformulation of this definition due to Kock \cite{Kock}.

  \begin{definition}\label{defn:tree}
    A \emph{polynomial endofunctor} is a diagram of sets
    \[ T_{0} \overset{s}{\leftarrow} T_{2} \overset{p}{\to} T_{1} \overset{t}{\to} T_{0}.\]
    We call a polynomial endofunctor as above a \emph{tree} if the following conditions are satisfied:
    \begin{itemize}
      \item The sets $T_{i}$ are all finite.
      \item The function $t$ is injective.
      \item The function $s$ is injective and there is a unique element $r$ called the \emph{root} in the complement of its image.
      \item Define a successor function $\sigma \colon T_{0} \to T_{0}$ by
      $\sigma(r) = r$ and $\sigma(e) = t(p(e))$ for $e\in s(T_{2})$. Then for every $e$ there exists some $k\geq 0$ such that
      $\sigma^{k}(e) = r$.
    \end{itemize}
  \end{definition}
  \begin{defn}
          Let $T$ be a tree and let $e,e'\in T_0$. We say $e$ and $e'$
          are \emph{comparable}, if there is some $k\geq 0$ such that
          either $\sigma^k(e)=e'$ or $\sigma^k(e')=e$, and
          \emph{incomparable} otherwise.
  \end{defn}
  \begin{remark}
          The intuition behind this notion of a ``tree'' is as
          follows: We interpret $T_{0}$ as the set of edges of the
          tree, $T_{1}$ as the set of vertices, and $T_{2}$ as the set
          of pairs $(v, e)$ where $e$ is an incoming edge of $v$. The
          function $s$ is the projection $s(v,e) = e$, the function
          $p$ is the projection $p(v,e) = v$, and the function $t$
          assigns to each vertex its unique outgoing edge.
  \end{remark}
  \begin{definition}\label{def leaf}
          For a tree $T$ given by
          $ T_{0} \overset{s}{\leftarrow} T_{2} \overset{p}{\to} T_{1}
          \overset{t}{\to} T_{0}$,
          we call an edge $e\in T_0$ a \emph{leaf} if it does not lie
          in the image of $t$, and an \emph{inner edge} if it lies in
          the image of $t$ and $e\neq r$.
  \end{definition}
  
  \begin{remark}
    The name ``polynomial endofunctor'' comes from the fact that such a
    diagram induces an endofunctor of $\xSet_{/X_{0}}$ given by
    $t_{!}p_{*}s^{*}$. We refer the reader to \cite{Kock} for a more thorough
    discussion of this.
  \end{remark}
  
  \begin{definition}\label{def morphisms in Omegain}
    A morphism of polynomial endofunctors $f \colon X \to Y$ is a
    commutative diagram
    \[\begin{tikzcd}
      X_{0} \ar{d}{f_{0}}& X_{2} \ar{l} \ar{r}
                        \ar{d}{f_{2}} \arrow[phantom]{dr}[very near start]{\ulcorner}& X_{1} \ar{r}\ar{d}{f_{1}} & X_{0} \ar{d}{f_{0}}\\
      Y_{0} & Y_{2} \ar{l} \ar{r} & Y_{1} \ar{r} & Y_{0}
    \end{tikzcd}
    \]
  
    such that the middle square is Cartesian. We write $\bbO_\xint$
    for the category of trees and  morphisms of polynomial endofunctors
    between them; we will refer to these morphisms as the \emph{inert} morphisms
    between trees, or as \emph{embeddings} of subtrees.  
  \end{definition}
  
  \begin{remark}
    By \cite[Proposition 1.1.3]{Kock} every morphism of polynomial
    endofunctors between trees is injective, which justifies calling
    these morphisms embeddings.
  \end{remark}
  
\begin{definition}
  A tree $T$ is called a \emph{corolla} if it has only one vertex,
  i.e. $T_1$ is a one-element set.  For $n\leq 0$, we write $C_n$
  for the corolla given by
  $$\mathbf{n+1} \hookleftarrow \mathbf{n} \to \{n+1\}
  \hookrightarrow \mathbf{n+1}.$$
  More generally, for a finite set $A$ we let $C_A$ denote the corolla
  \[ A_{+} \hookleftarrow A \to * \hookrightarrow A_{+}\]
  (which is of course isomorphic to $C_{|A|}$).
  We write $\eta$ for the
  \emph{edge}, namely the trivial tree
  \[\mathbf{1} \hookleftarrow \mathbf{0} \to \mathbf{0}
  \hookrightarrow \mathbf{1}.\]
\end{definition}
  
\begin{definition}\label{def Omegael}
  We define $\bbOel$ to be the full subcategory of $\bbOint$
  spanned by the corollas $C_n$, $n\geq 0$, and the edge $\eta$. 
  For a tree $T$ we write $\bbOelT$ for the pullback $\bbOel
  \times_{\bbOint} \bbOintT$.
\end{definition}

\begin{definition}
  For a tree $T$, we write $\text{sub}(T)$ for the set of subtrees of
  $T$, meaning the set of morphisms $T' \to T$ in $\bbO_{\xint}$, and
  we write $\text{sub}'(T)$ for the set of subtrees of $T$ with a
  marked leaf, meaning the set of pairs of morphisms
  $(\eta\to T', T' \to T)$. We then write $\overline{T}$ for the polynomial
  endofunctor 
  \[ T_{0} \leftarrow \text{sub}'(T) \to \text{sub}(T) \to T_{0},\]
  where the first map sends a marked subtree to its marked edge, the
  second is the obvious projection, and the third sends a subtree to
  its root.
\end{definition}
  
\begin{definition}\label{def morphism in Omega}
  The dendroidal category $\bbO$ has trees as objects and the
  morphisms of polynomial endofunctors $\overline{T} \to \overline{T}'$ as
  morphisms from $T$ to $ T'$.
\end{definition}
  
\begin{remark}\label{rem kleisli}
  By \cite[Corollary 1.2.10]{Kock}, the polynomial endofunctor
  $\overline{T}$ is in fact the free polynomial monad generated by $T$, and
  the category $\bbO$ is a full subcategory of the Kleisli category of
  the monad for free polynomial monads. This means that a morphism
  $T \to S$ in $\bbO$ can be identified with a map of polynomial
  endofunctors $T \to \overline{S}$. It follows that
  $\bbOint$ is a subcategory of $\bbO$.
\end{remark}
\begin{defn}
  A map $T \to T'$ in $\bbO$ is \emph{active} if it takes the leaves
  of $T$ to the leaves of $T'$ (bijectively) and the root of $T$ to
  the root of $T'$.
\end{defn}

\begin{remark}
  By \cite[Proposition 1.3.13]{Kock} the inert and active
  morphisms form a factorization system on $\bbO$.
\end{remark}

\begin{remark}\label{rem u}
  There is a fully faithful functor $u_{\bbO} \colon \simp \to \bbO$
  which can viewed as the embedding of the full subcategory of linear
  trees into $\bbO$. More precisely, the functor $u$ takes an object
  $[n]\in \simp$ to the tree
  \[\mathbf{n+1} \overset{s}{\hookleftarrow} \{2,\ldots,n+1\}
  \overset{p}{\to} \{1,\ldots, n\} \overset{t}{\hookrightarrow}
  \mathbf{n+1},\]
  where $s,t$ are canonical inclusions and $p(i)=i-1$ for
  $1\leq i\leq n$.
\end{remark}

\begin{defn}\label{defn:CorO}
  The functor $\CorO \colon \bbO^{\op} \to \Fin_{*}$ takes an object
  $T$ corresponding to a diagram
  \[ T_{0} \xfrom{s} T_{2} \overset{p}{\to} T_{1}
  \xto{t} T_{0} \]
 to $T_{1,+} \in \Fin_{*}$ and a morphism $f \colon T \to T'$ in $\bbO$
 corresponding to a diagram
\[
\begin{tikzcd}
T_{0} \arrow{d}{f_{0}} & T_{2} \arrow[phantom]{dr}[very near
start]{\ulcorner} \arrow{l}\arrow{r}\arrow{d}{f_{2}} &
T_{1}\arrow{r}\arrow{d}{f_{1}} & T_{0} \arrow{d}{f_{0}} \\
T'_{0} & \name{sub}'(T')\arrow{l}\arrow{r} & \name{sub}(T') \arrow{r} & T'_{0}
\end{tikzcd}
\]
to the morphism $\CorO(f) \colon T'_{1,+} \to T_{1,+}$ defined by 
\[
\CorO(f)(x) = 
\begin{cases}
  y, & x \text{ is a vertex of the subtree } f_{1}(y),\\
  *, & x \text{ is not in the image of } f.
\end{cases}
\]
\end{defn}

The following observation shows that the functor $\CorO$ is
well-defined:
\begin{lemma}
If $\phi\colon T\to S$ is a map in $\bbO$ and $t\neq t'\in T_1$, then $\phi(t)$ and $\phi(t')$ are two subtrees of $S$ with disjoint sets of vertices.
\end{lemma}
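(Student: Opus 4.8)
The plan is to unwind the definition of a morphism $\phi\colon T\to S$ in $\bbO$ as a map of polynomial endofunctors $\phi\colon T\to\overline S$ and to use the Cartesianness of its middle square together with a handful of elementary facts about trees. For a vertex $v\in T_1$ write $\mathrm{out}(v)\in T_0$ for its outgoing edge (the image of $v$ under $t\colon T_1\to T_0$). The right-hand square of $\phi$ shows that $\phi_0(\mathrm{out}(v))$ is the root of the subtree $\phi_1(v)=\phi(v)\in\mathrm{sub}(S)$, while the Cartesian middle square identifies the set of incoming edges of $v$ with the set of leaves of $\phi(v)$, sending an incoming edge $e$ of $v$ to the leaf $\phi_0(e)$. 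For an edge $\bar e\in S_0$ let $U(\bar e)$ denote its \emph{cone}, the set of vertices $w\in S_1$ with $\sigma_S^{k}(\mathrm{out}(w))=\bar e$ for some $k\ge 0$; this is the vertex set of the full subtree of $S$ rooted at $\bar e$. I will use three standard facts about a subtree $S'\subseteq S$: (a) every vertex of $S'$ lies in the cone of its root; (b) no vertex of $S'$ lies in the cone of any leaf of $S'$; and (c) the cones of two distinct leaves of a common subtree are disjoint.

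First I would establish a monotonicity property of $\phi_0$. If $e$ is an incoming edge of a vertex $v$, then $\sigma_T(e)=\mathrm{out}(v)$, and by the above $\phi_0(e)$ is a leaf of $\phi(v)$ while $\phi_0(\mathrm{out}(v))$ is its root; since the root of a subtree lies weakly below each of its leaves, $\phi_0(\sigma_T(e))$ lies weakly below $\phi_0(e)$ in $S$. Iterating this along a downward path in $T$ shows that whenever $\bar e=\sigma_T^{\,i}(\bar e')$ in $T$ we have $\phi_0(\bar e)$ weakly below $\phi_0(\bar e')$ in $S$, and hence $U(\phi_0(\bar e'))\subseteq U(\phi_0(\bar e))$.

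Now fix $t\ne t'\in T_1$. If $t$ and $t'$ are comparable in the tree order on $T$, say $t'$ lies above $t$, let $e$ be the incoming edge of $t$ through which the downward path from $t'$ enters $t$, so that $e=\sigma_T^{\,i}(\mathrm{out}(t'))$ for some $i\ge 0$. Then $\lambda:=\phi_0(e)$ is a leaf of $\phi(t)$, and by monotonicity $U(\phi_0(\mathrm{out}(t')))\subseteq U(\lambda)$. Combining (a) for $\phi(t')$ with (b) for $\phi(t)$ gives $V(\phi(t'))\subseteq U(\phi_0(\mathrm{out}(t')))\subseteq U(\lambda)$ and $V(\phi(t))\cap U(\lambda)=\varnothing$, so the vertex sets are disjoint. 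If instead $t$ and $t'$ are incomparable, let $m$ be the branch point of their downward paths, i.e. the topmost vertex lying below both; then $t$ and $t'$ lie above two \emph{distinct} incoming edges $e,e'$ of $m$. Their images $\lambda=\phi_0(e)$ and $\lambda'=\phi_0(e')$ are distinct leaves of $\phi(m)$, and monotonicity together with (a) yields $V(\phi(t))\subseteq U(\lambda)$ and $V(\phi(t'))\subseteq U(\lambda')$, which are disjoint by (c).

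The main work is concentrated in the monotonicity step and in the three tree facts (a)--(c), which together encode that $\phi$ grafts the subtrees $\phi(v)$ along the edges of $T$ in an order-preserving way; once these are in place the case analysis is immediate. I would also note the degenerate possibility that a unary vertex $v$ has $\phi(v)=\eta$, but then $V(\phi(v))=\varnothing$ and disjointness holds trivially, so this causes no difficulty.
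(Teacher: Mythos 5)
Your proof is correct, but it is organized quite differently from the one in the paper. The paper argues by contradiction: assuming a vertex lies in both $\phi(t)$ and $\phi(t')$, it produces a path in $S$ forcing a linear ordering of a leaf of $\phi(t')$, the root of $\phi(t)$ and the root of $\phi(t')$, and a path in $T$ forcing an incompatible ordering, and then appeals to \cite[Proposition 1.3.7]{Kock} for the contradiction. You instead give a direct argument: you isolate a monotonicity lemma --- $\phi_0$ respects the ``towards the root'' preorder, because the root of $\phi(v)$ is $\phi_0(\mathrm{out}(v))$ while its leaves are the $\phi_0$-images of the incoming edges of $v$ --- and then split into the comparable and incomparable cases, in each case exhibiting leaf-cones (of $\phi(t)$, resp.\ of $\phi(m)$ for the branch vertex $m$) that contain one vertex set and miss the other. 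The combinatorial input is the same in both proofs, namely the Cartesianness of the middle square and the antisymmetry of the edge order on a tree (the latter is where your facts (a)--(c), which you assert but do not prove, ultimately live); but your version yields the stronger positive containments $V(\phi(t))\subseteq U(\lambda)$ rather than only a contradiction, at the cost of the case analysis and of having to verify the three tree facts. I checked all three, as well as the distinctness of $\lambda$ and $\lambda'$ in the incomparable case (which uses that the Cartesian middle square makes incoming edges of $m$ biject with leaves of $\phi(m)$), and the argument goes through; the paper's version is simply shorter because it outsources the final step to Kock.
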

\begin{proof}
The map $\phi$ is given by a morphism of polynomial endofunctors of the form 
\[
\begin{tikzcd}
T_{0} \ar{d}& \text {sub}'(T) \ar{l} \ar{r}
\ar{d} \arrow[phantom]{dr}[very near start]{\ulcorner}& \text {sub}(T) \ar{r}\ar{d} & T_{0} \ar{d}\\
S_{0} & \text {sub}'(S) \ar{l} \ar{r} & \text {sub}(S) \ar{r} & S_{0}.
\end{tikzcd}
\]
By identifying $T_1$ with the subset of $\text {sub}(T)$ consisting of
corollas whose roots are given by elements in $T_1$, we regard
$\phi(t)$ and $\phi(t')$ as two subtrees in $S$. Let us assume that
there exists a corolla $C_n$ lying in the intersection of $\phi(t)$
and $\phi(t')$. Without loss of generality we can assume that there
exists a path from a leaf $l'$ of the subtree $\phi(t')$ to the root
$r'$ of $\phi(t')$ which passes through the root of $C_n$ and the root
$t$ of $\phi(t)$. In particular, this path induces a linear ordering
$l'_S\leq r\leq r'$. Since the middle square in the diagram is
Cartesian, $\phi$ carries the leaves of $t'$ to leaves of
$\phi(t')$. Therefore, the path in $T$ going from $t$ to $t'$
necessarily passes through a unique leaf $l'_T$ of the corolla
corresponding to $t'$. We then obtain another linear ordering
$t\leq l'_T\leq t'$. As this contradicts \cite[Proposition
1.3.7]{Kock}, the subtrees $\phi(t)$ and $\phi(t')$ necessarily have
disjoint sets of corollas.
\end{proof}

\begin{lemma}
  The functor $\CorO$ preserves active-inert factorizations.
\end{lemma}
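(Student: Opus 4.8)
The plan is to follow the proof of the analogous statement for $\CorDF$: I will show separately that $\CorO$ carries inert morphisms of $\bbO$ to inert morphisms of $\Fin_*$ and active morphisms to active morphisms. Since the inert and active maps form a factorization system on $\bbO$ by \cite[Proposition 1.3.13]{Kock}, and the inert and active maps likewise form a factorization system on $\Fin_*$ (Definition~\ref{def F*}), and since factorizations in such systems are essentially unique, preserving each of the two classes automatically forces $\CorO$ to preserve the factorizations themselves. Thus the whole statement reduces to these two class-wise assertions.

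For the inert case, I take $f\colon T\to T'$ to be an inert map, i.e.\ an embedding of $T$ as a subtree of $T'$. Then the vertex component $f_1\colon T_1\to T'_1$ is an injection (every morphism of polynomial endofunctors is injective by \cite[Proposition 1.1.3]{Kock}), and for each vertex $y\in T_1$ the subtree $f_1(y)$ is simply the corolla at the vertex $f_1(y)$. Consequently, for every non-basepoint $y\in T_{1,+}$ the fibre of $\CorO(f)$ over $y$ is the set of vertices of $T'$ lying in $f_1(y)$, which is the single vertex $f_1(y)$; hence $\CorO(f)^{-1}(y)$ is a singleton and $\CorO(f)$ is inert.

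For the active case, I take $f\colon T\to T'$ to be active and must check that $\CorO(f)$ is active, i.e.\ that $\CorO(f)^{-1}(*)=\{*\}$; unwinding Definition~\ref{defn:CorO}, this is equivalent to the assertion that every vertex of $T'$ lies in some subtree $f_1(y)$ with $y\in T_1$. The preceding lemma guarantees that the subtrees $f_1(y)$ have pairwise disjoint sets of vertices, so I can form their union $U$, a connected subtree of $T'$ (connected because $T$ is connected and $f$ is a map of trees). Since $f$ is active it restricts to a bijection of leaves and sends the root of $T$ to the root of $T'$, so $U$ has the same leaves and the same root as $T'$.

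The key combinatorial point --- and what I expect to be the main obstacle --- is then that a connected subtree of $T'$ containing the root and all the leaves must be all of $T'$; granting this, $U=T'$, so every vertex is covered and $\CorO(f)$ is active. I expect this covering step to require the most care, and it can be established either directly (every vertex of $T'$ lies on some root-to-leaf path, and such a path is contained in $U$ once $U$ contains the root and all leaves) or by invoking the path and ordering results of \cite[\S 1.3]{Kock} already used in the preceding lemma. Combining the inert and active cases then yields the claim.
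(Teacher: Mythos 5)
Your overall structure is the same as the paper's: reduce to showing that $\CorO$ preserves the inert maps and the active maps separately (the factorization being essentially unique), and your inert case is correct and matches the paper's argument --- for an embedding each subtree $f_1(y)$ is a corolla, so each non-basepoint fibre of $\CorO(f)$ is a singleton.

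The gap is in your justification of the covering step in the active case. The statement you isolate as the ``key combinatorial point'' --- that a connected subtree of $T'$ containing the root and all the leaves must be all of $T'$ --- is false as stated, and your proposed direct proof of it fails. Definition~\ref{defn:tree} allows nullary vertices, and a vertex with no incoming edges (or, more generally, any vertex not lying above a leaf) does not lie on any root-to-leaf path. Concretely, for $T'=C_0$ the edge $\eta\to C_0$ picking out the root is a subtree containing the root and all leaves of $C_0$ (there are none), yet it omits the vertex. What saves the argument is the \emph{stronger} consequence of activeness that you mention in passing but do not use: the leaves of the image subtree $U$ are \emph{exactly} the leaves of $T'$, i.e.\ $U$ acquires no extra leaves. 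With that, one argues as follows: if $v\in T'_1$ were not covered, follow the successor path from $t(v)$ down to the root and let $e$ be the first edge of that path lying in $U_0$ (it exists, since the root is in $U$); because inert maps have Cartesian middle square, the vertex of $T'$ whose outgoing edge is $e$ cannot lie in $U_1$, so $e$ is a leaf of $U$ that is not a leaf of $T'$ --- a contradiction. (In the degenerate case $e=t(v)$ itself one gets the same contradiction directly.) The paper dismisses this step with ``it is easy to see,'' so you are filling in a detail it omits; but as written your filling-in only works for trees in which every vertex sits on a root-to-leaf path, and the appeal to the ordering results of Kock's \S 1.3 is too vague to close the gap on its own.
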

\begin{proof}
  If $f \colon T \to T'$ in $\bbO$ is inert, then the subtree
  $f_{1}(x)$ is a corolla for every $x \in T_{1}$, so that $\CorO(f)$
  is injective away from the base point, i.e.\ $\CorO(f)$ is
  inert. Similarly, if $f$ is active, then it is easy to see that 
  every corolla of $T'$ must lie in the subtree $f_{1}(x)$ for some $x
  \in T_{1}$; thus nothing is sent to the base point, i.e.\ $\CorO(f)$
  is active.
\end{proof}

\subsection{Segal Presheaves on $\bbOV$}\label{subsec: Seg OV}
We now introduce the dendroidal analogue of the Segal presheaves of
\S\ref{subsec DFV}. We will state the analogues of many of the results
of \S\ref{sec segal presheaves} in this setting, but without giving
the proofs, seeing as they are entirely analogous.
\begin{definition}
  A presheaf $\xxO\colon \bbO^\op\to \xS$ is called a \emph{Segal
    presheaf} (or a \emph{dendroidal Segal space}) if the canonical
  map
  \[ \xxO(T)\to \lim_{S\in \bbOelTop} \xxO(S)\]
  is an equivalence for every $T \in \bbO$. We write
  $\PSeg(\bbO)$ for the full subcategory of $\PSh(\bbO)$ spanned by
  the Segal presheaves.
\end{definition}

\begin{defn}
 Let $\mathcal{V}$ is a symmetric monoidal \icat{}. By identifying $\Fin_{*}$ with its skeleton $\xF_{*}$, we define the
  \icat{} $\bbOV$ by the pullback
  \csquare{\bbOV}{\mathcal{V}_{\otimes}}{\bbO}{\xF_{*}^{\op}.}{}{}{}{\CorO^\op}
  We also write $\bbO_\xint^\xV$ and $\bbO_\xel^\xV$ for the pullbacks
  $\bbO_\xint\times_{\bbO} \bbO^\xV$ and
  $\bbO_\xel\times_{\bbO} \bbO^\xV,$ respectively. We let $\overline T$ or $T(v_c)_{c\in T_1}$ denote an object in $\bbO^\xV$ lying over an object $T\in \bbO$.
\end{defn}

\begin{remark}
  An object in $\bbO^\xV$ should be thought of as a tree in $\bbO$
  whose vertices are labeled by objects of $\xV$. 
\end{remark}

\begin{definition}\label{def dend segal spaces}
  Given a symmetric monoidal \icat{} $\xV^\otimes$, a presheaf
  $\xxO\colon \bbOVop \to \xS$ is called a \emph{Segal
    presheaf on $\bbOV$} if for every object $\overline{T}$ of $\bbOV$
  lying over $T \in \bbO$, the canonical map
  \[\xxO(\overline T)\to \lim_{\psi\in \bbOelTop}
  \xxO(\psi^*(\overline T))\]
  is an equivalence, where $\psi^*(\overline T)\to \overline T$ is the
  Cartesian lift of the inert map $\psi$ (corresponding to a
  coCartesian morphism in $\mathcal{V}^{\otimes}$). We write
  $\PSeg(\bbOV)$ for the full subcategory of $\PSh(\bbOV)$ spanned by
  the Segal presheaves. Similarly, we define the full subcategory
  $\PSeg(\bbOVint)$ of Segal presheaves on $\bbOVint$.
\end{definition}

\begin{defn}
  Let $\mathcal{V}$ be a presentably symmetric monoidal \icat{}. We
  say a presheaf $\mathcal{O} \in \PSh(\bbOV)$ is a \emph{continuous
    Segal presheaf} if it is a Segal presheaf and moreover the functor
  \[ \mathcal{V}^{\op} \simeq (\bbOVop)_{C_{n}} \to
  \xS_{/\mathcal{O}(\overline \eta)^{n+1}},\]
  induced by the Cartesian lifts of the $n+1$ morphisms $\eta\to C_n$,
  preserves all small limits in $\xV$ for every $n$. We write
  $\PCS(\bbOV)$ for the full subcategory of $\PSh(\bbOV)$ spanned by
  the continuous Segal presheaves. Similarly, we define the full
  subcategories $\PCS(\bbOVint)$ and $\PCts(\bbOVel)$ of continous
  Segal presheaves on $\bbOVint$ and continuous presheaves on
  $\bbOVel$.
\end{defn}

\begin{propn}
  The following are equivalent for a presheaf $\mathcal{O} \in \PSh(\bbOV)$:
  \begin{enumerate}
  \item $\mathcal{O}$ is a Segal presheaf.
  \item $\mathcal{O}$ is local with respect to the morphisms
    $\overline{T}_{\Seg} := \colim_{C \in \bbOelTop} \overline{C} \to
    \overline{T}$.
  \item $\mathcal{O}|_{\bbOVintop}$ is the right Kan extension of
    $\mathcal{O}|_{\bbOVelop}$.
  \end{enumerate}
\end{propn}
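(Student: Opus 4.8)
The plan is to follow the proofs of Propositions~\ref{propn:SegDFcond} and \ref{propn:SegDFVcond} essentially verbatim, replacing $\DF$, $\DFV$ by $\bbO$, $\bbOV$ throughout; since the present statement lists only the three conditions corresponding to (1), (2) and (5) of Proposition~\ref{propn:SegDFVcond}, it suffices to establish (1)~$\Leftrightarrow$~(2) and (1)~$\Leftrightarrow$~(3). The one structural input required is that $\bbOV \to \bbO$ is a Cartesian fibration --- being the pullback of the Cartesian fibration $\mathcal{V}_{\otimes} \to \bbG$ along $\CorO^{\op}$ --- and that it therefore restricts to a Cartesian fibration $p \colon \bbOVint \to \bbOint$, whose fibrewise Cartesian lifts of inert maps are exactly the maps $\psi^{*}\overline{T} \to \overline{T}$ appearing in Definition~\ref{def dend segal spaces}.

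For (1)~$\Leftrightarrow$~(2), I would argue by the Yoneda lemma exactly as in the unenriched case. Both $\overline{T}$ and each $\overline{C}$ (for $C \in \bbOelT$) are representable, so mapping out of the defining colimit $\overline{T}_{\Seg} = \colim_{C \in \bbOelTop} \overline{C}$ into a presheaf $\mathcal{O}$ converts this colimit into the limit occurring in the Segal condition. Concretely, $\Map(\overline{T}, \mathcal{O}) \simeq \mathcal{O}(\overline{T})$ while $\Map(\overline{T}_{\Seg}, \mathcal{O}) \simeq \lim_{\psi \in \bbOelTop} \mathcal{O}(\psi^{*}\overline{T})$, so that locality with respect to $\overline{T}_{\Seg} \to \overline{T}$ is literally the assertion that the canonical map $\mathcal{O}(\overline{T}) \to \lim_{\psi \in \bbOelTop}\mathcal{O}(\psi^{*}\overline{T})$ is an equivalence, i.e.\ that $\mathcal{O}$ is a Segal presheaf.

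For (1)~$\Leftrightarrow$~(3), I would invoke Lemma~\ref{lem:cofslice} applied to the Cartesian fibration $p \colon \bbOVint \to \bbOint$ together with the full subcategory $\bbOel \subseteq \bbOint$. By the pointwise formula, the right Kan extension of $\mathcal{O}|_{\bbOVelop}$ evaluated at $\overline{T}$ is the limit of $\mathcal{O}$ over the slice $\mathcal{E}'_{/\overline{T}} := \bbOVel \times_{\bbOVint} (\bbOVint)_{/\overline{T}}$. Lemma~\ref{lem:cofslice} identifies the full subcategory $\mathcal{E}''_{/\overline{T}}$ of Cartesian morphisms to $\overline{T}$ as a cofinal subcategory of $\mathcal{E}'_{/\overline{T}}$ which moreover maps equivalently to $\bbOelT$; passing to opposites, this reduces the right Kan extension limit to the Segal limit $\lim_{\psi \in \bbOelTop}\mathcal{O}(\psi^{*}\overline{T})$. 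Hence condition (3), which asserts that $\mathcal{O}|_{\bbOVintop}$ agrees with this right Kan extension, holds precisely when the comparison map $\mathcal{O}(\overline{T}) \to \lim_{\psi \in \bbOelTop}\mathcal{O}(\psi^{*}\overline{T})$ is an equivalence for every $\overline{T}$, i.e.\ precisely when $\mathcal{O}$ is a Segal presheaf.

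Since all of the real work is already carried out in Lemma~\ref{lem:cofslice}, and the two Yoneda reformulations are purely formal, there is no genuine obstacle here: the only point demanding care is the verification that $p \colon \bbOVint \to \bbOint$ is Cartesian and that $\bbOel$ truly plays the role of the elementary subcategory, with Cartesian lifts corresponding to labelled corollas and edges. This is immediate from the construction of $\bbOV$, $\bbOVint$ and $\bbOVel$, together with the fact, established above, that $\CorO$ preserves inert-active factorizations.
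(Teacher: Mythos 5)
Your proposal is correct and follows exactly the route the paper takes: the paper's proof of this proposition is literally ``As Proposition~\ref{propn:SegDFVcond}'', whose own proof consists of the Yoneda reformulation for (1)~$\Leftrightarrow$~(2) and an application of Lemma~\ref{lem:cofslice} to the Cartesian fibration over the inert subcategory with its elementary objects for (1)~$\Leftrightarrow$~(3). Your additional remarks about $\bbOVint \to \bbOint$ being Cartesian and $\CorO$ preserving inert--active factorizations correctly identify the only structural inputs needed for the transfer from $\DFV$ to $\bbOV$.
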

\begin{proof}
  As Proposition~\ref{propn:SegDFVcond}.
\end{proof}

\begin{propn}
  The following are equivalent for a Segal presheaf $\mathcal{O}$ in
  $\PSeg(\bbOV)$:
  \begin{enumerate}
  \item $\mathcal{O}$ is continuous.
  \item For every $n$, the presheaf
    \[ \mathcal{O}(C_{n}(\blank)) \colon \mathcal{V}^{\op} \simeq
    (\bbOVop)_{C_{n}} \to \mathcal{S} \]
    preserves weakly contractible limits, and the natural map
    $\mathcal{O}(C_{n}(\emptyset)) \to \prod_{n+1}
    \mathcal{O}(\overline{\eta})$ is an equivalence.
  \item $\mathcal{O}$ is local with respect to the map $\coprod_{n+1}
    \overline{\eta} \to C_{n}(\emptyset)$ and the map
    $\colim_{\mathcal{I}}C_{n}(\phi) \to C_{n}(\colim_{\mathcal{I}} \phi)$ for every weakly contractible diagram
    $\phi$ in $\mathcal{V}$.
  \item $\xxO$ is local with respect to the map $\coprod_{n+1}
    \overline{\eta} \to C_{n}(\emptyset)$ and the map
    $\colim_{\mathcal{I}^{\triangleleft}}C_{n}(\phi) \to
    C_{n}(\colim_{\mathcal{I}^{\triangleleft}} \phi)$ for
    every diagram $\phi$ such that $\phi(-\infty) \simeq \emptyset$.
  \end{enumerate}
\end{propn}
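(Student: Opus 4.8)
The plan is to reproduce the proof of Proposition~\ref{propn:CtsDFV} verbatim, replacing the corolla $\mathfrak{c}_n \in \DF$ and the edge $\mathfrak{e}$ by their dendroidal counterparts $C_n, \eta \in \bbO$, and the fibre identification $(\DFVop)_{\mathfrak{c}_n} \simeq \mathcal{V}^{\op}$ by $(\bbOVop)_{C_n} \simeq \mathcal{V}^{\op}$. By definition $\mathcal{O}$ is continuous precisely when, for each $n$, the functor
\[ \mathcal{O}(C_n, \blank) \colon \mathcal{V}^{\op} \simeq (\bbOVop)_{C_n} \to \xS_{/\mathcal{O}(\overline\eta)^{n+1}} \]
preserves all small limits. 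First I would invoke \cite[Corollary 4.4.2.5]{ht} to split preservation of all small limits into preservation of the terminal object together with preservation of pullbacks and filtered limits, the latter two being weakly contractible limits.

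Next I would observe that the terminal object of the slice $\xS_{/\mathcal{O}(\overline\eta)^{n+1}}$ is $\prod_{n+1}\mathcal{O}(\overline\eta)$ and that $\emptyset$ is terminal in $\mathcal{V}^{\op}$, so that preservation of the terminal object is exactly the requirement that the natural map $\mathcal{O}(C_n, \emptyset) \to \prod_{n+1}\mathcal{O}(\overline\eta)$ be an equivalence. Since the forgetful functor $\xS_{/\mathcal{O}(\overline\eta)^{n+1}} \to \xS$ detects and preserves weakly contractible (i.e.\ connected) limits, the functor $\mathcal{O}(C_n,\blank)$ preserves such limits if and only if its composite with this forgetful functor, namely $\mathcal{O}(C_n, \blank) \colon \mathcal{V}^{\op} \to \xS$, does. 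Together these yield the equivalence of (1) and (2).

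For the equivalence of (2) and (3) I would use the standard fact that a presheaf $\mathcal{V}^{\op} \to \xS$ preserves weakly contractible limits if and only if it is local with respect to the maps $\colim_{\mathcal{I}}(C_n, \phi) \to (C_n, \colim_{\mathcal{I}}\phi)$ for weakly contractible $\mathcal{I}$, while the terminal-object clause of (2) translates into locality with respect to $\coprod_{n+1}\overline\eta \to (C_n, \emptyset)$. Finally, for the equivalence of (3) and (4), since $\emptyset$ is initial in $\mathcal{V}$ a diagram $\mathcal{I}^{\triangleright} \to \mathcal{V}$ is a colimit diagram if and only if its extension $\mathcal{I}^{\triangleleft\triangleright}\to\mathcal{V}$ sending $-\infty \mapsto \emptyset$ is one, which lets me rewrite the generators of (3) as those of (4).

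Since the content is entirely formal there is no genuine obstacle here; the proof relies only on the dendroidal definitions of $\bbO$, $\bbOV$, $\CorO$ and the fibre identifications, which play exactly the roles of their $\DF$-counterparts. The only points requiring a moment's care are the slice-category reduction in the second step --- that weakly contractible limits are connected and hence detected by the forgetful functor out of $\xS_{/X}$ --- together with the mild bookkeeping of which copies of $\overline\eta$ appear (the passage between the $n+1$ copies in (3) and the $n$ copies in (4), absorbed by the initial-object extension). As in Proposition~\ref{propn:CtsDFV}, no input beyond these formal facts is needed.
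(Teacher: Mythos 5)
Your proposal is correct and is exactly the paper's approach: the paper proves this proposition simply by the remark ``As Proposition~\ref{propn:CtsDFV},'' and the argument you spell out is precisely the proof of that proposition transported to $\bbOV$ (reduction via \cite[Corollary 4.4.2.5]{ht} to terminal objects and weakly contractible limits, detection of the latter by the forgetful functor out of the slice, and the initial-object extension to pass between (3) and (4)). No further comment is needed.
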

\begin{proof}
  As Proposition~\ref{propn:CtsDFV}.
\end{proof}
  
\begin{defn}
    Given a space $X$, we write $\bbOX \to \bbO$ for the right fibration associated to the functor $\bbO^{\op} \to \mathcal{S}$ given by the right Kan extension of the functor $\{\eta\} \to
    \mathcal{S}$ with value $X$ along the inclusion $\{\eta\}
    \hookrightarrow \bbO^{\op}$. 
    We write $\tilde T$ for an object in $\bbOX$ lying over $T \in \bbO$, and we view $\bbOX^{\op}$ as living over
    $\xF_{*}$ via the composite map \[\bbOX^{\op} \to \bbO^{\op}
    \xto{\CorO} \xF_{*}.\]
\end{defn}

\begin{remark}
  This right Kan extension $\bbO^{\op} \to \mathcal{S}$ takes an object $T$ to a product of copies
  of $X$ indexed by the number of edges of $T$.
\end{remark}

\begin{defn}
  If $\mathcal{V}^{\otimes}\to \xF_{*}$ is a symmetric monoidal
  \icat{}, then an \emph{$\bbOX^{\op}$-algebra} in $\mathcal{V}$ is a
  functor $\bbOX^{\op} \to \mathcal{V}^{\otimes}$ over $\xF_{*}$
  that takes the inert morphisms to coCartesian
  morphisms. We write $\Alg_{\bbOX^{\op}}(\mathcal{V})$ for the full
  subcategory of $\Fun_{\xF_{*}}(\bbOX^\op, \mathcal{V}^{\otimes})$
  spanned by the algebras. This is clearly functorial in $X$, and we
  write $\AlgOS(\mathcal{V}) \to \mathcal{S}$ for the
  Cartesian fibration associated to the functor $\mathcal{S}^{\op} \to
  \CatI$ taking $X$ to $\Alg_{\bbOX^{\op}}(\mathcal{V})$.
\end{defn}

\begin{thm}\label{thm:PCSisAlgDend}
  Let $\mathcal{V}$ be a presentably symmetric monoidal \icat{}. There is an equivalence of \icats{} over $\mathcal{S}$,
  \[
  \begin{tikzcd}
    \PCS(\bbOV) \arrow{dr}{\name{ev}_{\eta}} \arrow{rr}{\sim} &
      & \AlgOS(\mathcal{V}) \arrow{dl} \\
      & \mathcal{S}.
  \end{tikzcd}
  \]
\end{thm}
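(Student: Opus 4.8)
The plan is to mirror the proof of Theorem~\ref{thm:PCSisAlgLT} step by step, replacing the category $\DF$ throughout by the dendroidal category $\bbO$. As in the $\DF$ case, I would factor the desired equivalence through an intermediate model given by continuous monoids. Concretely, for a space $X$ set $\bbOX^{\mathcal{V}} := \bbOX \times_{\bbO} \bbOV$, and define a $\bbOX^{\mathcal{V},\op}$-monoid to be a presheaf $F \in \PSh(\bbOX^{\mathcal{V}})$ such that the natural map $F(\overline{T}) \to \prod_{v \in \CorO(T)} F(v^{*}\overline{T})$ is an equivalence for every $\overline{T}$, and call it continuous when the induced functor $\mathcal{V}^{\op} \simeq (\bbOX^{\mathcal{V},\op})_{\tilde{C}_{n}} \to \mathcal{S}$ preserves limits; I write $\PCM(\bbOX^{\mathcal{V}})$ for the full subcategory of $\PSh(\bbOX^{\mathcal{V}})$ spanned by the continuous $\bbOX^{\mathcal{V},\op}$-monoids.

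First I would establish the dendroidal analogue of Proposition~\ref{propn:PCMisAlg}, namely an equivalence $\PCM(\bbOX^{\mathcal{V}}) \simeq \Alg_{\bbOXop}(\mathcal{V})$ natural in $X$. This goes through unchanged: since $\bbOX^{\mathcal{V},\op} \to \bbOXop$ is a coCartesian fibration, \cite[Proposition 7.3]{laxcolimits} identifies $\Fun(\bbOX^{\mathcal{V},\op}, \mathcal{S})$ with $\Fun_{\bbGop}(\bbOXop, \PSh_{\bbGop}(\mathcal{V}^{\op,\otimes}))$, and presentability of $\mathcal{V}$ lets us identify the full subcategory of fibrewise limit-preserving functors with $\mathcal{V}^{\otimes}$ via the Yoneda embedding; the monoid and continuity conditions then match the algebra condition exactly as before.

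Next I would prove the analogue of Proposition~\ref{propn:SegisMon}, exhibiting an equivalence $\PCS(\bbOV) \simeq \mathcal{E}_{\name{CtsMon}}$ over $\mathcal{S}$. The abstract inputs --- Lemma~\ref{lem:pbadj} on pullback adjunctions and \cite[Corollary 9.8]{laxcolimits} for the left-Kan-extension equivalence $\PSh(\bbOV)_{/\overline{\imath}_{X}} \simeq \PSh(\bbOX^{\mathcal{V}})$ --- apply verbatim. The one genuinely combinatorial ingredient is the dendroidal version of Lemma~\ref{lem:monLKEseg}, relating $\bbOX^{\mathcal{V},\op}$-monoids to continuous Segal presheaves via the left Kan extension $\pi_{!}$. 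Its proof requires the cofinality of the fibre inclusion $(\bbOX^{\mathcal{V},\op})_{\overline{T}} \to (\bbOX^{\mathcal{V},\op})_{/\overline{T}}$, the identification of the edges of $T$ with maps $\eta \to T$, and the pullback square expressing $\lim_{\psi \in \bbOelTop} \pi_{!}F(\psi^{*}\overline{T})$ as a product over the vertices $\CorO(T)$ fibred over the edges. Each of these is the tree-theoretic counterpart of a fact used in the $\DF$ case, and the general cofinality Lemma~\ref{lem:cofslice} applies directly to the Cartesian fibration $p \colon \bbOVint \to \bbOint$ and the full subcategory $\bbOel$.

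Finally, combining these two equivalences over $\mathcal{S}$ yields the theorem, with the evaluation at $\eta$ corresponding to evaluation at $\overline{\eta}$ under the identifications. I expect the main obstacle to be the bookkeeping of tree combinatorics in the dendroidal analogue of Lemma~\ref{lem:monLKEseg}: one must check that embeddings of subtrees interact with $\CorO$ so that the relevant slice categories and fibre products behave exactly as for forests. This is precisely where the structural lemmas on $\bbO$ proved above --- that $\CorO$ preserves inert--active factorizations, and that distinct vertices of $T$ are carried to subtrees with disjoint sets of vertices --- do the real work.
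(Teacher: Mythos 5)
Your proposal matches the paper's proof exactly: the paper simply says ``As Theorem~\ref{thm:PCSisAlgLT}'', meaning the argument via continuous monoids and the two intermediate propositions is to be repeated with $\DF$ replaced by $\bbO$, which is precisely what you carry out. Your identification of the combinatorial ingredients that must be re-verified for trees (cofinality of the fibre inclusion, $\CorO$ preserving inert--active factorizations, disjointness of subtree images) is the right list of points where the transcription is not purely formal.
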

\begin{proof}
  As Theorem~\ref{thm:PCSisAlgLT}.
\end{proof}

\begin{thm}\label{thm enr psh dend}
  Let $\xU^\otimes$ be a small symmetric monoidal \icat{}. Let $y
  \colon \bbOU \to \bbO^{\PSh(\mathcal{U})}$ denote the functor
  induced by the Yoneda embedding $\mathcal{U} \to \PSh(\mathcal{U})$,
  viewed as a symmetric monoidal functor. Then right Kan extension
  along $y$ gives a fully
  faithful functor
  $y_{*} \colon \PSh(\bbOU) \hookrightarrow
  \PSh(\bbO^{\PSh(\mathcal{U})})$
  which restricts to an equivalence
  $\PSeg(\bbOU) \isoto \PCS(\bbO^{\PSh(\mathcal{U})})$.
\end{thm}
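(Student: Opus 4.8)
The plan is to transcribe the proof of Theorem~\ref{thm enr psh} into the dendroidal setting, replacing $\DF$, $\CorDF$, $\xfc_n$, $\xfe$, $\DFel$, $\DFint$ by $\bbO$, $\CorO$, $C_n$, $\eta$, $\bbOel$, $\bbOint$ throughout; all the structural inputs needed have already been established (the active--inert factorization system on $\bbO$, the Cartesian fibration $\bbO^{\PSh(\mathcal{U})} \to \bbO$, and the fact that $\CorO$ preserves active--inert factorizations). First I would establish that $y^{*} \colon \PSh(\bbO^{\PSh(\mathcal{U})}) \to \PSh(\bbOU)$ admits a fully faithful right adjoint $y_{*}$ given by right Kan extension, exactly as in the unnamed proposition preceding Theorem~\ref{thm enr psh}. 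The only point requiring care is that $\bbO^{\PSh(\mathcal{U})}$ is large, so one must check the slice $(\bbOU^{\op})_{\overline{T}/}$ is essentially small for each $\overline{T}$; this follows because $\bbO$ is small, $\mathcal{U}$ is essentially small, and $\PSh(\mathcal{U})$ is locally small, after identifying the fibres of the relevant left fibration with mapping spaces $\Map_{\PSh(\mathcal{U})}(\bigotimes \psi_d, \phi_c)$ via the pullback defining $\bbO^{\PSh(\mathcal{U})}$ and the Day convolution.

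Next I would prove the two preservation statements. That $y^{*}$ takes continuous Segal presheaves to Segal presheaves (the analogue of Lemma~\ref{lem y* pres segal}) is immediate from the symmetric monoidality of the Yoneda embedding, since this makes $y$ commute with the inert pullbacks appearing in the Segal condition. The substantive direction is that $y_{*}$ sends Segal presheaves to continuous Segal presheaves (the analogue of Proposition~\ref{prop pres segal equivalences}); by adjunction this reduces to checking that $y^{*}$ carries a generating set of continuous Segal equivalences on $\bbO^{\PSh(\mathcal{U})}$ --- namely the maps $\overline{T}_{\Seg} \to \overline{T}$, the maps $\coprod_{n+1}\overline{\eta} \to (C_n, \emptyset)$, and the maps $\colim_{\mathcal{I}}(C_n, \phi) \to (C_n, \colim_{\mathcal{I}}\phi)$ for weakly contractible $\mathcal{I}$ --- to Segal equivalences on $\bbOU$. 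This in turn rests on the dendroidal analogues of Lemma~\ref{lem:y*Ipreswccolim} (that $y^{*}\overline{T}(\blank,\ldots,\blank)$ preserves weakly contractible colimits in each vertex-label) and Lemma~\ref{lem empty} (that deleting a corolla labelled $\emptyset$ does not change $y^{*}\overline{T}$). Both are proved by computing the fibres of $\Map_{\bbO^{\PSh(\mathcal{U})}}(y(\overline{S}), \overline{T})$ over each $\phi \colon S \to T$ in $\bbO$, using the Cartesian fibration $\bbO^{\PSh(\mathcal{U})} \to \bbO$, the identification $(\bbO^{\PSh(\mathcal{U})})_S \simeq \PSh(\mathcal{U})^{\times |\CorO(S)|}$, and the fact that the Day convolution preserves colimits separately in each variable.

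The heart of the argument --- and the main obstacle --- is the four-step induction of Proposition~\ref{prop pres segal equivalences}, which shows $y^{*}\overline{T}_{\Seg} \to y^{*}\overline{T}$ is a Segal equivalence for increasingly general labels (first in the image of Yoneda, then allowing $\emptyset$, then finite coproducts, then arbitrary presheaves via sifted colimits). The genuinely new combinatorial input is the poset $\Sub_{c'}(T)$ of subtrees of $T$ omitting a chosen corolla $c'$, together with the contractibility of the slices $\Sub_{c'}(T)_{K/}$ appearing in the deletion lemma; here the analysis differs from the $\DF$-case because one must work with subtrees of a tree rather than subforests, though the factorization-system and Kock-style subtree combinatorics recalled in \S\ref{sec dendroidal cat} supply exactly what is needed. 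Once these inputs are in place, the conclusion follows as in the proof of Theorem~\ref{thm enr psh}: full faithfulness of $y_{*}$ gives a fully faithful restriction $\PSeg(\bbOU) \to \PCS(\bbO^{\PSh(\mathcal{U})})$, and for essential surjectivity one checks the unit $\mathcal{O} \to y_{*}y^{*}\mathcal{O}$ is an equivalence by evaluating at $\overline{\eta}$ and $(C_n, \phi)$ --- on representables this holds because $y^{*}y_{*} \simeq \id$, and for general $\phi \in \PSh(\mathcal{U})$ one writes $\phi$ as a colimit of representables and invokes the resulting Segal equivalence $\colim_{\mathcal{C}^{\triangleleft}}(C_n, f^{\triangleleft}) \to (C_n, \phi)$.
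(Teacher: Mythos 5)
Your proposal is correct and matches the paper's approach exactly: the paper's proof of this theorem is simply the remark that it proceeds as Theorem~\ref{thm enr psh}, i.e.\ by transcribing the presheaf-enrichment argument from $\DF$ to $\bbO$, which is precisely what you carry out (and you correctly identify the only point where the combinatorics genuinely differ, namely the subtree poset $\Sub_{c'}(T)$ replacing the subforest analysis).
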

\begin{proof}
  As Theorem~\ref{thm enr psh}.
\end{proof}
Building on this, there are also obvious dendroidal variants of the results of
\S\ref{subsec enr loc}. We will not spell these out explicitly, except
for the following analogue of Corollaries~\ref{cor:kappaseg} and
\ref{cor PCSDFV pres}, which we will need below:
\begin{cor}
  Suppose $\mathcal{V}$ is a presentably symmetric monoidal
  \icat{}. Then $\mathcal{V}$ is $\kappa$-presentably symmetric
  monoidal for some regular cardinal $\kappa$, and we have an
  equivalence
  \[ \PCS(\bbOV) \isoto \PkSeg(\bbO^{\mathcal{V}^{\kappa}}).\]
  In particular, $\PCS(\bbOV)$ is a presentable \icat{}.
\end{cor}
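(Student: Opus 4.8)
The plan is to deduce this as the dendroidal counterpart of Corollaries~\ref{cor:kappaseg} and \ref{cor PCSDFV pres}, combined with Proposition~\ref{propn Vkappa symmon}. First I would invoke Proposition~\ref{propn Vkappa symmon} to choose a regular cardinal $\kappa$ such that $\mathcal{V}$ is $\kappa$-presentable and the full subcategory $\mathcal{V}^{\kappa}$ of $\kappa$-compact objects is closed under the tensor product; this is precisely the first claim, namely that $\mathcal{V}$ is $\kappa$-presentably symmetric monoidal. By \cite[Corollary 5.3.4.15]{ht} the subcategory $\mathcal{V}^{\kappa}$ is closed under $\kappa$-small colimits, and since $\otimes$ preserves colimits in each variable on $\mathcal{V}$ its restriction makes $\mathcal{V}^{\kappa}$ a small symmetric monoidal \icat{} whose tensor product preserves $\kappa$-small colimits. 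Moreover $\mathcal{V} \simeq \Ind_{\kappa}(\mathcal{V}^{\kappa})$ as symmetric monoidal \icats{}.

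With these identifications in hand, the equivalence $\PCS(\bbOV) \isoto \PkSeg(\bbO^{\mathcal{V}^{\kappa}})$ is exactly the dendroidal analogue of Corollary~\ref{cor:kappaseg}. Explicitly, I would apply the dendroidal version of Corollary~\ref{cor PSeg=PCS} to $\mathcal{U} = \mathcal{V}^{\kappa}$, taking $\mathbb{S}$ to be a set of representatives for the comparison maps $\colim\, y\circ\phi \to y(\colim \phi)$, where $\phi$ ranges over pushouts and $\kappa$-small coproducts in $\mathcal{V}^{\kappa}$; the strongly saturated class generated by $\mathbb{S}$ is closed under tensor products because $\otimes$ preserves colimits, so $\mathbb{S}$ is compatible with the symmetric monoidal structure. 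Rewriting $\Ind_{\kappa}(\mathcal{V}^{\kappa})$ as $\mathcal{V}$ then gives the stated equivalence, the local objects being precisely the $\kappa$-continuous Segal presheaves. All of these steps are formally identical to the $\DF$-case, using the already-established dendroidal analogues of \S\ref{subsec psh}--\ref{subsec enr loc} promised after Theorem~\ref{thm enr psh dend}.

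Finally, for presentability I would argue as in Remark~\ref{rmk PCS Ind pres}: the \icat{} $\PkSeg(\bbO^{\mathcal{V}^{\kappa}})$ is the full subcategory of the presheaf \icat{} $\PSh(\bbO^{\mathcal{V}^{\kappa}})$ spanned by the objects local with respect to a \emph{set} of maps (the generating Segal equivalences for the small \icat{} $\bbO^{\mathcal{V}^{\kappa}}$, together with the $\kappa$-continuity maps). Hence it is an accessible localization of a presentable \icat{}, and therefore presentable; transporting along the equivalence shows $\PCS(\bbOV)$ is presentable.

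The main obstacle is not any single hard argument but rather confirming that the \S\ref{subsec psh}--\ref{subsec enr loc} machinery transfers verbatim to $\bbO$: one must check that $\CorO$ behaves enough like $\CorDF$ that the key lemmas driving Proposition~\ref{prop pres segal equivalences} (the analogues of Lemmas~\ref{lem:y*Ipreswccolim} and \ref{lem empty}) go through, so that the dendroidal version of Corollary~\ref{cor PSeg=PCS} is genuinely available. Granting that, the corollary is immediate.
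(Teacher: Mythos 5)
Your proposal is correct and follows exactly the route the paper intends: the paper states this corollary without proof as the "analogue of Corollaries~\ref{cor:kappaseg} and \ref{cor PCSDFV pres}", relying on Theorem~\ref{thm enr psh dend} and the asserted dendroidal variants of \S\ref{subsec enr loc}, which is precisely the chain you spell out (Proposition~\ref{propn Vkappa symmon} to choose $\kappa$, the dendroidal Corollary~\ref{cor PSeg=PCS} with $\mathbb{S}$ the pushout/$\kappa$-small coproduct comparison maps, and accessibility of the localization as in Remark~\ref{rmk PCS Ind pres}). You also correctly identify the only real point to check, namely that $\CorO$ supports the analogues of Lemmas~\ref{lem:y*Ipreswccolim} and \ref{lem empty}, which the paper takes for granted.
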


\subsection{Reduction to $\DFi$}\label{subsec DFi} 
Just as in \cite{ChuHaugsengHeuts}, in order to connect $\DF$ to
$\bbO$ it is convenient to introduce an intermediate category. While
$\bbO$ consists of trees, the objects of $\DF$ can be thought of as
``forests'' of trees with levels; the category $\DFi$ is then the
full subcategory of trees:
\begin{defn}
  As in \cite{ChuHaugsengHeuts} we write $\DFi$ for the full
  subcategory of $\DF$ spanned by the objects $([n], f)$ such that
  $f(n) = \mathbf{1}$. Since $\DFel$ is contained in $\DFi$ we can
  define Segal presheaves on $\DFi$ as presheaves $\mathcal{O} \in
  \PSh(\DFi)$ such that for every $I \in \DFi$ the canonical map
  \[ \mathcal{O}(I) \to \lim_{J \in \DFelIop} \mathcal{O}(J) \]
  is an equivalence. Let $i$ denote the inclusion $\DFi
  \hookrightarrow \DF$. If $\mathcal{V}$ is a symmetric monoidal \icat{},
  we define $\DFiV$ by the pullback square
  \csquare{\DFiV}{\DFV}{\DFi}{\DF;}{\overline{\imath}}{}{}{i}
  we also similarly
  define $\DFiVint$. We also define Segal presheaves on $\DFiV$ and,
  when $\mathcal{V}$ is presentably symmetric monoidal, continuous
  Segal presheaves on $\DFiV$, by the obvious variants of
  Definitions~\ref{def Segal presheaf} and \ref{def cts Seg psh}; we
  write $\PSeg(\DFiV)$ and $\PCS(\DFiV)$ for the full subcategories of
  $\PSh(\DFiV)$ spanned by the Segal presheaves and the continuous
  Segal presheaves, respectively.
\end{defn}

In \cite[Lemma 2.11]{ChuHaugsengHeuts} we proved that the functor
$i^{*} \colon \PSh(\DF) \to \PSh(\DFi)$ given by composition with $i$
restricts to an equivalence $\PSeg(\DF) \to \PSeg(\DFi)$. We now want
to prove an enriched analogue of this statement:
\begin{propn}\label{propn:DFiV}
  Let $\mathcal{V}$ be a symmetric monoidal \icat{}.
  \begin{enumerate}[(i)]
  \item The functor
    $\bari^{*} \colon \PSh(\DFV) \to \PSh(\DFiV)$
    restricts to an equivalence \[\PSeg(\DFV) \isoto \PSeg(\DFiV).\]
  \item If $\mathcal{V}$ is presentably symmetric monoidal, then this
    restricts further to an equivalence
    \[ \PCS(\DFV) \isoto \PCS(\DFiV).\]
  \end{enumerate}
\end{propn}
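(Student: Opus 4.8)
The plan is to show that the restriction $\bari^{*}$ admits an inverse given by right Kan extension $\bari_{*}$ along $\bari^{\op}$, which amounts to extending a Segal presheaf from trees to forests by the product over constituent trees. Since $\bari \colon \DFiV \hookrightarrow \DFV$ is the inclusion of a full subcategory, $\bari_{*}$ is automatically fully faithful and the counit $\bari^{*}\bari_{*} \to \id$ is an equivalence, so the real work lies in checking that $\bari^{*}$ and $\bari_{*}$ preserve the relevant full subcategories and that the unit is an equivalence on Segal presheaves.

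First I would record that $\bari^{*}$ preserves Segal presheaves: because the elementary objects $\DFel$ all lie in $\DFi$, for a tree $\overline{J} \in \DFiV$ the diagram computing the Segal condition at $\overline{J}$ already lies in $\DFiV$, so the Segal condition for $\bari^{*}\mathcal{O}$ at $\overline{J}$ is literally that for $\mathcal{O}$. The heart of the argument is a product formula: for $\mathcal{Q} \in \PSeg(\DFiV)$ and $\overline{I} \in \DFV$ lying over a forest $I = ([n],f)$ with constituent trees $I_{k}$, $k \in f(n)$ (Notation~\ref{no Iij}), I claim $\bari_{*}\mathcal{Q}(\overline{I}) \simeq \prod_{k \in f(n)} \mathcal{Q}(\overline{I}_{k})$, where $\overline{I}_{k} \in \DFiV$ carries the labels of $\overline{I}$ on the corollas of the subtree $I_{k}$. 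The pointwise formula expresses $\bari_{*}\mathcal{Q}(\overline{I})$ as a limit over the opposite of the category of trees mapping to $\overline{I}$, and I would show that the inert inclusions $\{\overline{I}_{k} \to \overline{I}\}$ form an initial subcategory of this opposite: a connected tree $\overline{J}$ mapping to $\overline{I}$ has image in a single component, so it factors through a unique $\overline{I}_{k}$, and this factorization is essentially unique because $\overline{I}_{k} \to \overline{I}$ is an inert (hence Cartesian) morphism of $\DFV$. This combinatorial fact --- that a tree maps into a forest through exactly one constituent tree, compatibly with the $\mathcal{V}$-labels --- is the enriched incarnation of \cite[Lemma 2.11]{ChuHaugsengHeuts} and is the step I expect to be the main obstacle.

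Granting the product formula, the rest is formal. It shows $\bari_{*}\mathcal{Q}$ satisfies the forest-decomposition locality of Proposition~\ref{propn:SegDFVcond}(4), while the tree-spine locality holds because $\bari_{*}\mathcal{Q}$ agrees with the Segal presheaf $\mathcal{Q}$ on trees; hence $\bari_{*}$ preserves Segal presheaves. The unit $\mathcal{O} \to \bari_{*}\bari^{*}\mathcal{O}$ is then an equivalence for Segal $\mathcal{O}$: on trees it is the identity, and on a forest $\overline{I}$ it is precisely the forest-decomposition equivalence $\mathcal{O}(\overline{I}) \isoto \prod_{k} \mathcal{O}(\overline{I}_{k})$ guaranteed by Proposition~\ref{propn:SegDFVcond}(4). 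This proves (i). For (ii), the continuity condition of Definition~\ref{def cts Seg psh} only refers to the presheaves $\mathcal{O}(\mathfrak{c}_{n}, \blank)$ on corollas, and corollas are trees; since neither $\bari^{*}$ nor $\bari_{*}$ alters the value of a presheaf on objects lying over corollas (the product formula being trivial there), both functors preserve continuity, and the equivalence of (i) restricts to the asserted equivalence $\PCS(\DFV) \isoto \PCS(\DFiV)$.
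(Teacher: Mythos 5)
Your proposal is correct and follows the same overall strategy as the paper: $\bari^{*}$ preserves Segal presheaves because $\DFel$ lies in $\DFi$, the right Kan extension $\bari_{*}$ is fully faithful with invertible counit since $\bari$ is fully faithful, and the unit is checked to be an equivalence using the Segal decomposition over constituent trees. The one step you package differently is the verification that $\bari_{*}$ lands in Segal presheaves: the paper argues dually, showing that the left adjoint $\bari^{*}$ carries the generating Segal equivalences $\coprod_{i \in f(n)} \overline{I}_{i} \to \overline{I}$ to equivalences via the identification $\coprod_{i}\Map_{\DFV}(\overline{J},\overline{I}_{i}) \isoto \Map_{\DFV}(\overline{J},\overline{I})$ for trees $\overline{J}$, whereas you compute $\bari_{*}\mathcal{Q}(\overline{I}) \simeq \prod_{k}\mathcal{Q}(\overline{I}_{k})$ directly by exhibiting the discrete subcategory $\{\overline{I}_{k} \to \overline{I}\}$ as initial in the slice computing the pointwise right Kan extension. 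These are two phrasings of the identical combinatorial input --- a labelled tree maps into a labelled forest through exactly one constituent tree, essentially uniquely because $\overline{I}_{k} \to \overline{I}$ is Cartesian --- so neither buys anything the other lacks, though your version has the mild advantage of producing an explicit formula for $\bari_{*}$ and of settling the existence of the (a priori large) right Kan extension in the same stroke, a point the paper handles separately via Lemma~\ref{lem:cofslice}. Your treatment of (ii) matches the paper's.
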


\begin{lemma}\label{lem:bariSeg}
  Let $\mathcal{V}$ be a symmetric monoidal \icat{}.
  \begin{enumerate}[(i)]
  \item The functor $\bari^{*}$ preserves Segal presheaves.
  \item The functor $\bari^{*}$ has a right adjoint $\bari_{*}$, given by
    right Kan extension along $\bari^{\op}$.
  \item The functor $\bari_{*}$ preserves Segal presheaves.
  \end{enumerate}
\end{lemma}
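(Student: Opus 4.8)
The plan is to reduce all three parts to a single combinatorial decomposition of comma categories, the enriched analogue of the argument behind \cite[Lemma 2.11]{ChuHaugsengHeuts}. \emph{Part (i)} should be immediate: for $\overline{I}\in\DFiV$ lying over a tree $I\in\DFi$, the indexing category $\DFelIop$ of the Segal condition is unchanged whether $I$ is regarded in $\DFi$ or in $\DF$, since $\DFel\subseteq\DFi$ and the elementary objects admitting an inert map to $I$ are the same. As all these elementary objects lie in $\DFVel\subseteq\DFiV$, we have $\bari^{*}\mathcal{O}(\psi^{*}\overline{I})=\mathcal{O}(\psi^{*}\overline{I})$, so the Segal condition for $\bari^{*}\mathcal{O}$ at $\overline{I}$ is literally that for $\mathcal{O}$ at $\overline{I}$; hence $\bari^{*}$ preserves Segal presheaves.

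The key step, on which both (ii) and (iii) rest, is a decomposition of comma categories. For $\overline{I}\in\DFV$ over $I=([n],f)$ and $k\in f(n)$, let $\overline{I}_{k}$ denote the object over the tree $I_{k}\in\DFi$ arising as the source of the Cartesian lift of the inert inclusion $I_{k}\hookrightarrow I$ (so $\overline{I}_{k}$ carries the labels of $\overline{I}$ on the vertices belonging to $I_{k}$). I would prove that
\[ (\DFiV)_{/\overline{I}} \simeq \coprod_{k\in f(n)} (\DFiV)_{/\overline{I}_{k}}, \]
each summand having $\id_{\overline{I}_{k}}$ as terminal object. The point is that any map $\overline{K}\to\overline{I}$ with $\overline{K}$ over a tree $K=([p],g)$, $g(p)=\mathbf{1}$, factors through a single component: the root of $K$ determines $e_{p}:=\phi_{p}(\ast)\in f(\alpha(p))$ and hence $k:=f^{\alpha(p),n}(e_{p})\in f(n)$, and the pullback condition of Definition~\ref{def Delta_F} forces every edge of $K$ into $I_{k}$, so $K\to I$ factors uniquely through the monomorphism $I_{k}\hookrightarrow I$. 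Since $\overline{I}_{k}\to\overline{I}$ is Cartesian, this factorization lifts uniquely to $\overline{K}\to\overline{I}_{k}$, and a morphism of the comma category forces its two endpoints into the same component.

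\emph{For Part (ii)}, since $\bari^{*}$ is restriction along the fully faithful $\bari^{\op}$, its right adjoint, where it exists pointwise, is right Kan extension, with $\bari_{*}F(\overline{I})\simeq\lim_{((\DFiV)_{/\overline{I}})^{\op}}F$. By the decomposition this limit is the finite product $\prod_{k\in f(n)}F(\overline{I}_{k})$, which exists in $\xS$ with no size hypotheses on $\mathcal{V}$ (each summand has a terminal object); this gives existence, and full faithfulness of $\bari_{*}$ follows from that of $\bari$, so in particular $\bari_{*}F|_{\DFiV}=F$. \emph{For Part (iii)}, given $F\in\PSeg(\DFiV)$ and $\overline{I}\in\DFV$ over $I=([n],f)$, I would check the Segal condition directly: the comparison map is $\bari_{*}F(\overline{I})\to\lim_{\psi\in\DFelIop}F(\psi^{*}\overline{I})$, using $\psi^{*}\overline{I}\in\DFVel\subseteq\DFiV$. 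By (ii) the source is $\prod_{k}F(\overline{I}_{k})$; since each corolla and edge of $I$ lies in a unique component $I_{k}$, one has $\DFelIop\simeq\coprod_{k}(\simp_{\mathbb{F},\name{el}/I_{k}})^{\op}$, whence
\[ \lim_{\psi\in\DFelIop}F(\psi^{*}\overline{I}) \simeq \prod_{k\in f(n)}\lim_{\psi_{k}}F(\psi_{k}^{*}\overline{I}_{k}) \simeq \prod_{k\in f(n)}F(\overline{I}_{k}), \]
the last equivalence because $F$ is Segal on each tree $\overline{I}_{k}$. The comparison map is then the product of these Segal equivalences, hence an equivalence, so $\bari_{*}F$ is Segal.

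The hard part will be the combinatorial lemma — specifically, showing that an arbitrary (not necessarily inert) morphism in $\DFV$ from an object over a tree into $\overline{I}$ lands in a single component tree $\overline{I}_{k}$. This is exactly where the pullback square in the definition of $\DF$ enters, and some care is needed to lift the factorization across the Cartesian fibration $\DFV\to\DF$ so that the labels match. Everything else is then bookkeeping with limits over coproducts of indexing categories.
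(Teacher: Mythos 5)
Your proof is correct, and for parts (ii) and (iii) it takes a more explicit route than the paper. Part (i) is essentially the paper's argument (the paper phrases it via locality with respect to the generating Segal equivalences $\bari(\overline{I})_{\Seg}\to\bari(\overline{I})$ and the adjunction $\bari_{!}\dashv\bari^{*}$, but the content is the same observation that the Segal condition on $\DFiV$ is the restriction of the one on $\DFV$). For (ii) the paper only proves \emph{existence} of the right Kan extension, via Lemma~\ref{lem:cofslice} and smallness of $(\DFiop)_{I/}$, whereas you compute it: the decomposition of $(\DFiV)_{/\overline{I}}$ into components with terminal objects $\overline{I}_{k}$ gives the closed formula $\bari_{*}F(\overline{I})\simeq\prod_{k}F(\overline{I}_{k})$. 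For (iii) the paper argues by adjunction: $\bari_{*}$ preserves local objects because $\bari^{*}$ sends the generating Segal equivalences of Proposition~\ref{propn:SegDFVcond}(4) to Segal equivalences, which reduces to the statement that $\coprod_{k}\Map_{\DFV}(\overline{J},\overline{I}_{k})\to\Map_{\DFV}(\overline{J},\overline{I})$ is an equivalence for $\overline{J}\in\DFiV$ --- and that mapping-space statement \emph{is} your comma-category decomposition, so the combinatorial core of the two proofs is identical. Your version buys an explicit formula for $\bari_{*}$ and a hands-on verification of the Segal limit; the paper's version avoids re-computing that limit by exploiting that locality is detected by the left adjoint on generators. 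One small correction to your sketch: the fact that a map $([p],g)\to([n],f)$ with $g(p)=\mathbf{1}$ lands in a single component $I_{k}$ follows from the commutativity (naturality) of the squares in Definition~\ref{def Delta_F} --- since $g^{j,p}$ is constant, $f^{\alpha(j),n}\circ\phi_{j}$ is constant with value $k$ --- rather than from those squares being pullbacks; the pullback condition is what you need afterwards to see that the corestricted $\phi'_{j}\colon g(j)\to f(\alpha(j))_{\{k\}}$ still define a morphism of $\DF$.
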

\begin{proof}
  A presheaf on $\DFiV$ is a Segal presheaf \IFF{}
  it is local with respect to the maps $\overline{I}_{\Seg} \to
  \overline{I}$ for $\overline{I}$ in $\DFiV$. For $\mathcal{O} \in
  \PSeg(\DFV)$ we have natural equivalences \[\Map_{\PSh(\DFiV)}(\overline{I},
  \bari^{*}\mathcal{O}) \simeq \mathcal{O}(\bari(\overline{I})) \simeq
  \Map_{\PSh(\DFV)}(\bari(\overline{I}), \mathcal{O}),\]
  \[ \Map_{\PSh(\DFiV)}(\overline{I}_{\Seg}, \bari^{*}\mathcal{O})
  \simeq \Map_{\PSh(\DFV)}(\bari(\overline{I})_{\Seg}, \mathcal{O}),\]
  where the second arises by moving the colimit in
  $\overline{I}_{\Seg}$ outside and then using equivalences of the
  first kind. Since $\bari(\overline{I})_{\Seg} \to
  \bari(\overline{I})$ is a generating Segal equivalence in
  $\PSeg(\DFV)$, it follows that $\bari^{*}\mathcal{O}$ is a Segal
  presheaf. Thus we have proved (i).

  To prove (ii), it suffices to show that for $F \in \PSh(\DFiV)$ and
  $\overline{I}$ in $\DF$, the limit
  $\lim_{\overline{J} \in (\DFiVop)_{\overline{I}/}} F(\overline{J})$
  exists in $\mathcal{S}$. By Lemma~\ref{lem:cofslice} there is a
  coinitial map to $(\DFiVop)_{\overline{I}/}$ from an \icat{}
  equivalent to $(\DFiop)_{I/}$; this is small, and $\mathcal{S}$ has
  all small colimits, so the right Kan extension along $\bari^{\op}$
  exists.

  To prove (iii), it suffices to show that $\bari^{*}$ takes a
  collection of generating Segal equivalences in $\PSh(\DFV)$ to Segal
  equivalences in $\PSh(\DFiV)$. By Proposition~\ref{propn:SegDFVcond}
  it is enough to check this for the images of the maps
  \[ \overline{I}_{\Seg}  \to \overline{I}\]
  for all $\overline{I}$ in $\DFiV \subseteq \DFV$, and
  \[ \coprod_{i \in f(n)} \overline{I}_{i} \to \overline{I} \]
  for all $I = ([n], f)$, where $I_{i} = ([n], f_{i})$ is obtained
  by taking the fibres at $i \in f(n)$.
  
  Since $\bari$ is fully faithful, the images of the first class of
  maps are clearly just the generating Segal equivalences in
  $\PSh(\DFiV)$. Moreover, from the definition of $\DFV$ it is easy to
  see that for $\overline{J} \in \DFiV$ we have
  \[\coprod_{i \in f(n)}
  \Map_{\DFV}(\overline{J}, \overline{I}_{i}) \isoto
  \Map_{\DFV}(\overline{J}, \overline{I}),\]
  so that
  $\bari^{*}\left(\coprod_{i \in f(n)} \overline{I}_{i}\right) \to
  \bari^{*}\overline{I}$ is an equivalence. This gives (iii).
\end{proof}

\begin{proof}[Proof of Proposition~\ref{propn:DFiV}]
  By Lemma~\ref{lem:bariSeg}, the adjunction $\bari^{*} \dashv
  \bari_{*}$ restricts to an adjunction
  \[ \bari^{*} : \PSeg(\DFV) \rightleftarrows \PSeg(\DFiV) :
  \bari_{*}.\]
  Since $\bari$ is fully faithful, the right Kan extension $\bari_{*}$
  is also fully faithful, and so we automatically have that the counit
  map $\bari^{*}\bari_{*} \to \id$ is an equivalence. If $\mathcal{O}$
  is a Segal presheaf on $\DFV$, then so is
  $\bari_{*}\bari^{*}\mathcal{O}$. To show that the unit map
  $\mathcal{O} \to \bari_{*}\bari^{*}\mathcal{O}$ is an equivalence,
  it therefore suffices to see that this map of presheaves is an
  equivalence when evaluated at $\overline{\mathfrak{e}}$ and
  $\mathfrak{c}_n(v)$ for $v \in \mathcal{V}$. Since these
  objects all lie in $\DFiV$, this follows from $\bari$ being fully
  faithful. This proves (i), and (ii) is clear since this only depends
  on objects in the image of $\bari$.
\end{proof}

\subsection{Comparison}\label{sec preparation for comparison}
In \cite{ChuHaugsengHeuts} we defined a functor
$\tau \colon \DFi \to \bbO$ and proved that composition with $\tau$
induces an equivalence
$\tau^{*} \colon \PSeg(\bbO) \isoto \PSeg(\DFi)$. In this subsection
we will prove the enriched analogue of this result. To state this, we
first introduce some notation:
\begin{defn}
  Let $\tau \colon \DFi \to \bbO$ be the functor constructed in
  \cite[\S 4]{ChuHaugsengHeuts}; by \cite[Lemmas 4.4 and
  4.5]{ChuHaugsengHeuts} this is compatible with the inert--active
  factorization systems and restricts to a functor
  $\tauint \colon \DFiint \to \bbOint$ and an equivalence
  $\tauel \colon \DFel \isoto \bbOel$.  Given a symmetric monoidal
  \icat{} $\mathcal{V}$, let $\bartau \colon \DFiV \to \bbOV$ be the
  functor defined by the pullback square
  \csquare{\DFiV}{\bbOV}{\DFi}{\bbO.}{\bartau}{}{}{\tau} We write
  $\bartauint \colon \DFiVint \to \bbOVint$ and
  $\bartauel \colon \DFVel \to \bbOVel$ for the appropriate
  restrictions of $\bartau$.
\end{defn}

\begin{remark}\label{rmk bartauel eq}
  By \cite[Lemma 4.4]{ChuHaugsengHeuts}, the functor $\tau$ restricts
  to an equivalence $\tauel \colon \DFel \isoto \bbOel$. The pullback
  $\bartau_{\name{el}} \colon \DFVel \to \bbOVel$ is therefore also an equivalence.
\end{remark}

Our goal is then to prove:
\begin{thm}\label{thm:PSegeq}
  Let $\mathcal{U}$ be a small symmetric monoidal \icat{}. The functor $\bartau^{*} \colon \PSh(\bbOU) \to \PSh(\DFiU)$
  restricts to an equivalence
  \[ \PSeg(\bbOU) \to \PSeg(\DFiU).\]
\end{thm}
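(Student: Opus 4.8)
The plan is to follow the template of Proposition~\ref{propn:DFiV}: exhibit an adjunction between $\PSh(\bbOU)$ and $\PSh(\DFiU)$ both of whose functors preserve Segal presheaves, and then check that its unit and counit are equivalences on Segal presheaves. The structural input I would use throughout is that $\bartau$ is the base change of $\tau$ along $\bbOU \to \bbO$; hence it is a map of Cartesian fibrations over $\tau$ that preserves Cartesian morphisms, induces an equivalence on each fiber, and restricts on elementary objects to the equivalence $\bartauel$ of Remark~\ref{rmk bartauel eq}.

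First I would show that $\bartau^{*}$, given by restriction along $\bartau^{\op}$, carries $\PSeg(\bbOU)$ into $\PSeg(\DFiU)$. For $\mathcal{O} \in \PSeg(\bbOU)$ and $\overline I \in \DFiU$ over $I$, the value $\bartau^{*}\mathcal{O}(\overline I) = \mathcal{O}(\bartau(\overline I))$ is the Segal limit of $\mathcal{O}$ over the inert elementary objects of $\bartau(\overline I)$; since $\tau$ is compatible with the inert--active factorizations (\cite[Lemmas 4.4, 4.5]{ChuHaugsengHeuts}) and $\bartau$ sends Cartesian lifts of inert maps to Cartesian lifts, $\tau$ together with $\bartauel$ identifies this diagram with the one computing the Segal limit of $\bartau^{*}\mathcal{O}$ at $\overline I$ (after reducing both to their Cartesian parts via Lemma~\ref{lem:cofslice}). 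This is the enriched form of the fact that $\tau^{*}$ preserves Segal presheaves. Next I would form the right adjoint $\bartau_{*}$, right Kan extension along $\bartau^{\op}$, which exists since $\mathcal{U}$ is small and $\mathcal{S}$ is complete. Because $\bartau^{*}$ is a restriction it preserves colimits, hence the colimits defining the objects $\overline T_{\Seg}$; so to see $\bartau_{*}$ preserves Segal presheaves it suffices, by adjunction, to check that $\bartau^{*}$ takes the generating Segal equivalences $\overline T_{\Seg} \to \overline T$ of $\bbOU$ to Segal equivalences of $\DFiU$, which follows from the same slice identification.

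Finally I would prove that the resulting adjunction $\bartau^{*} : \PSeg(\bbOU) \rightleftarrows \PSeg(\DFiU) : \bartau_{*}$ is an adjoint equivalence, i.e.\ that its unit and counit are equivalences. Here I would upgrade the unenriched equivalence $\tau^{*} \colon \PSeg(\bbO) \isoto \PSeg(\DFi)$: using the identification $\PSh(\bbOU) \simeq \Fun_{\bbO^{\op}}(\bbO^{\op}, \PSh_{\bbO^{\op}}(\bbOU^{\op}))$ of \cite[Proposition 7.3]{laxcolimits} and the fact that $\DFiU$ is the base change of $\bbOU$ along $\tau$, the functor $\bartau^{*}$ becomes restriction of sections along $\tau$ of a Cartesian fibration pulled back from $\bbO$, so the Segal conditions on both sides are controlled by the same slice data that $\tau$ matches, with the labels carried along formally since $\bartau$ preserves Cartesian morphisms and is a fiberwise equivalence. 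The hard part is exactly this last matching: one must verify that $\tau$ induces the required equivalences (or cofinal comparisons, again via Lemma~\ref{lem:cofslice}) not only of the inert-elementary slices—which already gives the Segal limits—but compatibly with the \emph{active} morphisms, which encode the composition data invisible on elementary objects. This combinatorial comparison of $\tau$-slices is the substance inherited from \cite{ChuHaugsengHeuts}, and once it is in place the enriched statement follows, uniformly in $\mathcal{U}$.
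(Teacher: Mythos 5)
Your preliminary steps (that $\bartau^{*}$ preserves Segal presheaves, and that the right Kan extension $\bartau_{*}$ exists and is a right adjoint on Segal presheaves once $\bartau^{*}$ is known to preserve Segal equivalences) match the paper. But there are two genuine gaps. First, the claim that $\bartau^{*}$ sends the generating Segal equivalences $\overline{T}_{\Seg} \to \overline{T}$ to Segal equivalences does not ``follow from the same slice identification'': the presheaf $\bartau^{*}\overline{T}$ is not representable, and the paper has to prove this by induction on the number of vertices of $T$, factoring $\overline{T}_{\Seg} \to \overline{T}$ through the external decomposition $\Dext \overline{T}$ and then showing that $\bartau^{*}\Dext\overline{T} \to \bartau^{*}\overline{T}$ is inner anodyne. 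That last step is exactly what the machinery of \S\ref{subsec inner anodyne} (Propositions~\ref{propn:pbwsat} and \ref{propn:iaDFV}) was built for: the enriched map is a pullback of the unenriched inner anodyne map $\tau^{*}\Dext T \to \tau^{*}T$ along the Cartesian fibration $\DFiU \to \DFi$, and one needs the stability of inner anodynes under such pullbacks. None of this is visible in your sketch.

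Second, and more seriously, your final step never actually proves that the unit and counit are equivalences; it restates the goal and defers ``the hard part'' to an unspecified compatibility with active morphisms. The paper's route is different and is the key idea you are missing: both $\PSeg(\bbOU)$ and $\PSeg(\DFiU)$ are \emph{monadic} over the categories of Segal presheaves on the inert subcategories (Lemma~\ref{lem:barjmonad}), and restriction along $\bartauint$ is already an equivalence there (Lemma~\ref{lem:tauinteq}), essentially because inert morphisms and the elementary objects are where $\tau$ behaves like an equivalence. By the Barr--Beck comparison criterion (\cite[Corollary 4.7.4.16]{ha}), $\bartau^{*}$ is an equivalence as soon as the canonical transformation $\overline{F}_{\DFi}\bartauint^{*} \to \bartau^{*}\overline{F}_{\bbO}$ of left adjoints is invertible, which by passing to right adjoints reduces to the single computation that $\barj_{\DFi}^{*}\bartau^{*}\bartau_{*}F \to \barj_{\DFi}^{*}F$ is an equivalence for Segal $F$ (Lemma~\ref{lem:rightadjeqce}). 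This is what lets one avoid ever computing the right Kan extension $\bartau_{*}$ on general objects, which a direct unit/counit check would require; without some such reduction your argument does not close.
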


Before we turn to the proof, we note that as an immediate consequence
our models for enriched \iopds{} as continuous Segal presheaves are
equivalent. This is a special case of the following observation:
\begin{cor}\label{cor:PSSegeq}
  Let $\mathcal{U}$ be a small symmetric monoidal \icat{} and let
  $\mathbb{S}$ be a small set of morphisms in $\PSh(\mathcal{U})$,
  compatible with the symmetric monoidal structure. Then:
  \begin{enumerate}[(i)]
  \item Composition with $\bartau \colon \DFiU \to \bbOU$ restricts
    to an equivalence
    \[ \PSSeg(\bbOU) \isoto \PSSeg(\DFiU).\]
  \item Composition with $\bartau \colon
    \DF^{1,\PSU} \to
    \bbO^{\PSU}$ restricts to an equivalence
    \[ \PCS(\bbO^{\PSU}) \isoto
    \PCS(\DF^{1,\PSU}).\]
  \end{enumerate}
\end{cor}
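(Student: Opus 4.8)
The plan is to deduce both statements from Theorem~\ref{thm:PSegeq}, using that the extra locality conditions defining $\mathbb{S}$-Segal presheaves are detected on corollas, where $\bartau$ is already an equivalence. For part~(i) I would begin with the equivalence $\bartau^{*} \colon \PSeg(\bbOU) \isoto \PSeg(\DFiU)$ of Theorem~\ref{thm:PSegeq} and check that it matches up the $\mathbb{S}$-local objects on both sides. By the evident analogue of Remark~\ref{rmk S-seg repr}, a Segal presheaf $\mathcal{O}$ on $\bbOU$ lies in $\PSSeg(\bbOU)$ \IFF{} the presheaf $\mathcal{O}(C_{n}, \blank) \colon \mathcal{U}^{\op} \to \mathcal{S}$ is $\mathbb{S}$-local for every $n$, and similarly a Segal presheaf $\mathcal{P}$ on $\DFiU$ is $\mathbb{S}$-Segal \IFF{} each $\mathcal{P}(\mathfrak{c}_{n}, \blank)$ is $\mathbb{S}$-local. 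Since $\tau$ restricts to the equivalence $\tauel \colon \DFel \isoto \bbOel$ carrying the corolla $\mathfrak{c}_{n}$ to $C_{n}$, its pullback $\bartau$ identifies $(\mathfrak{c}_{n}, v)$ with $(C_{n}, v)$ for $v \in \mathcal{U}$ (Remark~\ref{rmk bartauel eq}), so that $(\bartau^{*}\mathcal{O})(\mathfrak{c}_{n}, \blank) \simeq \mathcal{O}(C_{n}, \blank)$ as functors $\mathcal{U}^{\op} \to \mathcal{S}$. The two $\mathbb{S}$-locality conditions therefore correspond under $\bartau^{*}$, and the asserted equivalence $\PSSeg(\bbOU) \isoto \PSSeg(\DFiU)$ follows.

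For part~(ii) I would assemble the square
\[
\begin{tikzcd}
\PCS(\bbO^{\PSU}) \arrow{r}{\bartau^{*}} \arrow{d}{\simeq} & \PCS(\DF^{1,\PSU}) \arrow{d}{\simeq}\\
\PSSeg(\bbOU) \arrow{r}{\bartau^{*}} & \PSSeg(\DFiU),
\end{tikzcd}
\]
whose left vertical arrow is the dendroidal analogue of Corollary~\ref{cor PSeg=PCS}, whose right vertical arrow is the $\DF^{1}$-version of the same corollary (obtained by combining it with Proposition~\ref{propn:DFiV}), and whose bottom row is the equivalence of part~(i). The commutativity of this square is the crux: it amounts to the fact that $\bartau^{*}$ intertwines the Yoneda-restriction and localization functors $y^{*}$ and $L_{\mathbb{S}}^{*}$ of Corollary~\ref{cor PSeg=PCS}. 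This in turn follows from the naturality of $\bartau$ in the enriching \icat{}: for any symmetric monoidal functor $F \colon \mathcal{V} \to \mathcal{W}$ the square relating $\bartau^{\mathcal{V}}$ and $\bartau^{\mathcal{W}}$ commutes (all four maps being defined by pullback along $\tau$ and $F$), and taking $F = y \colon \mathcal{U} \to \PSh(\mathcal{U})$ and $F = L_{\mathbb{S}} \colon \PSh(\mathcal{U}) \to \PSU$ yields the needed compatibilities on presheaves. Granting commutativity, three of the four sides are equivalences, hence so is the top, giving $\PCS(\bbO^{\PSU}) \isoto \PCS(\DF^{1,\PSU})$.

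The genuinely substantial input is Theorem~\ref{thm:PSegeq} itself, whose proof is deferred; once it is available, the corollary is largely bookkeeping. The step most likely to need care is the commutativity of the square in part~(ii) — that is, verifying that the pullback construction defining $\bartau$ is natural enough in the enriching category to intertwine the chain of equivalences from \S\ref{subsec enr loc}.
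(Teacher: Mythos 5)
Your proposal is correct and follows essentially the same route as the paper: part (i) by transporting the $\mathbb{S}$-locality condition through the equivalence of Theorem~\ref{thm:PSegeq} using that $\bartau$ restricts to an equivalence on elementary objects, and part (ii) by the same commutative square whose vertical arrows are the enrichment-in-presheaves equivalences, with three sides known to be equivalences. Your extra attention to the commutativity of that square (via naturality of $\bartau$ in the enriching \icat{}) is a point the paper simply asserts, so it is a welcome, not a divergent, elaboration.
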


\begin{proof}
  By Theorem~\ref{thm:PSegeq} composition with $\bartau$ restricts to
  an equivalence $\bartau^{*} \colon \PSeg(\bbOU) \isoto \PSeg(\DFiU)$. The full
  subcategories $\PSSeg(\bbOU)$ and $\PSSeg(\DFiU)$ consist of Segal
  presheaves  whose restrictions to $\bbOUel$ and $\DFUel$
  satisfy a locality condition, as in Definition~\ref{def S Segal
    presheaves}. Since $\bartauel \colon \DFUel \to
  \bbOUel$ is an equivalence by Remark~\ref{rmk bartauel eq}, these
  subcategories correspond under the equivalence $\bartau^{*}$. This
  gives (i).

  To prove (ii), observe that we have a commutative square
  \csquare{\PCS(\bbO^{\PSU})}{\PCS(\DF^{1,\PSU})}{\PSSeg(\bbOU)}{\PSSeg(\DFiU),}{\bartau^*}{}{}{\bartau^*}
  where the vertical maps, given by composition with the maps induced
  by the Yoneda embedding $\mathcal{U} \to \PSU$, are equivalences by analogues of
  Corollary~\ref{cor:kappaseg} for $\bbO$ and $\DFi$. Since the bottom
  horizontal map is an equivalence by (i), so is the upper horizontal map.
\end{proof}

\begin{cor}
  If $\mathcal{V}$ is a presentably symmetric monoidal \icat{}, then
  the functor $\colon \PSh(\bbOV) \to \PSh(\DFiV)$
  restricts to an equivalence
  \[ \PCS(\bbOV) \to \PCS(\DFiV).\]
\end{cor}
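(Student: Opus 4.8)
The plan is to deduce this from the small-category case already established in Corollary~\ref{cor:PSSegeq}(i), using the standard device of presenting $\mathcal{V}$ as an $\Ind$-completion of a small symmetric monoidal subcategory. First I would apply Proposition~\ref{propn Vkappa symmon} to choose a regular cardinal $\kappa$ for which the full subcategory $\mathcal{U} := \mathcal{V}^{\kappa}$ of $\kappa$-compact objects is closed under the tensor product, hence a small symmetric monoidal subcategory, and for which $\mathcal{V} \simeq \Ind_{\kappa}\mathcal{U}$ as a symmetric monoidal \icat{} (exactly as in the proof of Corollary~\ref{cor:PCStensor}). The inclusion $\mathcal{U} \hookrightarrow \mathcal{V}$ is then symmetric monoidal and, by functoriality of the pullbacks defining $\bbO^{(\blank)}$ and $\DFi^{(\blank)}$, induces functors $\bbOU \to \bbOV$ and $\DFiU \to \DFiV$ that are compatible with the two copies of $\bartau$.

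Next I would record two vertical equivalences obtained by restriction along these inclusion-induced maps: the dendroidal analogue of Corollary~\ref{cor:kappaseg} (the corollary following Theorem~\ref{thm enr psh dend}) gives $\PCS(\bbOV) \isoto \PkSeg(\bbOU)$, and the corresponding statement for $\DFi$ — obtained by combining Corollary~\ref{cor:kappaseg} with Proposition~\ref{propn:DFiV}(ii) — gives $\PCS(\DFiV) \isoto \PkSeg(\DFiU)$. As in the proof of Corollary~\ref{cor:kappaseg}, the class $\PkSeg$ is the special case $\PSSeg$ in which $\mathbb{S}$ is the small set of maps witnessing preservation of $\kappa$-small colimits; this $\mathbb{S}$ is compatible with the symmetric monoidal structure, so Corollary~\ref{cor:PSSegeq}(i) applies to it and yields an equivalence $\bartau^{*} \colon \PkSeg(\bbOU) = \PSSeg(\bbOU) \isoto \PSSeg(\DFiU) = \PkSeg(\DFiU)$.

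I would then assemble the square
\csquare{\PCS(\bbOV)}{\PCS(\DFiV)}{\PkSeg(\bbOU)}{\PkSeg(\DFiU)}{\bartau^{*}}{\simeq}{\simeq}{\bartau^{*}}
in which the vertical maps are the two equivalences of the previous paragraph and the bottom map is the equivalence just described. The square commutes by the contravariant functoriality of restriction, since $\bartau \circ (\DFiU \to \DFiV) \simeq (\bbOU \to \bbOV) \circ \bartau$ as functors $\DFiU \to \bbOV$. Three of the four maps being equivalences then forces the top map $\bartau^{*} \colon \PCS(\bbOV) \to \PCS(\DFiV)$ to be an equivalence as well, which is the claim.

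The main obstacle I expect is not the diagram chase but the bookkeeping behind the vertical equivalences: one must check that restriction along the inclusion-induced maps really implements the identifications $\PCS \simeq \PkSeg$ on both sides and that these identifications intertwine the two copies of $\bartau^{*}$. This rests on the naturality of the pullback constructions $\mathcal{W} \mapsto \bbO^{\mathcal{W}}$ and $\mathcal{W} \mapsto \DFi^{\mathcal{W}}$ in the symmetric monoidal \icat{} $\mathcal{W}$, together with the fact (Remark~\ref{rmk bartauel eq}) that $\tau$ restricts to an equivalence on elementary objects, so that the extra locality conditions cutting $\PkSeg$ out of $\PSeg$ correspond to one another under $\bartau^{*}$.
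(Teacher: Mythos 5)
Your proposal is correct and follows essentially the same route as the paper: the authors likewise invoke Proposition~\ref{propn Vkappa symmon} to choose $\kappa$ with $\mathcal{V}\simeq\Ind_{\kappa}\mathcal{V}^{\kappa}$ and then cite Corollary~\ref{cor:PSSegeq} applied to $\mathcal{V}^{\kappa}$ with $\mathbb{S}$ as in Corollary~\ref{cor:kappaseg}. Your write-up just makes explicit the commuting square of identifications $\PCS\simeq\PkSeg$ that the paper leaves implicit.
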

\begin{proof}
  Since $\mathcal{V}$ is presentably symmetric monoidal, by
  Proposition~\ref{propn Vkappa symmon} we can choose
  a regular cardinal $\kappa$ such that $\mathcal{V}^{\kappa}$ is a
  symmetric monoidal subcategory of $\mathcal{V}$, and $\mathcal{V}
  \simeq \Ind_{\kappa}\mathcal{V}^{\kappa}$ as a symmetric monoidal
  \icat{}. The result is then a special case of
  Corollary~\ref{cor:PSSegeq} applied to $\mathcal{V}^{\kappa}$ with
  $\mathbb{S}$ as in the proof of Corollary~\ref{cor:kappaseg}.
\end{proof}

We now turn to the proof of Theorem~\ref{thm:PSegeq}. Our approach
closely follows that of \cite[Theorem 5.1]{ChuHaugsengHeuts}, and
where the proofs are obtained from the unenriched version simply by
adding superscript $\mathcal{V}$'s and overlines, we will not repeat
them here.
\begin{lemma}
  The functor $\bartau^{*} \colon \PSh(\bbOV) \to \PSh(\DFiV)$
  preserves Segal presheaves.
\end{lemma}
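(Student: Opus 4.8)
The plan is to verify the Segal condition for $\bartau^{*}\mathcal{O}$ directly, deriving it from the Segal condition for $\mathcal{O}$ by exploiting the good behaviour of $\bartau$ with respect to the inert structure and the elementary objects. Fix a Segal presheaf $\mathcal{O} \in \PSeg(\bbOV)$ and an object $\overline{I} \in \DFiV$ lying over $I \in \DFi$, and write $T := \tau(I) \in \bbO$, so that $\bartau(\overline{I})$ lies over $T$. I must show that the canonical map
\[ \bartau^{*}\mathcal{O}(\overline{I}) = \mathcal{O}(\bartau\overline{I}) \to \lim_{\psi \in \DFelIop} \mathcal{O}\bigl(\bartau(\psi^{*}\overline{I})\bigr) \]
is an equivalence.

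First I would record the two structural facts about $\bartau$ that make this work. Since $\tau$ is compatible with the inert--active factorization systems and $\bartau$ is obtained from it by pulling back along $\mathcal{V}_{\otimes} \to \bbG$, the functor $\bartau$ restricts to $\bartauint \colon \DFiVint \to \bbOVint$ and carries the Cartesian lift $\psi^{*}\overline{I} \to \overline{I}$ of an inert map $\psi$ to the Cartesian lift of the inert map $\tau(\psi)$; thus $\mathcal{O}(\bartau(\psi^{*}\overline{I})) \simeq \mathcal{O}(\tau(\psi)^{*}\bartau\overline{I})$. Second, by Remark~\ref{rmk bartauel eq} the restriction $\bartauel \colon \DFVel \isoto \bbOVel$ is an equivalence. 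Combined with the unenriched equivalence of elementary slice categories induced by $\tauel$ (the key point in the proof of \cite[Theorem 5.1]{ChuHaugsengHeuts}), and using that both $\DFVel$ and $\bbOVel$ are defined as the base changes of $\DFel$ and $\bbOel$ along $\mathcal{V}_{\otimes}$, this yields an equivalence of indexing diagrams identifying $\DFelIop$ with $\bbOelTop$ compatibly with the assignments $\psi \mapsto \bartau(\psi^{*}\overline{I})$ and $\varphi \mapsto \varphi^{*}\bartau\overline{I}$.

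With these identifications in place, the target limit becomes
\[ \lim_{\psi \in \DFelIop} \mathcal{O}\bigl(\bartau(\psi^{*}\overline{I})\bigr) \simeq \lim_{\varphi \in \bbOelTop} \mathcal{O}\bigl(\varphi^{*}\bartau\overline{I}\bigr), \]
and the latter is precisely $\mathcal{O}(\bartau\overline{I})$ because $\mathcal{O}$ satisfies the Segal condition on $\bbOV$ (Definition~\ref{def dend segal spaces}). Hence the canonical map is an equivalence and $\bartau^{*}\mathcal{O}$ is a Segal presheaf. The only nontrivial ingredient is the identification of the elementary slice categories, and this is inherited from the unenriched comparison of \cite{ChuHaugsengHeuts} after base change along $\mathcal{V}_{\otimes} \to \bbG$; everything else is the formal transport of the Segal condition along a functor that preserves inert Cartesian lifts and elementary objects.
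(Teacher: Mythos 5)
Your argument is correct and is essentially the paper's own proof: the paper simply writes ``As \cite[Lemma 4.5]{ChuHaugsengHeuts}'', and the content of that unenriched lemma is precisely the identification of the elementary slice categories $\DFelIop \simeq \bbO_{\name{el}/\tau(I)}^{\op}$ via $\tauel$, combined with the fact that $\bartau$ preserves inert maps and their Cartesian lifts so that the two Segal limits match term by term. The enriched decorations are handled exactly as you handle them, so there is nothing to add beyond noting that the key slice-category equivalence lives in \cite[Lemma 4.5]{ChuHaugsengHeuts} rather than in the proof of Theorem 5.1 there.
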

\begin{proof}
  As \cite[Lemma 4.5]{ChuHaugsengHeuts}.
\end{proof}

\begin{lemma}\label{lem:tauinteq}
  Composition with the functor $\bartauint \colon \DFiVint \to
  \bbOVint$ induces an equivalence \[\PSeg(\bbOVint) \isoto \PSeg(\DFiVint).\]
\end{lemma}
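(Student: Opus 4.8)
The plan is to identify both sides with presheaf categories on the \emph{elementary} subcategories and then invoke the fact, recorded in Remark~\ref{rmk bartauel eq}, that $\bartauel$ is an equivalence. Concretely, I would first show that restriction along the inclusion $\DFVel \hookrightarrow \DFiVint$ gives an equivalence $\PSeg(\DFiVint) \isoto \PSh(\DFVel)$, with inverse given by right Kan extension, and likewise that restriction along $\bbOVel \hookrightarrow \bbOVint$ gives an equivalence $\PSeg(\bbOVint) \isoto \PSh(\bbOVel)$.

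For the first equivalence I would argue as follows. The inert analogue of the implication (1)$\Leftrightarrow$(5) of Proposition~\ref{propn:SegDFVcond} says that a presheaf on $\DFiVint$ is a Segal presheaf precisely when it is the right Kan extension of its restriction to $\DFVel$; the existence of this right Kan extension follows from Lemma~\ref{lem:cofslice}, which identifies (up to a cofinal comparison) the relevant slice category of elementary objects mapping to a fixed $\overline{I}$ with an essentially small category of inert maps out of edges and corollas. Since the inclusion $\DFVel \hookrightarrow \DFiVint$ is fully faithful, the right Kan extension restricts back to the identity, so restriction is fully faithful on Segal presheaves; and because the right Kan extension of an \emph{arbitrary} presheaf on $\DFVel$ is --- by the same characterization --- a Segal presheaf, restriction is also essentially surjective. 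The same argument, using the dendroidal analogue of Proposition~\ref{propn:SegDFVcond}, yields $\PSeg(\bbOVint) \isoto \PSh(\bbOVel)$.

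To conclude I would assemble the square
\[
\begin{tikzcd}
\PSeg(\bbOVint) \arrow{r}{\bartauint^*} \arrow{d} & \PSeg(\DFiVint) \arrow{d} \\
\PSh(\bbOVel) \arrow{r}{\bartauel^*} & \PSh(\DFVel),
\end{tikzcd}
\]
whose vertical maps are the two restriction equivalences just established and which commutes because $\bartauint$ restricts on elementary objects to $\bartauel$ by definition. By Remark~\ref{rmk bartauel eq} the functor $\bartauel$ is an equivalence, hence so is the induced functor $\bartauel^*$ on presheaf \icats{}; together with the two vertical equivalences this forces the top map $\bartauint^*$ to be an equivalence, as desired.

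The only genuinely new input beyond the unenriched case of \cite{ChuHaugsengHeuts} is the inert, $\mathcal{V}$-labelled version of the ``Segal $=$ right Kan extension'' characterization, and I expect verifying this to be the main point. It should follow from Lemma~\ref{lem:cofslice} exactly as in the proof of Proposition~\ref{propn:SegDFVcond}, the one thing to check being that replacing $\DFV$ by its full subcategory $\DFiV$ does not affect the Segal condition on the inert subcategory --- which is clear, since all elementary objects and all inert maps out of them already lie in $\DFiV$, so the slice categories over a fixed object, and hence the cofinality argument, are unchanged.
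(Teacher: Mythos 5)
Your proposal is correct and follows essentially the same route as the paper, whose proof simply defers to the unenriched case in \cite{ChuHaugsengHeuts}: there, too, one identifies $\PSeg(\bbOVint)$ and $\PSeg(\DFiVint)$ with presheaves on the elementary subcategories via the right-Kan-extension characterization of the Segal condition (using the cofinality statement of Lemma~\ref{lem:cofslice}) and then invokes the equivalence $\bartauel \colon \DFVel \isoto \bbOVel$. Your closing observation --- that passing from $\DFVint$ to $\DFiVint$ changes nothing because all elementary objects and the relevant slice categories already live in $\DFiV$ --- is exactly the point that makes the enriched and restricted version go through verbatim.
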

\begin{proof}
  As \cite[Lemma 5.2]{ChuHaugsengHeuts}.
\end{proof}

We let $\barj_{\bbO}$ and $\barj_{\DFi}$ denote the inclusions $\bbOVint
\to \bbOV$ and $\DFiVint \to \DFiV$, respectively. Moreover, we write
$\overline{L}_{\bbO}$ and $\overline{L}_{\DFi}$ for the localizations
$\PSh(\bbOV) \to \PSeg(\bbOV)$ and $\PSh(\DFiV) \to \PSeg(\DFiV)$,
respectively.

\begin{lemma}\label{lem:barjmonad}\ 
  \begin{enumerate}[(i)]
  \item The functor $\barj_{\bbO}^{*} \colon \PSeg(\bbOV) \to
    \PSeg(\bbOVint)$ has a left adjoint
    $\overline{F}_{\bbO}:=\overline{L}_{\bbO}\barj_{\bbO,!}$, and the adjunction
    $\overline{F}_{\bbO} \dashv \barj_{\bbO}^{*}$ is
    monadic.
  \item The functor $\barj_{\DFi}^{*} \colon \PSeg(\DFiV) \to
    \PSeg(\DFiVint)$ has a left adjoint
    $\overline{F}_{\DFi} := \overline{L}_{\DFi}\barj_{\DFi,!}$, and the adjunction
    $\overline{F}_{\DFi} \dashv \barj_{\DFi}^{*}$ is
    monadic.
  \end{enumerate}
\end{lemma}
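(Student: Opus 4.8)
The plan is to prove both parts by the same argument, so I would describe only (i) and obtain (ii) by replacing $\bbO$ with $\DFi$ throughout: the only structural inputs are that $\barj_{\bbO}\colon\bbOVint\hookrightarrow\bbOV$ is a subcategory containing every object, and that the Segal condition on $\bbOV$ is phrased entirely in terms of inert maps and elementary objects; both hold verbatim for $\barj_{\DFi}\colon\DFiVint\hookrightarrow\DFiV$. The basic observation underlying everything is that, since the Segal condition at an object $\overline{T}$ refers only to the values of a presheaf at the elementary objects $\psi^{*}\overline{T}$ and to the inert maps among them, a presheaf $P\in\PSh(\bbOV)$ is a Segal presheaf \emph{if and only if} its restriction $\barj_{\bbO}^{*}P\in\PSh(\bbOVint)$ is one. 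In particular $\barj_{\bbO}^{*}$ carries $\PSeg(\bbOV)$ into $\PSeg(\bbOVint)$.

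Granting this, the existence of the left adjoint is formal. For $F\in\PSeg(\bbOVint)$ and $G\in\PSeg(\bbOV)$ I would compute
\[ \Map_{\PSeg(\bbOV)}(\overline{L}_{\bbO}\barj_{\bbO,!}F, G) \simeq \Map_{\PSh(\bbOV)}(\barj_{\bbO,!}F, G) \simeq \Map_{\PSh(\bbOVint)}(F, \barj_{\bbO}^{*}G) \simeq \Map_{\PSeg(\bbOVint)}(F, \barj_{\bbO}^{*}G), \]
using in turn that $G$ is local, the presheaf adjunction $\barj_{\bbO,!}\dashv\barj_{\bbO}^{*}$, and the full faithfulness of $\PSeg(\bbOVint)\hookrightarrow\PSh(\bbOVint)$. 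This identifies $\overline{F}_{\bbO}=\overline{L}_{\bbO}\barj_{\bbO,!}$ as left adjoint to $\barj_{\bbO}^{*}$.

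For monadicity I would invoke the Barr--Beck--Lurie theorem \cite[Theorem 4.7.3.5]{ha} for the right adjoint $\barj_{\bbO}^{*}$, which needs two inputs. Conservativity is immediate: $\barj_{\bbO}$ is a bijection on objects, so $\barj_{\bbO}^{*}$ records the value of a presheaf at every object, and equivalences of presheaves are detected objectwise. It thus remains to verify that $\barj_{\bbO}^{*}$ preserves colimits of $\barj_{\bbO}^{*}$-split simplicial objects. Given such an $X_{\bullet}$ in $\PSeg(\bbOV)$, write $P\in\PSh(\bbOV)$ for its levelwise colimit. Since $\barj_{\bbO}^{*}X_{\bullet}$ is split, its colimit in $\PSeg(\bbOVint)$ is absolute, hence preserved by the inclusion into $\PSh(\bbOVint)$; as $\barj_{\bbO}^{*}$ preserves objectwise colimits, that colimit is exactly $\barj_{\bbO}^{*}P$, which is therefore a Segal presheaf. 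By the basic observation this forces $P$ itself to be a Segal presheaf on $\bbOV$, so $\overline{L}_{\bbO}P\simeq P$ and $P$ is the colimit of $X_{\bullet}$ in $\PSeg(\bbOV)$; restricting back gives $\barj_{\bbO}^{*}|X_{\bullet}|\simeq\barj_{\bbO}^{*}P\simeq|\barj_{\bbO}^{*}X_{\bullet}|$.

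The step I expect to be the main obstacle — though it turns out to be mild once set up correctly — is precisely this colimit condition of Barr--Beck--Lurie, i.e.\ the compatibility of the Segal localization with restriction along $\barj_{\bbO}$ on split objects. The reason it is not harder is that Segal presheaves are \emph{not} closed under arbitrary geometric realizations (finite limits do not commute with colimits in $\xS$), so one genuinely must exploit the \emph{absoluteness} of split geometric realizations together with the fact that Segal-ness is detected on the inert part; these two facts combine to show the levelwise colimit is already local, sidestepping any delicate comparison between $\barj_{\bbO}^{*}\overline{L}_{\bbO}$ and $\overline{L}_{\bbO}\barj_{\bbO}^{*}$. With both hypotheses verified, $\overline{F}_{\bbO}\dashv\barj_{\bbO}^{*}$ is monadic, and the identical argument with $\DFi$ in place of $\bbO$ yields (ii).
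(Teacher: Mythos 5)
Your proof is correct and takes essentially the same route as the paper, which simply defers to the unenriched analogue in \cite[Lemma 5.3]{ChuHaugsengHeuts}: the left adjoint comes formally from $\barj_{!}\dashv\barj^{*}$ composed with the localization, and monadicity is the Barr--Beck--Lurie check via conservativity (bijectivity on objects) together with the observation that the Segal condition is detected after restriction to the inert subcategory, so the presheaf-level colimit of a $\barj^{*}$-split simplicial object is already local. Your identification of the split/absolute-colimit step as the only point requiring care is exactly right.
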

\begin{proof}
  As \cite[Lemma 5.3]{ChuHaugsengHeuts}.
\end{proof}

\begin{propn}
  The functor $\bartau^{*} \colon \PSh(\bbOV) \to \PSh(\DFiV)$
  preserves Segal equivalences.
\end{propn}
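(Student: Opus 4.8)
The plan is to use that $\bartau^*$, being restriction along $\bartau^{\op}$, preserves all colimits, so that the class of maps in $\PSh(\bbOV)$ which $\bartau^*$ sends to Segal equivalences is strongly saturated; it therefore suffices to treat a generating family of Segal equivalences. By the analogue for $\bbOV$ of Proposition~\ref{propn:SegDFVcond} these are the maps $\overline{T}_{\Seg} := \colim_{\overline{C}\in\bbOelTop}\overline{C} \to \overline{T}$ for $\overline{T} \in \bbOV$, so the whole statement reduces to showing that $\bartau^*(\overline{T}_{\Seg}) \to \bartau^*(\overline{T})$ is a Segal equivalence in $\PSh(\DFiV)$ for each such $\overline{T}$. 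Since $\bartau^*$ preserves colimits, the source is $\colim_{\overline{C}\in\bbOelTop}\bartau^*(\overline{C})$, and (in the case $\mathcal{U}$ small, where $\bbOU$ is essentially small) testing against a Segal presheaf $\mathcal{P}$ and adjoining shows that this is exactly the assertion that the right adjoint $\bartau_*$, given by right Kan extension along $\bartau^{\op}$, carries Segal presheaves on $\DFiU$ to Segal presheaves on $\bbOU$; I would use this reformulation as the conceptual guide.

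To carry this out I would argue as in \cite[\S 5]{ChuHaugsengHeuts}, decorating the unenriched argument with the $\mathcal{V}$-labels. The two inputs already available are that $\bartau$ preserves inert maps, hence restricts to $\bartauint \colon \DFiVint \to \bbOVint$ and makes the evident square of restriction functors commute, and that $\bartauel \colon \DFVel \to \bbOVel$ is an equivalence by Remark~\ref{rmk bartauel eq}. Using the characterization of Segal presheaves as those whose restriction to the inert subcategory is right Kan extended from the elementary subcategory (the analogue of Proposition~\ref{propn:SegDFVcond}(5)), the verification amounts to a Beck--Chevalley comparison: restricting $\bartau^*(\overline{T})$ along $\DFiVint \hookrightarrow \DFiV$ and expressing it as a right Kan extension from $\DFVel$ should match, under the equivalence $\bartauel$, the corresponding right Kan extension computing $\overline{T}_{\Seg} \to \overline{T}$ on $\bbOV$.

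The main obstacle is precisely this compatibility, which at bottom is a cofinality statement about the comma categories of $\bartau$ relative to the elementary trees, handled in the style of Lemma~\ref{lem:cofslice}; this is the genuinely combinatorial part, and it is exactly where the content of \cite{ChuHaugsengHeuts} enters. The reason the enriched case requires no new combinatorics is that $\DFiV$, $\bbOV$ and $\bartau$ are all obtained by base change of $\DFi$, $\bbO$ and $\tau$ along $\mathcal{V}_{\otimes} \to \bbG$ (via $\CorDF^{\op}$ and $\CorO^{\op}$): the relevant comma categories are therefore pulled back from the unenriched ones, so their weak contractibility --- and hence the required cofinality --- is inherited, with the $\mathcal{V}$-labels contributing only the extra data of a Cartesian fibration, exactly as in Lemma~\ref{lem:cofslice}. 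Combined with the preceding lemma that $\bartau^*$ preserves Segal presheaves, this yields that $\bartau^*$ preserves Segal equivalences.
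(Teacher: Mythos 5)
Your reduction to the generating Segal equivalences $\overline{T}_{\Seg} \to \overline{T}$ and the reformulation in terms of $\bartau_{*}$ preserving Segal presheaves are both correct, and the first step matches the paper (the reformulation is essentially what the paper records afterwards as Corollary~\ref{cor:bartauRKE}). The gap is in the central step. You assert that the required comparison is ``a cofinality statement about the comma categories of $\bartau$ relative to the elementary trees'' and that it is inherited from the unenriched case because those comma categories are pulled back along $\mathcal{V}_{\otimes} \to \bbG$. Neither half of this is justified, and I do not believe either holds as stated. The value $\bartau_{*}P(\overline{T})$ is a limit of the non-constant diagram $P$ over $\DFiVop \times_{\bbOVop} (\bbOVop)_{\overline{T}/}$, whose objects involve arbitrary (not just inert) morphisms of trees; comparing this with the Segal limit over $\bbOelTop$ is not a weak-contractibility statement, and the relevant functors between comma categories are not cofinal --- this is precisely why the unenriched comparison in \cite{ChuHaugsengHeuts} is proved by induction rather than by cofinality. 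Moreover, the fibres of the enriched comma category over the unenriched one consist of $\mathcal{V}$-labellings together with morphisms of labellings, so it is not ``the unenriched comma category with a Cartesian fibration on top'' in the sense of Lemma~\ref{lem:cofslice}; that lemma applies to the inert slices of Proposition~\ref{propn:SegDFVcond}(5), where one can restrict to Cartesian morphisms, not to the full comma category of $\bartau$.

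The paper's proof supplies two ingredients your sketch omits. First, an induction on the number of vertices of $T$, using the one-level decomposition $\Dext\overline{T}$ from \cite{ChuHaugsengHeuts}: the generating map is factored through $(\Dext\overline{T})_{\Seg} \to \Dext\overline{T} \to \overline{T}$, and the first map is a colimit of generating Segal equivalences for strictly smaller trees, handled by the inductive hypothesis. Second, the actual transfer mechanism from the unenriched to the enriched setting: $\Dext\overline{T} \to \overline{T}$ is a pullback of $\pi^{*}\Dext T \to \pi^{*}T$, the map $\tau^{*}\Dext T \to \tau^{*}T$ is inner anodyne in $\PSh(\DFi)$ (an intermediate result extracted from the proof in \cite{ChuHaugsengHeuts}, not the final comparison theorem), and inner anodyne maps --- being a weakly saturated class --- pull back via Proposition~\ref{propn:pbwsat} to inner anodyne maps in $\PSh(\DFiV)$, which are Segal equivalences by Proposition~\ref{propn:iaDFV}. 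So the base-change principle you are reaching for is real, but it operates at the level of weakly saturated classes of maps of presheaves, not at the level of comma categories, and it only handles the one-level piece of the decomposition; the rest genuinely needs the induction. As written, your argument defers exactly the content of the proposition to an unproved claim.
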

\begin{proof}
  We need to prove that for $\overline{T}$ in $\bbOV$, the map
  $\tau^{*}\overline{T}_{\Seg} \to \tau^{*}\overline{T}$ is a Segal
  equivalence in $\PSh(\DFiV)$. We prove this by inducting on the
  number of vertices of $\overline{T}$, noting that the statement is
  vacuous if $\overline{T}$ has zero or one vertices.

  For $T$ in $\bbO$, let $\Dext T$ and $(\Dext T)_{\Seg}$ be as in
  \cite[Definition 5.7]{ChuHaugsengHeuts}. Writing $\pi$ for the
  projection $\bbOV \to \bbO$, we then define $\Dext \overline{T}$ for
  $\overline{T}$ in $\bbOV$ by the pullback square
  \nolabelcsquare{\Dext \overline{T}}{\overline{T}}{\pi^* \Dext
    T}{\pi^* T.}  We also define $(\Dext \overline{T})_{\Seg}$
  similarly. By \cite[Lemma 5.8]{ChuHaugsengHeuts} the natural map
  $(\Dext T)_{\Seg} \to T_{\Seg}$ is an equivalence; since pullbacks
  in $\PSh(\bbOV)$ preserve colimits, we see that the natural map
  $(\Dext \overline{T})_{\Seg} \to \overline{T}_{\Seg}$ is a pullback
  of this map, and hence this is also an equivalence. We have a
  commutative square
  \nolabelcsquare{(\Dext \overline{T})_{\Seg}}{\Dext
    \overline{T}}{\overline{T}_{\Seg}}{\overline{T}.}
  Here the upper horizontal morphism is a colimit of generating Segal
  equivalences for trees with fewer vertices than $\overline{T}$, and
  is therefore mapped to a Segal equivalence in $\PSh(\DFiV)$ by the
  inductive hypothesis. It therefore suffices to show that
  $\bartau^{*}\Dext \overline{T} \to \bartau^{*}\overline{T}$ is a
  Segal equivalence. If $\psi$ denotes the projection $\DFiV \to
  \DFi$, then as $\bartau^{*}$ preserves pullbacks we have a Cartesian
  square
  \nolabelcsquare{\bartau^{*}\Dext
    \overline{T}}{\bartau^{*}\overline{T}}{\psi^* \tau^* \Dext
    T}{\psi^* \tau^* T.}
  The proof of \cite[Proposition 5.6]{ChuHaugsengHeuts} shows that
  $\tau^{*} \Dext T  \to \tau^{*} T$ is an inner anodyne map in
  $\PSh(\DFi)$. By Proposition~\ref{propn:pbwsat} it follows that
  $\bartau^{*}\Dext \overline{T} \to \bartau^{*}\overline{T}$ is an
  inner anodyne map in $\PSh(\DFiV)$, and hence a Segal equivalence by
  Proposition~\ref{propn:iaDFV}.
\end{proof}

As an immediate consequence, we get:
\begin{cor}\label{cor:bartauRKE}
  The functor $\bartau_{*}$ given by right Kan extension along
  $\bartau$ restricts to a functor $\bartau_{*}\colon  \PSeg(\DFiV)
  \to \PSeg(\bbOV)$, right adjoint to $\bartau^{*}$.
\end{cor}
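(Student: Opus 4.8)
The plan is to deduce this formally from the preceding proposition, which shows that $\bartau^{*}$ preserves Segal equivalences, so that essentially no new work is needed. First I would recall that $\bartau^{*} \colon \PSh(\bbOV) \to \PSh(\DFiV)$ is given by composition with $\bartau^{\op}$, and hence admits a right adjoint $\bartau_{*}$ computed by right Kan extension along $\bartau^{\op}$. This right Kan extension exists because the relevant comma categories are essentially small, so that the defining limits exist in $\mathcal{S}$, exactly as in the existence argument for $\bari_{*}$ in Lemma~\ref{lem:bariSeg}(ii). It then only remains to check that this adjunction restricts to the reflective subcategories of Segal presheaves.

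Next I would invoke the standard criterion for an adjunction to restrict to a pair of localizations. Since $\PSeg(\bbOV) \subseteq \PSh(\bbOV)$ and $\PSeg(\DFiV) \subseteq \PSh(\DFiV)$ are the full subcategories of objects local with respect to the Segal equivalences, for any Segal presheaf $\mathcal{O} \in \PSeg(\DFiV)$ and any Segal equivalence $f$ in $\PSh(\bbOV)$ the adjunction $\bartau^{*} \dashv \bartau_{*}$ provides a natural equivalence
\[ \Map_{\PSh(\bbOV)}(f, \bartau_{*}\mathcal{O}) \simeq \Map_{\PSh(\DFiV)}(\bartau^{*}f, \mathcal{O}). \]
By the preceding proposition $\bartau^{*}f$ is again a Segal equivalence, and since $\mathcal{O}$ is a Segal presheaf the right-hand side is an equivalence. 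Thus $\bartau_{*}\mathcal{O}$ is local with respect to every Segal equivalence, i.e.\ $\bartau_{*}\mathcal{O} \in \PSeg(\bbOV)$, which shows that $\bartau_{*}$ carries Segal presheaves to Segal presheaves.

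Finally, combining this with the lemma above showing that $\bartau^{*}$ preserves Segal presheaves, both adjoints preserve the full subcategories of Segal presheaves, so the adjunction restricts to an adjunction $\bartau^{*} \colon \PSeg(\bbOV) \rightleftarrows \PSeg(\DFiV) : \bartau_{*}$, which is precisely the assertion. I do not expect any genuine obstacle here: the substantive content is the preservation of Segal equivalences established in the preceding proposition, and the present statement follows purely formally from that together with the defining adjunction. The only point that warrants a word of care is the existence of the right Kan extension $\bartau_{*}$, which is guaranteed by the essential smallness of the slice categories over $\bbOV$, exactly as in the treatment of $\bari_{*}$.
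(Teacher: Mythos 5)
Your proposal is correct and follows exactly the argument the paper intends: the paper states this corollary as an ``immediate consequence'' of the preceding proposition, the point being precisely that a right adjoint to a functor preserving Segal equivalences must preserve Segal (i.e.\ local) presheaves, combined with the earlier lemma that $\bartau^{*}$ preserves Segal presheaves. Your additional remark on the existence of the right Kan extension via essential smallness of the slices is a reasonable point of care that the paper leaves implicit.
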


\begin{lemma}\label{lem:rightadjeqce}
  The canonical map $\bartauint^{*}\barj_{\bbO}^{*}\bartau_{*}F \simeq
  \barj_{\DFi}^{*}\bartau^{*}\bartau_{*}F \to \barj_{\DFi}^{*}F$ is an
  equivalence for $F \in \PSeg(\DFiV)$.
\end{lemma}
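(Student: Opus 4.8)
The plan is to identify the displayed map with the inert restriction of the counit of the adjunction $\bartau^{*}\dashv\bartau_{*}$ and then to verify it on elementary objects. Since $\bartau\circ\barj_{\DFi}\simeq\barj_{\bbO}\circ\bartauint$ by the construction of $\bartauint$, the stated identification $\bartauint^{*}\barj_{\bbO}^{*}\bartau_{*}F\simeq\barj_{\DFi}^{*}\bartau^{*}\bartau_{*}F$ holds automatically, and the map in question is $\barj_{\DFi}^{*}$ applied to the counit $\varepsilon_{F}\colon\bartau^{*}\bartau_{*}F\to F$ of Corollary~\ref{cor:bartauRKE}. First I would note that both $\barj_{\DFi}^{*}\bartau^{*}\bartau_{*}F$ and $\barj_{\DFi}^{*}F$ are Segal presheaves on $\DFiVint$: the first because $\bartau_{*}F$ is a Segal presheaf by Corollary~\ref{cor:bartauRKE} and $\bartau^{*}$ preserves Segal presheaves, the second because $F\in\PSeg(\DFiV)$. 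By the inert variant of Proposition~\ref{propn:SegDFVcond}(5) --- which is itself where Lemma~\ref{lem:cofslice} enters --- such a presheaf is the right Kan extension of its restriction to the elementary subcategory $\DFVel$, so a map between two of them is an equivalence as soon as it is one after restriction to $\DFVel$. It therefore suffices to check that $\varepsilon_{F}$ is an equivalence on elementary objects.

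On $\DFVel$ I would use the equivalence $\bartauel\colon\DFVel\isoto\bbOVel$ of Remark~\ref{rmk bartauel eq} to reduce to showing that the canonical map $(\bartau_{*}F)(\bartau\overline{E})\to F(\overline{E})$ is an equivalence for $\overline{E}$ an edge or a labelled corolla. Expanding the left-hand side through the pointwise formula for the right Kan extension $\bartau_{*}$ presents it as a limit of $F$ over the comma \icat{} of maps $\bartau\overline{J}\to\bartau\overline{E}$ with $\overline{J}\in\DFiV$. Factoring each such map through the inert--active factorization system on $\bbOV$ (Remark~\ref{rem factorization system} together with its dendroidal analogue), and using that $F$ is a Segal presheaf and that $\tau$ carries active maps to active maps, the active part of each map can be absorbed, so that this comma \icat{} is computed cofinally by its subcategory of inert maps into $\bartau\overline{E}$. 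For an elementary target $\bartau\overline{E}$ the only inert maps into it come from $\overline{E}$ itself and from its edges, and $\overline{E}$ is a terminal object of this inert slice; hence the limit collapses to $F(\overline{E})$, and the collapse is precisely $\varepsilon_{F}$ at $\overline{E}$.

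This is the enriched counterpart of the computation in the proof of \cite[Theorem 5.1]{ChuHaugsengHeuts}; the $\mathcal{V}$-labels are carried along purely formally, since all the functors in sight are pullbacks of $\tau$, $\tauint$ and $\tauel$ along the Cartesian fibrations to $\DF$, $\DFi$ and $\bbO$, and the labels of corresponding elementary objects agree under $\bartauel$. The main obstacle is the cofinality step in the second paragraph --- verifying that the inert maps into $\bartau\overline{E}$ are indeed cofinal in the full comma \icat{}, i.e.\ that the active directions are genuinely absorbed by the Segal property of $F$ and the compatibility of $\tau$ with the inert--active factorizations (\cite[Lemmas 4.4, 4.5]{ChuHaugsengHeuts}); this combinatorial point carries all the content, the surrounding reductions being formal. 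Alternatively, one may package the whole statement as the Beck--Chevalley equivalence $\barj_{\bbO}^{*}\bartau_{*}\simeq\bartauint_{*}\barj_{\DFi}^{*}$ for the defining square (using that $\bartauint^{*}$ is an equivalence by Lemma~\ref{lem:tauinteq}), and then pass to mates to reduce it to the corresponding identity $\barj_{\DFi,!}\bartauint^{*}\simeq\bartau^{*}\barj_{\bbO,!}$ for the left Kan extensions; this formulation rests on exactly the same base-change property of the square.
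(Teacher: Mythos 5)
Your overall reduction is correct and is in fact the strategy behind the unenriched statement that the paper's proof simply cites (\cite[Lemma 5.5]{ChuHaugsengHeuts}): identify the map with the inert restriction of the counit of $\bartau^{*}\dashv\bartau_{*}$, note that both sides are Segal presheaves on $\DFiVint$ and hence right Kan extended from the elementary subcategory, and thereby reduce to the pointwise formula for $\bartau_{*}F$ at an edge or labelled corolla. The gap is in how you propose to evaluate that pointwise limit, and it is not just an omitted verification --- the mechanism you describe is misdirected. By the time you are computing $(\bartau_{*}F)(\bartau\overline{E})$ for elementary $\overline{E}$, the Segal property of $F$ has already been spent (it was used to reduce to elementary objects) and can play no further role: what you actually need is the purely combinatorial assertion that $(\overline{E},\id)$ is a \emph{terminal} object of the full comma \icat{} $\DFiV\times_{\bbOV}(\bbOV)_{/\bartau\overline{E}}$, equivalently that $\Map_{\DFiV}(\overline{J},\overline{E})\to\Map_{\bbOV}(\bartau\overline{J},\bartau\overline{E})$ is an equivalence for \emph{every} $\overline{J}$, i.e.\ that every map $\bartau\overline{J}\to\bartau\overline{E}$ in $\bbOV$ lifts uniquely to $\DFiV$. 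Only then does the limit collapse to $F(\overline{E})$ (and it then does so for an arbitrary presheaf $F$). Terminality of $\overline{E}$ in the \emph{inert} slice, which is what you check, gives nothing by itself; and coinitiality of the inert subcategory in the full comma \icat{} is a \emph{consequence} of this terminal object, not a route to it. In particular, ``the active part of each map can be absorbed using that $F$ is a Segal presheaf'' cannot be turned into an argument, since the claim to be proved does not depend on $F$, and the fact that $\tau$ \emph{preserves} active maps is not what is needed --- the issue is whether active and degenerate maps of underlying trees \emph{lift uniquely} against $\tau$ when the target is elementary.

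That unique-lifting statement is where all the content of the lemma sits, and it requires a hands-on check that your proposal never formulates: for $n\geq 2$, the objects of $\DFi\times_{\bbO}\bbO_{/C_{n}}$ are exactly the linear level-trees (whose structure map factors through an edge of $C_{n}$) and the level-subdivided corollas $([m],\mathbf{n}\xrightarrow{\sim}\cdots\xrightarrow{\sim}\mathbf{n}\to\mathbf{1}\to\cdots\to\mathbf{1})$, and one must match the maps to $\mathfrak{c}_{n}$ in $\DF$ (degeneracies onto edges in the first case, $\Sigma_{n}$-twists of the Cartesian lift of the active map $[m]\to[1]$ in the second) bijectively with the maps of underlying trees to $C_{n}$. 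The Beck--Chevalley repackaging at the end is an equivalent reformulation and does not supply this input either. So the surrounding reductions are sound and the statement is true, but the proof as written is missing its only substantive step, and the heuristic offered in its place points at the wrong ingredient.
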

\begin{proof}
  As \cite[Lemma 5.5]{ChuHaugsengHeuts}.
\end{proof}

\begin{proof}[Proof of Theorem~\ref{thm:PSegeq}]
  We have a commutative square
  \csquare{\PSeg(\bbOV)}{\PSeg(\DFiV)}{\PSeg(\bbOVint)}{\PSeg(\DFiVint),}{\bartau^*}{\barj_{\bbO}^*}{\barj_{\DFi}^*}{\bartauint^*}
  where the lower horizontal morphism is an equivalence by
  Lemma~\ref{lem:tauinteq} and the vertical morphisms are monadic
  right adjoints by Lemma~\ref{lem:barjmonad}. It follows from
  \cite[Corollary 4.7.4.16]{ha} that to show $\bartau^{*}$ is an
  equivalence it is enough to prove that the canonical natural
  transformation $\overline{F}_{\DFi}\bartauint^{*} \to
  \bartau^{*}\overline{F}_{\bbO}$ is an equivalence. But by
  Corollary~\ref{cor:bartauRKE} both functors are left adjoints, so
  this transformation is an equivalence \IFF{} the corresponding
  transformation of right adjoints $\barj_{\bbO}^{*}\bartau_{*} \to
  (\bartauint^{*})^{-1} \barj_{\DFi}^{*f}$ is an equivalence, which it
  is by Lemma~\ref{lem:rightadjeqce}.
\end{proof}

\begin{defn}
  Let $u_{\bbO}^*\colon \PSh(\bbO)\to \PSh(\simp)$ be the functor
  induced by the inclusion $u_{\bbO} \colon \simp \to \bbO$ of
  Remark~\ref{rem u}. If $\mathcal{O}$ is a Segal presheaf on $\bbO$,
  then $u_{\bbO}^{*}\mathcal{O}$ is a Segal space, and we say that $\xxO$ is
  \emph{complete} if $u_{\bbO}^{*}\mathcal{O}$ is a complete Segal
  space. Similarly, we say that a continuous Segal presheaf
  $\mathcal{O} \in \PCS(\bbOV)$ is complete if
  $\overline{u}_{\bbO}^{*}\mathcal{O} \in \PCS(\simp^{\mathcal{V}})$
  is complete, where
  $\overline{u}_{\bbO} \colon \simp^{\mathcal{V}} \to \bbOV$ denotes
  the pullback of $u_{\bbO}$ along the projection $\bbOV \to \bbO$.
\end{defn}
Since $u_{\bbO} = \tau \circ u$, the complete objects correspond
under the equivalence $\bartau^{*}$, giving:
\begin{cor}
  Let $\mathcal{V}$ be a presentably symmetric monoidal
  \icat{}. Composition with $\bartau$ induces an equivalence
  \[ \PCCS(\bbOV) \isoto \PCCS(\DFiV).\]
\end{cor}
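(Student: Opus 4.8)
The plan is to deduce this purely formally from the equivalence $\bartau^{*} \colon \PCS(\bbOV) \isoto \PCS(\DFiV)$ of the preceding corollary, by checking that $\bartau^{*}$ identifies the two notions of completeness. Since by definition an object $\mathcal{O} \in \PCS(\bbOV)$ is complete precisely when its underlying enriched \icat{} $\overline{u}_{\bbO}^{*}\mathcal{O} \in \PCS(\simp^{\mathcal{V}})$ is complete, and similarly for $\DFiV$ using the pullback $\overline{u} \colon \simp^{\mathcal{V}} \to \DFiV$ of $u \colon \simp \to \DFi$, it is enough to compare the two ``underlying \icat{}'' functors along $\bartau^{*}$. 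The restriction of an equivalence of \icats{} to full subcategories that correspond to each other is again an equivalence, so this comparison will finish the proof.

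The key step is to lift the unenriched identity $u_{\bbO} = \tau \circ u$ (recorded just before the corollary) to the enriched level. First I would observe that $\overline{u}$, $\bartau$, and $\overline{u}_{\bbO}$ are all defined by pullback from $u$, $\tau$, and $u_{\bbO}$ along the respective projections to $\DFi$, $\DF$, and $\bbO$ (which are in turn compatible over $\bbG$ via the corolla functors $\CorDF$ and $\CorO$). Pasting the defining pullback squares and using $u_{\bbO} = \tau \circ u$ then gives the identity $\overline{u}_{\bbO} \simeq \bartau \circ \overline{u}$ of functors $\simp^{\mathcal{V}} \to \bbOV$. Passing to presheaves, this yields a natural equivalence
\[ \overline{u}^{*}\bartau^{*} \simeq (\bartau \circ \overline{u})^{*} \simeq \overline{u}_{\bbO}^{*}. \]

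With this identity in hand, for any $\mathcal{O} \in \PCS(\bbOV)$ the underlying enriched \icat{} of $\bartau^{*}\mathcal{O}$ is $\overline{u}^{*}\bartau^{*}\mathcal{O} \simeq \overline{u}_{\bbO}^{*}\mathcal{O}$, which is exactly the object of $\PCS(\simp^{\mathcal{V}})$ whose completeness defines completeness of $\mathcal{O}$. Hence $\mathcal{O}$ is complete if and only if $\bartau^{*}\mathcal{O}$ is, so $\bartau^{*}$ restricts to an equivalence $\PCCS(\bbOV) \isoto \PCCS(\DFiV)$. I do not expect a genuine obstacle here, as the argument is entirely formal once the equivalence on continuous Segal presheaves is known; the only point demanding care is the pasting of pullback squares that produces $\overline{u}_{\bbO} \simeq \bartau \circ \overline{u}$, which is a routine diagram chase over $\bbG$ using the compatibility of $\CorO \circ \tau^{\op}$ with $\CorDF$ built into the construction of $\tau$ in \cite{ChuHaugsengHeuts}.
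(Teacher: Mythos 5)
Your proposal is correct and is essentially the paper's own argument: the paper justifies this corollary in one line by noting that $u_{\bbO} = \tau \circ u$, so that $\overline{u}^{*}\bartau^{*} \simeq \overline{u}_{\bbO}^{*}$ and the complete objects correspond under the equivalence $\bartau^{*}$ of the preceding corollary. Your fleshing-out of the pasting of pullback squares is the routine verification the paper leaves implicit.
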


\section{Rectification of Enriched $\infty$-Operads}\label{sec alg}
In this section we relate our homotopy theory of enriched \iopds{} to
the existing literature on model categories of enriched operads. In
\S\ref{sec opd} we relate our dendroidal model to algebras for the
($\infty$-)operads for coloured operads, and in \S\ref{subsec rect} we
prove a rectification result for \iopds{} enriched in a symmetric
monoidal \icat{} coming from a nice symmetric monoidal model category.

\subsection{Operads for Operads}\label{sec opd} 
In this subsection we will prove that for a set $S$, algebras for
$\bbO_{S}^{\op}$ in a symmetric monoidal \icat{} are equivalent to
algebras for the operad for $S$-coloured operads. We will do this by
showing that $\bbO_{S}^{\op}$ is an ``approximation'' to this operad,
in the sense of \cite[\S 2.3.3]{ha}. Our first task is therefore to
give a convenient definition of these operads, which requires some
observations about certain pushouts in $\bbO$ and $\bbO_{S}$:

\begin{lemma}\label{lem:bbOsubst}
  Given an active morphism $C_{i} \to T$ and an inert morphism $C_{i}
  \to S$ in $\bbO$, the pushout
  \nolabelcsquare{C_i}{T}{S}{S \amalg_{C_{i}} T}
  exists in $\bbO$.
\end{lemma}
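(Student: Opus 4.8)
The plan is to construct the pushout explicitly as the tree obtained by \emph{substituting} $T$ for the corolla $C_i$ inside $S$, and then to verify the universal property directly using Kock's description of morphisms in $\bbO$ (Remark~\ref{rem kleisli}). First I would fix the combinatorial data: the inert map $C_i \to S$ exhibits $C_i$ as a subtree of $S$, hence singles out a vertex $v \in S_1$ together with its outgoing edge $e_0$ and its incoming edges $e_1,\ldots,e_i$, while the active map $C_i \to T$ exhibits $T$ as a tree with exactly $i$ leaves and a root, matched with $e_1,\ldots,e_i$ and $e_0$ respectively. I would then define $W := S \amalg_{C_i} T$ as the polynomial endofunctor with edge set $W_0 = S_0 \amalg \{\text{inner edges of }T\}$ (gluing the root and the leaves of $T$ onto $e_0,e_1,\ldots,e_i$) and vertex set $W_1 = (S_1 \setminus \{v\}) \amalg T_1$, and check that the tree axioms of Definition~\ref{defn:tree} hold: finiteness, injectivity of $s$ and $t$, existence of a unique root, and that the successor function eventually reaches the root. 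This is routine, since grafting a tree into a single vertex of another tree again yields a tree.

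By construction there is an inert embedding $T \hookrightarrow W$ and a map $S \to W$ sending $v$ to the subtree $T \subseteq W$ and every other vertex to itself; the latter is in fact active, so that the pushout of an active map along an inert one swaps the two classes of the factorization system. The square commutes because both composites $C_i \to W$ land on the subtree $T \subseteq W$ with matching root and leaves. It then remains to verify the universal property. Given a tree $U$ with maps $a \colon S \to U$ and $b \colon T \to U$ whose composites with $C_i \to S$ and $C_i \to T$ agree, the key observation is that, read through the Kleisli description, this agreement says precisely that the subtree $a_1(v) \subseteq U$ coincides with the subtree $\bigcup_{w \in T_1} b_1(w) \subseteq U$ obtained as the image of all of $T$ under $b$: both composites send the unique vertex of $C_i$ to this subtree, with the same marked root and leaf edges. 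I would then define $h \colon W \to U$ on edges by $a_0$ on $S_0$ and $b_0$ on the inner edges of $T$ (these agree on the glued edges $e_0,\ldots,e_i$ because the two composites agree on edges), and on vertices by $h_1(w) = a_1(w)$ for $w \in S_1 \setminus \{v\}$ and $h_1(w) = b_1(w)$ for $w \in T_1$.

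The main obstacle, and where I would concentrate the care, is checking that this assignment is a genuine morphism of polynomial endofunctors $W \to \overline{U}$, i.e.\ that the subtree-valued map $h_1$ is compatible with the incidences of $W$ across the substitution boundary. This is controlled exactly by the identity $a_1(v) = \bigcup_{w \in T_1} b_1(w)$: the subtrees assigned to the vertices of $T$ graft together inside $U$ in the same pattern in which they sit inside $W$, and along $e_0,\ldots,e_i$ they match the subtrees $a_1$ assigns to the neighbours of $v$ in $S$. Uniqueness is then immediate, since every edge and every vertex of $W$ lies in the image of $S$ or of $T$, so any map making the square commute must restrict to $a$ and $b$ and hence coincide with $h$.
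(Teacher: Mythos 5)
Your argument is correct in substance, but it takes a genuinely different route from the paper. The paper's proof never constructs the pushout by hand: it invokes Kock's result that the ``Segal diagram'' $\bbO_{\name{el}/S}^{\triangleright} \to \bbOint$ of elementary subtrees of $S$ is a colimit diagram, observes that this colimit is preserved by the passage from $\bbOint$ to $\bbO$ (composition with the free polynomial monad functor, a left adjoint), and then identifies $S \amalg_{C_i} T$ with the colimit of the same grafting diagram with the corolla $C_i$ replaced by $T$ --- which exists by Kock's proposition again. That argument buys brevity and uniformity (all the combinatorics is outsourced to the statement that trees are graftings of their corollas), at the cost of relying on the colimit-preservation step. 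Your approach instead builds the substituted tree $W$ explicitly and checks the universal property directly in the Kleisli description; this is longer but self-contained, and it correctly isolates the one point where real content lives, namely that agreement of the two composites out of $C_i$ is exactly the identity $a_1(v) = \bigcup_{w \in T_1} b_1(w)$ of subtrees of $U$, which is what makes the glued assignment $h_1$ a morphism of polynomial endofunctors. Your uniqueness argument (every vertex of $W$ lies in exactly one of $S_1\setminus\{v\}$ and $T_1$, and inertness of $T \hookrightarrow W$ pins down $h_1$ on $T_1$) is also fine.

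One small caveat: your formula $W_0 = S_0 \amalg \{\text{inner edges of } T\}$ tacitly assumes the root and the leaves of $T$ are pairwise distinct edges. In the degenerate case $T = \eta$ (forced to have $i=1$), the root and the unique leaf of $T$ coincide, so the gluing identifies $e_0$ with $e_1$ and $W_0$ is a quotient of $S_0$ rather than $S_0$ itself; the substitution then deletes the unary vertex $v$. The universal-property argument goes through verbatim after this adjustment, but you should state the construction so that it covers this case (the paper's colimit formulation handles it automatically).
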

\begin{proof}
  It follows from \cite[Proposition 1.1.19]{Kock} that for any tree
  $S$ the ``Segal diagram'' $\bbO_{\name{el}/S}^{\triangleright} \to \bbOint$ is a
  colimit diagram. The composite diagram
  $\bbO_{\name{el}/S}^{\triangleright} \to \bbO$ is therefore also a colimit, since this is
  given by composition with the free polynomial monad functor, which
  is a left adjoint. Thus the pushout $S \amalg_{C_{i}} T$, if it
  exists, is equivalent to the colimit of the diagram
  $\bbO_{\name{el}/S}^{\triangleright} \to \bbO$ obtained from the Segal diagram
  of $S$ by replacing the corolla $C_{i}$ by the tree $T$. This
  colimit is still given by grafting of trees, and so exists by
  \cite[Proposition 1.1.19]{Kock}.
\end{proof}

\begin{remark}\label{rmk:glueing}
  The tree $S \amalg_{C_{i}} T$ is obtained by substituting the
  corolla $C_{i}$ in $S$ by the tree $T$. We refer to this as
  \emph{substituting} the tree $T$ into $S$. If we are given also an
  active morphism $C_{j}\to U$ and an inert morphism $C_{j} \to T$,
  then this procedure is associative in the sense that
  $(S \amalg_{C_{i}} T) \amalg_{C_{j}} U$ is canonically isomorphic to
  $S \amalg_{C_{i}} (T \amalg_{C_{j}} U)$, since we have the following
  commutative diagram where all squares are pushouts:
  \[
  \begin{tikzcd}
  {} & C_{j}  \arrow{r} \arrow[hookrightarrow]{d} & U
  \arrow[hookrightarrow]{d} \\
  C_{i} \arrow{r} \arrow[hookrightarrow]{d} & T \arrow{r} \arrow[hookrightarrow]{d}& T
  \amalg_{C_{j}} U \arrow[hookrightarrow]{d}\\
  S \arrow{r} & S \amalg_{C_{i}} T \arrow{r} & X.
\end{tikzcd}
\]
Here $X$ can be identified with both
$(S \amalg_{C_{i}} T) \amalg_{C_{j}} U$ and
$S \amalg_{C_{i}} (T \amalg_{C_{j}} U)$.
\end{remark}

The analogous result also holds in $\bbO_X$ for any space $X$. To see this, we use:
\begin{lemma}\label{lem:rightfibpushout}
  Let $\mathcal{E} \to \mathcal{B}$ be a right fibration
  corresponding to a functor $F \colon \mathcal{B}^{\op} \to
  \mathcal{S}$. Suppose
  \csquare{a}{b}{b'}{c}{f}{f'}{g}{g'}
  is a pushout diagram in $\mathcal{B}$ and that $F$ takes this to a
  pullback square in $\mathcal{S}$. Then for any morphisms $\bar{b} \xfrom{\bar{f}}
  \bar{a} \xto{\bar{f}'} \bar{b'}$ lying over $b \from a \to b'$ there is a
  pushout square in $\mathcal{E}$ lying over the given pushout square in
  $\mathcal{B}$.
\end{lemma}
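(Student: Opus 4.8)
The plan is to pass through the straightening--unstraightening equivalence, which identifies the right fibration $p \colon \mathcal{E} \to \mathcal{B}$ with the total space of $F$. Under this identification an object of $\mathcal{E}$ over $y \in \mathcal{B}$ is a point $\bar{y} \in F(y)$, every morphism of $\mathcal{E}$ is $p$-Cartesian, and for $\bar{y}$ over $y$ and $\bar{x}$ over $x$ the mapping space is computed as
\[ \Map_{\mathcal{E}}(\bar{y}, \bar{x}) \simeq \Map_{\mathcal{B}}(y, x) \times_{F(y)} \{\bar{y}\}, \]
where the map $\Map_{\mathcal{B}}(y,x) \to F(y)$ sends $h$ to $F(h)(\bar{x})$. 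This is immediate from the fact that a morphism over $h$ factors through the Cartesian lift $F(h)(\bar{x}) \to \bar{x}$ via a path in the fibre $F(y)$; in particular a morphism $\bar{y} \to \bar{x}$ lying over $h$ is the same datum as a path from $\bar{y}$ to $F(h)(\bar{x})$ in $F(y)$. The whole statement thus becomes a manipulation of (finite) limits of spaces.

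First I would construct the candidate cocone point. By hypothesis $F$ carries the pushout square to a pullback, so $F(c) \simeq F(b) \times_{F(a)} F(b')$. The chosen lifts $\bar{f} \colon \bar{a} \to \bar{b}$ and $\bar{f}' \colon \bar{a} \to \bar{b'}$ correspond to paths $\bar{a} \simeq F(f)(\bar{b})$ and $\bar{a} \simeq F(f')(\bar{b'})$ in $F(a)$, which splice to a path $F(f)(\bar{b}) \simeq F(f')(\bar{b'})$. The triple consisting of $\bar{b}$, $\bar{b'}$ and this path is precisely a point $\bar{c}$ of $F(b) \times_{F(a)} F(b') \simeq F(c)$, together with identifications $F(g)(\bar{c}) \simeq \bar{b}$ and $F(g')(\bar{c}) \simeq \bar{b'}$; the latter provide morphisms $\bar{b} \to \bar{c}$ and $\bar{b'} \to \bar{c}$ over $g$ and $g'$, and, together with $\bar{f}, \bar{f}'$, a square in $\mathcal{E}$ lying over the given one in $\mathcal{B}$.

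It then remains to check this square is a pushout, i.e.\ that for every $\bar{x} \in \mathcal{E}$ the induced square of mapping spaces is a pullback. Applying the formula above and the fact that $c = b \amalg_{a} b'$ gives $\Map_{\mathcal{B}}(c, x) \simeq \Map_{\mathcal{B}}(b,x) \times_{\Map_{\mathcal{B}}(a,x)} \Map_{\mathcal{B}}(b',x)$, while the hypothesis on $F$ gives $F(c) \simeq F(b) \times_{F(a)} F(b')$. Substituting both into $\Map_{\mathcal{E}}(\bar{c}, \bar{x}) \simeq \Map_{\mathcal{B}}(c,x) \times_{F(c)} \{\bar{c}\}$ and rearranging the resulting iterated fibre product identifies it with $\Map_{\mathcal{E}}(\bar{b}, \bar{x}) \times_{\Map_{\mathcal{E}}(\bar{a},\bar{x})} \Map_{\mathcal{E}}(\bar{b'},\bar{x})$.

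The main obstacle here is purely one of coherence: one must verify that the two evident maps $\Map_{\mathcal{B}}(b,x) \to F(a)$ — one routed through $\Map_{\mathcal{B}}(a,x)$ and one through $F(b)$ — agree, together with the analogous compatibilities for $b'$, so that the resulting ``Fubini'' rearrangement of finite limits of spaces is legitimate. All of these follow from the functoriality of $F$ (for instance $F(h \circ f)(\bar{x}) \simeq F(f)(F(h)(\bar{x}))$), so once the full diagram of structure maps is set up carefully the verification is formal, and the square is a pushout in $\mathcal{E}$ as desired.
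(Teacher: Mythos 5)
Your proof is correct and follows essentially the same route as the paper's: construct $\bar{c}$ from the equivalence $\mathcal{E}_c \simeq F(c) \simeq F(b)\times_{F(a)}F(b')$, then verify the universal property by decomposing each $\Map_{\mathcal{E}}(-,\bar{x})$ over the corresponding mapping space in $\mathcal{B}$ and using the pushout in $\mathcal{B}$ together with the pullback of fibres. The paper packages the final step as checking that a square of mapping spaces is Cartesian fibrewise over each $\phi\colon c\to x$ (which makes the comparison map on fibres visibly the equivalence coming from $\mathcal{E}_c\simeq \mathcal{E}_b\times_{\mathcal{E}_a}\mathcal{E}_{b'}$ and so avoids the explicit limit-rearrangement coherence you flag), but this is only a difference in presentation.
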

\begin{proof}
  We have $\mathcal{E}_{c} \simeq
  \mathcal{E}_{b}\times_{\mathcal{E}_{a}} \mathcal{E}_{b'}$, and the
  morphisms $\bar{f}$, $\bar{f}'$ determine a point $\bar{c} \in
  \mathcal{E}_{c}$ and a commutative square
  \csquare{\bar{a}}{\bar{b}}{\bar{b}'}{\bar{c}}{\bar{f}}{\bar{f}'}{\bar{g}}{\bar{g}'}
  For $\bar{x}$ in $\mathcal{E}$ lying over $x \in \mathcal{B}$ we
  then have a commutative square
  \[
  \begin{tikzcd}
    \Map_{\mathcal{E}}(\bar{c}, \bar{x}) \arrow{r} \arrow{d} &
    \Map_{\mathcal{E}}(\bar{b}, \bar{x})
    \times_{\Map_{\mathcal{E}}(\bar{a}, \bar{x})}
    \Map_{\mathcal{E}}(\bar{b}', \bar{x}) \arrow{d} \\
    \Map_{\mathcal{B}}(c, x) \arrow{r}{\sim} & \Map_{\mathcal{B}}(b, x)
    \times_{\Map_{\mathcal{B}}(a, x)} \Map_{\mathcal{B}}(b', x).
  \end{tikzcd}
  \]
  Since the bottom horizontal morphism is an equivalence, to show the
  top horizontal morphism is an equivalence it suffices to prove this
  square is Cartesian. This is equivalent to the map between the
  fibres at any $\phi \colon c \to x$ being an equivalence. This map
  can be identified with
  \[ \Map_{\mathcal{E}_{c}}(\bar{c}, \phi^{*}\bar{x}) \to
  \Map_{\mathcal{E}_{b}}(\bar{b}, g^{*}\phi^{*}\bar{x})
  \times_{\Map_{\mathcal{E}_{a}}(\bar{a}, f^{*}g^{*}\phi^{*}\bar{x})}
  \Map_{\mathcal{E}_{b'}}(\bar{b}', g'^{*}\phi^{*}\bar{x}),\]
  which is an equivalence since $\mathcal{E}_{c}$ is a pullback.
\end{proof}

\begin{lemma}
  Let $X$ be a space. Given an active morphism $C_{i} \to T$ and an
  inert morphism $C_{i} \to S$ in $\bbO$, and morphisms $\tilde{C}_{i}
  \to \tilde{T}$ and $\tilde{C}_{i} \to \tilde{S}$ in $\bbO_{X}$ lying
  over these, the pushout
  \nolabelcsquare{\tilde{C}_i}{\tilde{T}}{\tilde{S}}{\tilde{S}
    \amalg_{\tilde{C}_{i}} \tilde{T}} 
  exists in $\bbO_{X}$ and forgets to a pushout in $\bbO$.
\end{lemma}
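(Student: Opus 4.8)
The plan is to obtain this directly from Lemma~\ref{lem:bbOsubst} and Lemma~\ref{lem:rightfibpushout}. First I would note that, since $\bbOX \to \bbO$ is the Cartesian fibration associated to the functor $i_{X} \colon \bbO^{\op} \to \xS$ valued in \emph{spaces}, it is in fact a right fibration classified by $i_{X}$. By Lemma~\ref{lem:bbOsubst} the pushout $S \amalg_{C_{i}} T$ exists in $\bbO$, so the active map $C_{i} \to T$ and the inert map $C_{i} \to S$ fit into a pushout square in $\bbO$. Lemma~\ref{lem:rightfibpushout} then applies to the right fibration $\bbOX \to \bbO$, the lifted data $\tilde{T} \leftarrow \tilde{C}_{i} \to \tilde{S}$, and this pushout square --- \emph{provided} $i_{X}$ sends the square to a pullback square in $\xS$ --- and its conclusion furnishes a pushout square in $\bbOX$ lying over the pushout square in $\bbO$. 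This simultaneously gives the existence of $\tilde{S} \amalg_{\tilde{C}_{i}} \tilde{T}$ and the fact that it forgets to $S \amalg_{C_{i}} T$, so the whole statement reduces to verifying the pullback hypothesis.

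To verify that hypothesis, recall that $i_{X}$ is the right Kan extension from $\{\eta\}$ with value $X$, so that $i_{X}(R) \simeq X^{E(R)}$, where $E(R) = \Hom_{\bbO}(\eta, R)$ is the set of edges of $R$; this is contravariantly functorial as $i_{X} = \Map(E(\blank), X)$. Since $\Map(\blank, X)$ carries colimits of sets to limits of spaces, it suffices to show that the edge sets assemble into a \emph{pushout square of sets}
\[ E(S \amalg_{C_{i}} T) \simeq E(S) \amalg_{E(C_{i})} E(T). \]
This is exactly the combinatorial content of substitution: the inert map $C_{i} \to S$ exhibits $C_{i}$ as the corolla at a vertex of $S$, so $E(C_{i}) \hookrightarrow E(S)$ is the inclusion of the root and incoming edges at that vertex, while the active map $C_{i} \to T$ identifies $E(C_{i})$ with the root and leaves of $T$; substituting $T$ for that vertex glues $E(S)$ and $E(T)$ along precisely these $i+1$ edges and adjoins the inner edges of $T$ as new edges. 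I would extract this identity from the description of $S \amalg_{C_{i}} T$ in the proof of Lemma~\ref{lem:bbOsubst} as the colimit of the elementary (Segal) diagram of $S$ with the corolla $C_{i}$ replaced by $T$, reading off the edge set of a grafting from its elementary subtrees.

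The main obstacle is making this edge-set pushout precise and checking that the two structure maps $E(C_{i}) \to E(S)$ and $E(C_{i}) \to E(T)$ (both injective) are exactly the ones along which $E(S)$ and $E(T)$ are glued, with no collisions beyond the shared root/leaf edges; once this is pinned down the remaining arguments are purely formal. A cleaner variant of this step, which I would likely prefer, is to invoke Remark~\ref{rmk:glueing} together with the grafting description so as to identify the elementary subtrees of $S \amalg_{C_{i}} T$ directly, from which the edge set --- and hence the pushout identity --- can be read off without further bookkeeping.
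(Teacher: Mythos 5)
Your proposal is correct and follows exactly the paper's route: the paper's proof likewise combines Lemma~\ref{lem:rightfibpushout} with Lemma~\ref{lem:bbOsubst} and reduces everything to ``the observation that in this situation the sets of edges give a pushout,'' which is precisely the pullback hypothesis you verify via $i_{X}(R)\simeq X^{E(R)}$. You have merely spelled out in more detail the edge-set bookkeeping that the paper leaves as a one-line observation.
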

\begin{proof}
  Using Lemma~\ref{lem:rightfibpushout} and Lemma~\ref{lem:bbOsubst}
  this follows from the observation that in this situation the sets of
  edges give a pushout.
\end{proof}

\begin{defn}
  Let $S$ be a set. The operad $\sOp_{S}$ is defined as follows: The
  objects of $\sOp_{S}$ are pairs $(A, \alpha)$ where $A$ is a finite
  set and $\alpha \colon A_{+} \to S$ is a function; equivalently, the
  objects are objects $\tilde{C}_A$ of $\bbO_{S}$ lying over the
  corolla $C_A$ in $\bbO$. A multimorphism
  $((A_{1},\alpha_{1}),\ldots, (A_{n}, \alpha_{n})) \to (B, \beta)$ is
  given by an object $\tilde{T}$ of $\bbO_{S}$ together with inert
  maps $\tilde{C}_{A_{i}} \to \tilde{T}$ such that each hits a distinct
  vertex of $\tilde{T}$ and all vertices are hit, and an active map
  $\tilde{C}_{B} \to \tilde{T}$. (In other words, the tree $\tilde{T}$
  is assembled from the corollas $\tilde{C}_{A_{i}}$ in such a way that
  the labels match up, the tree $\tilde{T}$ has $|B|$ leaves, and
  the labels of the leaves and root match those of $\tilde{C}_{B}$.) More precisely, a multimorphism is an
  \emph{isomorphism class} of this data. It is convenient to represent
  this as a cospan
  $\coprod \tilde{C}_{A_{i}} \to \tilde{T} \from \tilde{C}_{B}$ (which
  can be thought of as living in the category obtained by freely
  adjoining coproducts to $\bbO_{S}$).
  Composition is given by substitution of trees: Given a multimorphism
  $\phi\colon (A_{1}, \ldots, A_{n}) \to B$ as above and another multimorphism
  $\psi\colon (C_{1},\ldots, C_{m}) \to A_{i}$ corresponding to
  $\coprod_{j} \tilde{C}_{C_{j}} \to \tilde{S} \from \tilde{C}_{A_{i}}$,
  then the composite $\psi \circ_{i} \phi$
  is given by the cospan
  \[
  \coprod_{i} \tilde{C}_{A_{i}} \amalg_{\tilde{C}_{A_{i}}} \coprod_{j}
  \tilde{C}_{C_{j}} \to \tilde{T} \amalg_{\tilde{C}_{A_{i}}} \tilde{S}
  \from \tilde{C}_{B}.  
  \]
  The composition is associative because of the associativity of
  tree substitution discussed in Remark~\ref{rmk:glueing}, and it is easy to
  see that the other requirements for an operad are satisfied.
\end{defn}

Recall that if $\mathbf{O}$ is an operad, we can define its
\emph{category of operators} $\mathbf{O}^{\otimes} \to \xF_{*}$.
This has objects lists $(x_{1},\ldots,x_{n})$ of objects of
$\mathbf{O}$, with a morphism
$(x_{1},\ldots,x_{n}) \to (x'_{1},\ldots,x'_{m})$ given by a morphism
$\phi \colon \angled{n} \to \angled{m}$ in $\xF_{*}$ and for all
$i = 1,\ldots,m$ a multimorphism
$(x_{j})_{j \in \phi^{-1}(j)} \to x'_{i}$. Equivalently, we can
replace the skeleton $\xF_{*}$ by $\Fin_{*}$ and take the objects
to be $(A \in \Fin, (x_{i})_{i \in A})$. Applying this construction to
the operad $\sOp_{S}$ gives a category $\OOp_{S} \to \Fin_{*}$; this
is an \iopd{}. We now wish to define a functor from $\bbOop_{S}$ to
$\OOp_{S}$.

\begin{defn}
  We first define the functor
  $\Theta \colon \bbO^{\op} \to \OOp_{*}$ over $\Fin_{*}$ as
  follows: If $T \in \bbO^{\op}$ corresponds to
  a diagram
  \[ T_{0} \from T_{2} \to T_{1} \to T_{0}\]
  then $\Theta(T)$ is the sequence $(T_{2,t})_{t\in T_1}$. For a morphism
  $f \colon T
  \to T'$ in $\bbO^{\op}$, given by a diagram
  \[
  \begin{tikzcd}
    T'_{0} \arrow{d}{f_{0}} & T'_{2} \arrow[phantom]{dr}[very near
    start]{\ulcorner} \arrow{l}\arrow{r}\arrow{d}{f_{2}} &
    T'_{1}\arrow{r}\arrow{d}{f_{1}} & T'_{0} \arrow{d}{f_{0}} \\
    T_{0} & \name{sub}'(T)\arrow{l}\arrow{r} & \name{sub}(T)
    \arrow{r} & T_{0},
  \end{tikzcd}
  \]
  the morphism $\Theta(f)$ is given by the morphism
  $\CorO(f) \colon T_{1,+} \to T'_{1,+}$ in $\Fin_{*}$ as in
  Definition~\ref{defn:CorO} together with the subtrees
  $f_{1}(x) \in \name{sub}(T)$ for $x \in T'_{1}$ and the isomorphisms
  between $T'_{2,x}$ and the leaves of this tree given by the pullback
  square
  \nolabelcsquare{T'_{2,x}}{\{x\}}{\name{sub}'(T)_{f_1(x)}}{\{f_1(x)\}.}
  It is easy to see that this is compatible with composition, because
  composition in $\bbO$ can be described in terms of substitution of
  trees.

  Now for a set $S$, we define $\Theta_{S} \colon \bbO_{S}^{\op} \to
  \OOp_{S}$ in the same way, just carrying the labelling of edges along.
\end{defn}

We wish to prove that the functor $\Theta_S$ is an
\emph{approximation} in the sense of \cite[Definition 2.3.3.6]{ha},
which we first recall for the reader's convenience:
\begin{definition}\label{def approximation}
  Suppose $p\colon \xxO \to \xF_{*}$ is an $\infty$-operad
  and $\xcc$ an $\infty$-category. We call a functor
  $f\colon \xcc\to \xxO$ an \emph{approximation to $\xxO$}, if
  it satisfies the following conditions:
  \begin{enumerate}
  \item Let $p'=p\circ f$, let $c\in \xcc$ be an object and let
    $p'(c)=\langle n\rangle$.  For every $1\leq i\leq n$, there is a
    locally $p'$-coCartesian morphism $ \alpha_i\colon c\to c_i $ in
    $\xcc$ lying over the inert map
    $\rho^i\colon\langle n\rangle\to \langle 1\rangle$ given by the
    projection at the $i$th element, and the morphism $f(\alpha_i)$ in
    $\xxO$ is inert.
  \item Let $c\in \xcc$ and let $\alpha\colon u\to f(c)$ be an active
    morphism in $\xxO$. There exists an $f$-Cartesian morphism
    $\overline \alpha\colon \overline u\to c$ lifting $\alpha$.
  \end{enumerate}
\end{definition}

\begin{propn}
  The functor $\Theta_{S} \colon \bbOop_{S} \to \OOp_{S}$ is an
  approximation.
\end{propn}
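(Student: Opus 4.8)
The plan is to verify the two conditions of Definition~\ref{def approximation} directly. Since $S$ is a \emph{set}, both $\bbO_{S}^{\op}$ and $\OOp_{S}$ are (nerves of) ordinary categories, so all mapping spaces are discrete and it suffices to check the relevant universal properties on $\Hom$-sets. Writing $p \colon \OOp_{S} \to \bbG^{\op}$ for the structure map, $p' := p \circ \Theta_{S}$ is the composite $\bbO_{S}^{\op} \to \bbO^{\op} \xto{\CorO} \bbG^{\op}$, so that $p'(T) = T_{1,+}$; thus for an object $T$ the index $n$ in Definition~\ref{def approximation} is the number $|T_{1}|$ of vertices.

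For condition (1), fix a vertex $i \in T_{1}$. First I would take $\alpha_{i} \colon T \to \tilde{C}_{i}$ to be the morphism in $\bbO_{S}^{\op}$ corresponding to the inert embedding $\tilde{C}_{i} \hookrightarrow T$ of the corolla at $i$ in $\bbO_{S}$, where $\tilde{C}_{i}$ carries the induced $S$-labels. Since $\CorO$ (hence $\Theta_{S}$) preserves inert morphisms (by the lemma that $\CorO$ preserves active-inert factorizations), $\Theta_{S}(\alpha_{i})$ is inert in $\OOp_{S}$ and lies over $\rho^{i}$. That $\alpha_{i}$ is locally $p'$-coCartesian is then a formal consequence of the right-fibration structure of $\bbO_{S} \to \bbO$ together with the corresponding property of inert morphisms for $\CorO$, obtained by a routine unwinding of the explicit description of morphisms in $\bbO_{S}$.

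The substance is condition (2). Given $T$ and an active morphism $\alpha \colon u \to \Theta_{S}(T)$ in $\OOp_{S}$, I would use the decomposition of the category of operators of $\sOp_{S}$ over the $|T_{1}|$ target slots to split $\alpha$ into a family of multimorphisms in $\sOp_{S}$, one for each $t \in T_{1}$, with target the corolla $\tilde{C}_{t} = \tilde{C}(T_{2,t})$. Each such multimorphism is by definition a tree $\tilde{S}_{t}$ in $\bbO_{S}$ equipped with an active map $\tilde{C}_{t} \to \tilde{S}_{t}$ matching leaves and root; together with the inert embeddings $\tilde{C}_{t} \hookrightarrow T$ this is exactly the data to which the substitution lemma in $\bbO_{S}$ (the analogue of Lemma~\ref{lem:bbOsubst}) applies. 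I would then set $\overline{u}$ to be the iterated substitution obtained by plugging each $\tilde{S}_{t}$ into the vertex $t$ of $T$; the resulting canonical map $T \to \overline{u}$ is active (substitution is precisely how active maps in $\bbO$ are described) and its image in $\bbO_{S}^{\op}$ is a lift $\overline{\alpha} \colon \overline{u} \to T$ of $\alpha$.

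It then remains to show $\overline{\alpha}$ is $\Theta_{S}$-Cartesian, which I expect to be the main obstacle. Concretely this asks, for every $w$, that
\[ \Hom_{\bbO_{S}}(\overline{u}, w) \to \Hom_{\bbO_{S}}(T, w) \times_{\Hom_{\OOp_{S}}(\Theta_{S} w, \Theta_{S} T)} \Hom_{\OOp_{S}}(\Theta_{S} w, \Theta_{S} \overline{u}) \]
be a bijection. I would prove this using the Kleisli description of $\bbO$ (Remark~\ref{rem kleisli}): a morphism out of $\overline{u}$ amounts to assigning to each vertex of $\overline{u}$ --- that is, to each vertex of each $\tilde{S}_{t}$ --- a subtree of $w$, compatibly along inner edges. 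Because $\overline{u}$ is built from $T$ by substitution, the associativity of substitution (Remark~\ref{rmk:glueing}) identifies this data with a morphism $T \to w$ together with, vertex by vertex, a compatible refinement recording how $\tilde{C}_{t}$ is resolved through $\tilde{S}_{t}$, and this refinement is exactly the extra operadic datum in $\OOp_{S}$. Checking that this correspondence is a bijection natural in $w$ is the heart of the argument; the remaining bookkeeping of edge-labels carries along verbatim since $S$ is a set.
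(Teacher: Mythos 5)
Your proposal is correct and follows essentially the same route as the paper: condition (1) via the inert corolla inclusions and the fact that $\CorO$ preserves inerts, and condition (2) by decomposing the active morphism into one tree $\tilde{S}_{t}$ per vertex and forming the Cartesian lift by iterated substitution as in Remark~\ref{rmk:glueing}. The only difference is that you sketch the Cartesianness verification (which the paper leaves as ``easily verified'') and work with general $S$ rather than reducing to the singleton case; neither changes the argument.
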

\begin{proof}
  We need to verify the two conditions mentioned in the previous
  definition.

  Let $p$ denote the projection $\OOp_{S} \to \xF_{*}$ and let
  $q\colon \bbO^\op_S\to \bbO^\op$ be the natural left fibration, then
  by construction we have $p\Theta = \CorO q$. For an object
  $\tilde T \in \bbO^{\op}_S$ lying over some $T\in \bbO^\op$ and an
  element $i \in \CorO(T)$, there is an inert map $T \to C_{n}$ in
  $\bbO^{\op}$ which lifts $\rho^i$ and it is easy to check that it is
  locally $\CorO$-coCartesian. Since $q$ is a left fibration, we can
  lift $T \to C_{n}$ to a locally $\CorO q$-coCartesian morphism.

  To simplify the notation we give the remainder of the proof in the
  case where $S$ is a singleton; the general case is proved by the
  same argument. Consider an active morphism
  $\phi \colon (A_{b})_{b \in B} \to \Theta(T)$ in $\OOp$. This
  corresponds to giving, for every vertex $v$ of $T$, a tree $S_{v}$
  whose leaves are identified with the incoming edges of $v$. We can
  view this as specifying active maps $S_{v} \to C_{v}$ in
  $\bbO^{\op}$ where $T \to C_{v}$ is the inert map corresponding to
  the vertex $v$, and then define a new tree $T'$ with an active map
  $T' \to T$ in $\bbO^{\op}$ by iterating the construction of
  Remark~\ref{rmk:glueing}. In other words, the tree $T'$ is (slightly
  informally) given by
  $T \amalg_{\coprod_{v} C_{v}} \coprod_{v} S_{v}$. It is easily
  verified using the definition of morphisms and composition in $\OOp$
  that the map $T' \to T$ is $\Theta$-Cartesian over $\phi$.
\end{proof}

Applying \cite[Theorem 2.3.3.23]{ha}, this implies:
\begin{cor}\label{cor:OOpSeq}
  If $\mathcal{V}$ is a symmetric monoidal \icat{}, then the functor
  $\Theta_{S}^{*} \colon \Alg_{\OOp_{S}}(\mathcal{V}) \to
  \Alg_{\bbOop_{S}}(\mathcal{V})$,
  induced by composition with $\Theta_{S}$, is an equivalence.
\end{cor}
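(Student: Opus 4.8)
The plan is to deduce the corollary directly from Lurie's theorem on approximations to \iopds{}, \cite[Theorem 2.3.3.23]{ha}. That theorem asserts that if $f \colon \xcc \to \mathcal{P}^{\otimes}$ is an approximation to an \iopd{} $\mathcal{P}^{\otimes}$, then for every \iopd{} $\mathcal{D}^{\otimes}$ composition with $f$ induces an equivalence $\Alg_{\mathcal{P}}(\mathcal{D}) \isoto \Alg_{\xcc}(\mathcal{D})$, where $\Alg_{\xcc}(\mathcal{D})$ denotes the full subcategory of functors $\xcc \to \mathcal{D}^{\otimes}$ over $\bbGop$ that send the distinguished locally coCartesian lifts $\alpha_{i}$ of the $\rho_{i}$ supplied by part~(1) of Definition~\ref{def approximation} (which $f$ carries to inert morphisms) to inert morphisms of $\mathcal{D}^{\otimes}$.

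To apply this, I would first note that a symmetric monoidal \icat{} $\mathcal{V}^{\otimes}$ is in particular an \iopd{}, so we may take $\mathcal{D}^{\otimes} = \mathcal{V}^{\otimes}$; likewise $\OOp_{S}$, being the category of operators of the ordinary coloured operad $\sOp_{S}$, is an \iopd{}. The preceding proposition shows that $\Theta_{S} \colon \bbOop_{S} \to \OOp_{S}$ is an approximation to $\OOp_{S}$, so the theorem applies with $\mathcal{P}^{\otimes} = \OOp_{S}$ and $\xcc = \bbOop_{S}$, producing an equivalence $\Alg_{\OOp_{S}}(\mathcal{V}) \isoto \Alg_{\bbOop_{S}}(\mathcal{V})$ given by composition with $\Theta_{S}$, i.e.\ the functor $\Theta_{S}^{*}$.

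The one point requiring care — and the step I expect to be the main obstacle — is checking that the \icat{} of algebras produced by the theorem on the $\bbOop_{S}$ side agrees with the paper's own definition of a $\bbOop_{S}$-algebra. Unwinding Lurie's statement, an object of the target is a functor $\bbOop_{S} \to \mathcal{V}^{\otimes}$ over $\bbGop$ (via $\CorO$) carrying the morphisms $\alpha_{i}$ of Definition~\ref{def approximation}(1) — which lie over $\rho_{i}$ and are sent by $\Theta_{S}$ to inert morphisms — to inert morphisms of $\mathcal{V}^{\otimes}$. Since $\mathcal{V}^{\otimes}$ is symmetric monoidal, its inert morphisms are exactly the coCartesian lifts of inert maps in $\Fin_{*}$, so this condition coincides with the requirement in the paper that the inert morphisms lying over $\rho_{i}$ be sent to coCartesian morphisms; hence the two subcategories of $\Fun_{\bbGop}(\bbOop_{S}, \mathcal{V}^{\otimes})$ coincide.

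On the $\OOp_{S}$ side no analogous matching is needed, since $\Alg_{\OOp_{S}}(\mathcal{V})$ is precisely the standard \icat{} of \iopd{}-algebras that Lurie's theorem outputs. With the algebra \icats{} identified on both sides and $\Theta_{S}^{*}$ identified with the equivalence of \cite[Theorem 2.3.3.23]{ha}, the corollary follows. I would remark finally that the case of a general set $S$ is handled by the same argument as the singleton case treated in the preceding proposition, the labels on edges merely being carried along, so no extra work is required for arbitrary $S$.
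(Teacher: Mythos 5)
Your proposal is correct and is exactly the paper's argument: the paper derives the corollary by applying \cite[Theorem 2.3.3.23]{ha} to the approximation $\Theta_{S}$ established in the preceding proposition. The extra care you take in matching Lurie's notion of $\bbOop_{S}$-algebra with the paper's definition (inert lifts of $\rho_{i}$ going to coCartesian morphisms) is a worthwhile check that the paper leaves implicit.
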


\begin{defn}
  Let $\AlgOpSet(\mathcal{V}) \to \Set$ denote the Cartesian
  fibration corresponding to the functor $\Set^{\op} \to \CatI$ taking
  $S$ to $\Alg_{\OOp_{S}}(\mathcal{V})$, and let
  $\AlgOSet(\mathcal{V}) \to \Set$ denote the
  pullback of the Cartesian fibration $\AlgOS(\mathcal{V})
  \to \mathcal{S}$ along the inclusion $\Set \hookrightarrow
  \mathcal{S}$. Since the functors $\Theta_{S}$ are natural in $S$,
  they induce a functor \[\Theta^{*} \colon
  \AlgOpSet(\mathcal{V}) \to
  \AlgOSet(\mathcal{V})\] 
  over $\Set$ that preserves Cartesian morphisms.
\end{defn}

\begin{cor}\label{cor:AlgOpdeq}
  The functor \[\Theta^{*} \colon
  \AlgOpSet(\mathcal{V}) \to
  \AlgOSet(\mathcal{V})\] is an equivalence.
\end{cor}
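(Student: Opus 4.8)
The plan is to deduce this global equivalence from the fiberwise equivalences of Corollary~\ref{cor:OOpSeq} via the standard principle that a morphism of Cartesian fibrations over a common base which preserves Cartesian morphisms is an equivalence as soon as it is a fiberwise equivalence. Both $\AlgOpSet(\mathcal{V})$ and $\AlgOSet(\mathcal{V})$ are by construction Cartesian fibrations over $\Set$, and by the preceding definition $\Theta^{*}$ is a functor over $\Set$ that preserves Cartesian morphisms; so the task reduces to checking that $\Theta^{*}$ is an equivalence on the fiber over each $S \in \Set$.

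First I would pass through the straightening equivalence of \cite[\S 3.2]{ht}: under it, the two Cartesian fibrations correspond to the functors $\Set^{\op} \to \CatI$ sending $S$ to $\Alg_{\OOp_{S}}(\mathcal{V})$ and to $\Alg_{\bbO_{S}^{\op}}(\mathcal{V})$, respectively, and the Cartesian-morphism-preserving functor $\Theta^{*}$ corresponds to a natural transformation between them. Its component at $S$ is exactly the functor $\Theta_{S}^{*} \colon \Alg_{\OOp_{S}}(\mathcal{V}) \to \Alg_{\bbO_{S}^{\op}}(\mathcal{V})$ induced by composition with $\Theta_{S}$, since $\Theta^{*}$ is assembled from the $\Theta_{S}$ naturally in $S$.

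By Corollary~\ref{cor:OOpSeq} each component $\Theta_{S}^{*}$ is an equivalence of \icats{}, so the associated natural transformation is a natural equivalence; as straightening is itself an equivalence of \icats{}, it follows that $\Theta^{*}$ is an equivalence. I do not expect any genuine obstacle here: the entire mathematical content sits in the fiberwise statement (Corollary~\ref{cor:OOpSeq}, which in turn rests on the approximation property of $\Theta_{S}$ together with \cite[Theorem 2.3.3.23]{ha}), and the only points left to verify are the purely formal claims that $\Theta^{*}$ preserves Cartesian morphisms and restricts to $\Theta_{S}^{*}$ on each fiber, both of which are immediate from the construction of $\Theta^{*}$.
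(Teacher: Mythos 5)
Your proposal is correct and follows exactly the paper's own proof: reduce to a fiberwise statement using the fact that a functor between Cartesian fibrations over $\Set$ preserving Cartesian morphisms is an equivalence once it is one on each fibre, and then invoke Corollary~\ref{cor:OOpSeq}. The extra detour through straightening is harmless but not needed; the paper states the reduction directly.
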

\begin{proof}
  Since this is a functor between Cartesian fibrations that preserves
  Cartesian morphisms, it is an equivalence as it is an equivalence on
  fibres over every $S \in \Set$ by Corollary~\ref{cor:OOpSeq}.
\end{proof}

\begin{propn}\label{propn:FFESSet}
  If $\mathcal{V}$ is a presentably symmetric monoidal \icat{}, then
  the inclusion $\AlgOSet(\mathcal{V}) \hookrightarrow
  \AlgOS(\mathcal{V})$ induces an equivalence
  \[ \AlgOSet(\mathcal{V})[\name{FFES}^{-1}] \isoto
  \AlgOS(\mathcal{V})[\name{FFES}^{-1}],\]
  where $\name{FFES}$ denotes the class of fully faithful and
  essentially surjective morphisms on both sides.
\end{propn}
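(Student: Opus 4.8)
The plan is to show that the fully faithful inclusion $\AlgOSet(\mathcal{V}) \hookrightarrow \AlgOS(\mathcal{V})$ --- which is the full subcategory spanned by those enriched $\infty$-operads whose space of objects is \emph{discrete} --- becomes an equivalence after inverting the fully faithful and essentially surjective morphisms. The strategy is to exhibit every enriched $\infty$-operad as the target of an FFES morphism out of one with a \emph{set} of objects, and then to invoke the general comparison of localizations at a full subcategory admitting such resolutions, exactly as for enriched $\infty$-categories in \cite{enriched}.

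First I would set up the Cartesian-fibration picture. Using Theorem~\ref{thm:PCSisAlgDend} I identify $\AlgOS(\mathcal{V})$ with $\PCS(\bbOV)$ over $\mathcal{S}$, the structure map being $\xev_{\eta}$. By the (entirely analogous) dendroidal version of Lemma~\ref{lem ff = Cartesian morphism}, the functor $\xev_{\eta}$ is a Cartesian fibration whose Cartesian morphisms are precisely the \emph{fully faithful} maps. Since $\AlgOSet(\mathcal{V})$ is by definition the pullback of this fibration along the fully faithful inclusion $\Set \hookrightarrow \mathcal{S}$, it is the full subcategory of $\AlgOS(\mathcal{V})$ on the objects lying over discrete spaces; consequently the FFES morphisms of $\AlgOSet(\mathcal{V})$ are exactly the FFES morphisms of $\AlgOS(\mathcal{V})$ between such objects. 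The concrete resolution step is then the following: given $\mathcal{O} \in \AlgOS(\mathcal{V})$ lying over $X \in \mathcal{S}$, choose a set $S$ together with a map $f \colon S \to X$ which is surjective on $\pi_{0}$ (e.g.\ a point of $X$ in each connected component), and let $\mathcal{O}' := f^{*}\mathcal{O}$ be the Cartesian pullback, which lies over $S$ and hence belongs to $\AlgOSet(\mathcal{V})$. The Cartesian morphism $\mathcal{O}' \to \mathcal{O}$ is fully faithful because it is Cartesian, and it is essentially surjective because $f$ meets every connected component of $X$, hence every equivalence class of objects by Remark~\ref{rem surj on pi0}. Thus every object of $\AlgOS(\mathcal{V})$ receives an FFES morphism from an object of $\AlgOSet(\mathcal{V})$.

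Granting this resolution property, the proof concludes by the general principle that a fully faithful inclusion of a full subcategory through which every object admits such an FFES resolution induces an equivalence on FFES-localizations. Essential surjectivity of the localized functor is immediate: by the completion theorem (Corollary~\ref{cor cp obj are loc}) and its dendroidal counterpart, $\AlgOS(\mathcal{V})[\name{FFES}^{-1}]$ is the reflective subcategory of complete objects with reflector the completion of Corollary~\ref{cor Ohat adj}, and each complete object is the completion of a resolution $\mathcal{O}'$ as above. The \textbf{main obstacle}, and the only nontrivial point, is the full faithfulness of the localized inclusion: one must show that no new morphisms between objects of $\AlgOSet(\mathcal{V})$ are created by the larger class of zigzags available in $\AlgOS(\mathcal{V})$. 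This is precisely where the resolution property is used in earnest --- it allows any such zigzag to be ``pulled into'' the subcategory --- and I would carry it out by the same argument as the corresponding comparison for enriched $\infty$-categories in \cite{enriched}, which applies verbatim once the Cartesian-fibration description and the resolution step above are in place.
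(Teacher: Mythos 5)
Your proposal is correct and takes essentially the same route as the paper, whose entire proof of this proposition is the single line ``As \cite[Theorem 5.3.17]{enriched}'': the mathematical content is exactly the resolution step you describe --- pulling back $\mathcal{O}$ lying over $X$ along a $\pi_{0}$-surjection $S \to X$ from a set to obtain a Cartesian (hence fully faithful, by the dendroidal analogue of Lemma~\ref{lem ff = Cartesian morphism}) and essentially surjective morphism from an object of $\AlgOSet(\mathcal{V})$ --- followed by the formal localization comparison of \cite{enriched}. Note only that the ``general principle'' you invoke is not a theorem from the mere existence of such resolutions (one also needs the two-out-of-three property of Proposition~\ref{prop 2 of 3} and the identification of the localization via completion to get full faithfulness), but you correctly isolate this as the remaining nontrivial point and defer it to the argument of \cite{enriched}, which is precisely what the paper does.
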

\begin{proof}
  As \cite[Theorem 5.3.17]{enriched}.
\end{proof}

\begin{cor}\label{cor:AlgOpdFFES}
  There is an equivalence of \icats{}
  \[ \AlgOpSet(\mathcal{V})[\name{FFES}^{-1}] \simeq \OpdI^{\mathcal{V}}.\]
\end{cor}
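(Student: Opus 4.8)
The plan is to obtain the claimed equivalence by splicing together the equivalences established in this section and the previous ones, tracking the fully faithful and essentially surjective (FFES) morphisms through each of them, and then invoking the completion theorem to recognise the resulting localization as $\OpdIV$. Concretely, I would assemble the chain
\[
\AlgOpSet(\mathcal{V}) \xrightarrow{\Theta^{*}} \AlgOSet(\mathcal{V}) \hookrightarrow \AlgOS(\mathcal{V}) \simeq \PCS(\bbOV) \simeq \PCS(\DFiV) \simeq \PCS(\DFV),
\]
where the first map is the equivalence of Corollary~\ref{cor:AlgOpdeq}, the inclusion is the one appearing in Proposition~\ref{propn:FFESSet}, the third equivalence is Theorem~\ref{thm:PCSisAlgDend}, the fourth is the comparison equivalence $\bartau^{*}$ of Section~\ref{sec dendr}, and the last is (the inverse of) the reduction $\bari^{*}$ of Proposition~\ref{propn:DFiV}(ii). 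Every arrow except the inclusion is a genuine equivalence, and Proposition~\ref{propn:FFESSet} promotes the inclusion to an equivalence after localizing at FFES morphisms.

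The main work, and the step I expect to be the real obstacle, is the verification that every functor in this chain preserves and reflects FFES morphisms, so that each descends to the localizations. The guiding principle is that FFES is \emph{not} intrinsic to the abstract \icat{}, but is detected by two pieces of structure: the space of objects, extracted by evaluation at the edge ($\name{ev}_{\mathfrak{e}}$, resp.\ $\name{ev}_{\eta}$), which governs essential surjectivity, and the Cartesian-square condition over the corollas, which governs full faithfulness. I would therefore check, one functor at a time, that it is compatible with evaluation at the edge and with the corolla fibres. For $\Theta^{*}$ this holds because it lies over $\Set$ and $\Theta_{S}$ matches corollas of $\bbO_{S}$ with the objects-with-arities of $\OOp_{S}$; for the equivalence of Theorem~\ref{thm:PCSisAlgDend} it holds because that equivalence is over $\mathcal{S}$ and compatible with $\name{ev}_{\eta}$; and for $\bartau^{*}$ and $\bari^{*}$ it holds because $\bartau$ and $\bari$ restrict to equivalences on the full subcategories of edges and corollas (Remark~\ref{rmk bartauel eq}, together with the fact that the edge and all corollas already lie in $\DFiV$). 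Granting these compatibilities, each equivalence carries FFES maps to FFES maps and conversely.

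Finally I would pass to localizations. Applying the FFES-preservation just established, the chain yields
\[
\AlgOpSet(\mathcal{V})[\name{FFES}^{-1}] \simeq \AlgOSet(\mathcal{V})[\name{FFES}^{-1}] \simeq \AlgOS(\mathcal{V})[\name{FFES}^{-1}] \simeq \PCS(\DFV)[\name{FFES}^{-1}],
\]
where the second equivalence is exactly Proposition~\ref{propn:FFESSet}. By the completion theorem (Corollary~\ref{cor cp obj are loc}) the right-hand side is precisely $\PCCS(\DFV) = \OpdIV$, and concatenating gives the desired equivalence $\AlgOpSet(\mathcal{V})[\name{FFES}^{-1}] \simeq \OpdIV$. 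The advantage of routing through $\PCS(\DFV)$ rather than through $\PCCS(\bbOV)$ is that it lets me appeal directly to the one completion theorem that was actually proved (for $\DFV$), avoiding the need to reprove completion separately in the dendroidal model; the price is the FFES-bookkeeping across the comparison equivalences, which is the one genuinely delicate part of the argument.
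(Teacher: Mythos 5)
Your proposal is correct and follows essentially the same route as the paper: the published proof is literally ``combine Corollary~\ref{cor:AlgOpdeq}, Proposition~\ref{propn:FFESSet}, Theorem~\ref{thm:PCSisAlgDend}, Theorem~\ref{thm:PSegeq}, Proposition~\ref{propn:DFiV}, and Theorem~\ref{theo cp obj are loc}'', which is exactly the chain you assemble, and your explicit bookkeeping of how each equivalence matches evaluation at the edge and the corolla fibres (hence preserves and reflects FFES morphisms) is precisely the content the paper leaves implicit.
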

\begin{proof}
  Combine the equivalences of Corollary~\ref{cor:AlgOpdeq},
  Proposition~\ref{propn:FFESSet}, Theorem~\ref{thm:PCSisAlgDend},
  Theorem~\ref{thm:PSegeq}, Proposition~\ref{propn:DFiV}, and
  Theorem~\ref{theo cp obj are loc}.
\end{proof}

\subsection{Rectification}\label{subsec rect}
In this subsection we will prove that for a suitable monoidal model
category $\mathbf{V}$, the \icat{} $\OpdI^{\mathbf{V}[W^{-1}]}$ of
\iopds{} enriched in the symmetric monoidal \icat{}
$\mathbf{V}[W^{-1}]$, obtained by inverting the weak equivalences $W$
in $\mathbf{V}$, is equivalent to the \icat{} obtained from operads
strictly enriched in $\mathbf{V}$.

\begin{defn}
  Let $\mathbf{V}$ be a symmetric monoidal model category.  An operad
  $\mathbf{O}$ is called \emph{admissible} for $\mathbf{V}$ if there
  is a model structure on $\Alg_{\mathbf{O}}(\mathbf{V})$ such that
  the weak equivalences and fibrations are those maps whose underlying
  maps in $\mathbf{V}$ are weak equivalences and fibrations,
  respectively.
\end{defn}

\begin{propn}[Pavlov--Scholbach]\label{propn:AlgOpScomp}
  Suppose the operad $\sOp_{S}$ is admissible for $\mathbf{V}$ for
  a set $S$, and let $W_{S}$ denote the class of weak equivalences in
  $\Alg_{\sOp_{S}}(\mathbf{V})$. Then the natural functor of \icats{}
  \[ \Alg_{\sOp_{S}}(\mathbf{V})[W_{S}^{-1}] \to
  \Alg_{\OOp_{S}}(\mathbf{V}[W^{-1}])\]
  is an equivalence, where $\mathbf{V}[W^{-1}]$ denotes the symmetric
  monoidal \icat{} induced by $\mathbf{V}$, with $W$ being the class
  of weak equivalences in $\mathbf{V}$.
\end{propn}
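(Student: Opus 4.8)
The plan is to deduce this directly from the rectification theorem of Pavlov--Scholbach, once the two sides have been matched up with the input and output of that result. The operad $\sOp_{S}$ is an ordinary ($S$-coloured, symmetric) operad, and by construction $\OOp_{S} \to \Fin_{*}$ is its category of operators. Hence $\OOp_{S}$ is (equivalent to) the operadic nerve of $\sOp_{S}$ in the sense of \cite{ha}, so that for any symmetric monoidal \icat{} $\mathcal{W}$ the \icat{} $\Alg_{\OOp_{S}}(\mathcal{W})$ is precisely the \icat{} of homotopy-coherent $\sOp_{S}$-algebras in $\mathcal{W}$. Applying this with $\mathcal{W} = \mathbf{V}[W^{-1}]$ identifies the target of the functor in question with the \icat{} of $\infty$-categorical $\sOp_{S}$-algebras valued in the localization of $\mathbf{V}$.

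Next I would pin down the comparison functor. A strict $\sOp_{S}$-algebra in $\mathbf{V}$ is a map $\sOp_{S}^{\otimes} \to \mathbf{V}^{\otimes}$ over $\Fin_{*}$ preserving inert morphisms; taking nerves and composing with the symmetric monoidal localization $N(\mathbf{V}^{\otimes}) \to \mathbf{V}[W^{-1}]^{\otimes}$ produces an $\OOp_{S}$-algebra in $\mathbf{V}[W^{-1}]$, using the identification $N(\sOp_{S}^{\otimes}) \simeq \OOp_{S}$ above. By admissibility the weak equivalences in $\Alg_{\sOp_{S}}(\mathbf{V})$ are detected on underlying objects in $\mathbf{V}$, and equivalences of $\infty$-algebras are likewise detected by evaluation at the colours; hence this construction sends $W_{S}$ to equivalences and descends to a functor
\[ \Alg_{\sOp_{S}}(\mathbf{V})[W_{S}^{-1}] \to \Alg_{\OOp_{S}}(\mathbf{V}[W^{-1}]), \]
which is the natural functor appearing in the statement.

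Finally I would invoke the rectification theorem of Pavlov--Scholbach, which, under the standing hypothesis that $\mathbf{V}$ is a nice symmetric monoidal model category together with the assumption that $\sOp_{S}$ is admissible, asserts exactly that this comparison functor is an equivalence. The main effort therefore lies not in a new argument but in checking that the hypotheses of their theorem hold in our situation: one must confirm that the niceness conditions imposed on $\mathbf{V}$ (combinatoriality, the relevant flatness of symmetric powers, and the monoid-type axioms) are precisely those assumed throughout this subsection, and that admissibility of $\sOp_{S}$ is the hypothesis being used. I expect the only genuine subtlety to be the bookkeeping identification of our category of operators $\OOp_{S}$ with the $\infty$-operad to which their rectification result applies, together with verifying that the comparison functor constructed above coincides with theirs; once this is in place the equivalence is immediate.
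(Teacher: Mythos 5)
Your approach is essentially the paper's: both reduce the statement to the rectification theorem of Pavlov--Scholbach after identifying $\OOp_{S}$ with the category of operators of $\sOp_{S}$. The only substantive point the paper makes that you leave implicit in "checking the hypotheses" is that the relevant cofibrancy condition is $\Sigma$-cofibrancy of $\sOp_{S}$, which holds because the $\Sigma_{n}$-actions are free, together with the observation that the simplicial-model-category hypothesis in the cited theorem can be dropped since $\sOp_{S}$ is an operad in sets.
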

\begin{proof}
  The operad $\sOp_{S}$ is $\Sigma$-cofibrant (which here just means
  that the $\Sigma_{n}$-actions are all free), so this is a special
  case of \cite[Theorem 7.10]{PavlovScholbach}. (As stated, this
  result requires a simplicial model category, but since our operad
  $\sOp_S$ is merely an operad in sets the same proof goes through
  without this assumption.)
\end{proof}

\begin{examples}\label{exs:admissible}
  Proposition~\ref{propn:AlgOpScomp} applies to the following
  model categories:
  \begin{enumerate}[(i)]
  \item the category $\Set_{\Delta}$ of simplicial sets, equipped with the Kan--Quillen model structure,
  \item the category $\name{Top}$ of compactly generated weak Hausdorff spaces, equipped with the usual model structure,
  \item the category $\name{Ch}_{k}$ of chain complexes of $k$-vector,
    spaces, where $k$ is a field of characteristic $0$ (or more
    generally a ring containing $\mathbb{Q}$), equipped with the
    projective model structure,
  \item the category $\name{Sp}^{\Sigma}$ of symmetric spectra,
    equipped with the positive stable model structure.
  \end{enumerate}
  These are the standard examples of model categories for which
  \emph{all} operads (and so in particular the operads $\sOp_{S}$ for
  all $S$) are admissible, as discussed in \cite[\S
  7]{PavlovScholbachSymm}. Further examples of such model categories
  include orthogonal spectra \cite{KroOperad}, orthogonal $G$-spectra,
  stable module categories, and the ``folk'' model structure on
  categories \cite{WhiteYau}.  Unfortunately, we are not aware of any
  examples of symmetric monoidal model categories for which the
  operads $\sOp_{S}$ are admissible other than those for which all
  operads are admissible.
\end{examples}

\begin{remark}\label{rmk:fixedobmodstr}
  There is a substantial literature on model structures for operads
  enriched in a model category (monochromatic or with a fixed set of
  objects), and more generally for algebras over a fixed operad in a
  model category.  We list some key results of this kind:
  \begin{itemize}
  \item Hinich~\cite{HinichAlg,HinichErr} constructed a model
    structure for monochromatic operads in chain complexes over a ring
    containing $\mathbb{Q}$.
  \item Berger--Moerdijk~\cite{BergerMoerdijk} constructed a model
    structure on reduced monochromatic operads (i.e.\ ones with no
    nullary operations) in suitable model categories with a
    commutative Hopf interval, including simplicial sets, topological
    spaces, and chain complexes over a ring. They later extended this
    result to algebras for coloured operads in
    \cite{BergerMoerdijkCol}, giving model structures for operads with
    any fixed set of colours as a special case.
  \item Kro~\cite{Kro} extended the work of Berger--Moerdijk to get a
    model structure for monochromatic reduced operads in orthogonal
    spectra.
  \item Elmendorf--Mandell~\cite{ElmendorfMandell} constructed a model
    structure on algebras over any simplicial operad in symmetric
    spectra (in $\name{Top}$); this was used by
    Gutiérrez--Vogt~\cite{GutierrezVogt} to obtain model structures on
    operads in symmetric spectra with a fixed set of colours.
  \item Most recently, Pavlov--Scholbach~\cite{PavlovScholbach} have
    studied general assumptions on a model category under which all
    operads are admissible, and applied this to symmetric spectra in
    general model categories in \cite{PavlovScholbachSp}.
  \end{itemize}
\end{remark}

\begin{remark}
  It follows from the results of \cite{NikolausSagave} that for any
  presentably symmetric monoidal \icat{} $\mathcal{V}$ there
  exists a symmetric monoidal simplicial combinatorial model category
  modelling $\mathcal{V}$ for which all (simplicial) operads are admissible.
\end{remark}

\begin{remark}
  Since the operads $\sOp_{S}$ are $\Sigma$-cofibrant, work of
  Spitzweck~\cite{spi} shows that for any cofibrantly generated
  symmetric monoidal model category $\mathbf{V}$, the category
  $\Alg_{\sOp_{S}}(\mathbf{V})$ has a \emph{semi}-model structure. We
  strongly suspect that this is sufficient for
  Proposition~\ref{propn:AlgOpScomp} to hold. However, for the proof
  to go through for semi-model categories one would have to extend most of
  Lurie's results relating model categories to \icats{}, such as the
  connection between homotopy colimits in model categories and
  colimits in the associated \icat{}, to the semi-model case.
\end{remark}

\begin{cor}\label{cor:OpdWeq}
  Suppose $\mathbf{V}$ is a symmetric monoidal model category for
  which the operads $\sOp_{S}$ are admissible for all sets $S$. Then
  there is an equivalence
  \[\OpdV[W^{-1}] \isoto
  \AlgOpSet(\mathbf{V}[W^{-1}])\]
  over $\Set$, where $W$ denotes the morphisms that are bijective on
  objects and given by weak equivalences on all multimorphism objects.

\end{cor}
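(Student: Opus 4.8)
The plan is to exhibit both sides as Cartesian fibrations over $\Set$ and to build a fibrewise equivalence from Proposition~\ref{propn:AlgOpScomp}. First I would identify the source: the forgetful functor $\OpdV \to \Set$ sending a $\mathbf{V}$-operad to its set of colours is a Cartesian fibration, with base change along $f \colon S \to S'$ given by relabelling (restricting the multimorphism objects along $f$), and whose fibre over $S$ is precisely the category $\Alg_{\sOp_{S}}(\mathbf{V})$ of $S$-coloured $\mathbf{V}$-operads. Thus $\OpdV$ is the unstraightening of the functor $\Set^{\op} \to \Cat$, $S \mapsto \Alg_{\sOp_{S}}(\mathbf{V})$. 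Under this identification the class $W$ decomposes: a morphism of $W$ lying over a bijection $f$ factors as an invertible Cartesian relabelling followed by a fibrewise map lying in the class $W_{S}$ of Proposition~\ref{propn:AlgOpScomp}, and conversely. Since relabelling along a bijection is an isomorphism of operads, it is already an equivalence in $\OpdV$; hence localizing at $W$ agrees with localizing fibrewise at the classes $W_{S}$, which are preserved by all the base-change functors.

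Next I would assemble the fibrewise localizations. The functor $S \mapsto \Alg_{\sOp_{S}}(\mathbf{V})$ lifts to a diagram of relative categories, each fibre carrying the class $W_{S}$ preserved by base change, and composing with Dwyer--Kan localization yields a functor $\Set^{\op} \to \CatI$, $S \mapsto \Alg_{\sOp_{S}}(\mathbf{V})[W_{S}^{-1}]$. By the standard compatibility of the unstraightening construction with fibrewise localization (exactly as in the companion treatment of enriched \icats{} in \cite{enriched}), the Cartesian fibration classified by this functor is the localization $\OpdV[W^{-1}] \to \Set$. In particular $\OpdV[W^{-1}]$ is again a Cartesian fibration over $\Set$ with fibre $\Alg_{\sOp_{S}}(\mathbf{V})[W_{S}^{-1}]$ over $S$.

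Finally I would compare with the right-hand side. The localization functor $\mathbf{V} \to \mathbf{V}[W^{-1}]$ induces, naturally in $S$, a functor $\Alg_{\sOp_{S}}(\mathbf{V}) \to \Alg_{\OOp_{S}}(\mathbf{V}[W^{-1}])$ (using the identification of $\sOp_{S}$-algebras with $\OOp_{S}$-algebras via the category of operators); it carries $W_{S}$ to equivalences and so factors through a natural transformation $\Alg_{\sOp_{S}}(\mathbf{V})[W_{S}^{-1}] \to \Alg_{\OOp_{S}}(\mathbf{V}[W^{-1}])$ of functors $\Set^{\op} \to \CatI$. Proposition~\ref{propn:AlgOpScomp} says each component is an equivalence, so this is an equivalence of classifying functors and hence, after unstraightening, an equivalence $\OpdV[W^{-1}] \isoto \AlgOpSet(\mathbf{V}[W^{-1}])$ over $\Set$, since the fibration classified by the right-hand functor is $\AlgOpSet(\mathbf{V}[W^{-1}])$ by definition.

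I expect the main obstacle to be the second step: justifying that the localization of the total category $\OpdV$ at $W$ is computed fibrewise and remains a Cartesian fibration over $\Set$. This needs $W$ to be stable under the Cartesian base-change functors and the only non-fibrewise maps inverted to be the relabelling isomorphisms --- both of which hold here --- together with a general fibrewise-localization statement of the kind established in \cite{enriched}. A secondary point is verifying that the Pavlov--Scholbach comparison of Proposition~\ref{propn:AlgOpScomp} is genuinely natural in the colour set $S$, which follows from its construction via $\mathbf{V}\to\mathbf{V}[W^{-1}]$ applied colourwise.
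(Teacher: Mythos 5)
Your proposal is correct and follows essentially the same route as the paper: exhibit $\OpdV \to \Set$ as a Grothendieck fibration with fibres $\Alg_{\sOp_{S}}(\mathbf{V})$, invoke a fibrewise-localization result (the paper cites \cite{enrcomp} or \cite{hinloc}) to identify $\OpdV[W^{-1}]$ as the Cartesian fibration with fibres $\Alg_{\sOp_{S}}(\mathbf{V})[W_{S}^{-1}]$, and conclude by checking the comparison is a Cartesian-morphism-preserving map of Cartesian fibrations that is a fibrewise equivalence by Proposition~\ref{propn:AlgOpScomp}. Your extra remark that $W$-morphisms over bijections factor as relabelling isomorphisms followed by fibrewise weak equivalences is a point the paper leaves implicit, but it does not change the argument.
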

\begin{proof}
  The forgetful functor $\OpdV \to \Set$ taking an
  operad in $\mathbf{V}$ to its set of colours is a
  Grothendieck fibration (and opfibration) with fibre
  $\Alg_{\sOp_{S}}(\mathbf{V})$. Applying \cite[Corollary 4.22]{enrcomp} or
  \cite[Proposition 2.1.4]{hinloc} it follows that $\OpdV[W^{-1}] \to \Set$
  is the Cartesian (and coCartesian) fibration corresponding to the
  functor taking $S$ to
  $\Alg_{\sOp_{S}}(\mathbf{V})[W_{S}^{-1}]$. Thus the functor
  \[\OpdV[W^{-1}] \to
  \AlgOpSet(\mathbf{V}[W^{-1}])\]
  over $\Set$ is a functor between Cartesian fibrations that
  preserves Cartesian morphisms. It is therefore an equivalence as
  it is an equivalence on fibres at each $S$ by Proposition~\ref{propn:AlgOpScomp}.
\end{proof}

\begin{remark}
  In the situation above there is a model structure on
  $\OpdV$ where the morphisms in $W$ are the weak
  equivalences by \cite[Proposition 4.25]{enrcomp} or \cite[Theorem
  3.0.12]{HarpazPrasma}.
\end{remark}

\begin{defn}
  If $\mathbf{V}$ is a symmetric monoidal model category, we say a
  morphism $F \colon \mathbf{O} \to \mathbf{O}'$ of
  $\mathbf{V}$-enriched operads is a \emph{Dwyer--Kan equivalence} if:
  \begin{enumerate}[(1)]
  \item The map $\mathbf{O}(x_{1},\ldots,x_{n}; y) \to
    \mathbf{O}'(F(x_{1}), \ldots, F(x_{n}); F(y))$ is a weak
    equivalence in $\mathbf{V}$ for all $x_{1},\ldots,x_{n},y$ in
    $\mathbf{O}$.
  \item The functor $\mathbf{V} \to h\mathbf{V}$ to the homotopy
    category is symmetric monoidal, so to a $\mathbf{V}$-enriched
    operad $\mathbf{O}$ we can associate an $h\mathbf{V}$-enriched
    operad $h\mathbf{O}$. The induced functor of
    $h\mathbf{V}$-enriched operads
    $h F \colon h \mathbf{O} \to h\mathbf{O}'$ is essentially
    surjective (i.e.\ its underlying functor of enriched categories is
    essentially surjective).
  \end{enumerate}
\end{defn}

\begin{thm}\label{thm:rect}
  Suppose $\mathbf{V}$ is a symmetric monoidal model category for
  which the operads $\sOp_{S}$ are admissible for all sets $S$. Then
  there is an equivalence 
  \[\OpdV[\name{DK}^{-1}] \isoto
  \OpdI^{\mathbf{V}[W^{-1}]},\]
  where $\name{DK}$ denotes the class of Dwyer--Kan equivalences.
\end{thm}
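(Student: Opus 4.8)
The plan is to express the sought-after equivalence as a composite of localizations and then to identify the class of morphisms being inverted with the Dwyer--Kan equivalences. Write $L_{W} \colon \OpdV \to \OpdV[W^{-1}]$ for the localization at the object-bijective weak equivalences $W$, write $\Psi \colon \OpdV[W^{-1}] \isoto \AlgOpSet(\mathbf{V}[W^{-1}])$ for the equivalence of Corollary~\ref{cor:OpdWeq}, and write $L_{\name{FFES}} \colon \AlgOpSet(\mathbf{V}[W^{-1}]) \to \OpdI^{\mathbf{V}[W^{-1}]}$ for the localization at the fully faithful and essentially surjective morphisms supplied by Corollary~\ref{cor:AlgOpdFFES} (valid since $\mathbf{V}[W^{-1}]$ is presentably symmetric monoidal, as $\mathbf{V}$ is assumed nice). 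First I would observe that the composite $\Phi := L_{\name{FFES}} \circ \Psi \circ L_{W}$ is, as a composite of localization functors and an equivalence, again a localization functor; it therefore exhibits $\OpdI^{\mathbf{V}[W^{-1}]}$ as $\OpdV[S^{-1}]$, where $S$ is the class of those $f$ in $\OpdV$ for which $\Phi(f)$ is an equivalence. Since the local equivalences coincide with the fully faithful and essentially surjective morphisms and form a strongly saturated class (by Theorem~\ref{theo cp obj are loc}, transported along the equivalences of Corollary~\ref{cor:AlgOpdFFES}), a morphism of $\AlgOpSet(\mathbf{V}[W^{-1}])$ is inverted by $L_{\name{FFES}}$ exactly when it is fully faithful and essentially surjective; hence $S = \{ f : \Psi L_{W}(f) \text{ is fully faithful and essentially surjective} \}$.

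The theorem thus reduces to the following identification, which is the real content: a morphism $f \colon \mathbf{O} \to \mathbf{O}'$ of $\OpdV$ is a Dwyer--Kan equivalence if and only if $\Psi L_{W}(f)$ is fully faithful and essentially surjective. I would establish the two conditions separately. For full faithfulness, the equivalence $\Psi$ of Corollary~\ref{cor:OpdWeq} is fibrewise over $\Set$ and sends a strictly enriched operad to the algebra whose value on a labelled corolla is the image in $\mathbf{V}[W^{-1}]$ of the corresponding multimorphism object of $\mathbf{O}$, which is how the comparison of Proposition~\ref{propn:AlgOpScomp} is built. By the remark following Definition~\ref{def ffes} (transported along the equivalences of Corollary~\ref{cor:AlgOpdFFES}), $\Psi L_{W}(f)$ is fully faithful precisely when each induced map on multimorphism objects is an equivalence in $\mathbf{V}[W^{-1}]$; since the localization $\mathbf{V} \to \mathbf{V}[W^{-1}]$ inverts exactly the weak equivalences, this is exactly condition~(1) in the definition of a Dwyer--Kan equivalence.

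For essential surjectivity I would pass to underlying objects: by Remark~\ref{rem surj on pi0}, $\Psi L_{W}(f)$ is essentially surjective if and only if the induced map $\pi_{0}\iota\, \overline{u}^{*}\Psi L_{W}(\mathbf{O}) \to \pi_{0}\iota\, \overline{u}^{*}\Psi L_{W}(\mathbf{O}')$ of the underlying enriched \icats{} is surjective. The crucial point is to match $\pi_{0}$ of the classifying space of equivalences of $\overline{u}^{*}\Psi L_{W}(\mathbf{O})$ with the set of isomorphism classes of objects of the homotopy operad $h\mathbf{O}$; this is the operadic analogue of the corresponding statement for enriched categories, so I would deduce it by restriction along $\overline{u}$ to the underlying enriched \icat{} together with the enriched-category results \cite[Lemma 5.3.4, Corollary 5.3.8]{enriched}. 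Under this identification, essential surjectivity of $\Psi L_{W}(f)$ translates precisely into essential surjectivity of $hf$, i.e.\ condition~(2), completing the proof that $S = \name{DK}$ and hence the theorem.

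The hardest part will be the essential-surjectivity matching in the last step: one must check carefully that an equivalence $E^{1} \to \overline{u}^{*}\Psi L_{W}(\mathbf{O})$ in the underlying enriched \icat{} corresponds to a homotopy equivalence between objects of $\mathbf{O}$, that is, to an isomorphism in $h\mathbf{O}$. This is exactly the place where the passage from the strict model to $\mathbf{V}[W^{-1}]$ must be controlled at the level of the classifying space of equivalences, and where the enriched-categorical rectification theorem of \cite{enriched} does the essential work; the full-faithfulness comparison and the formal two-stage localization are by contrast routine.
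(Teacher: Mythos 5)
Your proposal is correct and follows essentially the same route as the paper: combine the equivalence $\OpdV[W^{-1}] \simeq \AlgOpSet(\mathbf{V}[W^{-1}])$ of Corollary~\ref{cor:OpdWeq} with the localization $\AlgOpSet(\mathbf{V}[W^{-1}])[\name{FFES}^{-1}] \simeq \OpdI^{\mathbf{V}[W^{-1}]}$ of Corollary~\ref{cor:AlgOpdFFES}, and identify the Dwyer--Kan equivalences with the morphisms carried to fully faithful and essentially surjective ones. The paper compresses that identification into the word ``clearly,'' whereas you correctly isolate it (in particular the matching of $\pi_{0}$ of the classifying space of equivalences with isomorphism classes in $h\mathbf{O}$) as the one step requiring the enriched-categorical comparison results; this is a faithful expansion rather than a different argument.
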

\begin{proof}
  The class of Dwyer--Kan equivalences clearly corresponds under the
  equivalence of Corollary~\ref{cor:OpdWeq} to the class of fully
  faithful and essentially surjective morphisms in
  $\AlgOpSet(\mathbf{V}[W^{-1}])$, so we get an equivalence
  \[ \OpdV[\name{DK}^{-1}] \isoto
  \AlgOpSet(\mathbf{V}[W^{-1}])[\name{FFES}^{-1}].\]
  The result now follows by combining this with the equivalence of
  Corollary~\ref{cor:AlgOpdFFES}.
\end{proof}

\begin{remark}
  By Example~\ref{exs:admissible}, we can apply Theorem~\ref{thm:rect}
  to get the following comparisons:
  \begin{itemize}
  \item The homotopy theory of simplicial operads is equivalent to
    that of \iopds{}; this was already shown by
    Cisinski--Moerdijk~\cite{CisinkiMoerdijk3}.
  \item The homotopy theory of spectral operads, or more precisely
    operads enriched in symmetric spectra, is equivalent to that of
    spectral \iopds{}, i.e.\ \iopds{} enriched in the \icat{} of
    spectra.
  \item If $k$ is a ring containing $\mathbb{Q}$, then the homotopy
    theory of dg-operads over $k$, i.e.\ operads enriched in chain
    complexes of $k$-modules, is equivalent to that of \iopds{}
    enriched in the derived \icat{} of $k$.
  \end{itemize}
\end{remark}

\begin{remark}\label{rmk:opdmodstr}
  In good cases, there is a model structure on
  $\OpdV$ with the Dwyer--Kan equivalences as weak
  equivalences. Such model structures were constructed by
  Cisinski--Moerdijk~\cite{CisinkiMoerdijk3} and
  Robertson~\cite{Robertson} for simplicial operads, and by
  Caviglia~\cite{Caviglia} for a general class of model categories
  that also includes topological spaces and chain complexes over a
  field of characteristic $0$. In unpublished work \cite{Caviglia2},
  Caviglia has furthermore extended this result so that it also
  applies to symmetric spectra.
\end{remark}

\bibliographystyle{amsplainurl}

\providecommand{\bysame}{\leavevmode\hbox to3em{\hrulefill}\thinspace}
\providecommand{\MR}{\relax\ifhmode\unskip\space\fi MR }
\providecommand{\MRhref}[2]{%
  \href{http://www.ams.org/mathscinet-getitem?mr=#1}{#2}
}
\providecommand{\href}[2]{#2}

\end{document}